\setlist[itemize]{leftmargin=0.8cm}
\newcommand{\BBm}{\mathcal B_m}
\def\felixleftmark{}
\def\felixrightmark{}
\newcommand\updatemark{\markleft{\felixleftmark~\ifdefempty{\felixrightmark}{}{--- \felixrightmark}}}
\let\origsection\section
\renewcommand{\section}[1]{\origsection{#1}\sectionmark{#1}}
\renewcommand\sectionmark[1]{\def\temp{#1}\def\felixrightmark{}\edef\x{\noexpand\def\noexpand\felixleftmark{Section \thesection: \expandonce{\temp}}}\x\updatemark}
\let\origsubsection\subsection
\renewcommand{\subsection}[1]{\origsubsection{#1}\subsectionmark{#1}}
\theoremstyle{plain}
\newtheorem{theorem}{Theorem}[section]
\newtheorem{proposition}[theorem]{Proposition}
\newtheorem{lemma}[theorem]{Lemma}
\newtheorem{corollary}[theorem]{Corollary}
\theoremstyle{definition}
\newtheorem{definition}[theorem]{Definition}
\theoremstyle{remark}
\newtheorem{remark}[theorem]{Remark}%[section]
\newtheorem{example}[theorem]{Example}%[section]
\theoremstyle{remark}
\newtheorem*{rem*}{Remark}
\theoremstyle{plain}
\newtheorem*{thm*}{Theorem}
\numberwithin{equation}{section}
\newcommand{\bd}{\mathbf}
\newcommand{\AAi}{\mathcal A}
\newcommand{\AAm}{\mathcal A_m}
\newcommand{\AAvA}{{\mathcal A_{v,\BanachOne}}}
\newcommand{\BBi}{\mathcal B}
\newcommand{\UU}{\mathcal U}
\newcommand{\SumSpaceSymbol}{\mathscr{H}}
\newcommand{\SumSpace}{\lebesgue^1 + \lebesgue^\infty + \lebesgue^{1,\infty} + \lebesgue^{\infty,1}}
\newcommand{\IntersectionNorm}[1]{\max \big\{ \|#1\|_{\lebesgue^1} , \|#1\|_{\lebesgue^\infty} , \|#1\|_{\lebesgue^{1,\infty}} , \|#1\|_{\lebesgue^{\infty,1}} \big\}}
\newcommand{\IntersectionSpace}{\lebesgue^1 \cap \lebesgue^\infty \cap \lebesgue^{1,\infty} \cap \lebesgue^{\infty,1}}
\newcommand{\IntersectionSpaceSymbol}{\mathscr{G}}
\newcommand{\R}{\mathbb{R}}
\newcommand{\Z}{\mathbb{Z}}
\newcommand{\N}{\mathbb{N}}
\newcommand{\QQ}{\mathbb{Q}}
\newcommand{\CC}{\mathbb{C}}
\newcommand{\MaxKernel}[1]{M_{#1}}
\newcommand{\Reservoir}{\mathscr{R}}
\newcommand{\Bounded}{\mathscr{B}}
\newcommand{\BanachOne}{\mathbf{A}}
\newcommand{\BanachTwo}{\mathbf{B}}
\newcommand{\BanachThree}{\mathbf{C}}
\newcommand{\XTuple}{(X,\CalF,\mu)}
\newcommand{\XIndexTuple}[1]{(X_{#1}, \CalF_{#1}, \mu_{#1})}
\newcommand{\YTuple}{(Y,\CalG,\nu)}
\newcommand{\YIndexTuple}[1]{(Y_{#1}, \CalG_{#1}, \nu_{#1})}
\newcommand{\ProductTupleX}{(X_1 \times X_2, \CalF_1 \otimes \CalF_2, \mu_1 \otimes \mu_2)}
\newcommand{\ProductTupleXCompressed}{(X_1 \! \times \! X_2, \CalF_1 \! \otimes \! \CalF_2, \mu_1 \! \otimes \! \mu_2)}
\newcommand{\ProductTupleY}{(Y_1 \times Y_2, \CalG_1 \otimes \CalG_2, \nu_1 \otimes \nu_2)}
\newcommand{\ProductTupleYCompressed}{(Y_1 \! \times \! Y_2, \CalG_1 \! \otimes \! \CalG_2, \nu_1 \! \otimes \! \nu_2)}
\newcommand{\GoodFunctions}{\mathscr{G}}
\newcommand{\KernelProduct}{\odot}
\newcommand{\vertiii}[1]{{\left\vert \kern-0.25ex  \left\vert \kern-0.25ex  \left\vert #1\right\vert \kern-0.25ex  \right\vert \kern-0.25ex  \right\vert}}
\def\esssup{\mathop{\operatorname{ess~sup}}}
\def\supp{\mathop{\operatorname{supp}}}
\newcommand{\LeftEqNo}{\let\veqno\@@leqno}
\DeclareFontFamily{U}{mathx}{\hyphenchar\font45}
\DeclareFontShape{U}{mathx}{m}{n}{
      <5> <6> <7> <8> <9> <10>
      <10.95> <12> <14.4> <17.28> <20.74> <24.88>
      mathx10
      }{}
\DeclareSymbolFont{mathx}{U}{mathx}{m}{n}
\DeclareMathAccent{\widecheck}{0}{mathx}{"71}
\DeclareMathAccent{\wideparen}{0}{mathx}{"75}
\newcommand{\Hil}{\mathcal{H}}
\newcommand{\oscUG}{{\mathrm{osc}}_{\UU,\Gamma}}
\newcommand{\oscUtG}{{\mathrm{osc}}_{\widetilde{\UU},\Gamma}}
\newcommand{\Indicator}{{\mathds{1}}}
\newcommand{\lebesgue}{\bd{L}}
\newcommand{\CalA}{\mathcal{A}}
\newcommand{\CalC}{\mathcal{C}}
\newcommand{\CalM}{\mathcal{M}}
\newcommand{\CalU}{\mathcal{U}}
\newcommand{\CalB}{\mathcal{B}}
\newcommand{\CalG}{\mathcal{G}}
\newcommand{\CalF}{\mathcal{F}}
\newcommand{\CalH}{\mathcal{H}}
\newcommand{\Co}{\operatorname{Co}}
\newcommand{\identity}{\mathrm{id}}
\newcommand{\eps}{\varepsilon}
\renewcommand\upsilon{{\scaleobj{0.65}{\Upsilon}}}
\let\emptyset\varnothing
\begin{document}

\author{Nicki Holighaus\texorpdfstring{$^\dag$}{}}
\address{$^\dag$ Acoustics Research Institute,
Austrian Academy of Sciences,
Wohllebengasse 12--14,
A-1040 Vienna,
Austria}
\thanks{This work was supported by the Austrian Science Fund (FWF): I\,3067--N30.
NH is grateful for the hospitality and support of the Katholische Universität Eichstätt-Ingolstadt during his visit.
FV would like to thank the Acoustics Research Institute for the hospitality during several visits, which were partially
supported by the Austrian Science Fund (FWF): 31225--N32.}
\email{nicki.holighaus@oeaw.ac.at}
\author{Felix Voigtlaender\texorpdfstring{$^{*\S}$ $^\ddag$}{}}
\address{$^*$ Lehrstuhl Wissenschaftliches Rechnen,
Katholische Universität Eichstätt-Ingolstadt,
Ostenstraße 26,
85072 Eichstätt,
Germany}
\address{$^{\S}$ Faculty of Mathematics,
University of Vienna,
Oskar-Morgenstern-Platz 1
A-1090 Vienna,
Austria}
\email{felix@voigtlaender.xyz}

\title[Schur-type Banach modules acting on mixed-norm Lebesgue spaces]
      {\texorpdfstring{Schur-type Banach modules of integral kernels\\{}
                       acting on mixed-norm Lebesgue spaces}
                      {Schur-type Banach modules of integral kernels
                       acting on mixed-norm Lebesgue spaces}}

\subjclass[2010]{47G10, 47L80, 46E30, 47L10}

%Explanation of the above subject classifications:
%47G10: Integral operators
%47L10: Algebras of operators on Banach spaces and other topological linear spaces
%47L80: Algebras of specific types of operators (Toeplitz, integral, pseudodifferential, etc.)
%46E30: Spaces of measurable functions
%       ($L^p$-spaces,
%        Orlicz spaces,
%        Köthe function spaces,
%        Lorentz spaces,
%        rearrangement invariant spaces,
%        etc.)

%Possibly also fitting:
%47B38: Operators on function spaces (general)
%47B35: Kernel operators
%       (NOTE: Does this mean ``integral operators?'' I am not so sure!)
%45P05: Integral operators
%       (NOTE: This is a sub-class of ``Integral equations'', which is not what we are doing!)

\keywords{integral operators,
          mixed-norm Lebesgue spaces,
          Schur's test,
          algebras of integral operators,
          coorbit spaces,
          spaces of operators}

\begin{abstract}
  Schur's test for integral operators states that if a kernel $K : X \times Y \to \CC$
  satisfies $\int_Y |K(x,y)| \, d \nu(y) \leq C$
  and $\int_X |K(x,y)| \, d \mu(x) \leq C$, then the associated integral
  operator is bounded from $\lebesgue^p (\nu)$ into $\lebesgue^p(\mu)$,
  simultaneously for all $p \in [1,\infty]$.
  We derive a variant of this result which ensures that the integral operator
  acts boundedly on the (weighted) \emph{mixed-norm} Lebesgue spaces $\lebesgue_w^{p,q}$,
  simultaneously for all $p,q \in [1,\infty]$.
  For \emph{non-negative} integral kernels our criterion is sharp; that is,
  the integral operator satisfies our criterion \emph{if and only if} it acts boundedly
  on all of the mixed-norm Lebesgue spaces.

  Motivated by this new form of Schur's test, we introduce solid Banach modules $\CalB_m(X,Y)$
  of integral kernels with the property that all kernels in $\CalB_m(X,Y)$ map the
  mixed-norm Lebesgue spaces $\lebesgue_w^{p,q}(\nu)$ boundedly into $\lebesgue_v^{p,q}(\mu)$,
  for arbitrary $p,q \in [1,\infty]$, provided that the weights $v,w$ are $m$-moderate.
  Conversely, we show that if $\bd A$ and $\bd B$ are non-trivial solid Banach spaces
  for which all kernels $K \in \CalB_m(X,Y)$
  define bounded maps from $\bd A$ into $\bd B$, then $\bd A$ and $\bd B$ are related
  to mixed-norm Lebesgue-spaces, in the sense that
  \(
    \left(\lebesgue^1
    \cap \lebesgue^\infty
    \cap \lebesgue^{1,\infty}
    \cap \lebesgue^{\infty,1}\right)_v
    \hookrightarrow \bd B
  \)
  and
  \(
    \bd A \hookrightarrow \left(
                            \lebesgue^1
                            + \lebesgue^\infty
                            + \lebesgue^{1,\infty}
                            + \lebesgue^{\infty,1}
                          \right)_{1/w}
  \)
  for certain weights $v,w$ depending on the weight $m$ used in the definition of $\BBm$.

  The kernel algebra $\CalB_m(X,X)$ is particularly suited for applications in (generalized) coorbit theory.
  Usually, a host of technical conditions need to be verified to guarantee that the coorbit space
  $\Co_\Psi (\bd A)$ associated to a continuous frame $\Psi$ and a solid Banach space $\bd A$
  are well-defined and that the discretization machinery of coorbit theory is applicable.
  As a simplification, we show that it is enough to check that certain integral kernels
  associated to the frame $\Psi$ belong to $\CalB_m(X,X)$;
  this ensures that the spaces $\Co_\Psi (\lebesgue_\kappa^{p,q})$ are well-defined
  for all $p,q \in [1,\infty]$ and all weights $\kappa$ compatible with $m$.
  Further, if some of these integral kernels have sufficiently small norm,
  then the discretization theory is also applicable.
\end{abstract}

\maketitle

\renewcommand{\thefootnote}{\fnsymbol{footnote}}
\footnotetext[3]{Corresponding author}
\renewcommand*{\thefootnote}{\arabic{footnote}}

\markright{}

\section{Introduction}
\label{sec:Intro}

For integral kernels that do not exhibit cancellations---in particular for non-negative kernels---%
Schur's test is one of the most important criteria to verify that the associated integral operator
acts boundedly on the Lebesgue space $\lebesgue^p$.
More precisely, given $\sigma$-finite measure spaces $(X,\CalF,\mu)$ and $(Y,\CalG,\nu)$
and a measurable integral kernel $K : X \times Y \to \CC$, the \emph{integral operator}
$\Phi_K$ associated to the kernel $K$ is defined by
\[
  [\Phi_K \, f] (x)
  := \int_{Y}
       K(x,y) \, f(y)
     \, d \nu(y)
  \text{ for measurable } f : Y \to \CC
  \text{ and } x \in X,
  \text{ if the integral exists};
\]
one then wants to study conditions on $K$ which guarantee that $\Phi_K$ acts boundedly
on a given function space.

\subsection{The different versions of \texorpdfstring{Schur's}{Schurʼs} test}

Schur's test provides such a criterion for operators acting on the Lebesgue spaces $\lebesgue^p$.
In fact, there are two somewhat different results in the literature that are commonly referred to
as ``Schur's test''.
The first variant yields boundedness of $\Phi_K$ on $\lebesgue^p$ for a \emph{specific} choice of
$p \in (1,\infty)$:

\begin{thm*}[Schur's test, specific version]
  Under the above assumptions, let $p,q \in (1,\infty)$ be conjugate exponents.
  Assume that there exist $A,B > 0$ and measurable functions
  $g : Y \to (0,\infty)$ and $h : X \to (0,\infty)$ satisfying
  \[
    \int_Y
      |K(x,z)| \cdot [g(z)]^q
    \, d \nu (z)
    \leq A \cdot [h(x)]^q
    \qquad \text{and} \qquad
    \int_X
      |K(z,y)| \cdot [h(z)]^p
    \, d \mu(z)
    \leq B \cdot [g(y)]^p
  \]
  for $\mu$-almost all $x \in X$ and $\nu$-almost all $y \in Y$.
  Then $\Phi_K : \lebesgue^p(\nu) \to \lebesgue^p(\mu)$ is bounded with
  $\| \Phi_K \|_{\lebesgue^p \to \lebesgue^p} \leq A^{1/q} \, B^{1/p}$.
\end{thm*}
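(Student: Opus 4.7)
The plan is to follow the classical weighted Schur argument based on a clever application of Hölder's inequality with the auxiliary weights $g$ and $h$, followed by Fubini--Tonelli.

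First I would reduce to the case where $f \geq 0$ by replacing $f$ with $|f|$, so that $|[\Phi_K f](x)| \leq \int_Y |K(x,y)| \, f(y) \, d\nu(y)$. The key algebraic trick is to split the integrand as
\[
  |K(x,y)| \, f(y)
  = \bigl( |K(x,y)|^{1/q} \, g(y) \bigr)
    \cdot
    \bigl( |K(x,y)|^{1/p} \, g(y)^{-1} \, f(y) \bigr) ,
\]
which brings both hypotheses of the theorem into play, the first through the factor $g(y)$ and the second (after raising to the $p$-th power and integrating over $X$) through the factor $g(y)^{-1}$.

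Next I would apply Hölder's inequality with conjugate exponents $q$ and $p$ in the integral over $Y$, obtaining
\[
  |[\Phi_K f](x)|
  \leq
    \Bigl( \int_Y |K(x,y)| \, g(y)^q \, d\nu(y) \Bigr)^{1/q}
    \cdot
    \Bigl( \int_Y |K(x,y)| \, g(y)^{-p} \, f(y)^p \, d\nu(y) \Bigr)^{1/p} .
\]
By the first assumption, the first bracket is bounded by $A \cdot h(x)^q$, so its $1/q$-th power equals $A^{1/q} \, h(x)$. Raising the resulting estimate to the $p$-th power, integrating over $X$, and invoking Tonelli's theorem (everything is non-negative) yields
\[
  \|\Phi_K f\|_{\lebesgue^p(\mu)}^p
  \leq
    A^{p/q}
    \int_Y
      f(y)^p \, g(y)^{-p}
      \Bigl( \int_X |K(x,y)| \, h(x)^p \, d\mu(x) \Bigr)
    \, d\nu(y) .
\]
Applying the second assumption to the inner integral produces the factor $B \cdot g(y)^p$, which cancels $g(y)^{-p}$, and we conclude $\|\Phi_K f\|_{\lebesgue^p(\mu)}^p \leq A^{p/q} \, B \, \|f\|_{\lebesgue^p(\nu)}^p$, which is the desired bound after taking $p$-th roots.

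There is no substantial obstacle: the only points that require minor care are the measurability/positivity conditions that justify Hölder and Fubini--Tonelli (guaranteed by $\sigma$-finiteness and by the assumption $g, h > 0$), and the reduction to non-negative $f$ so that $\Phi_K f$ is pointwise defined $\mu$-almost everywhere on the set where the right-hand side is finite. Everything else is the direct calculation sketched above.
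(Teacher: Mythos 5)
Your proof is correct and is the standard argument for this classical result. It is worth noting that the paper does not actually prove this ``specific version'' of Schur's test at all — it only states it as background material, citing Aronszajn \cite{AronszajnSpacesOfPotentialsConnectedWithLP} for the statement and leaving the proof to the literature. So there is no proof in the paper to compare against.

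Your argument is the textbook one: split
\(
  |K(x,y)|\,f(y)
  = \bigl(|K(x,y)|^{1/q}\,g(y)\bigr)\cdot\bigl(|K(x,y)|^{1/p}\,g(y)^{-1}\,f(y)\bigr),
\)
apply Hölder with exponents $q$ and $p$ in $y$, invoke the first hypothesis, raise to the $p$-th power and integrate in $x$, swap the order of integration by Tonelli, and invoke the second hypothesis to cancel the $g^{-p}$ factor. The only cosmetic quibble is the sentence ``its $1/q$-th power equals $A^{1/q}\,h(x)$'' — it is \emph{at most} $A^{1/q}\,h(x)$, which is all you need — and you could be slightly more explicit that the chain of inequalities also establishes that $\int_Y |K(x,y)|\,|f(y)|\,d\nu(y)<\infty$ for $\mu$-a.e.\ $x$, so $\Phi_K f$ is indeed well-defined; but you already flag this point at the end. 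For what it's worth, the paper does reuse the same Hölder-splitting idea (with the trivial weights $g\equiv h\equiv 1$ replaced by $g=f^p$, in Step~3 of the proof of Proposition~\ref{prop:SchurTestMixedCase1}) to bootstrap from the $p=1$ case to general $p$ in the mixed-norm setting, so your approach is very much in the same spirit as the techniques the paper uses for its new results.
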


The above formulation of Schur's test appeared for the first time in
\mbox{\cite[Pages~239--240]{AronszajnSpacesOfPotentialsConnectedWithLP}};
it is a highly generalized version of a result by Schur \cite{Schur1911},
which considered matrix operators acting on $\ell^2$.
Gagliardo showed for non-negative kernels $K$ that the sufficient condition given above
is ``almost'' necessary; see \cite{GagliardoIntegralTransformstionWithPositiveKernel} for the details.

While the above theorem is very flexible and in particular allows to prove boundedness
of operators which act boundedly on $\lebesgue^p$ only for \emph{some but not all} exponents $p$,
the second version of Schur's test---which yields boundedness on $\lebesgue^p$ \emph{simultaneously}
for all $p \in [1,\infty]$---is more frequently used in applications related to time-frequency
analysis \cite[Lemma~6.2.1]{gr01} and coorbit theory
\cite{FeichtingerCoorbit0,FeichtingerCoorbit1,FeichtingerCoorbit2,GeneralizedCoorbit1,RauhutCoorbitQuasiBanach,kempka2015general}.
This second version reads as follows:

\begin{thm*}[Schur's test, uniform version]
  Assume that the integral kernel $K : X \times Y \to \CC$ is measurable and such that
  \begin{equation}\label{eq:schurtest0}
    \int_X |K(z,y)| \, d \mu(z) \leq C
    \,\,\, \text{and} \,\,\,
    \int_Y |K(x,z)| \, d \nu(z) \leq C
    \,\,\, \text{for almost all } x \in X \text{ and } y \in Y.
  \end{equation}
  Then $\Phi_K : \lebesgue^p(\nu) \to \lebesgue^p(\mu)$ is bounded for all $p \in [1,\infty]$,
  with $\| \Phi_K \|_{\lebesgue^p \to \lebesgue^p} \leq C$.
\end{thm*}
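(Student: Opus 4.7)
The plan is to handle the endpoint cases $p=1$ and $p=\infty$ directly via Fubini/Tonelli, and then either interpolate or give a unified Hölder argument that also covers the intermediate range $p \in (1,\infty)$. Since the paper likely wants a self-contained and elementary proof, I would favor the Hölder route, which avoids invoking Riesz--Thorin interpolation.

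For $p=\infty$ the estimate is immediate: for $\mu$-a.e.\ $x \in X$,
\[
  |[\Phi_K f](x)|
  \leq \|f\|_{\lebesgue^\infty(\nu)}
       \int_Y |K(x,z)| \, d\nu(z)
  \leq C \, \|f\|_{\lebesgue^\infty(\nu)}.
\]
For $p=1$ the estimate follows from Tonelli's theorem by swapping the order of integration:
\[
  \|\Phi_K f\|_{\lebesgue^1(\mu)}
  \leq \int_Y |f(y)|
         \int_X |K(z,y)| \, d\mu(z)
       \, d\nu(y)
  \leq C \, \|f\|_{\lebesgue^1(\nu)}.
\]
In particular, the integral defining $[\Phi_K f](x)$ is absolutely convergent for $\mu$-a.e.\ $x$.

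For $p \in (1,\infty)$, with $q$ the conjugate exponent, the key step is to split $|K(x,y)| = |K(x,y)|^{1/q} \cdot |K(x,y)|^{1/p}$ and apply Hölder:
\[
  |[\Phi_K f](x)|
  \leq \Bigl(\int_Y |K(x,z)| \, d\nu(z) \Bigr)^{1/q}
       \Bigl(\int_Y |K(x,z)| \, |f(z)|^p \, d\nu(z) \Bigr)^{1/p}
  \leq C^{1/q}
       \Bigl(\int_Y |K(x,z)| \, |f(z)|^p \, d\nu(z) \Bigr)^{1/p}.
\]
Raising to the $p$-th power, integrating in $x$, and using Tonelli together with the first hypothesis on $K$ then gives
\[
  \|\Phi_K f\|_{\lebesgue^p(\mu)}^p
  \leq C^{p/q} \int_Y |f(y)|^p
                 \int_X |K(z,y)| \, d\mu(z)
               \, d\nu(y)
  \leq C^{p/q + 1} \, \|f\|_{\lebesgue^p(\nu)}^p
  = C^p \, \|f\|_{\lebesgue^p(\nu)}^p.
\]

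There is essentially no ``main obstacle'' here --- the result is a textbook computation. The only mildly subtle point is justifying that the iterated integrals exist and can be swapped, which requires noting that everything in sight is non-negative (so Tonelli applies without any integrability prerequisite) and that $\sigma$-finiteness of $\mu$ and $\nu$ ensures Tonelli is available. Once the Hölder splitting is in place, the two hypotheses in \eqref{eq:schurtest0} are used in a perfectly symmetric manner, one for each factor, which is what produces the uniform constant $C$ independent of $p$.
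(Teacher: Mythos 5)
Your proof is correct and matches the standard argument; the paper itself does not reprove this classical result but simply cites \cite[Theorem~6.18]{FollandRA}, while the Hölder splitting $|K| = |K|^{1/p'}\cdot|K|^{1/p}$ that you use for $p\in(1,\infty)$ is precisely the technique the paper deploys in Step~3 of the proof of Proposition~\ref{prop:SchurTestMixedCase1} when it generalizes to mixed-norm spaces. In short, your approach is the one the paper implicitly relies on.
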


This second version of Schur's test is again a generalization of an estimate in Schur's
original work \cite{Schur1911}; it is a folklore result and can be found for instance in
\cite[Theorem~6.18]{FollandRA}.
It also seems to be folklore that the above form of Schur's test is \emph{sharp};
that is, \eqref{eq:schurtest0} holds \emph{if and only if}
$\Phi_K : \lebesgue^p(\nu) \to \lebesgue^p(\mu)$ is bounded for all $p \in [1,\infty]$.
Since we could not locate a reference for this fact in the setting of general measure spaces,
we provide a proof in Appendix~\ref{sec:SharpnessComplexValued}.

\medskip{}

An important application of this second form of Schur's test occurs in
\emph{generalized coorbit theory} \cite{GeneralizedCoorbit1},
where a \emph{Schur-type Banach algebra of integral kernels} is considered.
More precisely, in the setting where $(X,\CalF,\mu) = (Y,\CalG,\nu)$, define
\[
  \AAi(X)
  := \bigl\{
       K : X \times X \to \CC
       \quad\colon\quad
       K \text{ measurable and } \| K \|_{\AAi} < \infty
     \bigr\},
\]
where
\[
  \| K \|_{\AAi} := \max
                    \left\{
                      \esssup_{y \in X} \int_X |K(x,y)| \, d \mu(x), \quad
                      \esssup_{x \in X} \int_X |K(x,y)| \, d \mu(y)
                    \right\}
  \in [0,\infty] .
\]

Clearly, $\AAi(X)$ contains all kernels that satisfy Schur's test, so that
$\AAi(X) \hookrightarrow \Bounded(\lebesgue^p(\mu))$.
Moreover, $\AAi(X)$ is a Banach algebra with multiplication given by
\[
  L \odot K(x,z)
  := \int_Y
       L(x,y) \, K(y,z)
     d\mu(y),
  \text{ for (almost) all } x,z \in X.
\]

\subsection{Our contribution}
\label{sub:IntroContribution}

We are concerned with extending the ``uniform version'' of Schur's test for integral operators
acting on the Lebesgue spaces $\lebesgue^p$ to a version for operators acting
on the \emph{mixed-norm} Lebesgue spaces $\lebesgue^{p,q}$,
which were originally introduced in \cite{MixedLpSpaces}.

Precisely, given a measure space $\XTuple = (X_1 \times X_2, \CalF_1 \otimes \CalF_2, \mu_1 \otimes \mu_2)$
which is the product of two $\sigma$-finite measure spaces $(X_i,\CalF_i,\mu_i)$,
the mixed Lebesgue-norm with exponents $p,q \in [1,\infty]$ is given by
\begin{equation}
  \|f\|_{\lebesgue^{p,q}(\mu)}
  := \Big\|
       x_2 \mapsto \big\| f(\bullet, x_2) \big\|_{\lebesgue^p (\mu_1)}
     \Big\|_{\lebesgue^q (\mu_2)}
  \in [0,\infty]
  \quad \text{for} \quad
  f : X \to \CC \text{ measurable} .
  \label{eq:MixedLebesgueNormDefinition}
\end{equation}
As usual, $\lebesgue^{p,q}(\mu)$ is the set of all (equivalence classes of) measurable functions
for which this norm is finite.
One can show that $\lebesgue^{p,q}(\mu)$ is indeed a Banach space; see \cite{MixedLpSpaces}.

\medskip{}

We make the following contributions regarding Schur's test for mixed-norm Lebesgue spaces:
\begin{itemize}
  \item We derive a variant of the condition \eqref{eq:schurtest0} which guarantees that
        $\Phi_K : \lebesgue^{p,q}(\nu) \to \lebesgue^{p,q}(\mu)$ is bounded simultaneously for all
        $p,q \in [1,\infty]$.

  \item This sufficient condition turns out to be reasonably sharp,
        meaning that for \emph{non-negative} integral kernels $K$,
        our sufficient condition is in fact necessary.
        For complex-valued integral kernels, however, this is no longer true in general.

  \item In the same way that the Banach algebra $\AAi(X)$ relates to the Schur-type condition
        \eqref{eq:schurtest0}, we introduce a novel family $\BBi(X,Y)$ of Banach spaces
        of integral kernels related to our generalized Schur-type condition
        and study its properties.

  \item In particular, we study \emph{necessary} conditions that a function space $\BanachOne$
        has to satisfy in order for $\BBi(X,X)$ to act boundedly on it.
        Our main result in this direction shows that such a space $\BanachOne$
        necessarily satisfies
        \[
          \lebesgue^1 \cap \lebesgue^\infty \cap \lebesgue^{1,\infty} \cap \lebesgue^{\infty,1}
          \hookrightarrow \BanachOne
          \hookrightarrow \lebesgue^1 + \lebesgue^\infty + \lebesgue^{1,\infty} + \lebesgue^{\infty,1}.
        \]

  \item We indicate how our results can be used to obtain streamlined conditions for the
        applicability of generalized coorbit theory using the kernel spaces $\BBi(X,Y)$.
\end{itemize}

In the above list, we only restricted ourselves to the unweighted spaces $\BBi(X,Y)$ for simplicity.
In fact, each of the listed questions is also considered for the weighted spaces $\BBm(X,Y)$,
acting boundedly on weighted mixed-norm Lebesgue spaces $\lebesgue^{p,q}_w$.

\subsection{Related work}

Mixed-norm Lebesgue spaces---originally introduced in \cite{MixedLpSpaces}---appear naturally
whenever one considers functions of more than one variable where the different variables
are related to fundamentally different notions or physical quantities.
Such is the case in time-frequency analysis~\cite{gr01}, where the variables represent time
(or space) and frequency, respectively.
In particular, the \emph{modulation space norms}
\cite{gr01,feichtinger1983modulation,feichtinger1989atomic,grochenig1999modulation}
are defined by putting a (weighted) mixed Lebesgue norm on the short-time Fourier transform
of the considered functions.

Generalizing from this, Feichtinger and Gröchenig developed \emph{coorbit theory}
\cite{FeichtingerCoorbit0,FeichtingerCoorbit1,FeichtingerCoorbit2,GroechenigDescribingFunctions},
a general framework for defining function spaces by putting certain function space norms
on suitable integral transforms of the functions under consideration,
and for discretizing such function spaces.
The prototypical examples of coorbit spaces are the modulation spaces and the Besov spaces
\cite{TriebelTheoryOfFunctionSpaces,TriebelTheoryOfFunctionSpaces3}, which can be described
by putting a weighted mixed Lebesgue norm on either the short-time Fourier transform or the
continuous wavelet transform of the given functions.
Other examples include Triebel-Lizorkin spaces; see for instance
\cite{UllrichContinuousCharacterizationsOfBesovTLSpaces,UllrichYangNewCharacterizationsOfBesovTLSpaces}.
The coorbit description of Besov spaces was generalized in
\cite{FuehrVoigtlaenderCoorbitSpacesAsDecompositionSpaces} to function spaces
associated to more general wavelet-type transforms, including the \emph{anisotropic} Besov spaces
studied in \cite{BownikAnisotropicBesovSpaces}.

In the original setup of Feichtinger and Gröchenig, the considered integral transforms
were required to stem from an irreducible, integrable group representation.
This assumptions has been significantly relaxed by the combined work of several authors
\cite{GeneralizedCoorbit1,GeneralizedCoorbit2,kempka2015general}, leading to the theory
of \emph{general coorbit spaces}, for which the integral transform is merely required
to be induced by a continuous (Parseval) frame;
see Sections~\ref{sec:IntroCoorbitTheory} and \ref{sub:CoorbitReview}.
However, while mixed-norm Lebesgue spaces have seen substantial interest and applications
for group-based coorbit theory, they have been mostly neglected in \emph{general} coorbit theory.
In Sections~\ref{sec:IntroCoorbitTheory} and \ref{sec:CoorbitTheory},
we demonstrate how our results can be applied to fill this gap.

Extensions of Schur's test to mixed-norm Lebesgue spaces have been studied in
\cite{TaylorPhDThesis} and \cite{samarah2005schur}.
In \cite{TaylorPhDThesis}, Taylor considers generalizations of the ``specific version''
of Schur's test to the setting of $n$-variable mixed-norm Lebesgue spaces $\lebesgue^{P}$
with $P = (p_1,\dots,p_n) \in (1,\infty)^n$.
In addition to generalizing the sufficient conditions to this setting, most of \cite{TaylorPhDThesis}
is concerned with showing that these sufficient conditions are also (almost) necessary,
similar to the results of Gagliardo \cite{GagliardoIntegralTransformstionWithPositiveKernel}
for the usual Lebesgue spaces.
Motivated by applications in time-frequency analysis, the work \cite{samarah2005schur}
by Samarah et al.~studies sufficient criteria regarding the integral kernel $K$
which ensure that $\Phi_K : \lebesgue^{p,q} \to \lebesgue^{p,q}$ is bounded%
\footnote{It should be observed that while the theorem statements in \cite{samarah2005schur}
seem to consider boundedness of ${\Phi_K : \lebesgue^{p,q} \to \lebesgue^{p',q'}}$, an analysis
of the proofs shows that actually $\Phi_K : \lebesgue^{p,q} \to \lebesgue^{p,q}$ is meant.}
for $p,q \in (1,\infty)$.
Again, these sufficient conditions are in the spirit of the ``specific version'' of Schur's test.
Furthermore, it should be noted that most of the results in \cite{samarah2005schur} assume
one of the measure spaces under consideration to have \emph{finite} measure;
the only result which does not do so is \cite[Proposition~4]{samarah2005schur},
in which it is assumed instead that $1 < p \leq q < \infty$, leaving open the case where $q < p$.

For the Lebesgue spaces $\lebesgue^p$, the ``uniform'' version of Schur's test
(for $p \in (1,\infty)$) is a straightforward consequence of the ``specific'' version,
derived by simply choosing $h \equiv g \equiv 1$.
However, it is unclear how such a derivation can be obtained from the results
in \cite{TaylorPhDThesis,samarah2005schur} in the case of the \emph{mixed-norm} Lebesgue spaces.
Besides formalizing the ``uniform'' version of Schur's test in that setting,
our work confirms its necessity and studies the properties of the closely related kernel spaces $\BBm(X,Y)$,
thereby complementing the results in \cite{TaylorPhDThesis,samarah2005schur}
by the addition of a new toolset for the study of integral operators on mixed-norm Lebesgue spaces.

\subsection{Structure of the paper}
\label{sub:Structure}

In the next section we state the main results of this paper.
Namely, we formulate the ``uniform version'' of Schur's test for mixed-norm Lebesgue spaces
and discuss its relevance as sufficient and necessary criterion for integral operators
mapping $\lebesgue^{p,q}(\nu)$ into $\lebesgue^{p,q}(\mu)$.
We also introduce the Schur-type Banach modules $\BBi(X,Y)$ and
their weighted variants, and we discuss the properties of these spaces.
In Section~\ref{sec:IntroCoorbitTheory}, we present
an overview of general coorbit theory with respect to mixed-norm Lebesgue spaces.
All proofs are deferred to the later sections:
Section~\ref{sec:MixedNormSchur} covers all proofs
related directly to the proposed extension of Schur's test,
while Sections~\ref{sec:KernelModulePropertiesProof} and \ref{sec:StructureOfBBmCompatibleSpaces}
are concerned with proving the properties of $\BBi(X,Y)$ and the kernels contained therein.
In Section~\ref{sec:EmbeddingIntoWeightedLInfty} we consider additional mapping properties of
the integral operators derived from kernels in $\BBi(X,Y)$.
Finally, Section~\ref{sec:CoorbitTheory} closes our treatment
with a more detailed account of general coorbit theory using mixed-norm Lebesgue spaces.
Several technical proofs are deferred to the Appendices.
In particular, we show in Appendix~\ref{sec:DualCharacterizationOfSumSpace} that
$\lebesgue^1 + \lebesgue^\infty + \lebesgue^{1,\infty} + \lebesgue^{\infty,1}$
is the associate space of the space
$\lebesgue^1 \cap \lebesgue^\infty \cap \lebesgue^{1,\infty} \cap \lebesgue^{\infty,1}$,
which might be of independent interest.

\section{\texorpdfstring{Schur's}{Schurʼs} test for mixed-norm Lebesgue spaces}
\label{sec:IntroMixedNormSchur}

As for the classical Schur test, we aim for readily verifiable conditions concerning the kernel
$K : X \times Y \to \CC$ which guarantee that the associated integral operator $\Phi_K$ defines
a bounded linear map $\Phi_K : \lebesgue^{p,q}(\nu) \to \lebesgue^{p,q}(\mu)$,
simultaneously for all $p,q \in [1,\infty]$.

To conveniently state our result, given a measurable function $K : X \times Y \to \CC$,
we define
\begin{equation}
  \begin{split}
    C_1 (K)
    & := \esssup_{x \in X}
           \int_Y
             |K(x,y)|
           \, d \nu (y) \in [0,\infty] , \\
    C_2 (K)
    & := \esssup_{y \in Y}
           \int_X
             |K(x,y)|
           \, d \mu(x) \in [0,\infty] , \\
    C_3 (K)
    & := \esssup_{x_2 \in X_2}
         \left[
           \int_{Y_2}
             \bigg(
               \esssup_{y_1 \in Y_1}
                 \int_{X_1}
                   \big| K \big( (x_1, x_2), (y_1,y_2) \big) \big|
                 \, d \mu_1 (x_1)
             \bigg)
           \, d \nu_2 (y_2)
         \right] \in [0,\infty] , \\
    C_4 (K)
    & := \esssup_{y_2 \in Y_2}
         \left[
           \int_{X_2}
             \bigg(
               \esssup_{x_1 \in X_1}
                 \int_{Y_1}
                   \big| K \big( (x_1, x_2), (y_1,y_2) \big) \big|
                 \, d \nu_1 (y_1)
             \bigg)
           \, d \mu_2 (x_2)
         \right] \in [0,\infty] .
  \end{split}
  \label{eq:MixedNormSchurConstants}
\end{equation}
It should be observed that $C_3(K), C_4(K) \in [0,\infty]$ are well-defined%
---that is, all appearing integrands are indeed measurable---as follows by combining Tonelli's
theorem with Lemma~\ref{lem:CountableLInfinityCharacterization}.

Using the quantities $C_1(K),\dots,C_4(K)$, we can now conveniently state our version of Schur's test
for mixed Lebesgue spaces:

\begin{theorem}\label{thm:SchurTestSufficientUnweighted}
  Let $\XTuple \!=\! \ProductTupleXCompressed$
  and $\YTuple \!=\! \ProductTupleYCompressed$
  and assume that $\mu_1,\mu_2,\nu_1,\nu_2$ are all $\sigma$-finite.

  If $K : X \times Y \to \CC$ is measurable and satisfies $C_i (K) < \infty$ for all
  $i \in \{1,2,3,4\}$, where the $C_i (K)$ are as in Equation~\eqref{eq:MixedNormSchurConstants},
  then the integral operator $\Phi_K$ is well-defined and bounded as an operator
  $\Phi_K : \lebesgue^{p,q}(\nu) \to \lebesgue^{p,q}(\mu)$ for all $p,q \in [1,\infty]$,
  with absolute convergence a.e.~of the defining integral.
  More precisely:
  \begin{itemize}
    \item If $p \leq q$ and $C_i (K) < \infty$ for $i \in \{1,2,3\}$, then
          \(
            \vertiii{\Phi_K}_{\lebesgue^{p,q} \to \lebesgue^{p,q}}
            \leq {\displaystyle
                  \max_{i \in \{ 1,2,3 \}}
                    C_i (K)
                 }
            <    \infty
            .
          \)
          \vspace{0.1cm}

    \item If $p > q$ and $C_i (K) < \infty$ for $i \in \{1,2,4\}$, then
          \(
            \vertiii{\Phi_K}_{\lebesgue^{p,q} \to \lebesgue^{p,q}}
            \leq {\displaystyle
                  \max_{i \in \{ 1,2,4 \}}
                    C_i (K)
                 }
            <    \infty
            .
          \)
  \end{itemize}
\end{theorem}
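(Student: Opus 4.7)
The strategy is to reduce to non-negative kernels via the pointwise bound $|\Phi_K f|\leq \Phi_{|K|}\,|f|$ and the solidity of $\lebesgue^{p,q}$, then to establish boundedness of $\Phi_{|K|}$ at the four ``corner'' exponents $(p,q)\in\{1,\infty\}^2$, and finally to interpolate to cover the interior of $[1,\infty]^2$. Absolute a.e.\ convergence of the defining integral for $\Phi_K f$ when $f\in\lebesgue^{p,q}(\nu)$ will then follow a posteriori from $\Phi_{|K|}|f|\in\lebesgue^{p,q}(\mu)$ being finite a.e.

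The four endpoint estimates are direct Tonelli-Fubini computations. At $(p,q)=(1,1)$, swapping the order of integration gives $\|\Phi_{|K|}f\|_{\lebesgue^{1,1}}\leq C_2(K)\,\|f\|_{\lebesgue^{1,1}}$; at $(\infty,\infty)$, pulling $\|f\|_\infty$ outside the defining integral yields the $C_1(K)$-bound. At $(p,q)=(1,\infty)$, one first integrates in $x_1$, then controls the integral over $Y_1$ by $\bigl[\esssup_{y_1}\int_{X_1}|K|\,d\mu_1\bigr]\cdot\|f(\cdot,y_2)\|_{\lebesgue^1_{y_1}}$; integrating in $y_2$ and taking $\esssup_{x_2}$ produces the $C_3(K)$-bound. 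The case $(\infty,1)$ is entirely symmetric and yields the $C_4(K)$-bound.

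For general $(p,q)\in[1,\infty]^2$, the point $(1/p,1/q)$ lies in one of two triangles in $[0,1]^2$: the region $p\leq q$ (equivalently $1/p\geq 1/q$) is the triangle with vertices $(1,1),(0,0),(1,0)$ whose corresponding corner bounds are $C_2,C_1,C_3$, while $p>q$ corresponds to the triangle with vertex $(0,1)$ and corner bound $C_4$ in place of $(1,0)$ and $C_3$. A two-step Calder\'on complex interpolation reaches any interior point: first interpolate between $\lebesgue^{1,1}$ and $\lebesgue^{1,\infty}$ to obtain boundedness on $\lebesgue^{1,q/p}$ with norm $\leq \max\{C_2,C_3\}$; then interpolate this against $\lebesgue^{\infty,\infty}$ to reach $\lebesgue^{p,q}$ with norm $\leq \max\{C_1,C_2,C_3\}$ (using $a^{1-\theta}b^\theta\leq\max\{a,b\}$ to replace geometric means by maxima). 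The symmetric argument with $\lebesgue^{\infty,1}$ in place of $\lebesgue^{1,\infty}$ and $C_4$ in place of $C_3$ covers the triangle $p>q$. The complex interpolation of mixed-norm Lebesgue spaces up to the endpoint indices needed here is the classical result of Benedek-Panzone.

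The main obstacle will be justifying complex interpolation at the endpoint exponents $1$ and $\infty$ in the mixed-norm setting, where the usual Riesz-Thorin/Stein-Weiss analytic-family argument requires either a mild adaptation to mixed-norm spaces or a preliminary density argument exploiting the a priori pointwise bound on simple functions. Secondary technical points---the measurability of the inner essential suprema defining $C_3(K)$ and $C_4(K)$, and the $\sigma$-finiteness needed to apply Tonelli's theorem in each of the endpoint computations---are already addressed, respectively, by the paper's earlier measurability lemma and by the standing $\sigma$-finiteness hypothesis.
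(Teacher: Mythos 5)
Your proposal is correct, but it takes a genuinely different route from the paper. You establish the four corner estimates at $(p,q)\in\{1,\infty\}^2$ by direct Tonelli computations (which do all go through exactly as you sketch), and then reach the interior of $[1,\infty]^2$ by the Benedek--Panzone complex interpolation theorem, using $a^{1-\theta}b^\theta\leq\max\{a,b\}$ to trade the geometric-mean operator bound for the claimed maximum. The paper deliberately avoids any appeal to interpolation: after the endpoint cases $(1,1)$, $(1,\infty)$, and $(\infty,\infty)$, it handles $p=1$, $q\in(1,\infty)$ via a direct Hölder/duality computation (Step~2 of Proposition~\ref{prop:SchurTestMixedCase1}), then lifts to general $p\in(1,\infty]$ using the pointwise estimate $\Phi_{|K|} f \leq C_1(K)^{1/p'}\bigl(\Phi_{|K|} f^p\bigr)^{1/p}$ for $f\geq0$ (a ``raise to the power $p$'' trick that converts an $\lebesgue^{p,q}$ bound into an $\lebesgue^{1,q/p}$ bound), and finally obtains the region $p>q$ from $p\leq q$ by transposing the kernel and invoking mixed-norm duality (Lemma~\ref{lem:AdjointBoundedness}). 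Your approach is shorter but leans on the external interpolation theorem; the paper's is longer but self-contained. On the obstacle you flag: the Benedek--Panzone Riesz--Thorin theorem \cite[Section~7, Theorem~2]{MixedLpSpaces} does cover all exponents in $[1,\infty]$ (the paper itself uses it at the endpoints in Appendix~\ref{sec:SharpnessComplexValued}), so the endpoint issue is not a genuine gap, provided you first confirm that $\Phi_{|K|}$ is a well-defined bounded linear operator on the four corner spaces (not merely a bounded sublinear map on non-negative functions) and recover the a.e.\ absolute convergence a posteriori from $\Phi_{|K|}|f|<\infty$ a.e., as you indicate.
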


\begin{proof}
  The proof is given in Section~\ref{sub:MixedNormSchurSufficient}.
\end{proof}

Even though the developed criterion is very convenient, one might wonder how sharp it is.
At first sight, it might be possible that there are kernels $K$ for which
$\Phi_K : \lebesgue^{p,q}(\nu) \to \lebesgue^{p,q} (\mu)$ is bounded for all $p,q \in [1,\infty]$,
but for which the generalized Schur test does not prove this boundedness.
At least for kernels without cancellations---that is, non-negative kernels---this does not happen.

\begin{theorem}\label{thm:SchurNecessity}
  Let $\XTuple \!=\! \ProductTupleXCompressed$ and $\YTuple \!=\! \ProductTupleYCompressed$,
  and assume that $\mu_1,\mu_2,\nu_1,\nu_2$ are all $\sigma$-finite.

  Let $K : X \times Y \to [0,\infty]$ be measurable, and let the constants $C_i(K)$
  be as defined in Equation~\eqref{eq:MixedNormSchurConstants}.
  Then the following hold:
  \begin{enumerate}
    \item If $\Phi_K : \lebesgue^1(\nu) \to \lebesgue^1(\mu)$ is well-defined and bounded,
          then $C_2 (K) \leq \vertiii{\Phi_K}_{\lebesgue^1 \to \lebesgue^1}$.
          \par\vspace{0.1cm}

    \item If $\Phi_K : \lebesgue^\infty(\nu) \to \lebesgue^\infty(\mu)$ is well-defined and bounded,
          then $C_1(K) \leq \vertiii{\Phi_K}_{\lebesgue^\infty \to \lebesgue^\infty}$.
          \par\vspace{0.1cm}

    \item If $\Phi_K : \lebesgue^{1,\infty}(\nu) \to \lebesgue^{1,\infty}(\mu)$
          is well-defined and bounded,
          then $C_3 (K) \leq \vertiii{\Phi_K}_{\lebesgue^{1,\infty} \to \lebesgue^{1,\infty}}$.
          \par\vspace{0.1cm}

    \item If $\Phi_K : \lebesgue^{\infty,1}(\nu) \to \lebesgue^{\infty,1}(\mu)$
          is well-defined and bounded,
          then $C_4 (K) \leq \vertiii{\Phi_K}_{\lebesgue^{\infty,1} \to \lebesgue^{\infty,1}}$.
  \end{enumerate}
\end{theorem}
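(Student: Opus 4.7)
The plan is to handle each of the four statements in turn, reducing Part~(3) to Part~(4) by duality and proving Part~(4) via a Lebesgue/martingale-differentiation argument.

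For Part~(1), I would test $\Phi_K$ against indicators $f = \mathbf{1}_E$ for measurable $E \subset Y$ with $\nu(E) \in (0,\infty)$. Tonelli's theorem (applicable since $K \geq 0$) gives
\[
  \|\Phi_K \mathbf{1}_E\|_{\lebesgue^1(\mu)}
    = \int_E \Big( \int_X K(x,y)\,d\mu(x) \Big)\,d\nu(y)
    \leq \vertiii{\Phi_K}_{\lebesgue^1 \to \lebesgue^1}\, \nu(E),
\]
and the $\sigma$-finiteness of $\nu$ then forces $\int_X K(x,y)\,d\mu(x) \leq \vertiii{\Phi_K}_{\lebesgue^1 \to \lebesgue^1}$ for $\nu$-a.e.\ $y$, which is $C_2(K) \leq \vertiii{\Phi_K}_{\lebesgue^1 \to \lebesgue^1}$. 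Part~(2) is immediate on testing with $f \equiv 1$ of unit $\lebesgue^\infty(\nu)$-norm, as then $\Phi_K f(x) = \int_Y K(x,y)\,d\nu(y)$ has $\lebesgue^\infty(\mu)$-norm exactly $C_1(K)$.

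For Parts~(3) and~(4), I would first reduce (3) to (4) via the formal adjoint. Setting $K^T(y,x) := K(x,y)$, Fubini's theorem yields the duality $\langle \Phi_K f, g\rangle_\mu = \langle f, \Phi_{K^T} g\rangle_\nu$, which combined with the mixed-norm H\"older bound $|\langle u, v\rangle| \leq \|u\|_{\lebesgue^{1,\infty}}\, \|v\|_{\lebesgue^{\infty,1}}$ gives
\[
  \vertiii{\Phi_{K^T}}_{\lebesgue^{\infty,1}(\mu) \to \lebesgue^{\infty,1}(\nu)}
    \leq \vertiii{\Phi_K}_{\lebesgue^{1,\infty}(\nu) \to \lebesgue^{1,\infty}(\mu)}.
\]
A direct computation shows $C_4(K^T) = C_3(K)$, so Part~(3) follows once Part~(4) is proved (applied to $K^T$ with the roles of $(X,\mu)$ and $(Y,\nu)$ swapped). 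For Part~(4) itself, set $M := \vertiii{\Phi_K}_{\lebesgue^{\infty,1}(\nu) \to \lebesgue^{\infty,1}(\mu)}$, $F(x_1, x_2, y_2) := \int_{Y_1} K((x_1,x_2),(y_1,y_2))\,d\nu_1$, $J(x_2, y_2) := \esssup_{x_1} F(x_1, x_2, y_2)$, and $\Psi(y_2) := \int_{X_2} J(x_2, y_2)\,d\mu_2$, so that $C_4(K) = \esssup_{y_2} \Psi(y_2)$. Testing against $f_\psi(y_1, y_2) := \psi(y_2)$ for non-negative $\psi \in \lebesgue^1(\nu_2)$ (so $\|f_\psi\|_{\lebesgue^{\infty,1}(\nu)} = \|\psi\|_{\lebesgue^1(\nu_2)}$) gives, via Tonelli,
\[
  \int_{X_2} \esssup_{x_1} \int_{Y_2} \psi(y_2)\, F(x_1, x_2, y_2)\, d\nu_2\, d\mu_2
    \leq M\, \|\psi\|_{\lebesgue^1(\nu_2)}.
\]
I would then choose $\psi = \psi_n := \nu_2(B_n)^{-1}\, \mathbf{1}_{B_n}$ for sets $B_n$ drawn from a countably generated filtration of $Y_2$ shrinking to a generic point $y_2^* \in Y_2$ in the martingale sense; the martingale-convergence (Lebesgue-differentiation) theorem then yields $\nu_2(B_n)^{-1} \int_{B_n} F(x_1, x_2, y_2)\,d\nu_2 \to F(x_1, x_2, y_2^*)$ for $\nu_2$-a.e.\ $y_2^*$ and $(\mu \otimes \nu_2)$-a.e.\ $(x_1, x_2)$. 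Passing this limit through $\esssup_{x_1}$ via a Fatou-type lemma for essential suprema (if $u_n \to u$ a.e.\ then $\esssup u \leq \liminf \esssup u_n$) and through $\int_{X_2}\,d\mu_2$ via the classical Fatou lemma produces $\Psi(y_2^*) \leq M$ for $\nu_2$-a.e.\ $y_2^*$, hence $C_4(K) \leq M$.

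The principal obstacle will be the delicate interchange of the limit $n \to \infty$ with the nested $\esssup_{x_1}$ and $\int_{X_2}\,d\mu_2$ in a general $\sigma$-finite measure space; this is resolved by combining martingale differentiation on a countably generated sub-$\sigma$-algebra of $\CalG_2$ with the two Fatou-type lemmas above. A naive attempt to deduce Parts~(3) and~(4) directly by testing against indicator functions $\mathbf{1}_{A \times B}$ fails because the ensuing interchange of $\esssup$ with $\int$ goes in the wrong direction, explaining the need for the Dirac-approximation step via shrinking filtration atoms.
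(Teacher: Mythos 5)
Your Parts~(1) and~(2) are essentially the paper's argument, and the choice to reduce Part~(3) to Part~(4) by transposition is the symmetric counterpart of what the paper does (it reduces Part~(4) to Part~(3) via Lemmas~\ref{lem:AdjointBoundedness} and~\ref{lem:SchurConstantsForAdjointKernel}). The genuine difference is in the ``hard'' half: the paper proves Part~(3) directly by invoking Lemma~\ref{lem:CountableDualityCharacterization}, which produces a single, \emph{universal} countable family $(h_n)$ of normalized test functions dualizing the $\lebesgue^{\infty,1}$-norm uniformly over all $\omega = x_2$; once this is in hand, the exceptional $\mu_2$-null sets $N_n$ can be unioned into one. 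Your proposal instead proves Part~(4) directly, replacing the universal dual family by a \emph{local} one: a filtration $(\CalG^{(n)})$ of a countably generated sub-$\sigma$-algebra of $\CalG_2$ with finite-measure atoms, and you recover pointwise values of $\Psi$ by martingale (Lebesgue) differentiation. Both strategies must pass to a countably generated $\sigma$-algebra at some point (the paper does this inside the proof of Lemma~\ref{lem:CountableDualityCharacterization} via Lemmas~\ref{lem:LebesgueSpaceSeparability}--\ref{lem:MeasurableFunctionCountablyGeneratedSigmaAlgebras}), and both use a Fubini-type argument to control the dependence of exceptional null sets on the other variable. What your route buys is conceptual transparency: the crucial inequality
$\int_{X_2}\esssup_{x_1}\int_{Y_2}\psi_n F\,d\nu_2\,d\mu_2 \leq M$
holds outright (no exceptional set in $y_2^*$), and all the a.e.\ issues are absorbed into a single application of the martingale convergence theorem followed by two ``one-sided'' Fatou estimates. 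What the paper's route buys is that, once the (admittedly long) proof of Lemma~\ref{lem:CountableDualityCharacterization} is done, the conclusion of Theorem~\ref{thm:SchurNecessity}(3) is a few lines, and the martingale machinery (which would otherwise need a careful $\sigma$-finite reformulation, including verifying that the atoms $B_n(y_2^*)$ eventually have finite positive measure and that $F(x_1,x_2,\bullet)\in\lebesgue^1_{\mathrm{loc}}(\nu_2)$ for $\mu$-a.e.\ $(x_1,x_2)$, which you can extract by testing $\Phi_K$ against $\Indicator_{Y_1\times Y_2^{(k)}}$) is avoided.

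Three small points worth noting. First, your Fatou-type lemma for essential suprema (if $u_n\to u$ a.e.\ then $\esssup u\leq\liminf_n\esssup u_n$) is correct and is the right tool; it is not hard, but it is not a standard citation either, so a two-line proof should accompany it. Second, the quantifier in ``for $\nu_2$-a.e.\ $y_2^*$ and $(\mu\otimes\nu_2)$-a.e.\ $(x_1,x_2)$'' should read ``for $\mu$-a.e.\ $(x_1,x_2)$''; the correct formulation is that the set of $((x_1,x_2),y_2^*)$ where the differentiation fails is $(\mu\otimes\nu_2)$-null, whence Fubini gives the order of quantifiers you need. Third, the measurability of $J(x_2,y_2)=\esssup_{x_1}F(x_1,x_2,y_2)$ (needed to make sense of $\Psi$) is not automatic and requires precisely the paper's Lemma~\ref{lem:CountableLInfinityCharacterization}; this is worth flagging explicitly, since it is the one ingredient of the paper's Appendix~\ref{sec:CountableDualityCharacterization} that your route does \emph{not} get to bypass.
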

\begin{proof}
  The proof is given in Section~\ref{sub:MixedNormSchurNecessary}.
\end{proof}

\begin{rem*}
  a) This shows in particular that if $K$ is non-negative,
  then $\Phi_K : \lebesgue^{p,q}(\nu) \to \lebesgue^{p,q}(\mu)$ is well-defined and bounded
  simultaneously for \emph{all} $p,q \in [1,\infty]$ if and only if this holds for all
  the ``boundary cases'' $p,q \in \{1, \infty\}$.

  b) The non-negativity assumption regarding $K$ cannot be dropped in general.
  Indeed, in Appendix~\ref{sec:SharpnessComplexValued} we provide an example of a complex-valued
  integral kernel for which $\Phi_K$ acts boundedly on the mixed Lebesgue space $\lebesgue^{p,q}$
  for all $p,q \in [1,\infty]$, but such that $C_3 (K) = \infty$.
\end{rem*}

The stated results readily generalize to the \emph{weighted} mixed-norm Lebesgue spaces
$\lebesgue^{p,q}_w$.
To describe this, let us first define these spaces.
In general, given any Banach space $(\mathbf{A}, \| \bullet \|_{\mathbf{A}})$
whose elements are (equivalence classes of) measurable functions $f : X \to \CC$
on a measure space $(X,\CalA,\mu)$, and given any measurable function $w : X \to (0,\infty)$
(called a \emph{weight}), we define the \emph{weighted space} $\mathbf{A}_w$ as
\begin{equation}
  \mathbf{A}_w
  := \big\{ f : X \to \CC \colon f \text{ measurable and } w \cdot f \in \mathbf{A} \big\}
  \quad \text{ with norm } \quad
  \| f \|_{\mathbf{A}_w} := \| w \cdot f \|_{\mathbf{A}} .
  \label{eq:WeightedSpaceDefinition}
\end{equation}

Now, given weights $v : X \to (0,\infty)$ and $w : Y \to (0,\infty)$, and an integral kernel
$K : X \times Y \to \CC$, define
\begin{equation}
  K_{v,w} :
  X \times Y \to \CC,
  (x,y) \mapsto \frac{v(x)}{w(y)} \cdot K(x,y) .
  \label{eq:WeightedKernelDefinition}
\end{equation}
It is then straightforward to verify that for a given measurable function $f : Y \to \CC$,
$\Phi_{K} f$ is well-defined if and only if $\Phi_{K_{v,w}} (w \cdot f)$ is well-defined,
and in this case we have
\[
  v \cdot (\Phi_K \, f)
  = \Phi_{K_{v,w}} (w \cdot f).
\]
Then, by applying the results from above for the unweighted spaces to the weighted kernel $K_{v,w}$,
we obtain the following generalized Schur-test concerning the boundedness of the integral operator
$\Phi_K$ acting on weighted mixed-norm Lebesgue spaces.

\begin{theorem}\label{thm:SchurTestWeighted}
  Let $\XTuple \!=\! \ProductTupleXCompressed$ and $\YTuple \!=\! \ProductTupleYCompressed$,
  and assume that $\mu_1,\mu_2,\nu_1,\nu_2$ are all $\sigma$-finite.
  Let $v : X \to (0,\infty)$ and $w : Y \to (0,\infty)$, as well as $K : X \times Y \to \CC$
  be measurable, and let the weighted kernel $K_{v,w}$ be as defined
  in Equation~\eqref{eq:WeightedKernelDefinition}.
  Then the following hold for $p,q \in [1,\infty]$:
  \begin{itemize}
    \item If $p \leq q$ and if $C_i (K_{v,w}) < \infty$ for $i \in \{1,2,3\}$,
          then $\Phi_K : \lebesgue^{p,q}_w (\nu) \to \lebesgue^{p,q}_v (\mu)$
          is well-defined and bounded, with
          \(
            \vertiii{\Phi_K}_{\lebesgue^{p,q}_w \to \lebesgue^{p,q}_v}
            \leq {\displaystyle
                  \max_{1 \leq i \leq 3}
                    C_i (K_{v,w})
                 }
          \).

    \item If $p > q$ and if $C_i (K_{v,w}) < \infty$ for $i \in \{ 1,2,4 \}$
          then $\Phi_K : \lebesgue^{p,q}_w (\nu) \to \lebesgue^{p,q}_v (\mu)$
          is well-defined and bounded, with
          \(
            \vertiii{\Phi_K}_{\lebesgue^{p,q}_w \to \lebesgue^{p,q}_v}
            \leq {\displaystyle
                  \max_{i \in \{ 1,2,4 \}}
                    C_i (K_{v,w})
                 }
          \).
  \end{itemize}
  Finally, if $K : X \times Y \to [0,\infty]$ is non-negative, then the following hold:
  \begin{itemize}
    \item If $\Phi_K : \lebesgue^{1}_w (\nu) \to \lebesgue^{1}_v (\mu)$
          is well-defined and bounded, then
          $C_2 (K_{v,w}) \leq \vertiii{\Phi_K}_{\lebesgue^1_w \to \lebesgue^1_v}$.

    \item If $\Phi_K : \lebesgue^{\infty}_w (\nu) \to \lebesgue^{\infty}_v (\mu)$
          is well-defined and bounded, then
          $C_1 (K_{v,w}) \leq \vertiii{\Phi_K}_{\lebesgue^\infty_w \to \lebesgue^\infty_v}$.

    \item If $\Phi_K : \lebesgue^{1,\infty}_w (\nu) \to \lebesgue^{1,\infty}_v (\mu)$
          is well-defined and bounded, then
          $C_3 (K_{v,w}) \leq \vertiii{\Phi_K}_{\lebesgue^{1,\infty}_w \to \lebesgue^{1,\infty}_v}$.

    \item If $\Phi_K : \lebesgue^{\infty,1}_w (\nu) \to \lebesgue^{\infty,1}_v (\mu)$
          is well-defined and bounded, then
          $C_4 (K_{v,w}) \leq \vertiii{\Phi_K}_{\lebesgue^{\infty,1}_w \to \lebesgue^{\infty,1}_v}$.
  \end{itemize}
\end{theorem}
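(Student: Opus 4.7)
The plan is to reduce the weighted statement entirely to the two unweighted results, Theorem~\ref{thm:SchurTestSufficientUnweighted} and Theorem~\ref{thm:SchurNecessity}, using the conjugation device already highlighted in the paragraph immediately preceding the theorem: the pointwise identity $v \cdot (\Phi_K f) = \Phi_{K_{v,w}}(w \cdot f)$. No new analytic content beyond those two theorems should be needed.

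First, I would make the reduction precise as follows. By the definition \eqref{eq:WeightedSpaceDefinition} of the weighted norm, the multiplication maps
\[
  M_w : \lebesgue^{p,q}_w(\nu) \to \lebesgue^{p,q}(\nu),
  \quad f \mapsto w \cdot f,
  \qquad
  M_v : \lebesgue^{p,q}_v(\mu) \to \lebesgue^{p,q}(\mu),
  \quad g \mapsto v \cdot g,
\]
are isometric bijections with inverses given by division by $w$ and $v$, respectively. The identity recalled above then reads $\Phi_K = M_v^{-1} \circ \Phi_{K_{v,w}} \circ M_w$ as an equality between maps of (equivalence classes of) measurable functions. Because $v(x)$ is a finite strictly positive scalar for every $x$, the defining integral of $[\Phi_K f](x)$ converges absolutely if and only if that of $[\Phi_{K_{v,w}}(w \cdot f)](x)$ does, so well-definedness transfers. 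Consequently $\Phi_K$ is well-defined and bounded from $\lebesgue^{p,q}_w(\nu)$ to $\lebesgue^{p,q}_v(\mu)$ with some norm $C$ if and only if $\Phi_{K_{v,w}}$ is well-defined and bounded from $\lebesgue^{p,q}(\nu)$ to $\lebesgue^{p,q}(\mu)$ with the same norm $C$, with absolute a.e.~convergence of the defining integral in either case meaning the same thing.

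Given this conjugation, the sufficient half of the theorem is obtained by applying Theorem~\ref{thm:SchurTestSufficientUnweighted} directly to the measurable kernel $K_{v,w}$: the hypotheses $C_i(K_{v,w}) < \infty$ for $i$ in the relevant index set $\{1,2,3\}$ or $\{1,2,4\}$ are exactly what that theorem requires, and the upper bounds on $\vertiii{\Phi_{K_{v,w}}}_{\lebesgue^{p,q} \to \lebesgue^{p,q}}$ it provides pass unchanged to $\vertiii{\Phi_K}_{\lebesgue^{p,q}_w \to \lebesgue^{p,q}_v}$. For the necessary half, observe that because $v, w > 0$ pointwise, $K$ is non-negative if and only if $K_{v,w}$ is non-negative. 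Applying each of the four parts of Theorem~\ref{thm:SchurNecessity} to $K_{v,w}$ and invoking the norm identity from the reduction yields the four claimed inequalities $C_j(K_{v,w}) \leq \vertiii{\Phi_K}_{\lebesgue^{p,q}_w \to \lebesgue^{p,q}_v}$ for the corresponding $(p,q) \in \{1,\infty\}^2$.

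I do not anticipate a genuine obstacle: the whole argument is a change-of-variables conjugation combined with a quotation of the unweighted results. The only bookkeeping worth writing out carefully is the transfer of the well-definedness clause and of the absolute a.e.\ convergence statement from $\Phi_{K_{v,w}}(w f)$ back to $\Phi_K f$, which is immediate from the positivity and finiteness of $v$ and $w$.
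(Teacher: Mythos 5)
Your proposal is correct and is precisely the argument the paper intends: the paper states the conjugation identity $v\cdot(\Phi_K f)=\Phi_{K_{v,w}}(w\cdot f)$ immediately before Theorem~\ref{thm:SchurTestWeighted} and then says the result follows "by applying the results from above for the unweighted spaces to the weighted kernel $K_{v,w}$," which is exactly your reduction via the isometries $M_w,M_v$ followed by citations of Theorems~\ref{thm:SchurTestSufficientUnweighted} and~\ref{thm:SchurNecessity}. You merely spell out the bookkeeping (isometry of the multiplication maps, transfer of well-definedness and of non-negativity) that the paper leaves implicit.
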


\subsection{A novel Banach module of integral kernels}
\label{sub:IntroKernelAlgebras}

Motivated by the classical form of Schur's test, Fornasier and Rauhut considered in
\cite{GeneralizedCoorbit1} the algebra $\AAi$ of integral kernels, and also its weighted variant $\AAm$.
Precisely, given $\sigma$-finite measure spaces $\XTuple$ and $\YTuple$,
and an integral kernel $K : X \times Y \to \CC$, the $\AAi$-norm of $K$ is given by
\begin{equation}\label{eq:normA1}
  \|K\|_{\AAi}
  := \|K\|_{\AAi (X,Y)}
  := \max \left\{
            \esssup_{x \in X}
              \| K(x,\bullet) \|_{\lebesgue^1(\nu)}
            \quad
            \esssup_{y \in Y}
              \| K(\bullet, y) \|_{\lebesgue^1(\mu)}
          \right\}
  \in [0,\infty] .
\end{equation}
Given this norm, the associated space is defined as
\[
  \AAi
  := \AAi (X,Y)
  := \big\{
       K : X \times Y \to \CC
       \,\,\colon\,
       K \text{ measurable and } \| K \|_{\AAi} < \infty
     \big\} .
\]
In \cite{GeneralizedCoorbit1}, it was observed that if $(X,\CalF,\mu) = (Y,\CalG,\nu)$,
then $\AAi (X,X)$ is a Banach algebra which satisfies
$\AAi(X,X) \hookrightarrow \Bounded (\lebesgue^p, \lebesgue^p)$ for all $p \in [1,\infty]$,
where we denote by $\Bounded (\BanachOne, \BanachTwo)$ the space of all bounded linear operators
$T : \BanachOne \to \BanachTwo$, for given Banach spaces $\BanachOne, \BanachTwo$.

Here and in the following, we frequently identify a space of kernels,
e.g., $\AAi(X,X)$, with the space of integral operators induced by such kernels,
e.g., $\{ \Phi_K \colon K \!\in\! \AAi(X,X)\}$.
Hence, the statement $\AAi(X,X) \hookrightarrow \Bounded (\BanachOne, \BanachTwo)$
is to be interpreted as follows:
For all $K\in \AAi(X,X)$, the operator $\Phi_K \colon \BanachOne \rightarrow \BanachTwo$
is well-defined and satisfies $\vertiii{\Phi_K}_{\BanachOne \rightarrow \BanachTwo} \lesssim \|K\|_{\AAi}$,
where the implied constant is independent of the choice of $K \in \AAi(X,X)$.

\begin{rem*}
  The space $\AAi$ can also be expressed in terms of mixed-norm Lebesgue spaces.
  In fact, using the notation $K^T (y,x) := K(x,y)$ for the \emph{transposed kernel},
  it follows directly from the definition that
  \(
    \| K \|_{\AAi}
    = \max \big\{
             \| K^T \|_{\lebesgue^{1,\infty}(\nu \otimes \mu)},
             \| K \|_{\lebesgue^{1,\infty}(\mu \otimes \nu)}
           \big\} .
  \)
\end{rem*}

Given our generalized form of Schur's test, it is natural to introduce an associated
kernel algebra similar to the algebra $\AAi$, and to study its basic properties.
In fact, we will \emph{not} assume that $(X,\CalF,\mu) = (Y,\CalG,\nu)$, so that in general
we will not obtain a \emph{Banach algebra} of kernels, but rather a \emph{Banach module} of kernels.
The formal definition is as follows:

\begin{definition}\label{def:NewKernelModule}
  Let $(X_i,\CalF_i,\mu_i)$ and $(Y_i,\CalG_i,\nu_i)$ be $\sigma$-finite measure spaces
  for $i \in \{ 1,2 \}$, and let $\XTuple = \ProductTupleX$ and $\YTuple = \ProductTupleY$.
  Given a measurable kernel $K : X \times Y \to \CC$ (or $K : X \times Y \to [0,\infty]$),
  we define
  \begin{equation}
    K^{(x_2,y_2)} (x_1, y_1) := K \big( (x_1,x_2), (y_1,y_2) \big)
    \quad \text{ for } \quad (x_1,x_2) \in X \text{ and } (y_1,y_2) \in Y.
    \label{eq:PartialKernelDefinition}
  \end{equation}
  Using this notation, we define
  \[
    \| K \|_{\BBi}
    := \| K \|_{\BBi (X,Y)}
    := \Big\|
         (x_2,y_2) \mapsto \| K^{(x_2,y_2)} \|_{\AAi (X_1, Y_1)}
       \Big\|_{\AAi (X_2, Y_2)}
    \in [0,\infty] ,
  \]
  and
  \(
    \BBi (X,Y)
    := \{ K : X \times Y \to \CC \, \colon K \text{ measurable and } \| K \|_{\BBi} < \infty \} .
  \)

  Finally, given a weight $m : X \times Y \to (0,\infty)$, we define
  $\| K \|_{\BBm} := \| m \cdot K \|_{\BBi} \in [0,\infty]$ and
  \(
    \BBm (X,Y)
    := \{ K : X \times Y \to \CC \, \colon m \cdot K \in \BBi (X,Y) \}.
  \)
\end{definition}

\begin{rem*}
  It should be observed that the norm $\| K \|_{\BBi} \in [0,\infty]$ is indeed well-defined,
  since ${X_2 \times Y_2 \to [0,\infty], (x_2, y_2) \mapsto \| K^{(x_2,y_2)} \|_{\AAi(X_1,Y_1)}}$
  is measurable, as can be seen by recalling the definition of $\| \bullet \|_{\AAi}$
  and by combining Tonelli's theorem with Lemma~\ref{lem:CountableLInfinityCharacterization}.
\end{rem*}

We now collect important basic properties of the spaces $\BBm(X,Y)$.
The proofs of Propositions~\ref{prop:NewKernelModuleBasicProperties1}--\ref{prop:NewKernelModuleBasicProperties3}
are given in Section~\ref{sec:KernelModulePropertiesProof}.

\begin{proposition}\label{prop:NewKernelModuleBasicProperties1}
  Let $(X_i,\CalF_i,\mu_i)$ and $(Y_i,\CalG_i,\nu_i)$ be $\sigma$-finite measure spaces
  for $i \in \{ 1,2 \}$, and let $\XTuple = \ProductTupleX$ and $\YTuple = \ProductTupleY$.
  The following hold for any (measurable) weight $m : X \times Y \to (0,\infty)$:
  \begin{enumerate}
    \item \label{enu:NewKernelModuleSolid}
          $\BBm(X,Y)$ is solid.
          That is, if $K \in \BBm(X,Y)$ and if $L : X \times Y \to \CC$ is measurable with
          $|L| \leq |K|$ almost everywhere, then $L \in \BBm(X,Y)$ and
          $\| L \|_{\BBm} \leq \| K \|_{\BBm}$.
          \vspace{0.1cm}

    \item \label{enu:NewKernelModuleFatouProperty}
          $\| \bullet \|_{\BBm}$ satisfies the \emph{Fatou property}:
          If $K_n : X \times Y \to [0,\infty]$ with $K_n \leq K_{n+1}$
          and if $K := \lim_{n \to \infty} K_n$ (pointwise),
          then $\| K_n \|_{\BBm} \to \| K \|_{\BBm}$.
          \vspace{0.1cm}

    \item \label{enu:NewKernelModuleComplete}
          $\big( \BBm(X,Y), \| \bullet \|_{\BBm} \big)$ is a Banach space.
    \vspace{0.1cm}

    \item \label{enu:NewKernelModuleEmbedsInOld}
          Each $K \in \BBm (X,Y)$ satisfies $K \in \AAm(X,Y)$ and
          $\| K \|_{\AAm} \leq \| K \|_{\BBm}$.
    \end{enumerate}
    \end{proposition}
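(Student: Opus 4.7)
The plan is to prove the four parts in order, using parts (1) and (2) as tools for (3), while (4) is an independent calculation. In every case the weight $m$ can be absorbed via the identity $\|K\|_{\BBm} = \|m \cdot K\|_{\BBi}$: the map $K \mapsto m \cdot K$ is a linear isometry $\BBm \to \BBi$ that preserves pointwise domination of absolute values and pointwise monotone limits (up to the positive factor $m$), so it suffices throughout to argue in the unweighted case and transfer the conclusion back to $\BBm$.

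For part (1), the norm $\|\cdot\|_{\AAi}$ is solid by inspection, being a maximum of two essential suprema of $L^1$-integrals of $|K|$. If $|L| \le |K|$ a.e.\ on $X \times Y$, then by Tonelli $|L^{(x_2,y_2)}| \le |K^{(x_2,y_2)}|$ a.e.\ on $X_1 \times Y_1$ for almost every $(x_2,y_2) \in X_2 \times Y_2$, and solidity of the inner $\AAi(X_1,Y_1)$-norm yields pointwise monotonicity of $(x_2,y_2) \mapsto \|K^{(x_2,y_2)}\|_{\AAi(X_1,Y_1)}$ in $|K|$; applying the solidity of the outer $\AAi(X_2,Y_2)$-norm closes the step. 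For part (2), monotone convergence passes through both ingredients of $\|\cdot\|_{\AAi}$: through $L^1$-integrals by the classical monotone convergence theorem, and through essential suprema of increasing families of non-negative functions by the countable characterization of Lemma~\ref{lem:CountableLInfinityCharacterization}, which allows the ess-sup to be represented as a countable supremum over which the pointwise increasing limit can be exchanged. Iterating through the four nested layers in the definition of $\|\cdot\|_{\BBi}$ yields the Fatou property.

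For part (3), I would invoke the standard Riesz--Fischer argument for a solid function norm with the Fatou property. Given a Cauchy sequence $(K_n) \subset \BBm$, extract a subsequence $(K_{n_k})$ with $\|K_{n_{k+1}} - K_{n_k}\|_{\BBm} \le 2^{-k}$ and form the partial sums $S_N := \sum_{k=1}^N |K_{n_{k+1}} - K_{n_k}|$. The triangle inequality gives $\|S_N\|_{\BBm} \le 1$; since $S_N$ is non-decreasing, the Fatou property from (2) yields $S := \lim_N S_N \in \BBm$ with $\|S\|_{\BBm} \le 1$, so $S < \infty$ a.e. Hence the telescoping series $K_{n_1} + \sum_k (K_{n_{k+1}} - K_{n_k})$ converges absolutely a.e.\ to some measurable $K$, and applying (1) and (2) to the dominating inequality $|K - K_{n_k}| \le \sum_{j \ge k} |K_{n_{j+1}} - K_{n_j}|$ shows that $K - K_{n_k} \in \BBm$ with norm tending to zero, so $K \in \BBm$ and $K_{n_k} \to K$ in $\BBm$; combining with the Cauchy property of $(K_n)$ gives $K_n \to K$.

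Part (4) reduces to a direct chain of inequalities. Absorbing the weight and replacing $K$ by $|K|$, I may assume $K \ge 0$. By Tonelli, $\int_Y K(x,y) \, d\nu(y) = \int_{Y_2} \int_{Y_1} K^{(x_2,y_2)}(x_1,y_1) \, d\nu_1(y_1) \, d\nu_2(y_2)$, and $\sigma$-finiteness further gives $\esssup_{(x_1,x_2) \in X} = \esssup_{x_2} \esssup_{x_1}$. Combining these with the elementary inequality $\esssup_{x_1} \int h(x_1,y_2) \, d\nu_2(y_2) \le \int \esssup_{x_1} h(x_1,y_2) \, d\nu_2(y_2)$ produces
\[
\esssup_{x \in X} \int_Y K(x,y) \, d\nu(y)
\le \esssup_{x_2} \int_{Y_2} \esssup_{x_1} \int_{Y_1} K^{(x_2,y_2)}(x_1,y_1) \, d\nu_1 \, d\nu_2,
\]
and the right-hand side is bounded by $\|K\|_{\BBi}$ since $\|K\|_{\BBi}$ dominates each of the four quantities obtained by choosing one entry from each of the two $\max$'s defining the outer $\AAi(X_2,Y_2)$-norm and the inner $\AAi(X_1,Y_1)$-norm, and the above is precisely one of these choices. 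Treating $\esssup_y \int_X K \, d\mu$ symmetrically yields $\|K\|_{\AAi} \le \|K\|_{\BBi}$, proving (4). The main obstacle is not any one deep step but the bookkeeping needed to exchange essential suprema with integrals in the correct order while ensuring that intermediate functions such as $y_2 \mapsto \esssup_{x_1} \int_{Y_1} K^{(x_2,y_2)}(x_1,y_1) \, d\nu_1$ are measurable; both issues are handled by $\sigma$-finiteness combined with Lemma~\ref{lem:CountableLInfinityCharacterization}.
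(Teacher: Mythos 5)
Your proof is correct and follows essentially the same structure as the paper's. Two small variations are worth noting: for part~(2), the paper handles the ess-sup step with the elementary observation that $0 \leq G_n \nearrow G$ implies $\|G_n\|_{\lebesgue^\infty} \nearrow \|G\|_{\lebesgue^\infty}$, whereas your appeal to Lemma~\ref{lem:CountableLInfinityCharacterization} is a heavier tool that also works provided one applies the lemma to the combined family $(K, K_1, K_2, \dots)$ so as to get a single countable representing family; and for part~(3), the paper verifies that $\|\cdot\|_{\BBm}$ is a Zaanen function norm with the Fatou property and then cites \cite[Section~65, Theorem~1]{ZaanenIntegration}, while you unroll that theorem into an explicit Riesz--Fischer argument --- the same content, just self-contained rather than cited. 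For part~(4) you establish $C_1(K) \le \|K\|_{\BBi}$ and $C_2(K) \le \|K\|_{\BBi}$ directly by symmetry of the definition, whereas the paper derives the $C_2$ bound from the $C_1$ bound via the transposition identity of Proposition~\ref{prop:NewKernelModuleBasicProperties2}(\ref{enu:NewKernelModuleAdjoint}); both are valid.
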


Proposition~\ref{prop:NewKernelModuleBasicProperties2} below concerns transpositions
and compositions of kernels.
In particular, it shows that the spaces $\BBm(X,Y)$ are compatible with the usual
\emph{product of kernels}, given by
\[
  (K \KernelProduct L) (x,z)
  := \int_Y
       K(x,y) \, L(y,z)
     \, d \nu (y)
  \quad \text{for all } (x,z) \in X \times Z \text{ for which the integral exists},
\]
for arbitrary measurable kernels $K : X \times Y \to \CC$ and $L : Y \times Z \to \CC$.

\begin{proposition}\label{prop:NewKernelModuleBasicProperties2}
 Let $(X_i,\CalF_i,\mu_i)$ and $(Y_i,\CalG_i,\nu_i)$ be $\sigma$-finite measure spaces
  for $i \in \{ 1,2 \}$, and let $\XTuple = \ProductTupleX$ and $\YTuple = \ProductTupleY$.
  The following hold for any (measurable) weight $m : X \times Y \to (0,\infty)$:
   \begin{enumerate}
    \item \label{enu:NewKernelModuleAdjoint}
          Define $m^T : Y \times X \to (0,\infty), (y,x) \mapsto m(x,y)$.
          If $K \in \BBm(X,Y)$, then the transposed kernel
          $K^T : Y \times X \to \CC, (y,x) \mapsto K(x,y)$ satisfies
          $K^T \in \BBi_{m^T}(Y,X)$ and furthermore
          \(
            \| K^T \|_{\BBi_{m^T}(Y,X)}
            = \| K \|_{\BBm(X,Y)} .
          \)
          \vspace{0.1cm}

    \item \label{enu:NewKernelModuleMultiplicationProperty}
          Let
          \(
            (Z,\mathcal{H},\varrho)
            = (Z_1 \times Z_2, \mathcal{H}_1 \otimes \mathcal{H}_2, \varrho_1 \otimes \varrho_2) ,
          \)
          where $(Z_1, \mathcal{H}_1, \varrho_1)$ and $(Z_2, \mathcal{H}_2, \varrho_2)$
          are $\sigma$-finite measure spaces.
          Let $\omega : X \times Y \to (0,\infty)$, $\sigma : Y \times Z \to (0,\infty)$
          and $\tau : X \times Z \to (0,\infty)$ be measurable and such that
          $\tau (x,z) \leq C \cdot \omega(x,y) \cdot \sigma(y,z)$ for all $x \in X, y \in Y$
          and $z \in Z$, and some $C > 0$.

          If $K \in \BBi_\omega (X,Y)$ and $L \in \BBi_\sigma (Y,Z)$, then
          \[
            K \KernelProduct L \in \BBi_\tau (X,Z)
            \quad \text{and} \quad
            \| K \KernelProduct L \|_{\BBi_\tau}
            \leq C \, \| K \|_{\BBi_\omega} \, \| L \|_{\BBi_\sigma} .
          \]
          Furthermore, the integral defining $(K \KernelProduct L)(x,z)$ converges absolutely
          for almost all $(x,z) \in X \times Z$.

          In particular, if the assumptions hold for $X = Y = Z$ and $\tau = \omega = \sigma$,
          then $\BBi_\tau(X,X)$ is an algebra with respect to $\KernelProduct$,
          and a \emph{Banach} algebra if $C = 1$.
 \end{enumerate}
\end{proposition}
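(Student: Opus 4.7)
The plan is to prove the two parts separately; part~(1) is essentially bookkeeping, while part~(2) is the substantive content. For part~(1), the observation is that the $\AAi$-norm defined in \eqref{eq:normA1} is symmetric under transposition: simply swapping the two essential suprema in the definition shows $\| L^T \|_{\AAi(V,U)} = \| L \|_{\AAi(U,V)}$ for any measurable $L : U \times V \to \CC$ on $\sigma$-finite spaces. Unpacking Definition~\ref{def:NewKernelModule} and checking that $(K^T)^{(y_2, x_2)} = (K^{(x_2, y_2)})^T$---a mere relabelling of the four variables---this transpose-invariance can be applied at both the inner $\AAi(X_1, Y_1)$-layer and the outer $\AAi(X_2, Y_2)$-layer of the $\BBi$-norm, yielding $\| K^T \|_{\BBi(Y,X)} = \| K \|_{\BBi(X,Y)}$. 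The weighted claim then follows from the identity $(m^T \cdot K^T)(y,x) = (m \cdot K)^T(y,x)$.

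For part~(2), I would first record an auxiliary ``three-space'' composition estimate for $\AAi$: for $\sigma$-finite measure spaces $X', Y', Z'$ and non-negative measurable kernels $K' : X' \times Y' \to [0,\infty]$ and $L' : Y' \times Z' \to [0,\infty]$,
\[
  \| K' \KernelProduct L' \|_{\AAi(X',Z')}
    \leq \| K' \|_{\AAi(X',Y')} \cdot \| L' \|_{\AAi(Y',Z')} ,
\]
whose proof is a direct two-line Tonelli computation generalising the Banach-algebra property of $\AAi(X,X)$ recalled in the introduction. Next, I reduce to non-negative kernels via the solidity from Proposition~\ref{prop:NewKernelModuleBasicProperties1}: it suffices to establish the norm estimate for $|K|$ and $|L|$; finiteness of $\| |K| \KernelProduct |L| \|_{\BBi_\tau}$ then forces $|K| \KernelProduct |L| < \infty$ almost everywhere (via the embedding $\BBi_\tau \hookrightarrow \AAi_\tau$ together with Tonelli applied to the definition of $\AAi_\tau$), which yields the absolute convergence claim, and solidity transports the norm bound back to the original complex-valued $K$ and $L$.

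For the non-negative unweighted case, Tonelli gives
\[
  (K \KernelProduct L)^{(x_2, z_2)} (x_1, z_1)
  = \int_{Y_2}
      \bigl( K^{(x_2, y_2)} \KernelProduct L^{(y_2, z_2)} \bigr)(x_1, z_1)
    \, d \nu_2(y_2) .
\]
Pulling the $\AAi(X_1, Z_1)$-norm inside the integral over $Y_2$ via a Minkowski-type inequality---which for $\AAi$ reduces to Tonelli plus the elementary bound $\esssup \int \leq \int \esssup$ valid for non-negative integrands---and then applying the auxiliary estimate fibre-wise gives $\| (K \KernelProduct L)^{(x_2, z_2)} \|_{\AAi(X_1, Z_1)} \leq (\mathcal{K} \KernelProduct \mathcal{L})(x_2, z_2)$, where $\mathcal{K}(x_2, y_2) := \| K^{(x_2, y_2)} \|_{\AAi(X_1, Y_1)}$ and $\mathcal{L}(y_2, z_2) := \| L^{(y_2, z_2)} \|_{\AAi(Y_1, Z_1)}$. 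A second application of the auxiliary estimate, now at the outer layer, yields
\[
  \| K \KernelProduct L \|_{\BBi(X,Z)}
  \leq \| \mathcal{K} \|_{\AAi(X_2, Y_2)} \cdot \| \mathcal{L} \|_{\AAi(Y_2, Z_2)}
  =    \| K \|_{\BBi(X,Y)} \cdot \| L \|_{\BBi(Y,Z)} ,
\]
the unweighted form of the claim.

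The weighted version follows by the standard pointwise trick: setting $\widetilde{K} := \omega \cdot K$ and $\widetilde{L} := \sigma \cdot L$, the hypothesis $\tau(x,z) \leq C \cdot \omega(x,y) \cdot \sigma(y,z)$ gives $\tau \cdot (|K| \KernelProduct |L|) \leq C \cdot (\widetilde{K} \KernelProduct \widetilde{L})$ pointwise on $X \times Z$, so solidity combined with the unweighted estimate applied to $\widetilde{K}, \widetilde{L}$ yields the weighted bound with constant $C$. The concluding Banach-algebra statement is the special case $X = Y = Z$, $\omega = \sigma = \tau$, $C = 1$. I expect the only non-routine technical point to be the justification of the Minkowski inequality for the $\AAi$-norm, since $\AAi$ is defined by a maximum of two essential-supremum-$\lebesgue^1$ expressions rather than by a classical $\lebesgue^p$-norm; treating each of the two terms separately, however, reduces cleanly to Tonelli together with the monotonicity of the essential supremum against non-negative integrands.
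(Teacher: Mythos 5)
Your proof is correct and tracks the paper's argument quite closely. For part (1) you use the same two ingredients (transpose-invariance of $\|\cdot\|_{\AAi}$ and the identity $(K^T)^{(y_2,x_2)} = (K^{(x_2,y_2)})^T$) applied at both the inner and outer layer; for part (2) the paper carries out the Tonelli and $\esssup$-vs-integral computations inline, whereas you factor the same estimates into an explicit three-space $\AAi$-composition lemma together with a Minkowski-type inequality for pulling $\|\cdot\|_{\AAi(X_1,Z_1)}$ through $\int_{Y_2}$ — a cleaner modularization, but the underlying chain of inequalities, the reduction to non-negative kernels via solidity, the finiteness-implies-a.e.-convergence argument, and the pointwise weighted-to-unweighted reduction are all the same as in the paper.
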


Finally, the integral operators $\Phi_K$ induced by kernels $K\in \BBm(X,Y)$ act boundedly on
mixed-norm Lebesgue spaces.

\begin{proposition}\label{prop:NewKernelModuleBasicProperties3}
          Let $(X_i,\CalF_i,\mu_i)$ and $(Y_i,\CalG_i,\nu_i)$ be $\sigma$-finite measure spaces
          for $i \in \{ 1,2 \}$, and let $\XTuple = \ProductTupleX$ and $\YTuple = \ProductTupleY$.
          Let further $v : X \to (0,\infty)$, $w : Y \to (0,\infty)$,
          and $m: X\times Y \rightarrow (0,\infty)$ be measurable,
          and assume that there is $C > 0$ such that $\frac{v(x)}{w(y)} \leq C \cdot m(x,y)$
          for all $(x,y) \in X \times Y$.

          For any $K \in \BBm(X,Y)$ and $p,q \in [1,\infty]$,
          the integral operator $\Phi_K$ is well-defined and bounded as an operator
          $\Phi_K : \lebesgue^{p,q}_w (\nu) \to \lebesgue^{p,q}_v (\mu)$,
          with absolute convergence almost everywhere of the defining integral.
          Finally,
          \(
            \vertiii{\Phi_K}_{\lebesgue^{p,q}_w (\nu) \to \lebesgue^{p,q}_v (\mu)}
            \leq C \cdot \| K \|_{\BBm (X,Y)}.
          \)
\end{proposition}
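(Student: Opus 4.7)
The plan is to apply Theorem~\ref{thm:SchurTestWeighted} directly: it suffices to produce, uniformly in $p,q \in [1,\infty]$, finite bounds for all four quantities $C_i(K_{v,w})$, $i \in \{1,2,3,4\}$, all controlled by $C \cdot \|K\|_{\BBm}$. Indeed, whichever branch of Theorem~\ref{thm:SchurTestWeighted} applies to a given pair $(p,q)$, its hypotheses will then be subsumed, and the asserted norm bound $C \cdot \|K\|_{\BBm}$ together with the a.e.\ absolute convergence of the defining integral will follow.

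To produce these bounds, observe that the hypothesis $v(x)/w(y) \leq C \cdot m(x,y)$ and the definition \eqref{eq:WeightedKernelDefinition} yield the pointwise estimate $|K_{v,w}(x,y)| \leq C \cdot m(x,y) \cdot |K(x,y)|$. Since each $C_i$ is monotone in the absolute value of its kernel argument and positively homogeneous, the task reduces to proving the key estimate
\[
  C_i(L) \leq \|L\|_{\BBi(X,Y)}
  \qquad \text{for every measurable } L \colon X \times Y \to [0,\infty]
  \text{ and every } i \in \{1,2,3,4\};
\]
applied to $L := m \cdot |K|$ and combined with $\|m \cdot |K|\|_{\BBi} = \|K\|_{\BBm}$, this gives $C_i(K_{v,w}) \leq C \cdot \|K\|_{\BBm}$.

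For the key estimate, I would unpack $\|L\|_{\BBi}$ as
$\max\bigl(\esssup_{x_2} \int_{Y_2} \max(A,B)\,d\nu_2,\,\esssup_{y_2} \int_{X_2} \max(A,B)\,d\mu_2\bigr)$,
where $A(x_2,y_2) := \esssup_{x_1} \int_{Y_1} L((x_1,x_2),(y_1,y_2))\,d\nu_1(y_1)$ and $B$ is the symmetric expression with $(X_1,\mu_1)$ and $(Y_1,\nu_1)$ interchanged. Two of the four inequalities are then immediate: $C_3(L) = \esssup_{x_2} \int_{Y_2} B\,d\nu_2$ and $C_4(L) = \esssup_{y_2} \int_{X_2} A\,d\mu_2$ are, by inspection, dominated by the outer terms of $\|L\|_{\BBi}$. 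For $C_1(L)$ and $C_2(L)$, which involve a full product ess-sup instead of an iterated one, I would rewrite the outer essential supremum as an iterated ess-sup via Lemma~\ref{lem:CountableLInfinityCharacterization}, apply Tonelli to the non-negative integrand, and then invoke the pull-out inequality
\[
  \esssup_{x_1} \int_{Y_2} G(x_1,x_2,y_2)\,d\nu_2(y_2)
  \leq \int_{Y_2} \esssup_{x_1} G(x_1,x_2,y_2)\,d\nu_2(y_2),
\]
valid for non-negative measurable $G$ on $\sigma$-finite spaces. Applied with $G := \int_{Y_1} L\,d\nu_1$, this delivers $C_1(L) \leq \esssup_{x_2} \int_{Y_2} A\,d\nu_2 \leq \|L\|_{\BBi}$; the bound for $C_2(L)$ is symmetric.

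The main obstacle is purely technical: the Fubini bookkeeping behind the pull-out inequality is subtle because the exceptional null sets implicit in the definitions of $A$ and $B$ depend on the frozen parameters, so the interchange of $\esssup_{x_1}$ and $\int_{Y_2}$ must be justified carefully using $\sigma$-finiteness and the countable characterization of essential supremum. Once this estimate is in place, the rest of the argument is an immediate appeal to Theorem~\ref{thm:SchurTestWeighted}.
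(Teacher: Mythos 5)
Your proposal is correct and follows essentially the same strategy as the paper: reduce to the weighted Schur test (Theorem~\ref{thm:SchurTestWeighted}) and show $C_i(m\cdot|K|) \leq \|K\|_{\BBm}$ for all $i$, with the key ingredient being the pull-out inequality of Equation~\eqref{eq:EsssupOfIntegral} (which you correctly identify as requiring care about exceptional null-sets, and which the paper establishes via Lemma~\ref{lem:CountableLInfinityCharacterization}). The only minor stylistic divergence is that the paper obtains the $C_2$ and $C_4$ bounds by duality, i.e.\ by applying the $C_1$ and $C_3$ bounds to $K^T$ and invoking $\|K^T\|_{\BBi(Y,X)} = \|K\|_{\BBi(X,Y)}$ (Lemma~\ref{lem:SchurConstantsForAdjointKernel} together with Part~(\ref{enu:NewKernelModuleAdjoint}) of Proposition~\ref{prop:NewKernelModuleBasicProperties2}), whereas you propose to prove these two bounds directly by ``the symmetric argument''; both are sound and of comparable length.
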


\subsection{Necessary conditions for spaces compatible with the kernel modules \texorpdfstring{$\BBm$}{𝓑ₘ}}
\label{sub:RestrictionsOnSolidSpaces}

Proposition~\ref{prop:NewKernelModuleBasicProperties3} shows in particular that kernels belonging
to $\BBm$ induce integral operators that act boundedly on mixed-norm Lebesgue spaces.
In addition to these spaces, there certainly exist other function spaces
$\BanachOne, \BanachTwo$ such that every kernel $K \in \BBm$ gives rise
to a bounded integral operator $\Phi_K : \BanachOne \to \BanachTwo$.
To mention just one possibility, we note that if
$\BanachOne = \lebesgue^{1,\infty} (\nu) \cap \lebesgue^{\infty,1} (\nu)$
and $\BanachTwo = \lebesgue^{1,\infty} (\mu) \cap \lebesgue^{\infty,1} (\mu)$,
then each kernel $K \in \BBi (X,Y)$ defines a bounded integral operator
$\Phi_K : \BanachOne \to \BanachTwo$.
Here, we equip the space $\BanachOne$ with the norm
\(
  \| f \|_{\BanachOne}
  = \max
    \big\{
      \| f \|_{\lebesgue^{1,\infty} (\nu)} , \,
      \| f \|_{\lebesgue^{\infty,1}(\nu)}
    \big\}
\),
and similarly for $\BanachTwo$.
Of course, a host of other choices for $\BanachOne, \BanachTwo$ are possible as well.

Still, the following question appears natural:
Given function spaces $\BanachOne, \BanachTwo$ such that each kernel $K \in \BBm (X,Y)$
induces a bounded integral operator $\Phi_K : \BanachOne \to \BanachTwo$, what can be said
about the spaces $\BanachOne, \BanachTwo$?
Are they ``similar'' to (weighted) mixed-norm Lebesgue spaces in some sense?

We will give a (partial) answer to this question for the case that $\BanachOne, \BanachTwo$
are \emph{solid function spaces}.
Intuitively, the membership of a function $f$ in a solid space does not depend on
any regularity properties of $f$, but only on the magnitude of the function values $f(x)$.
Formally, we say that a normed vector space $(\BanachOne, \| \bullet \|_{\BanachOne})$
is a solid function space on a measure space $\XTuple$
if $\BanachOne$ consists of (equivalence classes of almost everywhere equal)
measurable functions $f : X \to \CC$, with the additional property that if
$f \in \BanachOne$ and if $g : X \to \CC$ is measurable with $|g| \leq |f|$ almost everywhere,
then $g \in \BanachOne$ and $\| g \|_\BanachOne \leq \| f \|_\BanachOne$.
In case that a solid function space $\BanachOne$ is complete, we say that it is
a \emph{solid Banach function space}.
For more details on such spaces, we refer to \cite{ZaanenIntegration}
and \cite{BennettSharpleyInterpolationOfOperators},
or to \cite[Section~2.2]{VoigtlaenderPhDThesis} for readers interested in Quasi-Banach spaces.

We will show in Theorem~\ref{thm:NecessaryConditionsForCompatibleSpaces} below
that Banach function spaces ``compatible'' with the kernel modules $\BBm(X,Y)$
are indeed somewhat similar to certain weighted mixed-norm Lebesgue spaces.
To formulate this conveniently, we first introduce two special spaces related
to mixed-norm Lebesgue spaces.

\begin{definition}\label{def:SumAndIntersectionSpaces}
  Let $\XTuple = \ProductTupleX$ where $\mu_1,\mu_2$ are $\sigma$-finite.
  We denote by
  \(
    \lebesgue^1 (\mu)
    + \lebesgue^\infty (\mu)
    + \lebesgue^{1,\infty} (\mu)
    + \lebesgue^{\infty,1} (\mu)
  \)
  the space of all (equivalence classes of almost everywhere equal)
  measurable functions $f: X \rightarrow \CC$, for which the norm
  \begin{equation}
    \begin{split}
      & \| f \|_{\lebesgue^1 + \lebesgue^\infty + \lebesgue^{1,\infty} + \lebesgue^{\infty,1}} \\
      & := \inf
           \Big\{
             \| f_1 \|_{\lebesgue^1}
             + \| f_2 \|_{\lebesgue^\infty}
             + \| f_3 \|_{\lebesgue^{1,\infty}}
             + \| f_4 \|_{\lebesgue^{\infty,1}}
             \,\,\colon
             \begin{array}{l}
               f_1,f_2,f_3,f_4 : X \to \CC \text{ measurable} \\
               \text{and } f = f_1 + f_2 + f_3 + f_4
             \end{array}
           \Big\}
    \end{split}
    \label{eq:SumNormDefinition}
  \end{equation}
  is finite.
  We further denote by
  \(
    \lebesgue^1 (\mu)
    \cap \lebesgue^\infty (\mu)
    \cap \lebesgue^{1,\infty} (\mu)
    \cap \lebesgue^{\infty,1} (\mu)
  \)
  the space of all (equivalence classes of almost everywhere equal)
  measurable functions $f: X \rightarrow \CC$, for which the norm
  \begin{equation}
    \| f \|_{\IntersectionSpace} := \IntersectionNorm{f}
    \label{eq:IntersectionNormDefinition}
  \end{equation}
  is finite.
\end{definition}

\begin{rem*}
  It is not quite obvious that the functional $\| \bullet \|_{\SumSpace}$ is indeed definite,
  and hence a norm. We verify definiteness as part of Theorem~\ref{thm:DualOfIntersection}.
\end{rem*}

\begin{theorem}\label{thm:NecessaryConditionsForCompatibleSpaces}
  Let $\XTuple \!=\! \ProductTupleX$ and $\YTuple \!=\! \ProductTupleY$,
  where $\mu_1,\mu_2, \nu_1,\nu_2$ are all $\sigma$-finite.
  Let $m : X \times Y \to (0,\infty)$ be measurable, and let
  $\BanachOne$ be a solid Banach function space on $Y$
  and $\BanachTwo$ a solid Banach function space on $X$.
  Finally, let $v : X \to (0,\infty)$ and $w : Y \to (0,\infty)$ be measurable and such that
  $m(x,y) \leq C \cdot v(x) \cdot w(y)$ for all $(x,y) \in X \times Y$ and some $C > 0$.
  Then the following hold:
  \begin{enumerate}
    \item \label{enu:KernelCoDomainEmbedding}
          If $\BanachOne \neq \{0\}$ and if each $K \in \BBm(X,Y)$ induces a well-defined operator
          $\Phi_K : \BanachOne \to \BanachTwo$, then
          \[
            \big(
              \lebesgue^1(\mu)
              \cap \lebesgue^\infty (\mu)
              \cap \lebesgue^{1,\infty}(\mu)
              \cap \lebesgue^{\infty,1}(\mu)
            \big)_v
            \hookrightarrow \BanachTwo .
          \]

    \item \label{enu:KernelDomainEmbedding}
          If $\mu(X) \neq 0$ and if for each non-negative kernel $K \in \BBm (X,Y)$
          and each non-negative $f \in \BanachOne$,
          the function $\Phi_K f : X \to [0,\infty]$ is almost-everywhere finite-valued, then
          \[
            \BanachOne
            \hookrightarrow \big(
                              \lebesgue^1 (\nu)
                              + \lebesgue^\infty (\nu)
                              + \lebesgue^{1,\infty} (\nu)
                              + \lebesgue^{\infty,1} (\nu)
                            \big)_{1/w} .
          \]
  \end{enumerate}
\end{theorem}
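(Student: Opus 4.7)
My plan is to probe the unknown spaces $\BanachOne$ and $\BanachTwo$ by feeding a large family of \emph{separable} (rank-one) kernels $K(x,y)=a(x)\,b(y)$ into the hypothesis. Unwinding the iterated $\AAi$-norm in Definition~\ref{def:NewKernelModule} yields
\[
  \|a\otimes b\|_{\BBi(X,Y)}
  \le \|a\|_{\lebesgue^\infty}\|b\|_{\lebesgue^1}
     +\|a\|_{\lebesgue^1}\|b\|_{\lebesgue^\infty}
     +\|a\|_{\lebesgue^{1,\infty}}\|b\|_{\lebesgue^{\infty,1}}
     +\|a\|_{\lebesgue^{\infty,1}}\|b\|_{\lebesgue^{1,\infty}},
\]
so $a\otimes b\in\BBi$ whenever both factors lie in $\IntersectionSpace$. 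Combined with $m(x,y)\le C v(x)w(y)$ and solidity of $\BBi$ (Proposition~\ref{prop:NewKernelModuleBasicProperties1}), this will transport Lebesgue-type information between the two sides of the claimed embeddings.

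For Part~(1) I would first fix a test pair $(f_0,h_0)$. Using $\BanachOne\ne\{0\}$ and solidity, pick $f_0\in\BanachOne$ with $f_0\ge 0$ and $\|f_0\|_{\BanachOne}>0$. Using $\sigma$-finiteness of $\nu_1,\nu_2$ and measurability of $w$, select $h_0:=\chi_B$ with $B=(B_1\times B_2)\cap\{w\le k\}\subseteq Y$ contained in a product set of finite measure, $w$ bounded on $B$, and $\int_B f_0\,d\nu>0$; then $wh_0$ is bounded with support in a finite-measure product set, so all four of its mixed norms are finite. For each $g\in(\IntersectionSpace)_v$ set $K_g(x,y):=g(x)h_0(y)$; then $|m(x,y)K_g(x,y)|\le C\,(v|g|)(x)(wh_0)(y)$, and solidity of $\BBi$ together with the rank-one estimate above give $K_g\in\BBm$ with $\|K_g\|_{\BBm}\lesssim\|g\|_{(\IntersectionSpace)_v}$. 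The hypothesis forces $\Phi_{K_g}f_0\in\BanachTwo$; but by Fubini $\Phi_{K_g}f_0=\bigl(\int h_0 f_0\,d\nu\bigr)\cdot g=c\,g$ with $c\in(0,\infty)$ (finiteness of $c$ is itself a consequence of the well-definedness hypothesis, since otherwise $\Phi_{K_g}f_0$ would fail to be a $\CC$-valued function whenever $g$ is nonzero on a set of positive measure). Consequently $g\in\BanachTwo$.

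For Part~(2) I would dualize the construction. Given $f\in\BanachOne$, pass to $|f|$ via solidity and, symmetrically to Part~(1), fix a test function $h_0\ge 0$ on $X$ with $vh_0\in\IntersectionSpace$ and $h_0>0$ on a set of positive $\mu$-measure (possible since $\mu(X)\ne 0$). For non-negative $g\in\IntersectionSpace$ set $K(x,y):=h_0(x)g(y)/w(y)$; then $m(x,y)K(x,y)\le C(vh_0)(x)g(y)$, so the rank-one estimate gives $K\in\BBm$ with $\|K\|_{\BBm}\lesssim\|g\|_{\IntersectionSpace}$. The hypothesis that $\Phi_K|f|$ is a.e.\ finite, evaluated at any $x$ with $h_0(x)>0$, forces $\int(|f|/w)\,g\,d\nu<\infty$ for every such $g$. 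A summation trick---setting $g:=\sum 2^{-n}g_n$ whenever $g_n$ in the unit ball of $\IntersectionSpace$ satisfy $\int(|f|/w)g_n\ge 2^n$, and using the Riesz--Fischer property of the (complete) intersection space to reach a contradiction with the established pointwise finiteness---upgrades this to the uniform bound $\sup\bigl\{\int(|f|/w)g\,d\nu:g\ge 0,\,\|g\|_{\IntersectionSpace}\le 1\bigr\}<\infty$. By Theorem~\ref{thm:DualOfIntersection} (the dual characterization proved in Appendix~\ref{sec:DualCharacterizationOfSumSpace}), this supremum equals $\|f/w\|_{\SumSpace}$, so $f\in(\SumSpace)_{1/w}$.

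The main obstacle in both parts is the passage from the qualitative membership established above to the quantitative \emph{continuous} embedding. I would invoke the closed graph theorem: in Part~(1) for the identity map $g\mapsto g$ from $(\IntersectionSpace)_v$ to $\BanachTwo$, and in Part~(2) for $f\mapsto f/w$ from $\BanachOne$ to $(\SumSpace)_{1/w}$. In both applications, closedness of the graph reduces to the standard fact that in any complete solid function space under consideration here, norm-convergent sequences admit subsequences converging almost everywhere (a rapid-Cauchy/Riesz--Fischer argument, see \cite{ZaanenIntegration,BennettSharpleyInterpolationOfOperators} and \cite[Section~2.2]{VoigtlaenderPhDThesis}); the two candidate limits must then coincide a.e., yielding the desired boundedness.
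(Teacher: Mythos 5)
Your proof is correct and follows essentially the same route as the paper: feed rank-one kernels $a\otimes b$ (controlled via the estimate of Lemma~\ref{lem:KernelModuleRichnessResult}) into the hypothesis, use solidity and $\sigma$-finiteness to produce the needed test functions (as in Lemma~\ref{lem:SolidSpacesContainGoodFunctions}), invoke the associate-space duality of Theorem~\ref{thm:DualOfIntersection} for Part~(2), and pass from set inclusion to continuous embedding via the closed graph theorem together with a.e.-convergent subsequences of norm-convergent sequences.

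Two minor remarks. In Part~(1) you pick a generic $f_0 \in \BanachOne$ and deduce finiteness of $c=\int h_0 f_0\,d\nu$ a posteriori from the well-definedness hypothesis; the paper instead constructs $\Indicator_E \in \BanachOne \cap \IntersectionSpaceSymbol_w$ directly, so that $c$ is manifestly finite. Your variant works but adds a small extra argument. In Part~(2) the summation trick is redundant: Theorem~\ref{thm:DualOfIntersection} already contains the qualitative statement that a measurable $h$ belongs to $\SumSpaceSymbol$ \emph{if and only if} $h\cdot g \in \lebesgue^1$ for all $g \in \IntersectionSpaceSymbol$, so once you have pointwise integrability against every $g$ (as you do), membership follows immediately, and the closed graph theorem you invoke anyway supplies the norm estimate. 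Establishing the uniform supremum by Riesz--Fischer summation is valid but adds work that the cited theorem already does for you; also, that supremum is only comparable to $\|h\|_{\SumSpaceSymbol}$ up to the factor $16$, not literally equal to it, though this does not affect the conclusion.
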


\begin{proof}
  The proof of Theorem~\ref{thm:NecessaryConditionsForCompatibleSpaces}
  is given in Section~\ref{sec:StructureOfBBmCompatibleSpaces}.
\end{proof}

\begin{remark}\label{rem:NecessaryConditionsSharpnessRemark}
  In many cases,
  \(
    \BanachThree = \SumSpaceSymbol
    := \big(
         \lebesgue^1(\nu)
         + \lebesgue^\infty (\nu)
         + \lebesgue^{1,\infty}(\nu)
         + \lebesgue^{\infty,1} (\nu)
       \big)_{1/w}
  \)
  turns out to be the \emph{minimal} space with the property that
  $\BanachOne \hookrightarrow \BanachThree$ for every solid Banach function space $\BanachOne$ on $Y$
  for which $\Phi_K f$ is almost-everywhere finite-valued for all $0 \leq K \in \BBm (X,Y)$
  and $0 \leq f \in \BanachOne$.

  Indeed, let us assume that $\BanachThree$ is a space with this property
  and that there is a weight ${u : X \to (0,\infty)}$ satisfying
  \begin{equation}
    \frac{u(x)}{1/w(y)} = u(x) \cdot w(y) \leq m(x, y)
    \qquad \text{for all} \qquad
    (x,y) \in X \times Y .
    \label{eq:NecessaryConditionsSharpnessAssumption}
  \end{equation}
  Then, Proposition~\ref{prop:NewKernelModuleBasicProperties3} shows for
  $K \in \BBm(X,Y)$ that $\Phi_K : \lebesgue^{p,q}_{1/w}(\nu) \to \lebesgue^{p,q}_u (\mu)$
  is well-defined and bounded, for arbitrary $p,q \in \{1,\infty\}$.
  In particular, this means that $\Phi_K f (x) < \infty$ almost everywhere
  for all $0 \leq K \in \BBm(X,Y)$ and $0 \leq f \in \lebesgue^{p,q}_{1/w} (\nu)$,
  with $p,q \in \{1,\infty\}$.
  By our assumption on $\BanachThree$,
  this means $\lebesgue^{p,q}_{1/w} (\nu) \hookrightarrow \BanachThree$
  for all $p,q \in \{1,\infty\}$, and hence $\SumSpaceSymbol \hookrightarrow \BanachThree$.

  The existence of $u$ with $u(x) \cdot w(y) \leq m(x,y)$
  is for example satisfied in the common case that $X = Y$ and
  \begin{equation}
    m(x,y) = m_w (x,y) := \max \Big\{ \frac{w(x)}{w(y)}, \frac{w(y)}{w(x)} \Big\}
    \quad \text{for a weight} \quad
    w : X \to (c,\infty) \text{ with } c > 0.
    \label{eq:SeparableMatrixWeight}
  \end{equation}
  Indeed, if we set $u : X \to (0,\infty), x \mapsto 1/w(x)$, then
  $u(x) \cdot w(y) \leq m(x,y) \leq \frac{1}{c^2} \cdot w(x) \cdot w(y)$.
\end{remark}

\subsection{\texorpdfstring{$\AAm$}{𝓐ₘ} as a special case of \texorpdfstring{$\BBm$}{𝓑ₘ}}%
\label{sub:OldAlgebraIsSpecialCase}

The classical spaces $\AAm (X_1, Y_1)$ arise as special cases of the newly introduced spaces $\BBm(X,Y)$.
Indeed, given $\sigma$-finite measure spaces $\XIndexTuple{1}$ and $\YIndexTuple{1}$, define
$X_2 := Y_2 := \{0\}$ and $\mu_2 := \nu_2 := \delta_0$,
and as usual $X := X_1 \times X_2$ and $Y := Y_1 \times Y_2$, as well as
$\mu := \mu_1 \otimes \mu_2$ and $\nu := \nu_1 \otimes \nu_2$.
Now, let us identify a function ${f : X_1 \to \CC}$
with $\widetilde{f} : X \to \CC, (x_1, x_2) \mapsto f(x_1)$,
and $g : Y_1 \to \CC$ with $\widetilde{g} : Y \to \CC, (y_1, y_2) \mapsto g(y_1)$.
A direct calculation shows that
$\| \widetilde{f} \, \|_{\lebesgue^{p,q}(\mu)} = \| f \|_{\lebesgue^p (\mu_1)}$
and similarly $\| \widetilde{g} \|_{\lebesgue^{p,q}(\nu)} = \| g \|_{\lebesgue^p (\nu_1)}$,
for arbitrary $p,q \in [1,\infty]$.
In other words, ${\lebesgue^{p,q}(\mu) = \lebesgue^{p}(\mu_1)}$
and ${\lebesgue^{p,q}(\nu) = \lebesgue^p (\nu_1)}$, up to canonical identifications.
Finally, an easy calculation shows that if we identify a given kernel ${K : X_1 \times Y_1 \to \CC}$
with ${\widetilde{K} : X \times Y \to \CC, \big( (x_1,x_2), (y_1,y_2) \big) \mapsto K(x_1, y_1)}$,
then ${\| \widetilde{K} \|_{\BBi(X,Y)} = \| K \|_{\AAi(X_1, Y_1)}}$,
and thus $\AAm(X_1,Y_1) = \BBm(X,Y)$, up to canonical identifications.

By using this identification of the spaces $\AAm(X_1, Y_1)$ as special $\BBm(X,Y)$ spaces,
all of our results imply a corresponding result for $\AAm(X_1, Y_1)$.
Most of the conclusions obtained in this way are already well-known, but some seem to be new.
As an example of such a result, we explicitly state the consequences of
Theorem~\ref{thm:NecessaryConditionsForCompatibleSpaces} for the spaces $\AAm$.

\begin{corollary}\label{cor:NecessaryConditionsForSpacesCompatibleWithAAm}
  Let $(X,\CalF,\mu)$ and $(Y,\CalG,\nu)$ be $\sigma$-finite measure spaces,
  let $\BanachOne$ be a solid Banach function space on $Y$,
  and $\BanachTwo$ a solid Banach function space on $X$.
  Finally, let $m : X \times Y \to (0,\infty)$, $v : X \to (0,\infty)$,
  and $w : Y \to (0,\infty)$ be measurable and such that $m(x,y) \leq C \cdot v(x) \cdot w(y)$
  for all $(x,y) \in X \times Y$ and some $C > 0$.
  Then the following hold:
  \begin{enumerate}
    \item If $\BanachOne \neq \{0\}$ and if each $K \in \AAm(X,Y)$ induces a well-defined operator
          $\Phi_K : \BanachOne \to \BanachTwo$, then
          \(
            \big( \lebesgue^1(\mu) \cap \lebesgue^\infty(\mu) \big)_{v} \hookrightarrow \BanachTwo .
          \)
          \vspace{0.2cm}

    \item If $\mu(X) \neq 0$ and if for each $0 \leq K \in \AAm(X,Y)$ and each $0 \leq f \in \BanachOne$,
          the function $\Phi_K f : X \to [0,\infty]$ is almost-everywhere finite-valued, then
          \(
            \BanachOne \hookrightarrow \big( \lebesgue^1(\nu) + \lebesgue^\infty (\nu) \big)_{1/w} .
          \)
  \end{enumerate}
\end{corollary}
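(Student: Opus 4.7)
The plan is to derive Corollary~\ref{cor:NecessaryConditionsForSpacesCompatibleWithAAm} as a direct specialization of Theorem~\ref{thm:NecessaryConditionsForCompatibleSpaces}, using precisely the identification of $\AAm(X_1,Y_1)$ with $\BBm(X,Y)$ for a trivial second factor, as described in Section~\ref{sub:OldAlgebraIsSpecialCase}.

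First I would rename the data of the corollary so as to play the role of the ``first factor'' in that identification. That is, set $(X_1,\CalF_1,\mu_1) := (X,\CalF,\mu)$ and $(Y_1,\CalG_1,\nu_1) := (Y,\CalG,\nu)$, choose $X_2 = Y_2 := \{0\}$ and $\mu_2 = \nu_2 := \delta_0$, and form $\widetilde X := X_1 \times X_2$, $\widetilde Y := Y_1 \times Y_2$ with their product measures. As recalled in Section~\ref{sub:OldAlgebraIsSpecialCase}, there are canonical isometric identifications $\lebesgue^{p,q}(\mu_1 \otimes \mu_2) = \lebesgue^p(\mu_1)$ and $\lebesgue^{p,q}(\nu_1 \otimes \nu_2) = \lebesgue^p(\nu_1)$ for all $p,q \in [1,\infty]$, and under the lift $K \mapsto \widetilde K$ with $\widetilde K((x_1,0),(y_1,0)) := K(x_1,y_1)$, we have $\AAm(X,Y) = \BBi_{\widetilde m}(\widetilde X,\widetilde Y)$, where $\widetilde m((x_1,0),(y_1,0)) := m(x_1,y_1)$. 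Similarly, lift $v$ and $w$ to weights $\widetilde v(x_1,0) := v(x_1)$ and $\widetilde w(y_1,0) := w(y_1)$; then the hypothesis $m(x,y) \leq C \cdot v(x) \cdot w(y)$ transfers verbatim into $\widetilde m \leq C \cdot \widetilde v \cdot \widetilde w$.

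Next, I would translate the function spaces. A solid Banach function space $\BanachOne$ on $Y$ corresponds, via the bijection $f \leftrightarrow \widetilde f$ with $\widetilde f(y,0) := f(y)$, to a solid Banach function space $\widetilde \BanachOne$ on $\widetilde Y$ with the same norm; analogously for $\BanachTwo$ on $\widetilde X$. The hypothesis that every $K \in \AAm(X,Y)$ yields a bounded operator $\Phi_K : \BanachOne \to \BanachTwo$ is then exactly the statement that every $\widetilde K \in \BBi_{\widetilde m}(\widetilde X,\widetilde Y)$ yields $\Phi_{\widetilde K} : \widetilde \BanachOne \to \widetilde \BanachTwo$, because $\Phi_{\widetilde K} \widetilde f(x,0) = [\Phi_K f](x)$ (and similarly for almost-everywhere finiteness in part (2)). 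The non-triviality conditions $\BanachOne \neq \{0\}$ and $\mu(X) \neq 0$ transfer directly to $\widetilde \BanachOne \neq \{0\}$ and $\widetilde\mu(\widetilde X) \neq 0$.

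Finally, I would invoke Theorem~\ref{thm:NecessaryConditionsForCompatibleSpaces} and observe that the four mixed-norm ingredients collapse in this setting. Since $\mu_2 = \delta_0$ is supported on a single point, for any $p,q \in [1,\infty]$ we have $\lebesgue^{p,q}(\widetilde \mu) = \lebesgue^p(\mu)$; in particular
\[
  \lebesgue^1 \cap \lebesgue^\infty \cap \lebesgue^{1,\infty} \cap \lebesgue^{\infty,1}
  \;=\; \lebesgue^1(\mu) \cap \lebesgue^\infty(\mu)
\]
on $\widetilde X$, and analogously $\lebesgue^1 + \lebesgue^\infty + \lebesgue^{1,\infty} + \lebesgue^{\infty,1} = \lebesgue^1(\nu) + \lebesgue^\infty(\nu)$ on $\widetilde Y$. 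Plugging these identifications, together with $\widetilde v = v$ and $\widetilde w = w$ under the canonical bijection, into the two conclusions of Theorem~\ref{thm:NecessaryConditionsForCompatibleSpaces} yields precisely the two embeddings claimed in the corollary.

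There is no genuine obstacle here: the argument is a purely formal transfer. The only thing one must be slightly attentive to is the bookkeeping that the ``solid Banach function space'' structure and the weight inequality are preserved under the lift $f \mapsto \widetilde f$ and $m \mapsto \widetilde m$, which is immediate because the added factor consists of a single point of unit mass.
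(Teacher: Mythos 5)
Your proposal is correct and is precisely the argument the paper intends: Section~\ref{sub:OldAlgebraIsSpecialCase} sets up exactly the identification $X_2 = Y_2 = \{0\}$, $\mu_2 = \nu_2 = \delta_0$, under which $\AAm(X_1,Y_1) = \BBm(X,Y)$ and $\lebesgue^{p,q}(\mu) = \lebesgue^p(\mu_1)$, and the corollary is stated as an immediate specialization of Theorem~\ref{thm:NecessaryConditionsForCompatibleSpaces} via that identification; your write-up simply carries out the bookkeeping (lifting $K$, $m$, $v$, $w$, $f$ and collapsing the four mixed-norm spaces to $\lebesgue^1 \cap \lebesgue^\infty$ and $\lebesgue^1 + \lebesgue^\infty$) that the paper leaves implicit.
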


\begin{rem*}
  Part~(2) of the corollary is strictly stronger than the inclusion
  $\BanachOne \hookrightarrow \mathcal{D} \big( \mathcal{U}, L^1, (L^\infty_{1/w})^\natural \big)$
  obtained in \cite[Lemma~8(a)]{GeneralizedCoorbit1}.
  Furthermore, the space $(\lebesgue^1 + \lebesgue^\infty)_{1/w}$ seems more natural than the space
  $\mathcal{D} \big( \mathcal{U}, L^1, (L^\infty_{1/w})^\natural \big)$,
  the definition of which is more involved (see \mbox{\cite[Pages 261 and 265]{GeneralizedCoorbit1}}).

  To see that the above result is indeed a strict improvement,
  first note that Remark~\ref{rem:NecessaryConditionsSharpnessRemark} shows
  \(
    (\lebesgue^1 + \lebesgue^\infty)_{1/w}
    \hookrightarrow \mathcal{D} \big( \mathcal{U}, L^1, (L^\infty_{1/w})^\natural \big)
    .
  \)
  Here, Condition~\eqref{eq:NecessaryConditionsSharpnessAssumption} is satisfied
  under the assumptions imposed in \cite{GeneralizedCoorbit1}, since these imply that $X = Y$
  and that $\frac{w(y)}{w(x)} \leq m(x,y) \leq w(x) \cdot w(y)$ for $w(x) := m(x,z)$ with $z \in X$ fixed;
  see \cite[Equations~(3.2), (3.3), and (3.7)]{GeneralizedCoorbit1}.

  Finally, if one chooses $(X,\mu) = (\R, \lambda)$ with the Lebesgue measure $\lambda$,
  and $m \equiv w \equiv 1$, as well as $\CalU = (U_k)_{k \in \Z} = \big( (k-1,k+1) \big)_{k \in \Z}$,
  then the function $f := \sum_{n=1}^\infty 2^n \cdot \Indicator_{[n,n+2^{-n}]}$ satisfies
  $f \notin \lebesgue^1 + \lebesgue^\infty$,
  but $f \in \mathcal{D} \big( \mathcal{U}, L^1, (L^\infty_{1/w})^\natural \big)$.
  To see this, note that $\| f \cdot \Indicator_{U_k} \|_{\lebesgue^1} \leq 3$ for all $k \in \Z$
  and hence $f \in \mathcal{D} \big( \mathcal{U}, L^1, (L^\infty_{1/w})^\natural \big)$,
  but that $\int_E f \, d \lambda = \infty$ for the finite measure set
  $E := \bigcup_{n \in \N} [n, n + 2^{-n}]$; this shows $f \notin \lebesgue^1 + \lebesgue^\infty$.
\end{rem*}

\subsection{Boundedness of \texorpdfstring{$\Phi_K : \BanachOne \to \lebesgue^\infty_{1/v}$}
                                          {ΦK from 𝐀 into a weighted 𝑳∞ space}}
\label{sub:IntroEmbeddingIntoWeightedLInfty}

In some applications it is not enough to merely know that an integral kernel $K$ induces
a bounded integral operator $\Phi_K : \BanachOne \to \BanachOne$.
Instead, it might be required that $\Phi_K : \BanachOne \to \BanachThree$ is well-defined and bounded,
for some space $\BanachThree$ that does not necessarily contain $\BanachOne$.
In particular for general coorbit theory---see Section~\ref{sec:IntroCoorbitTheory}---%
such a property is required for $\BanachThree = \lebesgue^\infty_{1/v}$,
for a suitable weight $v$.
Having to verify this additional condition can be a serious obstruction
for the application of coorbit theory.

Using the kernel modules $\BBm(X,X)$, it is possible to simplify proving that
$\Phi_K : \BanachOne \to \lebesgue^\infty_{1/v}$ is indeed bounded for a suitable choice of $v$.
All one needs to verify are the following two conditions:
\begin{enumerate}
  \item $\BBm(X,X) \hookrightarrow \Bounded(\BanachOne, \BanachOne)$,
        which holds for all (suitably) weighted
        mixed-norm Lebesgue spaces $\BanachOne = \lebesgue^{p,q}_w$; and

  \item a certain \emph{maximal kernel} $\MaxKernel{\CalU} K$ of the kernel $K$ belongs to $\BBm(X,X)$.
\end{enumerate}

The maximal kernel $\MaxKernel{\CalU} K$ is defined using a suitable covering $\CalU$
of $X = X_1 \times X_2$.
The following definition clarifies the conditions that this covering has to satisfy,
and formally introduces the maximal kernel $\MaxKernel{\CalU} K$.

\begin{definition}\label{def:ProductAdmissibleCovering}
  Let $\XTuple = \ProductTupleX$, where $\XIndexTuple{i}$ is a $\sigma$-finite measure space
  for $i \in \{ 1, 2 \}$.
  A family $\CalU = (U_j)_{j \in J}$ is said to be a \emph{product-admissible covering} of $X$,
  if it satisfies the following conditions:
  \begin{enumerate}
    \item the index set $J$ is countable;

    \item we have $X = \bigcup_{j \in J} U_j$;
          \vspace{0.05cm}

    \item each $U_j$ is of the form $U_j = V_j \times W_j$
          with $V_j \in \CalF_1$ and $W_j \in \CalF_2$ and we have $\mu(U_j) > 0$;

    \item there is a constant $C > 0$ such that
          the \emph{covering weight} $w_{\CalU}$ defined by
          \begin{equation}
            (w_{\CalU})_j
            := \min \big\{ 1, \mu_1 (V_j), \mu_2 (W_j), \mu(U_j) \big\}
            \qquad \text{for } j \in J
            \label{eq:CoveringWeightDefinition}
          \end{equation}
          satisfies $(w_{\CalU})_i \leq C \cdot (w_{\CalU})_j$ for all $i,j \in J$ for which
          $U_i \cap U_j \neq \emptyset$.
  \end{enumerate}

  Given such a product-admissible covering $\CalU = (U_j)_{j \in J}$
  and any kernel $K : X \times X \to \CC$, the associated \emph{maximal kernel} $\MaxKernel{\CalU} K$
  is defined as
  \begin{equation}
    \MaxKernel{\CalU} K
    : X \times X \to [0,\infty],
    (x,y) \mapsto \sup_{z \in \CalU(x)} |K(z,y)|
    \quad \text{where} \quad
    \CalU(x) := \bigcup_{j \in J \text{ with } x \in U_j} U_j .
    \label{eq:MaximalKernelDefinition}
  \end{equation}

  Finally, given a weight $u : X \to (0,\infty)$, we say that $u$ is \emph{$\CalU$-moderate},
  if there is a constant $C' > 0$ such that $u(x) \leq C' \cdot u(y)$ for all $j \in J$
  and all $x,y \in U_j$.
\end{definition}

Our precise sufficient criterion for ensuring that $\Phi_K : \BanachOne \to \lebesgue^\infty_{1/v}$
reads as follows:

\begin{theorem}\label{thm:BoundednessIntoWeightedLInfty}
  Let $\XTuple = \ProductTupleX$, where $\XIndexTuple{i}$ is a $\sigma$-finite measure space
  for $i \in \{ 1, 2 \}$.
  Let $m : X \times X \to (0,\infty)$ and $u : X \to (0,\infty)$ be measurable and such that
  $m(x,y) \leq C \cdot u(x) \cdot u(y)$ for all $x,y \in X$ and some constant $C > 0$.
  Let $\CalU = (U_j)_{j \in J}$ be a product-admissible covering of $X$ for which $u$
  is \emph{$\CalU$-moderate}.
  Then the following hold:
  \begin{enumerate}
    \item There is a measurable weight $w_{\CalU}^c : X \to (0,\infty)$
          (a ``continuous version'' of the discrete weight $w_{\CalU}$
          defined in \eqref{eq:CoveringWeightDefinition}) such that
          \begin{equation}
            \sup_{j \in J}
              \sup_{x \in U_j}
                \bigg[
                  \frac{(w_{\CalU})_j}{w_{\CalU}^c (x)}
                  + \frac{w_{\CalU}^c (x)}{(w_{\CalU})_j}
                \bigg]
            < \infty .
            \label{eq:ContinuousCoveringWeightCondition}
          \end{equation}
          For any two such weights $w_{\CalU}^c, v_{\CalU}^c$,
          we have $w_{\CalU}^c \asymp v_{\CalU}^c$.
          \vspace{0.1cm}

    \item Let $\BanachOne$ be a solid Banach function space on $X$
          such that $\BBm(X,X) \hookrightarrow \Bounded(\BanachOne)$.
          Let the kernel ${K : X \times X \to \CC}$ be measurable,
          and assume that there is a measurable kernel ${L : X \times X \to [0,\infty]}$
          satisfying $\MaxKernel{\CalU} K \leq L$ and $\| L \|_{\BBm} < \infty$.
          Define
          \begin{equation}
            v : X \to (0,\infty),
                x \mapsto \frac{u(x)}{w_{\CalU}^c (x)} .
            \label{eq:SpecialLInftyWeight}
          \end{equation}
          Then $K \in \BBm(X,X)$ (so that $\Phi_K : \BanachOne \to \BanachOne$ is bounded),
          and $K$ induces a well-defined and bounded integral operator
          ${\Phi_K : \BanachOne \to \lebesgue^\infty_{1/v}}$.
  \end{enumerate}
\end{theorem}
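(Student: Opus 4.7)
\emph{Part (1).} The plan is to construct $w_{\CalU}^c$ explicitly as the countable supremum
\[
  w_{\CalU}^c (x)
  := \sup \big\{ (w_{\CalU})_j \,\colon\, j \in J,\, x \in U_j \big\}
  = \sup_{j \in J} \big( (w_{\CalU})_j \cdot \Indicator_{U_j}(x) \big),
\]
which is well-defined (the index set is nonempty because $X = \bigcup_j U_j$, and each $(w_{\CalU})_j$ is positive since $\mu(U_j) > 0$ forces $\mu_1(V_j), \mu_2(W_j) > 0$) and measurable as a countable supremum of simple measurable functions. For $x \in U_i$ the lower bound $w_{\CalU}^c(x) \geq (w_{\CalU})_i$ is automatic by choosing $j = i$, while the moderation condition \ref{def:ProductAdmissibleCovering}(4) gives $(w_{\CalU})_j \leq C (w_{\CalU})_i$ for every $j$ contributing to the supremum, hence $w_{\CalU}^c(x) \leq C (w_{\CalU})_i$. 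This is precisely \eqref{eq:ContinuousCoveringWeightCondition}. Uniqueness up to equivalence is then immediate: any two candidates $w_{\CalU}^c, v_{\CalU}^c$ are both squeezed to $\asymp (w_{\CalU})_i$ on every $U_i$.

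\emph{Part (2), easy half.} Since $x \in \CalU(x)$ we have $|K(x,y)| \leq \MaxKernel{\CalU} K(x,y) \leq L(x,y)$ pointwise; solidity of $\BBm$ (Proposition~\ref{prop:NewKernelModuleBasicProperties1}(1)) then yields $K \in \BBm(X,X)$ with $\|K\|_{\BBm} \leq \|L\|_{\BBm}$, and the hypothesis $\BBm \hookrightarrow \Bounded(\BanachOne)$ gives $\Phi_K, \Phi_L \in \Bounded(\BanachOne)$. The remaining task is to prove $\Phi_K : \BanachOne \to \lebesgue^\infty_{1/v}$.

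\emph{Local maximal estimate.} The core strategy combines a maximal-kernel argument with a sequence-space style extraction. For fixed $j \in J$ and for a.e.\ $x \in U_j$ at which $\Phi_L |f|(x) < \infty$, the bound $|K(z,y)| \leq \MaxKernel{\CalU} K(x,y) \leq L(x,y)$ valid for every $z \in U_j \subseteq \CalU(x)$ yields
\[
  |\Phi_K f(z)|
  \leq \int_X L(x,y) \, |f(y)| \, d\mu(y)
  = \Phi_L |f|(x)
\]
at every $z \in U_j$ where the defining integral for $\Phi_K f(z)$ converges absolutely (a condition on $z$ alone). Passing to the $z$-essential supremum gives
\(
  \lambda_j
  := \esssup_{z \in U_j} |\Phi_K f(z)|
  \leq \Phi_L |f|(x)
\)
for a.e.\ $x \in U_j$, i.e.\ $\lambda_j \Indicator_{U_j} \leq \Phi_L |f|$ a.e.\ on $X$. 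Solidity of $\BanachOne$ and boundedness of $\Phi_L$ then yield
\[
  \lambda_j \cdot \| \Indicator_{U_j} \|_{\BanachOne}
  = \| \lambda_j \Indicator_{U_j} \|_{\BanachOne}
  \leq \| \Phi_L |f| \|_{\BanachOne}
  \leq C \, \|L\|_{\BBm} \, \|f\|_{\BanachOne}.
\]

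\emph{Lower bound on $\|\Indicator_{U_j}\|_{\BanachOne}$ and conclusion.} To finish, I will invoke Theorem~\ref{thm:NecessaryConditionsForCompatibleSpaces}(2), whose hypotheses are met here (with $\BanachTwo = \BanachOne$) because $\BBm \hookrightarrow \Bounded(\BanachOne)$ and elements of $\BanachOne$ are a.e.\ finite-valued; the hypothesis $m(x,y) \leq C u(x) u(y)$ permits the choice $v = w = u$ in that theorem, giving
\(
  \BanachOne \hookrightarrow \SumSpaceSymbol_{1/u}
  ,
\)
where $\SumSpaceSymbol = \SumSpace$. Writing $u_j$ for any representative value of $u$ on $U_j$ (all such values are equivalent by $\CalU$-moderation), this translates into
\(
  \| \Indicator_{U_j} \|_{\BanachOne}
  \gtrsim \| \Indicator_{U_j} / u \|_{\SumSpaceSymbol}
  \asymp u_j^{-1} \, \| \Indicator_{U_j} \|_{\SumSpaceSymbol} .
\)
The crux is the identity $\| \Indicator_{U_j} \|_{\SumSpaceSymbol} \asymp (w_{\CalU})_j$; the upper bound is immediate from the single-term decompositions, using
\(
  \|\Indicator_{V_j \times W_j}\|_{\lebesgue^1} = \mu(U_j),\
  \|\cdot\|_{\lebesgue^\infty} = 1,\
  \|\cdot\|_{\lebesgue^{1,\infty}} = \mu_1(V_j),\
  \|\cdot\|_{\lebesgue^{\infty,1}} = \mu_2(W_j)
\),
whose minimum is $(w_{\CalU})_j$; the matching lower bound I would obtain by pairing $\Indicator_{U_j}$ against $g = \Indicator_{U_j} / \IntersectionNorm{\Indicator_{U_j}}$ through the duality $(\IntersectionSpaceSymbol)' = \SumSpaceSymbol$ of Theorem~\ref{thm:DualOfIntersection} (Appendix~\ref{sec:DualCharacterizationOfSumSpace}), followed by a four-regime case analysis in $\mu_1(V_j), \mu_2(W_j) \lessgtr 1$ showing $\mu(U_j) / \max\{\mu(U_j), 1, \mu_1(V_j), \mu_2(W_j)\} = (w_{\CalU})_j$. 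Combining everything yields $\lambda_j \lesssim (u_j / (w_{\CalU})_j) \, \|f\|_{\BanachOne}$, so that for a.e.\ $x \in U_j$,
\[
  \frac{|\Phi_K f(x)|}{v(x)}
  = \frac{w_{\CalU}^c(x)}{u(x)} \, |\Phi_K f(x)|
  \lesssim \frac{(w_{\CalU})_j}{u_j} \cdot \lambda_j
  \lesssim \|f\|_{\BanachOne},
\]
which is the desired $\lebesgue^\infty_{1/v}$-bound. I expect the main obstacle to be the lower bound $\| \Indicator_{U_j} \|_{\SumSpaceSymbol} \gtrsim (w_{\CalU})_j$, which rests on the appendix-level duality and a careful case analysis; a secondary measure-theoretic subtlety is ensuring that $\lambda_j \Indicator_{U_j} \leq \Phi_L |f|$ genuinely holds a.e.\ on $X$, rather than only modulo $x$-dependent null sets in the esssup step.
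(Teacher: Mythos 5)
Your proof is essentially correct and follows the same high-level architecture as the paper: a local maximal estimate on each $U_j$, solidity of $\BanachOne$, the embedding $\BanachOne \hookrightarrow \SumSpaceSymbol_{1/u}$ from Theorem~\ref{thm:NecessaryConditionsForCompatibleSpaces}(2), and a lower bound for $\|\Indicator_{U_j}\|_{\SumSpaceSymbol}$ by $(w_{\CalU})_j$. The places where you deviate are minor and all work out. Your construction of $w_{\CalU}^c$ as the countable supremum $\sup_{j}\bigl((w_{\CalU})_j \Indicator_{U_j}\bigr)$ is arguably cleaner than the paper's, which disjointifies the covering via an enumeration of $J$ and glues the discrete values along the resulting partition; both are measurable and both verify \eqref{eq:ContinuousCoveringWeightCondition} by the same moderation argument.

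Two remarks. First, the bound you call the ``crux''---$\|\Indicator_{U_j}\|_{\SumSpaceSymbol} \gtrsim (w_{\CalU})_j$---is already available as Lemma~\ref{lem:CartesianProductSumNorm}, and the paper proves it \emph{not} by duality against $\IntersectionSpaceSymbol$ but directly via the function norm $\varrho_\otimes$ (Proposition~\ref{prop:IteratedFunctionNormEquivalentToSumNorm}), factoring $\varrho_\otimes(\Indicator_{V\times W}) = \varrho_{X_1}(\Indicator_V)\,\varrho_{X_2}(\Indicator_W)$ and bounding each factor below by $\min\{1,\mu_i(\cdot)\}$; this yields constant $1$ and sidesteps the case analysis. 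Your duality-based route does work---pairing against $g = \Indicator_{U_j}/\IntersectionNorm{\Indicator_{U_j}}$ and checking $\mu(U_j)/\max\{1,\mu_1(V_j),\mu_2(W_j),\mu(U_j)\} = (w_{\CalU})_j$ across the regimes $\mu_1(V_j),\mu_2(W_j)\lessgtr 1$, which is a true identity---but at the cost of the extra factor $1/16$ from Theorem~\ref{thm:DualOfIntersection}, which of course is harmless here. Second, you should dispose of the edge case $\BanachOne = \{0\}$ separately (the claim is then vacuous), since Theorem~\ref{thm:NecessaryConditionsForCompatibleSpaces}(2) requires $\mu(X) \neq 0$; the paper observes that $\BanachOne\neq\{0\}$ forces $\mu(X)>0$. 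Your use of $\esssup$ in place of the paper's pointwise $\sup$ is perfectly adequate for the $\lebesgue^\infty_{1/v}$ conclusion and avoids having to argue (as the paper implicitly does) that $\Phi_K f$ is defined at \emph{every} point of $U_j$; the measure-theoretic caveat you flag is handled exactly as you anticipate, since $J$ is countable and each $j$ contributes only one null set.
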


\begin{rem*}
  The slightly convoluted formulation involving the additional kernel $L$
  is only chosen to avoid having to assume that the maximal kernel $\MaxKernel{\CalU} K$
  is measurable.
\end{rem*}

\begin{proof}
  The proof of Theorem~\ref{thm:BoundednessIntoWeightedLInfty}
  is given in Section~\ref{sec:EmbeddingIntoWeightedLInfty}.
\end{proof}

\section{Application: Simplified conditions for coorbit theory using \texorpdfstring{$\BBm(X)$}{𝓑ₘ(X)}}
\label{sec:IntroCoorbitTheory}

The main idea of coorbit theory is to quantify the ``niceness'' of a function
in terms of the decay of its frame coefficients with respect to a fixed continuous frame $\CalF$.
This approach gives rise to a family of function spaces---the so-called \emph{coorbit spaces}---%
and an associated discretization theory for these spaces.

Coorbit theory was originally introduced by Feichtinger and Gröchenig
in the seminal papers~\cite{FeichtingerCoorbit0,FeichtingerCoorbit1,FeichtingerCoorbit2}
for frames generated by an integrable, irreducible group representation.
It was later generalized to arbitrary continuous frame; see \cite{GeneralizedCoorbit1,GeneralizedCoorbit2}.
Our presentation is based on the recent contribution by Kempka et al.~\cite{kempka2015general},
which generalizes and streamlines coorbit theory for continuous frames
(\emph{general coorbit theory}).

In this introduction, we only present a sketch of the theory
and give an honest but simplified account of how the kernel spaces $\BBm$
can be used to simplify the process of verifying the necessary assumptions.
The reader interested in the finer details is referred to Section~\ref{sec:CoorbitTheory}.

Let $\CalH$ be a separable Hilbert space, and let $\XTuple = \ProductTupleX$,
where $\XIndexTuple{j}$ ($j \in \{1,2\}$) is a $\sigma$-compact,
second countable, locally compact Hausdorff space with Borel $\sigma$-algebra $\CalF_j$,
and a Radon measure $\mu_j$ with $\supp \mu_j = X_j$.
Furthermore, fix a \emph{continuous Parseval frame} $\Psi = (\psi_x)_{x \in X} \subset \CalH$,
which by definition means that the \emph{voice transform}
\begin{equation}
  V_\Psi f : X \to \CC, x \mapsto \langle f, \psi_x \rangle_{\CalH}
  \label{eq:VoiceTransformDefinition}
\end{equation}
is measurable for each $f \in \Hil$ and satisfies
\begin{equation}
  \| f \|_{\CalH}^2 = \| V_\Psi f \|_{\lebesgue^2 (\mu)}^2
  \qquad \forall \, f \in \CalH .
  \label{eq:ParsevalFrameCondition}
\end{equation}
We will only be working with Parseval frames as above; for a detailed treatment of general continuous
frames, we refer to \cite{AliContinuousFrames,SpeckbacherReproducingPairs1}.

The idea of coorbit theory is to generalize the description \eqref{eq:ParsevalFrameCondition}
of the $\CalH$-norm in terms of the $L^2$-norm of the voice transform to different decay conditions
regarding the voice transform.
Somewhat more precisely, we say that a solid Banach function space $\BanachOne$ on $X$ is \emph{rich},
if $\Indicator_K \in \BanachOne$ for all compact sets $K \subset X$.
Given such a rich solid Banach function space $\BanachOne$
satisfying certain additional technical conditions, the associated \emph{coorbit space}
$\Co (\BanachOne)$ is defined as
\[
  \Co (\BanachOne)
  := \Co_\Psi (\BanachOne)
  := \big\{
      f \in \Reservoir
      \,\,\colon\,
      V_\Psi f \in \BanachOne
    \big\} ,
  \qquad \text{with norm} \qquad
  \| f \|_{\Co (\BanachOne)}
  := \| V_\Psi f \|_{\BanachOne} .
\]
Here, the space $\Reservoir$---the ``reservoir'' from which the elements of $\Co(\BanachOne)$
are taken---can be thought of as a generalization of the space of (tempered) distributions
to the general setting considered here; see Lemma~\ref{lem:ReservoirConditions}
for the precise definition.

The first component of coorbit theory is a set of ``compatibility criteria''
between the \emph{reproducing kernel} (or Gramian kernel)
\begin{equation}
  K_\Psi : X \times X \to \CC, (x,y) \mapsto \langle \psi_y, \psi_x \rangle_{\CalH}
  \label{eq:ReproducingKernelDefinition}
\end{equation}
of the frame $\Psi$, and the Banach function space $\BanachOne$, which ensure that
the coorbit spaces are actually well-defined Banach spaces.
In their usual formulation, these conditions are quite technical.
Using the kernel spaces $\BBm$, however, one can formulate a set of conditions
which is less tedious to verify.
To conveniently formulate these conditions, we will assume the following:
\begin{equation}
  \begin{split}
    & \CalU = (U_j)_{j \in J}
    \text{ is a product admissible covering of } X, \\
    & u : X \to (0,\infty) \text{ is $\CalU$-moderate and locally bounded}, \\
    & v : X \to [1,\infty)
    \text{ is measurable with } v(x) \gtrsim \max
                                            \big\{
                                              \| \psi_x \|_{\Hil}, \,\,
                                              [w_{\CalU}^c (x)]^{-1} \, u(x)
                                            \big\} , \\
    & m_0 \!:\! X \!\times\! X \to (0,\infty)
    \text{ is measurable with } m_0(x,y) \!\leq\! C \, u(x) \, u(y) \text{ and } m_0(x,y) \!=\! m_0(y,x), \\
    & \BanachOne \neq \{ 0 \}
    \text{ is a solid Banach function space on $X$, and }
    \| \Phi_K \|_{\BanachOne \to \BanachOne} \leq \| K \|_{\BBi_{m_0}} \,\,\, \forall \, K \in \BBi_{m_0} .
  \end{split}
  \label{eq:CoorbitWeightConditions}
\end{equation}
Here, $w_{\CalU}^c$ is as defined in Part~(1) of Theorem~\ref{thm:BoundednessIntoWeightedLInfty}.

\begin{rem*}
\emph{(i):} By means of Proposition~\ref{prop:NewKernelModuleBasicProperties3}, we see that the condition
$\| \Phi_K \|_{\BanachOne \to \BanachOne} \leq \| K \|_{\BBi_{m_0}}$
is satisfied if $\BanachOne = \lebesgue_\kappa^{p,q}(\mu)$
and $\frac{\kappa(x)}{\kappa(y)} \leq m_0 (x,y)$.

\medskip{}

\emph{(ii):} If only $\| \Phi_K \|_{\BanachOne \to \BanachOne} \leq C_0 \, \| K \|_{\BBi_{m_0}}$ holds,
then one can achieve $C_0 = 1$ by replacing $m_0$ with $C_0 \cdot m_0$.
Thus, there is no real loss of generality in assuming $C_0 = 1$.
\end{rem*}

Our simplified condition for the well-definedness of the coorbit spaces reads as follows:
\begin{theorem}\label{thm:IntroductionCoorbitWellDefinedConditions}
  Let $\XTuple$ and $\Psi$ as above, and assume that Condition~\eqref{eq:CoorbitWeightConditions}
  is satisfied.
  With $m_v$ as in Equation~\eqref{eq:SeparableMatrixWeight}
  and $M_{\CalU} K_\Psi$ as in Equation~\eqref{eq:MaximalKernelDefinition}, assume that
  \begin{equation}
    K_\Psi \in \AAi_{m_v}
    \quad \text{and} \quad
    \text{there exists } L \in \BBi_{m_0} \text{ satisfying } M_{\CalU} K_\Psi \leq L .
    \label{eq:CoorbitWellDefinedConditions}
  \end{equation}
  Then all technical assumptions imposed in \cite[Sections 2.3 and 2.4]{kempka2015general}
  are satisfied; consequently, the coorbit space $\Co_\Psi (\BanachOne)$ is a well-defined Banach space.
\end{theorem}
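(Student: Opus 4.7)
The plan is to verify, one by one, each of the technical hypotheses required by \cite[Sections 2.3 and 2.4]{kempka2015general}, and to reduce them to the kernel-module conditions gathered in \eqref{eq:CoorbitWeightConditions} and \eqref{eq:CoorbitWellDefinedConditions}. Those hypotheses split into three groups: (i) $\Phi_{K_\Psi}$ is bounded on the target space $\BanachOne$; (ii) $\Phi_{K_\Psi}$ maps $\BanachOne$ continuously into an $\lebesgue^\infty$-type space with weight $1/v$, which is what governs the reservoir $\Reservoir$; and (iii) $\Phi_{K_\Psi}$ acts boundedly on certain weighted $\lebesgue^p$-spaces (typically $\lebesgue^1_v$, $\lebesgue^\infty_{1/v}$, and their intersection), which makes the reproducing identity $K_\Psi \KernelProduct K_\Psi = K_\Psi$ converge on the reservoir.

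For (i), the pointwise bound $|K_\Psi| \leq \MaxKernel{\CalU} K_\Psi \leq L$ combined with the solidity of $\BBi_{m_0}$ from Proposition~\ref{prop:NewKernelModuleBasicProperties1} gives $K_\Psi \in \BBi_{m_0}$; the last line of \eqref{eq:CoorbitWeightConditions} then yields $\Phi_{K_\Psi} \in \Bounded(\BanachOne)$ immediately. For (ii), I would invoke Theorem~\ref{thm:BoundednessIntoWeightedLInfty} with the very same $m_0$, $u$ and $\CalU$ and with the kernel $K_\Psi$: its hypotheses are precisely what \eqref{eq:CoorbitWeightConditions} and \eqref{eq:CoorbitWellDefinedConditions} encode, and its conclusion is that $\Phi_{K_\Psi}: \BanachOne \to \lebesgue^\infty_{1/\widetilde v}$ with $\widetilde v = u/w_{\CalU}^c$. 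The pointwise bound $v \gtrsim u/w_{\CalU}^c$ from \eqref{eq:CoorbitWeightConditions}, together with the solidity of weighted $\lebesgue^\infty$, then promotes this to $\Phi_{K_\Psi}: \BanachOne \to \lebesgue^\infty_{1/v}$.

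For (iii), I would exploit the auxiliary assumption $K_\Psi \in \AAi_{m_v}$. As observed in Section~\ref{sub:OldAlgebraIsSpecialCase}, $\AAm$ arises as a special case of $\BBm$ when one of the factor spaces is collapsed to a singleton; applying Proposition~\ref{prop:NewKernelModuleBasicProperties3} in that degenerate setting yields boundedness of $\Phi_{K_\Psi}$ on $\lebesgue^p_v$ for every $p \in [1,\infty]$, hence in particular on $\lebesgue^1_v \cap \lebesgue^\infty_{1/v}$. The reproducing identity $K_\Psi \KernelProduct K_\Psi = K_\Psi$ itself is a direct consequence of the Parseval property \eqref{eq:ParsevalFrameCondition} (via the polarization identity applied to $V_\Psi \psi_z$ and $V_\Psi \psi_x$), and the absolute a.e.\ convergence of the defining integral is handed to us by the kernel-composition statement in Proposition~\ref{prop:NewKernelModuleBasicProperties2}, using $m_0 \lesssim u \otimes u$ and the symmetry $m_0(x,y) = m_0(y,x)$ to control the composition weight.

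The main obstacle I anticipate is the bookkeeping needed to align our kernel-module language with the ad hoc axiomatic setup of \cite{kempka2015general}, whose conditions are phrased in terms of weighted operator norms rather than kernel spaces. In particular, checking that the single weight $v$ chosen in \eqref{eq:CoorbitWeightConditions} simultaneously satisfies all operator-norm bounds demanded by their framework---including the estimate $\|\psi_x\|_\Hil \leq v(x)$ that controls the evaluation functionals on $\Reservoir$, and the compatibility bounds between $v$, $u$ and $w_{\CalU}^c$ needed to make $\lebesgue^1_v \cap \lebesgue^\infty_{1/v}$ a valid reservoir---will require careful attention. Once that dictionary is established, each individual verification collapses to one of the three reductions above.
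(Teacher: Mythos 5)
Your plan correctly identifies the two main tools the paper uses---Theorem~\ref{thm:BoundednessIntoWeightedLInfty} for the $\lebesgue^\infty_{1/v}$-mapping property and solidity of $\BBi_{m_0}$ to pull $K_\Psi$ into the kernel module---but there are two concrete gaps that would leave the argument incomplete.

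First, Lemma~\ref{lem:CoorbitConditions} requires $\BanachOne$ to be a \emph{rich} solid Banach function space, i.e., $\Indicator_K \in \BanachOne$ for every compact $K \subset X$, and your proposal never addresses this. The paper deduces it from Theorem~\ref{thm:NecessaryConditionsForCompatibleSpaces}(\ref{enu:KernelCoDomainEmbedding}): since $\BBi_{m_0}(X) \hookrightarrow \Bounded(\BanachOne)$ with $\BanachOne \neq \{0\}$ and $m_0 \lesssim u \otimes u$, one gets $\IntersectionSpaceSymbol_u \hookrightarrow \BanachOne$, and then the local boundedness of $u$ (one of the assumptions in \eqref{eq:CoorbitWeightConditions}) and the product structure of compact sets give $\Indicator_K \in \IntersectionSpaceSymbol_u$. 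This step is where the hypothesis that $u$ be locally bounded actually earns its keep; without it the proof does not close.

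Second, there is a conflation in your step (ii). Theorem~\ref{thm:BoundednessIntoWeightedLInfty} yields that $\Phi_{K_\Psi} \colon \BanachOne \to \lebesgue^\infty_{1/v}$ is \emph{bounded}, i.e., $\|\Phi_{K_\Psi} f\|_{\lebesgue^\infty_{1/v}} \leq C \|f\|_{\BanachOne}$. But the actual hypothesis of Lemma~\ref{lem:CoorbitConditions}, Equation~\eqref{eq:CoorbitLInftyEmbeddingAssumption}, is the stronger statement $\|\Phi_{K_\Psi} f\|_{\lebesgue^\infty_{1/v}} \leq C \|\Phi_{K_\Psi} f\|_{\BanachOne}$, which does not follow from operator boundedness alone. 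Closing this gap requires the reproducing kernel identity $\Phi_{K_\Psi} \circ \Phi_{K_\Psi} = \Phi_{K_\Psi}$ on $\BanachOne$---a nontrivial extension from $\lebesgue^2$ to $\BanachOne$ via truncation and dominated convergence---which is precisely the content of Lemma~\ref{lem:BoundednessEmbeddingEquivalence}, and which in turn uses the boundedness of $\Phi_{|K_\Psi|}$ on $\BanachOne$ that you establish in step (i). You do mention the reproducing identity in passing (as part of your speculative step (iii)), but you never wire it into the place where it is actually needed. Your step (iii) is otherwise superfluous: the reservoir condition in Lemma~\ref{lem:ReservoirConditions} is simply $K_\Psi \in \AAi_{m_v}$ together with $\|\psi_x\|_\Hil \lesssim v(x)$, both of which are verbatim among the stated hypotheses, so no separate $\lebesgue^p_v$-boundedness or kernel-composition argument is required at that level.
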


\begin{rem*}
  The conclusion of the theorem is deliberately left somewhat vague;
  the finer details are given in Proposition~\ref{pro:coorbitsWithBm}.
\end{rem*}

The second component of coorbit theory is the \emph{discretization theory}.
This provides conditions under which a sampled version $\Psi_d = (\psi_{x_i})_{i \in I}$
of the continuous frame $\Psi = (\psi_x)_{x \in X}$ can be used to describe the coorbit space
$\Co_{\Psi} (\BanachOne)$.
More precisely, if the discretization theory is applicable, then there are solid sequence spaces
$\BanachOne^{\flat}, \BanachOne^{\sharp} \subset \CC^{I}$ associated to $\BanachOne$
(see Equation~\eqref{eq:SequenceSpaceNorms}) such that the coefficient and synthesis maps
\begin{equation}
  C_{\Psi_d} : \Co_\Psi(\BanachOne) \to \BanachOne^{\flat},
              f \mapsto \big( \langle f, \psi_{x_i} \rangle \big)_{i \in I}
  \quad \text{and} \quad
  D_{\Psi_d} : \BanachOne^{\sharp} \to \Co_\Psi (\BanachOne),
              (c_i)_{i \in I} \mapsto \sum_{i \in I} c_i \, \psi_{x_i}
  \label{eq:CoefficientSynthesisOperator}
\end{equation}
are well-defined and bounded, and such that $C_{\Psi_d}$ has a bounded linear left-inverse,
while $D_{\Psi_d}$ has a bounded linear right-inverse.
In the language of \cite{GroechenigDescribingFunctions}, the former means
that the sampled frame $\Psi_d$ forms a \emph{Banach frame}
and the latter implies that $\Psi_d$ is an \emph{atomic decomposition}
for $\Co_\Psi (\BanachOne)$.
Put briefly, we say that $\Psi_d$ forms a \emph{Banach frame decomposition} of $\Co_\Psi (\BanachOne)$
if both conditions are satisfied.

To conveniently state our simplified criteria for the applicability of the discretization theory,
we require the notion of the \emph{oscillation} $\oscUG (K)$ of a kernel $K : X \times X \to \CC$
with respect to a covering $\CalU = (U_i)_{i \in I}$ and a (not necessarily measurable)
\emph{phase function} $\Gamma : X \times X \to S^1$, where $S^1 := \{ z \in \CC \colon |z| = 1 \}$.
This oscillation is defined as
\begin{equation}
  \oscUG(K) : X \times X \rightarrow [0,\infty], \quad
              (x, y) \mapsto \sup_{z \in \CalU(y)}
                               \big| K(x, y) - \Gamma(y, z) K(x, z) \big|,
  \label{eq:OscillationDefinition}
\end{equation}
with $\CalU(y) := \bigcup_{i \in I \text{ with } y \in U_i} U_i$
as in Equation~\eqref{eq:MaximalKernelDefinition}.

Furthermore, we need the notion of an \emph{admissible covering}.
Precisely, a family $\CalU = (U_i)_{i \in I}$ of subsets of $X$ is an \emph{admissible covering}
of $X$, if it satisfies the following properties:
\begin{enumerate}
 \item $\CalU$ is a covering of $X$; that is, $X = \bigcup_{i \in I} U_i$;

 \item $\CalU$ is locally finite, meaning that each $x \in X$ has an open neighborhood
        intersecting only finitely many of the $U_i$;

 \item each $U_i$ is measurable, relatively compact, and has non-empty interior;

 \item the intersection number
        \(
          \sigma(\CalU)
          := \sup_{i \in I} |\{ \ell \in I \colon U_\ell \cap U_i \neq \emptyset \}|
        \)
        is finite.
\end{enumerate}

\begin{theorem}\label{thm:IntroductionCoorbitDiscretizationConditions}
  Suppose that the assumptions of Theorem~\ref{thm:IntroductionCoorbitWellDefinedConditions}
  hold, and define ${m := m_v + m_0}$.
  If there exist
  an admissible covering $\widetilde{\CalU} = (\widetilde{U}_i)_{i \in I}$ of $X$,
  a phase function $\Gamma : X \times X \to S^1$, and some $L \in \BBm$ such that
  \[
    \oscUtG (K_\Psi) \leq L
    \quad \text{ and } \quad
    \| L \|_{\BBm} \cdot \bigl( 2 \, \| K_\Psi \|_{\BBm} + \| L \|_{\BBm} \bigr) < 1,
  \]
  and if for each $i \in I$ some $x_i \in \widetilde{U}_i$ is chosen,
  then all technical assumptions imposed in \mbox{\cite[Theorem~2.48]{kempka2015general}}
  are satisfied; consequently, $\Psi_d = (\psi_{x_i})_{i \in I}$ is a Banach frame decomposition
  for $\Co_\Psi(\BanachOne)$.
\end{theorem}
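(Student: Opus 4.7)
The plan is to verify each of the technical hypotheses of \cite[Theorem~2.48]{kempka2015general} by translating them into assertions about $\BBm$-norms that our hypotheses render immediate. Since Theorem~\ref{thm:IntroductionCoorbitWellDefinedConditions} already secures all of the well-definedness setup, what remains are (i) the boundedness of the oscillation kernel as an operator on $\BanachOne$ (and from $\BanachOne$ into $\lebesgue^\infty_{1/v}$), and (ii) a smallness condition guaranteeing that the discretized reproducing operator is a strict contractive perturbation of the identity on $\Co_\Psi(\BanachOne)$, so that a Neumann series furnishes the required inverses for $C_{\Psi_d}$ and $D_{\Psi_d}$.

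For (i), the inclusion $\BBm \hookrightarrow \BBi_{m_0}$ (from $m = m_v + m_0 \geq m_0$ and solidity) combined with the hypothesis $\oscUtG (K_\Psi) \leq L \in \BBm$ and Proposition~\ref{prop:NewKernelModuleBasicProperties1}(\ref{enu:NewKernelModuleSolid}) reduces $\BanachOne$-boundedness of $\Phi_{\oscUtG(K_\Psi)}$ to the final clause of~\eqref{eq:CoorbitWeightConditions}, with operator norm controlled by $\|L\|_{\BBm}$. The companion $\BanachOne \to \lebesgue^\infty_{1/v}$ bound will be extracted from Theorem~\ref{thm:BoundednessIntoWeightedLInfty} (using $m \geq m_v$), provided a suitable majorant for the maximal kernel of $\oscUtG(K_\Psi)$ is produced; this is obtained by combining $\oscUtG (K_\Psi) \leq L$ with the max-kernel hypothesis for $K_\Psi$ already available from Theorem~\ref{thm:IntroductionCoorbitWellDefinedConditions}, via the triangle inequality inside the oscillation.

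For (ii), the crux is the idempotence $K_\Psi \odot K_\Psi = K_\Psi$ of the reproducing kernel of a Parseval frame, which follows from the reconstruction formula $f = \int_X V_\Psi f(y) \, \psi_y \, d\mu(y)$. The discretized reproducing operator associated to the samples $(x_i)_{i \in I}$ has an integral kernel of the form $K_\Psi + E$ with $|E| \leq \oscUtG (K_\Psi) \leq L$ pointwise, hence $\|E\|_{\BBm} \leq \|L\|_{\BBm}$ by solidity. Expanding
\[
  (K_\Psi + E) \odot (K_\Psi + E) - K_\Psi
    = K_\Psi \odot E + E \odot K_\Psi + E \odot E
\]
and invoking the Banach-module product estimate of Proposition~\ref{prop:NewKernelModuleBasicProperties2}(\ref{enu:NewKernelModuleMultiplicationProperty}) produces
\[
  \bigl\| (K_\Psi + E) \odot (K_\Psi + E) - K_\Psi \bigr\|_{\BBm}
    \leq 2 \, \| K_\Psi \|_{\BBm} \| L \|_{\BBm} + \| L \|_{\BBm}^2
    = \| L \|_{\BBm} \bigl( 2 \| K_\Psi \|_{\BBm} + \| L \|_{\BBm} \bigr)
    < 1,
\]
which, via Proposition~\ref{prop:NewKernelModuleBasicProperties3}, transfers to a strict contraction estimate for the induced operator on $\Co_\Psi(\BanachOne)$; a Neumann series then yields the required bounded left- and right-inverses of $C_{\Psi_d}$ and $D_{\Psi_d}$.

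The main obstacle will be supplying the algebra structure required by Proposition~\ref{prop:NewKernelModuleBasicProperties2}(\ref{enu:NewKernelModuleMultiplicationProperty}): each composition demands a submultiplicative estimate $m(x,z) \leq C \, m(x,y) \, m(y,z)$. This holds verbatim for $m_v$ (which is of the form~\eqref{eq:SeparableMatrixWeight}) but has to be verified for the mixed $(m_v, m_0)$-products coming from $m = m_v + m_0$; a natural route is to split each of $K_\Psi \odot E$, $E \odot K_\Psi$ and $E \odot E$ into pieces weighted by $m_v$ and by $m_0$, then to exploit $m_0(x,y) \leq C \, u(x) u(y)$ together with the $\widetilde{\CalU}$-moderateness of $u$ to control each piece separately. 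A secondary, mostly bureaucratic difficulty is that the phase function $\Gamma$ need not be measurable; since it enters the hypothesis only through the measurable majorant $L$ of $\oscUtG (K_\Psi)$, all measurability considerations downstream reduce to handling $L$, which is acceptable within the framework of~\cite{kempka2015general}.
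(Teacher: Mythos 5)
Your approach goes off course at step~(ii): instead of \emph{verifying the hypotheses} of \cite[Theorem~2.48]{kempka2015general} (which is all the statement asks for), you attempt to re-derive the Neumann-series contraction that lives \emph{inside} that black box. In doing so you expand $(K_\Psi + E) \odot (K_\Psi + E) - K_\Psi$ and appeal to the Banach-module estimate of Proposition~\ref{prop:NewKernelModuleBasicProperties2}(\ref{enu:NewKernelModuleMultiplicationProperty}), which requires a submultiplicative weight $m(x,z) \leq C\, m(x,y)\, m(y,z)$. As you yourself flag, $m = m_v + m_0$ need not satisfy this: submultiplicativity holds for $m_v$, but $m_0$ is only constrained by an upper bound $m_0(x,y) \leq C\,u(x)u(y)$, with no lower bound on $u$, so the mixed terms (e.g.\ $m_0(x,z)$ versus $m_v(x,y)\,m_0(y,z)$) are not controllable. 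Your proposed remedy of ``splitting into pieces'' and ``controlling each separately'' is not a proof and does not resolve this; the obstacle is genuine, not bureaucratic.

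The paper's actual proof (Proposition~\ref{pro:coorbitsWithBm2}) never touches the kernel product and so sidesteps the submultiplicativity issue entirely. It observes that the hypotheses of \cite[Theorem~2.48]{kempka2015general} are phrased in terms of $\| \cdot \|_{\AAvA} := \max\{ \|m_v \cdot \|_{\AAi}, \|\Phi_\cdot\|_{\BanachOne\to\BanachOne} \}$, and then establishes the single norm comparison $\| K \|_{\AAvA} \leq \| K \|_{\BBm}$ for all kernels $K$. This follows from two facts you already have at hand: $\|m_v K\|_{\AAi} = \|K\|_{\AAi_{m_v}} \leq \|K\|_{\BBi_{m_v}} \leq \|K\|_{\BBi_m}$ by Proposition~\ref{prop:NewKernelModuleBasicProperties1}(\ref{enu:NewKernelModuleEmbedsInOld}) and solidity (since $m \geq m_v$), and $\|\Phi_K\|_{\BanachOne\to\BanachOne} \leq \|K\|_{\BBi_{m_0}} \leq \|K\|_{\BBi_m}$ by Condition~\eqref{eq:CoorbitWeightConditions} and solidity (since $m \geq m_0$). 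Combined with the symmetry $m^T = m$ and Proposition~\ref{prop:NewKernelModuleBasicProperties2}(\ref{enu:NewKernelModuleAdjoint}) (giving $\|L^T\|_{\BBm} = \|L\|_{\BBm}$), the hypothesis $\|L\|_{\BBm}(2\|K_\Psi\|_{\BBm} + \|L\|_{\BBm}) < 1$ dominates the required condition $\delta(2\,\|\,|K_\Psi|\,\|_{\AAvA} + \delta) < 1$ with $\delta = \max\{\|L\|_{\AAvA}, \|L^T\|_{\AAvA}\}$. No product estimate in $\BBm$ is ever invoked. Your part~(i) is fine in spirit (solidity plus $m \geq m_0$ gives $\BanachOne$-boundedness of any kernel $\leq L$), but the gap in part~(ii) is fatal to the proposal as written.
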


\begin{rem*}
  Again, the conclusion of the theorem is deliberately not completely precise;
  a rigorous version is given in Proposition~\ref{pro:coorbitsWithBm2}.
\end{rem*}

\section{Proving \texorpdfstring{Schur's}{Schurʼs} test for weighted mixed-norm Lebesgue spaces}
\label{sec:MixedNormSchur}

In this section, we prove our generalization of Schur's test
to (weighted) mixed-norm Lebesgue spaces; that is,
we prove Theorems~\ref{thm:SchurTestSufficientUnweighted} and \ref{thm:SchurNecessity}.
We will assume throughout that $(X_i, \CalF_i, \mu_i)$ and $(Y_i, \CalG_i, \nu_i)$ are
$\sigma$-finite measure spaces for $i = 1,2$.
Define $X := X_1 \times X_2$ and $Y := Y_1 \times Y_2$, which we equip with the respective
product $\sigma$-algebras $\CalF := \CalF_1 \otimes \CalF_2$ or $\CalG := \CalG_1 \otimes \CalG_2$,
and with the product measures $\mu := \mu_1 \otimes \mu_2$ and $\nu := \nu_1 \otimes \nu_2$.

\subsection{Sufficiency of the generalized Schur condition}
\label{sub:MixedNormSchurSufficient}

In this subsection, we prove that the integral operator
$\Phi_K : \lebesgue^{p,q}(\nu) \to \lebesgue^{p,q}(\mu)$ is indeed well-defined and bounded
if $C_i (K) < \infty$ for all $i \in \{1,2,3,4\}$,
with $C_i (K)$ as defined in Equation~\eqref{eq:MixedNormSchurConstants}.

To simplify the subsequent proofs, we first show that it suffices to only consider non-negative
kernels $K \geq 0$ and functions $f \geq 0$, and show that
\(
  \|\Phi_K \, f\|_{\lebesgue^{p,q}(\mu)}
  \leq [\max_{i \in J} C_i (K)]
       \cdot \|f\|_{\lebesgue^{p,q}(\nu)}
\)
for some subset $J \subset \{1,2,3,4\}$ that might depend on $p,q$.

To see why this is enough, suppose we know that
\(
  \|\Phi_{|K|} \, f\|_{\lebesgue^{p,q}(\mu)}
  \leq \bigl[ \max_{i \in J} C_i (|K|) \bigr]
       \cdot \|f\|_{\lebesgue^{p,q}(\nu)}
\)
for all $f \geq 0$.
Now, first note that $C_i (K) = C_i (|K|)$;
hence, if $C_i (K) < \infty$, then $C_i (|K|) < \infty$ as well.
Next, note that if $C_i (K) < \infty$ for all $i \in J$, then
\[
  |\Phi_K f (x)|
  \leq \int_{Y}
         |K|(x,y) \, |f|(y)
       \, d \nu(y)
  =    [\Phi_{|K|} \, |f|](x)
  <    \infty
\]
for $f \in \lebesgue^{p,q}(\nu)$ and almost all $x \in X$.
Hence, $\Phi_K \, f$ is an almost-everywhere well-defined function
which is measurable by Fubini's theorem, and which satisfies
\[
  \|\Phi_K \, f\|_{\lebesgue^{p,q}(\mu)}
  \leq \big\| \, \Phi_{|K|} |f| \, \big\|_{\lebesgue^{p,q}(\mu)}
  \leq \big[ \max_{i \in J} C_i (|K|) \big] \cdot \| \, |f| \, \|_{\lebesgue^{p,q}(\nu)}
  =    \big[ \max_{i \in J} C_i (K) \big] \cdot \| f \|_{\lebesgue^{p,q}(\nu)},
\]
as desired.
Therefore, in the following proofs we will concentrate on the case where $K,f$ are non-negative.
This has the advantage that $\Phi_K \, f : X \to [0,\infty]$ is always a well-defined measurable
function, as a consequence of Tonelli's theorem.

\medskip{}

We start our proof by considering the case $p \leq q$.
In the proof and in the remainder of the paper, we will frequently use the (elementary) estimate
\begin{equation}
  \esssup_{\omega \in \Omega}
    \int_{\Lambda}
      F(\omega,\lambda)
    \, d \mu(\lambda)
  \leq \int_{\Lambda}
         \esssup_{\omega \in \Omega}
           F(\omega,\lambda)
       \, d \mu(\lambda),
  \label{eq:EsssupOfIntegral}
\end{equation}
which holds for any measurable function $F : \Omega \times \Lambda \to [0,\infty]$,
where $(\Omega,\nu)$ and $(\Lambda,\mu)$ are $\sigma$-finite measure spaces.
To see this, first note that $\lambda \mapsto \esssup_{\omega \in \Omega} F(\omega,\lambda)$
is measurable, for instance as a consequence of Lemma~\ref{lem:CountableLInfinityCharacterization}.
Next, Equation~\eqref{eq:EsssupOfIntegral} is trivial if the right-hand side is infinite.
On the other hand, if the right-hand side is finite,
we see for arbitrary $g \in \lebesgue^1 (\nu)$ with $g \geq 0$ that
\[
  \int_\Omega \!
    g(\omega) \!\!
    \int_\Lambda \!\!
      F(\omega,\lambda)
    \, d\mu(\lambda)
  \, d \nu(\omega)
  = \!\! \int_\Lambda \!
           \int_\Omega \!
             g(\omega)
             F(\omega,\lambda)
           \, d \nu(\omega)
         \, d\mu(\lambda)
  \leq \!
       \|g\|_{\lebesgue^1(\nu)} \!
       \int_\Lambda \!
         \esssup_{\omega \in \Omega}
           F(\omega,\lambda)
       \, d\mu(\lambda).
\]
In view of the characterization of the $\lebesgue^\infty(\nu)$-norm by duality
(see \mbox{\cite[Theorem~6.14]{FollandRA}}), this implies
\(
  \|\omega \mapsto \int_\Lambda F(\omega,\lambda) \, d \mu(\lambda)\|_{\lebesgue^\infty(\nu)}
  \leq \int_\Lambda \esssup_{\omega \in \Omega} F(\omega,\lambda) \, d \mu (\lambda)
\),
which is precisely \eqref{eq:EsssupOfIntegral}.

With these preparations, we now prove Theorem~\ref{thm:SchurTestSufficientUnweighted}
for the case $p \leq q$.

\begin{proposition}\label{prop:SchurTestMixedCase1}
  Let $\XIndexTuple{i}$ and $\YIndexTuple{i}$ be $\sigma$-finite measure spaces
  for $i \in \{ 1, 2 \}$, and let
  $\XTuple = \ProductTupleX$ and $\YTuple = \ProductTupleY$.

  Assume that $K : X \times Y \to \CC$ is measurable and satisfies
  $C_i (K) < \infty$ for $i \in \{1,2,3\}$, where $C_i (K)$ is as defined in
  Equation~\eqref{eq:MixedNormSchurConstants}.
  Finally, assume that $p,q \in [1, \infty]$ with $p \leq q$.
  Then
  \[
    \Phi_{K}
    : \lebesgue^{p,q}(\nu) \to \lebesgue^{p,q}(\mu)
  \]
  is well-defined (with absolute convergence of the defining integral for $\mu$-almost all $x \in X$)
  and bounded, with
  \(
    \vertiii{\Phi_{K}}_{\lebesgue^{p,q}(\nu) \to \lebesgue^{p,q}(\mu)}
    \leq \max_{1 \leq i \leq 3} C_i (K)
    .
  \)
\end{proposition}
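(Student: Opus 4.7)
The plan is to reduce to non-negative $K$ and $f$ (as already explained in the paper's preamble), and then combine Hölder's inequality, Tonelli's theorem, and one application of the classical (scalar-valued) Schur test to the outer variable $x_2 \in X_2$. The guiding idea is that the three constants $C_1,C_2,C_3$ correspond exactly to the three ``corners'' $(p,q)=(\infty,\infty),(1,1),(1,\infty)$ of the region $\{p\le q\}$, so one expects a single estimate with exponents summing to $1$ involving only those three quantities.

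First I would handle the trivial endpoint $p=q=\infty$ directly: pulling $f$ out in essential-sup, $|\Phi_K f(x)|\le \|f\|_{L^\infty(\nu)}\int_Y K(x,y)\,d\nu(y)\le C_1(K)\|f\|_{L^\infty(\nu)}$. For the remaining range $p\in[1,\infty)$, $q\in[p,\infty]$, the key pointwise estimate comes from Hölder with exponents $(p',p)$ applied to the measure $K(x,\cdot)\,d\nu$:
\[
  (\Phi_K f)(x)
  \;\le\; \Big(\!\int_Y K(x,y)\,d\nu(y)\Big)^{1/p'}\!\Big(\!\int_Y K(x,y)\,f(y)^p\,d\nu(y)\Big)^{1/p}
  \;\le\; C_1(K)^{1/p'}\Big(\!\int_Y K(x,y)\,f(y)^p\,d\nu(y)\Big)^{1/p}.
\]
For $p=1$ this is just an equality; the factor $C_1^{1/p'}=1$ in that case is harmless.

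Next I would raise to the $p$-th power, integrate in $x_1$, use Tonelli to swap the $x_1$-integration with the $y$-integration, and then bound $\int_{X_1}K\,d\mu_1$ by $B(x_2,y_2):=\operatorname*{ess\,sup}_{y_1}\int_{X_1}K\,d\mu_1$ for a.e.\ $y_1$. Writing $F_p(y_2):=\|f(\cdot,y_2)\|_{L^p(\nu_1)}^p$ and $g(x_2):=\|(\Phi_K f)(\cdot,x_2)\|_{L^p(\mu_1)}$, this gives
\[
  g(x_2)^p
  \;\le\; C_1(K)^{p/p'}\int_{Y_2}B(x_2,y_2)\,F_p(y_2)\,d\nu_2(y_2).
\]
Finally, since $p\le q$, the exponent $r:=q/p\in[1,\infty]$ is admissible for a second application of the classical (unweighted) Schur test, this time to the kernel $B$ viewed as a kernel on $X_2\times Y_2$: its row-sum is $\operatorname*{ess\,sup}_{x_2}\int_{Y_2}B\,d\nu_2=C_3(K)$, and its column-sum is $\operatorname*{ess\,sup}_{y_2}\int_{X_2}B\,d\mu_2\le C_2(K)$ by \eqref{eq:EsssupOfIntegral}. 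Therefore the operator $F_p\mapsto \int B(\cdot,y_2)F_p(y_2)\,d\nu_2$ sends $L^r(\nu_2)$ to $L^r(\mu_2)$ with norm $\le C_3^{1-1/r}C_2^{1/r}$. Combining with $\|F_p\|_{L^r(\nu_2)}^{r}=\|f\|_{L^{p,q}(\nu)}^{q}$ yields
\[
  \|\Phi_K f\|_{L^{p,q}(\mu)}
  \;\le\; C_1(K)^{1/p'}\,C_3(K)^{1/p-1/q}\,C_2(K)^{1/q}\,\|f\|_{L^{p,q}(\nu)}
  \;\le\; \max_{1\le i\le 3} C_i(K)\cdot\|f\|_{L^{p,q}(\nu)},
\]
the last inequality holding because the three exponents are non-negative (using $p\le q$) and sum to $1$. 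The case $q=\infty$ with $p<\infty$ is handled the same way, replacing the Schur step by the trivial essential-sup bound $\int B\,F_p\,d\nu_2\le \|F_p\|_{L^\infty}\cdot\operatorname*{ess\,sup}_{x_2}\int B\,d\nu_2$.

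The main potential obstacle is purely technical bookkeeping: making sure all the intermediate functions (notably $B(x_2,y_2)$, the partial kernels $K^{(x_2,y_2)}$, and the essential-sup quantities such as $\operatorname*{ess\,sup}_{x_2}\int B\,d\nu_2$) are measurable and fit the hypotheses of Tonelli's theorem and \eqref{eq:EsssupOfIntegral}, and correctly interpreting the exponent formulas at the endpoints $p\in\{1,\infty\}$, $q\in\{1,\infty\}$ (where factors like $C_1^{1/p'}$ or $C_3^{1/p-1/q}$ degenerate to $1$). This is where one needs Lemma~\ref{lem:CountableLInfinityCharacterization} together with the $\sigma$-finiteness assumptions on all four factor measures.
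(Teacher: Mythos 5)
Your overall strategy mirrors the paper's: reduce to non-negative $K$ and $f$, apply Hölder with exponents $(p',p)$ to pull out the factor $C_1(K)^{1/p'}$, and reduce the problem to an estimate at the exponent pair $(1, q/p)$. The gap is in how you finish that reduced case. You replace the paper's duality argument by a direct application of Schur's test on $X_2 \times Y_2$ to the kernel
\[
  B(x_2,y_2)
  := \esssup_{y_1 \in Y_1}
       \int_{X_1}
         \bigl| K \bigl( (x_1,x_2), (y_1,y_2) \bigr) \bigr|
       \, d\mu_1(x_1),
\]
and for that you need both row and column sums of $B$ to be controlled. The row-sum identity $\esssup_{x_2}\int_{Y_2}B\,d\nu_2 = C_3(K)$ is fine, but the claimed column-sum bound $\esssup_{y_2}\int_{X_2}B\,d\mu_2 \leq C_2(K)$ is false, and Equation~\eqref{eq:EsssupOfIntegral} in fact points the \emph{opposite} way: pulling the $\esssup_{y_1}$ inside the $x_2$-integral can only increase the value, so one has
\(
  \int_{X_2} B(x_2,y_2)\,d\mu_2(x_2)
  \geq
  \esssup_{y_1}\int_{X_2}\int_{X_1}|K|\,d\mu_1\,d\mu_2
\),
which is the quantity $\leq C_2(K)$ -- there is no inequality in the direction you need. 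A concrete counterexample: take $X_1$ and $Y_2$ to be one-point spaces (with Dirac measures), $X_2 = Y_1 = \N$ with counting measure, and $K\bigl((x_1,k),(n,y_2)\bigr) := \delta_{k,n}$. Then $\Phi_K$ is the identity $\ell^p(\N) \cong \lebesgue^{p,q}(\nu) \to \lebesgue^{p,q}(\mu) \cong \ell^q(\N)$, so the proposition holds with constant $1$; indeed $C_1 = C_2 = C_3 = 1$ (and $C_4 = \infty$, which is permitted). But $B(k,y_2) \equiv 1$, so $\int_{X_2}B\,d\mu_2 = \infty$, and the Schur test on $B$ gives nothing. The loss already occurs in your pointwise estimate $g(x_2)^p \leq C_1^{p/p'}\int_{Y_2}B(x_2,y_2)F_p(y_2)\,d\nu_2$: in this example it replaces $|f(k)|$ by $\|f\|_{\ell^p}$, uniformly in $k$, after which no summability in $k$ can be recovered.

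The underlying issue is the order of operations: you take $\esssup_{y_1}$ \emph{before} integrating over $x_2$, discarding exactly the $y_1$-dependence that couples the inner $Y$-coordinate to the outer $X$-coordinate. The paper's Step 2 keeps $y_1$ alive: it tests $\|\Phi_K f(\cdot,x_2)\|_{\lebesgue^1(\mu_1)}$ against $h \in \lebesgue^{q'}(\mu_2)$, splits $H(x_2,y) = H^{1/q'}H^{1/q}$ inside the $x_2$-integral via Hölder, and applies the $C_2$-bound to the $H^{1/q}$ factor \emph{while it still depends on} $(y_1,y_2)$; only afterwards does it use \eqref{eq:EsssupOfIntegral} to pass from $H$ to $B$ in the remaining $H^{1/q'}$ factor, and close with the $C_3$-bound. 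So the $x_2$-integration and the $y_1$-supremum occur in the opposite order from yours, and that ordering is essential. Your final exponent formula $C_1^{1/p'}C_3^{1/p-1/q}C_2^{1/q}$ does match what the paper's argument produces, but the route you propose to it is blocked at the column-sum step.
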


\begin{proof}
  As discussed at the beginning of Section~\ref{sub:MixedNormSchurSufficient},
  it suffices to consider non-negative kernels $K \geq 0$, and to show that
  \(
    \| \Phi_K \, f \|_{\lebesgue^{p,q}(\mu)}
    \leq \max \{ C_1 (K) , C_2 (K), C_3(K) \} \cdot \|f\|_{\lebesgue^{p,q}(\nu)}
  \)
  for $f \in \lebesgue^{p,q} (\nu)$ with $f \geq 0$.
  We divide the proof of this fact into three steps.

  \medskip{}

  \noindent
  \textbf{Step 1} \emph{(the case $p = \infty$):}
  This implies $q = \infty$, since $p \leq q$.
  For $f \in \lebesgue^{p,q}(\nu) = \lebesgue^\infty(\nu)$ with $f \geq 0$, we have
  \[
    0
    \leq \Phi_K f (x)
    \leq \int_Y K(x,y) \, f(y) \, d \nu (y)
    \leq \int_Y K(x,y) \, d \nu (y) \cdot \|f\|_{\lebesgue^{\infty}(\nu)}
    \leq C_1 (K) \cdot \|f\|_{\lebesgue^{p,q}(\nu)}
    <    \infty
  \]
  for $\mu$-almost all $x \in X$, and hence
  $\| \Phi_K f \|_{\lebesgue^{p,q}(\mu)} \leq C_1 (K) \cdot \| f \|_{\lebesgue^{p,q}(\nu)}$.

  \medskip{}

  \noindent
  \textbf{Step 2} \emph{(the case $p = 1$):}
  If we also have $q = 1$, then $\lebesgue^{p,q} = \lebesgue^1$, so that the standard version
  of Schur's test (see \cite[Theorem~6.18]{FollandRA}) shows that
  $\Phi_K : \lebesgue^1(\nu) \to \lebesgue^1 (\mu)$ is well-defined and bounded, with
  \(
    \vertiii{\Phi_K}_{\lebesgue^{p,q} \to \lebesgue^{p,q}}
    = \vertiii{\Phi_K}_{\lebesgue^1 \to \lebesgue^1}
    \leq \max \{ C_1(K), C_2(K) \}
    ,
  \)
  as desired.

  For the case $q \in (1,\infty]$, define
  \[
    H :
    X_2 \times Y \to [0,\infty],
    (x_2, y) \mapsto \int_{X_1} K \big( (x_1,x_2), y \big) \, d \mu_1 (x_1),
  \]
  and note that
  \(
    \int_{X_2} H(x_2, y) \, d \mu_2(x_2)
    \leq C_2 (K)
  \)
  for $\nu$-almost all $y \in Y$.

  Now, we distinguish two cases.
  First, if $q = \infty$, we see for $f \in \lebesgue^{p,q}(\nu)$ with $f \geq 0$ that
  \begin{align*}
    \|\Phi_K f\|_{\lebesgue^{p,q}(\mu)}
    & = \esssup_{x_2 \in X_2}
          \int_{X_1}
            \int_Y
              K \big( (x_1,x_2), y \big)
              f(y)
            \, d \nu(y)
          \, d \mu_1 (x_1) \\
    ({\scriptstyle{\text{Tonelli's theorem}}})
    & = \esssup_{x_2 \in X_2}
          \int_{Y_2}
            \int_{Y_1}
              H \big( x_2, (y_1,y_2) \big)
              f(y_1,y_2)
            \, d\nu_1(y_1)
          \, d \nu_2(y_2) \\
    ({\scriptstyle{\text{Hölder}}})
    & \leq \esssup_{x_2 \in X_2}
             \int_{Y_2}
               \|f(\bullet,y_2)\|_{\lebesgue^1 (\nu_1)}
               \esssup_{y_1 \in Y_1}
                 H \big( x_2, (y_1,y_2) \big)
             \, d \nu_2(y_2) \\
    & \leq \|f\|_{\lebesgue^{1,\infty}(\nu)} \cdot C_3 (K) < \infty.
  \end{align*}

  It remains to consider the case $q \in (1,\infty)$.
  Here, we see by repeated applications of Tonelli's theorem and Hölder's inequality
  for $f \in \lebesgue^{p,q}(\nu)$ and $h \in \lebesgue^{q'}(\mu_2)$
  with $f, h \geq 0$ that
  \begin{align*}
    & \int_{X_2}
        \big\| (\Phi_K f)(\bullet, x_2) \big\|_{\lebesgue^1(\mu_1)} \cdot h(x_2)
      \, d \mu_2 (x_2) \\
    ({\scriptstyle{\text{Tonelli}}})
    & = \int_Y
          f(y)
          \int_{X_2}
            h(x_2) \cdot [H(x_2, y)]^{\frac{1}{q'} + \frac{1}{q}}
          \, d \mu_2 (x_2)
        \, d \nu(y) \\
    ({\scriptstyle{\text{Hölder}}})
    & \leq \int_Y
             f(y)
             \left[
               \int_{X_2}
                 [h(x_2)]^{q'}
                 H(x_2, y)
               \, d \mu_2(x_2)
             \right]^{\frac{1}{q'}}
             \left[
               \int_{X_2}
                 H(x_2,y)
               \, d \mu_2(x_2)
             \right]^{\frac{1}{q}}
           \, d \nu(y) \\
    ({\scriptstyle{\text{Tonelli}}})
    & \leq [C_2 (K)]^{\frac{1}{q}}
           \int_{Y_2}
             \int_{Y_1}
               \!\! f(y_1, y_2)
               \left[
                 \int_{X_2}
                   [h(x_2)]^{q'}
                   H \big( x_2, (y_1, y_2) \big)
                 \, d \mu_2(x_2)
               \right]^{\frac{1}{q'}} \!\!\!
             \, d \nu_1(y_1)
           \, d \nu_2(y_2) \\
    ({\scriptstyle{\text{Eq.~}\eqref{eq:EsssupOfIntegral}, \, p = 1}})
    & \leq [C_2 (K)]^{\frac{1}{q}} \!
           \int_{Y_2} \!\!
             \|f(\bullet,y_2)\|_{\lebesgue^{p}(\nu_1)} \!
             \left[ \!
               \int_{X_2} \!\!
                 [h(x_2)]^{q'}
                 \esssup_{y_1 \in Y_1}
                   H \big( x_2, (y_1,y_2) \big)
               \, d \mu_2(x_2)
             \right]^{\!\frac{1}{q'}} \!\!\!\!
           d \nu_2(y_2) \\
    ({\scriptstyle{\text{Hölder, Tonelli}}})
    & \leq [C_2 (K)]^{\frac{1}{q}}
           \, \|f\|_{\lebesgue^{p,q}(\nu)}
           \left[
             \int_{X_2}
               [h(x_2)]^{q'}
               \int_{Y_2} \!
                 \esssup_{y_1 \in Y_1}
                   H \big( x_2, (y_1,y_2) \big)
               \, d \nu_2(y_2)
             d \mu_2(x_2)
           \right]^{\frac{1}{q'}} \\
    & \leq [C_2 (K)]^{\frac{1}{q}}
           \, \|f\|_{\lebesgue^{p,q}(\nu)}
           [C_3 (K)]^{\frac{1}{q'}}
           \left[
             \int_{X_2}
               [h(x_2)]^{q'}
             d \mu_2(x_2)
           \right]^{\frac{1}{q'}} \\
    & =    [C_2(K)]^{\frac{1}{q}}
           [C_3(K)]^{\frac{1}{q'}}
           \|f\|_{\lebesgue^{p,q}(\nu)}
           \|h\|_{\lebesgue^{q'} (\mu_2)}
      < \infty.
  \end{align*}
  Using the characterization of the $\lebesgue^{q}(\mu_2)$-norm by duality
  (see \cite[Theorem~6.14]{FollandRA}), and recalling again that $p = 1$, this implies
  \(
    \|\Phi_K f\|_{\lebesgue^{p,q}(\mu)}
    \leq [C_2(K)]^{\frac{1}{q}} \,
         [C_3(K)]^{\frac{1}{q'}} \,
         \|f\|_{\lebesgue^{p,q}(\nu)}
    <    \infty
  \).
  This completes the proof for the case $p =1$.

  \medskip{}

  \noindent
  \textbf{Step 3} \emph{(the case $p \in (1,\infty)$):}
  Define $r := \tfrac{q}{p}$ (with the understanding that $r = \infty$ if $q = \infty$),
  and note $r \in [1,\infty]$ since $p \leq q$.
  Let $f \in \lebesgue^{p,q}(\nu)$ with $f \geq 0$, and set $g := f^p$.
  A straightforward calculation shows that $g \in \lebesgue^{1,r}(\nu)$, with
  $\|g\|_{\lebesgue^{1,r}(\nu)} = \|f\|_{\lebesgue^{p,q}(\nu)}^{p}$.
  Furthermore, we see by Hölder's inequality and by definition of $C_1 (K)$ that
  \begin{align*}
    0   \leq (\Phi_K f)(x)
      & =    \int_{Y} [K(x,y)]^{\frac{1}{p'} + \frac{1}{p}} f(y) \, d \nu (y)
        \leq \biggl[
               \int_Y K(x,y) \, d \nu(y)
             \biggr]^{\frac{1}{p'}}
             \biggl[
               \int_Y K(x,y) g(y) \, d \nu(y)
             \biggr]^{\frac{1}{p}} \\
      & \leq [C_1(K)]^{\frac{1}{p'}} \cdot [(\Phi_K \, g)(x)]^{\frac{1}{p}}
  \end{align*}
  for $\mu$-almost all $x \in X$.
  Therefore, by employing the result from Step~2, we see that
  \begin{align*}
    \|\Phi_K f\|_{\lebesgue^{p,q}(\mu)}
    & \leq [C_1(K)]^{\frac{1}{p'}}
           \cdot \Big\|
                   x_2 \mapsto \big\|
                                 [(\Phi_K \, g) (\bullet, x_2)]^{1/p}
                               \big\|_{\lebesgue^p(\mu_1)}
                 \Big\|_{\lebesgue^q(\mu_2)} \\
    & =    [C_1(K)]^{\frac{1}{p'}}
           \cdot \Big\|
                   x_2 \mapsto \big\|
                                 (\Phi_K \, g) (\bullet, x_2)
                               \big\|_{\lebesgue^1(\mu_1)}
                 \Big\|_{\lebesgue^r(\mu_2)}^{1/p} \\
    & \leq [C_1(K)]^{\frac{1}{p'}}
           \cdot \Big[ \|g\|_{\lebesgue^{1,r}} \cdot \max_{1 \leq i \leq 3} C_i (K) \Big]^{1/p}
      \leq \|f\|_{\lebesgue^{p,q}(\nu)} \cdot \max_{1 \leq i \leq 3} C_i (K)
      <    \infty.
      \qedhere
  \end{align*}
\end{proof}

For the proof of the case $p > q$, we will make use of the following duality result:

\begin{lemma}\label{lem:AdjointBoundedness}
  Let $(X,\CalF,\mu) = (X_1 \times X_2, \CalF_1 \otimes \CalF_2, \mu_1 \otimes \mu_2)$
  and $(Y,\CalG,\nu) = (Y_1 \times Y_2, \CalG_1 \otimes \CalG_2, \nu_1 \otimes \nu_2)$
  and assume that $\mu_1,\mu_2,\nu_1,\nu_2$ are $\sigma$-finite.

  Let $K : X \times Y \to [0,\infty]$ be measurable, and define the \emph{transposed kernel} as
  \begin{equation}
    K^T : Y \times X \to [0,\infty], (y,x) \mapsto K(x,y) .
    \label{eq:TransposedKernelDefinition}
  \end{equation}
  Let $p,q \in [1,\infty]$.
  Then $\Phi_K : \lebesgue^{p,q}(\nu) \to \lebesgue^{p,q}(\mu)$ is well-defined and bounded
  if and only if $\Phi_{K^T} : \lebesgue^{p',q'}(\mu) \to \lebesgue^{p',q'}(\nu)$ is.
  In this case,
  \(
    \vertiii{\Phi_K}_{\lebesgue^{p,q} \to \lebesgue^{p,q}}
    = \vertiii{\Phi_{K^T}}_{\lebesgue^{p',q'} \to \lebesgue^{p',q'}} .
  \)
\end{lemma}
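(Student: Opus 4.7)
Since $(K^T)^T = K$, $(p')' = p$, and $(q')' = q$, it suffices to prove the implication in one direction, together with the inequality $\vertiii{\Phi_{K^T}}_{\lebesgue^{p',q'} \to \lebesgue^{p',q'}} \leq \vertiii{\Phi_K}_{\lebesgue^{p,q} \to \lebesgue^{p,q}}$; applying this to $K^T$ in place of $K$ then yields the reverse inequality and hence equality of norms.

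The heart of the argument is the duality pairing. Since $K \geq 0$, Tonelli's theorem applied to the product measure $\mu \otimes \nu$ gives, for any measurable non-negative $f : Y \to [0,\infty]$ and $g : X \to [0,\infty]$, the key identity
\begin{equation*}
  \int_X g(x) \cdot (\Phi_K f)(x) \, d\mu(x)
  = \int_{X \times Y} g(x) \, K(x,y) \, f(y) \, d(\mu \otimes \nu)(x,y)
  = \int_Y f(y) \cdot (\Phi_{K^T} g)(y) \, d\nu(y) .
\end{equation*}
Both sides are well-defined (possibly infinite), and Tonelli also guarantees that $\Phi_K f$ and $\Phi_{K^T} g$ are measurable $[0,\infty]$-valued functions.

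The plan is then to combine this identity with the duality characterization of the mixed-norm. Because $\mu_1,\mu_2,\nu_1,\nu_2$ are all $\sigma$-finite, iterating the standard duality characterization of the $\lebesgue^p$-norm (see \cite[Theorem~6.14]{FollandRA}) for the inner variable and then the outer variable yields, for every $p,q \in [1,\infty]$ and every measurable $h : Y \to [0,\infty]$,
\begin{equation*}
  \| h \|_{\lebesgue^{p,q}(\nu)}
  = \sup \Big\{ \int_Y h(y) \, \varphi(y) \, d\nu(y) \;\colon\; \varphi : Y \to [0,\infty] \text{ measurable, } \| \varphi \|_{\lebesgue^{p',q'}(\nu)} \leq 1 \Big\} ,
\end{equation*}
and analogously for $\mu$ in place of $\nu$ (one must be careful when $p$ or $q$ equals $\infty$, but $\sigma$-finiteness suffices; this is where the assumption is used). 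Assuming $\Phi_K : \lebesgue^{p,q}(\nu) \to \lebesgue^{p,q}(\mu)$ is bounded with operator norm $C$, take $g \in \lebesgue^{p',q'}(\mu)$ with $g \geq 0$. For any measurable $f : Y \to [0,\infty]$ with $\| f \|_{\lebesgue^{p,q}(\nu)} \leq 1$, the Tonelli identity combined with Hölder's inequality for mixed-norm spaces gives
\begin{equation*}
  \int_Y f(y) \cdot (\Phi_{K^T} g)(y) \, d\nu(y)
  = \int_X g(x) \cdot (\Phi_K f)(x) \, d\mu(x)
  \leq \| g \|_{\lebesgue^{p',q'}(\mu)} \cdot \| \Phi_K f \|_{\lebesgue^{p,q}(\mu)}
  \leq C \cdot \| g \|_{\lebesgue^{p',q'}(\mu)} .
\end{equation*}
Taking the supremum over admissible $f$ and invoking the duality characterization of the $\lebesgue^{p',q'}(\nu)$-norm yields $\| \Phi_{K^T} g \|_{\lebesgue^{p',q'}(\nu)} \leq C \cdot \| g \|_{\lebesgue^{p',q'}(\mu)}$. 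For general (signed or complex) $g \in \lebesgue^{p',q'}(\mu)$, one reduces to $|g|$ via $|\Phi_{K^T} g| \leq \Phi_{K^T} |g|$, which shows that $\Phi_{K^T} g$ is a.e.\ finite and yields the desired bound.

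The main obstacle is the mixed-norm duality formula, in particular the boundary cases where $p$ or $q$ equals $\infty$. I expect to justify it by iterating the $\sigma$-finite $\lebesgue^p$--$\lebesgue^{p'}$ duality from \cite[Theorem~6.14]{FollandRA} first in the inner variable and then in the outer variable; the non-negativity of all functions involved sidesteps the issue that $(\lebesgue^\infty)^*$ strictly contains $\lebesgue^1$. Everything else---Tonelli, the pointwise reduction to $|g|$, and the symmetry argument---is routine.
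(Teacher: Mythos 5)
Your argument is essentially the same as the paper's: both rest on the Tonelli pairing identity $\int_X g\,\Phi_K f\,d\mu = \int_Y f\,\Phi_{K^T} g\,d\nu$, the mixed-norm H\"older inequality, and the dual characterization of the $\lebesgue^{p,q}$-norm over non-negative test functions (the paper's Theorem~\ref{thm:MixedNormDuality}, which it quotes from Benedek--Panzone rather than re-deriving by iteration as you suggest). The only cosmetic difference is the direction: you deduce boundedness of $\Phi_{K^T}$ from that of $\Phi_K$, while the paper does the reverse; by the symmetry you both invoke, this is immaterial.
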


The proof of the above result is based on the following characterization
of the mixed $\lebesgue^{p,q}$-norm by duality.

\begin{theorem}\label{thm:MixedNormDuality}(easy consequence of \cite[Theorem~2 in Section~2]{MixedLpSpaces})
  Let $(Y_1, \CalG_1, \nu_1)$ and $(Y_2, \CalG_2, \nu_2)$ be $\sigma$-finite measure spaces.
  Let $f : Y_1 \times Y_2 \to \CC$ be measurable, and let $p,q \in [1, \infty]$.
  Then
  \[
    \|f\|_{\lebesgue^{p,q}(\nu)}
    = \sup_{\substack{g : Y_1 \times Y_2 \to [0,\infty)\\
                      \|g\|_{\lebesgue^{p',q'}} \leq 1}}
        \int_{Y}
          |f(y)| \cdot g(y)
        \, d \nu (y) .
  \]
\end{theorem}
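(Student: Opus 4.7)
The plan is to prove the identity as two inequalities, starting from the standard one-variable duality for $\sigma$-finite Lebesgue spaces, namely
\[
  \| h \|_{\lebesgue^r(\rho)}
  = \sup_{\substack{k \geq 0 \\ \| k \|_{\lebesgue^{r'}(\rho)} \leq 1}}
      \int h \cdot k \, d \rho ,
\]
valid for every $r \in [1,\infty]$ and every $\sigma$-finite measure space $(\Omega,\rho)$ (this is \cite[Theorem~6.14]{FollandRA}). Without loss of generality I may replace $f$ by $|f|$ and assume $f \geq 0$.

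For the inequality $\sup \leq \|f\|_{\lebesgue^{p,q}(\nu)}$, the key input is a mixed-norm Hölder inequality: given non-negative measurable $g$ on $Y$ with $\| g \|_{\lebesgue^{p',q'}(\nu)} \leq 1$, two successive applications of the one-variable Hölder inequality (inner in $\lebesgue^p(\nu_1)$ vs.\ $\lebesgue^{p'}(\nu_1)$, outer in $\lebesgue^q(\nu_2)$ vs.\ $\lebesgue^{q'}(\nu_2)$) yield
\[
  \int_{Y} f(y) \, g(y) \, d\nu(y)
  \leq \| f \|_{\lebesgue^{p,q}(\nu)} \cdot \| g \|_{\lebesgue^{p',q'}(\nu)}
  \leq \| f \|_{\lebesgue^{p,q}(\nu)}.
\]

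For the reverse inequality I would iterate the one-variable duality. Set $F(y_2) := \| f(\bullet, y_2) \|_{\lebesgue^p(\nu_1)}$, which is measurable by Fubini/Tonelli, so that $\| f \|_{\lebesgue^{p,q}(\nu)} = \| F \|_{\lebesgue^q(\nu_2)}$. Applying the duality identity in $\lebesgue^q(\nu_2)$, I fix an $\eps > 0$ and choose $0 \leq k \in \lebesgue^{q'}(\nu_2)$ with $\| k \|_{\lebesgue^{q'}} \leq 1$ and $\int_{Y_2} F(y_2) \, k(y_2) \, d\nu_2 \geq \| F \|_{\lebesgue^q(\nu_2)} - \eps$. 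Next, for each $y_2$ I need a non-negative measurable function $h(y_1, y_2)$, depending jointly measurably on $(y_1,y_2)$, such that $\| h(\bullet,y_2) \|_{\lebesgue^{p'}(\nu_1)} \leq 1$ almost everywhere and $\int_{Y_1} f(y_1,y_2) \, h(y_1,y_2) \, d\nu_1(y_1) \geq F(y_2) - \eps$ (or with a multiplicative factor close to $1$). Setting $g(y_1,y_2) := h(y_1,y_2) \, k(y_2)$, a direct computation gives $\| g \|_{\lebesgue^{p',q'}(\nu)} \leq \| k \|_{\lebesgue^{q'}} \leq 1$, and Tonelli plus the choices above yield $\int_Y f g \, d\nu \geq \| f \|_{\lebesgue^{p,q}(\nu)} - O(\eps)$; letting $\eps \downarrow 0$ gives the desired bound.

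The main obstacle is the joint measurability of the selector $h$. For $p \in [1,\infty)$ this is routine: the explicit Hölder-extremizer $h(y_1,y_2) := [f(y_1,y_2)]^{p-1} / [F(y_2)]^{p-1}$ (on $\{F > 0\}$, and $0$ elsewhere) is jointly measurable and gives equality $\int f(\bullet,y_2) h(\bullet,y_2) \, d\nu_1 = F(y_2)$ with $\| h(\bullet,y_2) \|_{\lebesgue^{p'}(\nu_1)} \leq 1$. The case $p = \infty$ is more delicate because there is no pointwise extremizer; here one can instead invoke $\sigma$-finiteness of $\nu_1$ together with a truncation/exhaustion argument to manufacture, for each $\eps > 0$, a jointly measurable $h$ supported on near-maximal level sets of $f(\bullet,y_2)$. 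Alternatively, since for $p,q < \infty$ the identity is contained in \cite[Theorem~2, Section~2]{MixedLpSpaces} (which identifies $(\lebesgue^{p,q})' = \lebesgue^{p',q'}$ isometrically), one can cite that result to cover the finite case, and then reduce each of the boundary cases ($p = \infty$, $q = \infty$, or both) to the finite case via monotone convergence on truncations $h \wedge n \cdot \Indicator_{Y_n}$, where $(Y_n)_n$ exhausts $Y$. Either route yields the claimed equality for all $p,q \in [1,\infty]$.
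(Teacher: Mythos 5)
The paper gives no explicit proof here; it simply cites \cite[Theorem~2 in Section~2]{MixedLpSpaces}, which establishes the dual-space identity $(\lebesgue^{p,q})^\ast \cong \lebesgue^{p',q'}$ for $p,q \in [1,\infty)$ and silently leaves the endpoint exponents to the reader. Your iterated one-variable-duality argument is the natural route to the full associate-norm identity for all $p,q \in [1,\infty]$, and both your easy direction and your treatment of $p \in [1,\infty)$ --- where $F$ is Tonelli-measurable and the explicit H\"older extremizer is jointly measurable --- are correct. Two things, however, break down exactly at $p = \infty$, which is the crux. First, the assertion that $F(y_2) = \esssup_{y_1 \in Y_1}\,|f(y_1,y_2)|$ is ``measurable by Fubini/Tonelli'' is wrong: measurability of an iterated $\esssup$ over a product is a genuine issue, which this paper confronts (and resolves) in Lemma~\ref{lem:CountableLInfinityCharacterization}. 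That lemma furnishes a fixed \emph{countable} family $f_n = \Indicator_{M_n}/\nu_1(M_n)$ with $\|f_n\|_{\lebesgue^1(\nu_1)} = 1$ such that $F(y_2) = \sup_n \int_{Y_1} |f(y_1,y_2)| f_n(y_1)\,d\nu_1(y_1)$ for every $y_2$; this both proves measurability of $F$ and, combined with the greedy-selection device used in Step~2 of the proof of Lemma~\ref{lem:CountableDualityCharacterization}, manufactures the jointly measurable near-extremizer $h$ that you postulate but do not construct. Second, your fallback --- cite Benedek--Panzone for finite exponents and ``reduce the boundary cases via monotone convergence on truncations'' --- does not actually reduce anything: truncating $f$ (or the would-be selector) leaves the pair $(p,q)$ untouched, so a duality statement at $p = \infty$ or $q = \infty$ is still required and the cited theorem does not supply it. The case $q = \infty$, $p < \infty$ is harmless (there $F$ is Tonelli-measurable and Folland's $\lebesgue^\infty/\lebesgue^1$ duality handles the outer step), so $p = \infty$ is the only real gap; Lemma~\ref{lem:CountableLInfinityCharacterization} is precisely the tool that fills it. Finally, the case $\|f\|_{\lebesgue^{p,q}(\nu)} = \infty$ should be addressed explicitly, since on $\{F = \infty\}$ the pointwise extremizer $|f|^{p-1}/F^{p-1}$ is undefined; a truncation of $|f|$ together with monotone convergence of both sides of the identity settles this.
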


\begin{proof}[Proof of Lemma~\ref{lem:AdjointBoundedness}]
  By symmetry (noting that $(K^T)^T = K$), it is enough to prove that if
  the operator ${\Phi_{K^T} : \lebesgue^{p',q'}(\mu) \to \lebesgue^{p',q'}(\nu)}$
  is bounded, then so is $\Phi_K : \lebesgue^{p,q}(\nu) \to \lebesgue^{p,q}(\mu)$,
  with operator norm
  \(
    \vertiii{\Phi_K}_{\lebesgue^{p,q} \to \lebesgue^{p,q}}
    \leq \vertiii{\Phi_{K^T}}_{\lebesgue^{p',q'} \to \lebesgue^{p',q'}} .
  \)
  Furthermore, as explained at the beginning of Section~\ref{sub:MixedNormSchurSufficient},
  it is enough to prove
  \(
    \| \Phi_K f \|_{\lebesgue^{p,q}(\mu)}
    \leq \vertiii{\Phi_{K^T}}_{\lebesgue^{p',q'} \to \lebesgue^{p',q'}}
         \cdot \| f \|_{\lebesgue^{p,q}(\nu)}
  \)
  for $f \in \lebesgue^{p,q}(\nu)$ with $f \geq 0$;
  here we use that $K$ is non-negative in Lemma~\ref{lem:AdjointBoundedness}.

  To see that the last inequality holds, let $f \in \lebesgue^{p,q}(\nu)$
  and $g \in \lebesgue^{p',q'}(\mu)$ with $f,g \geq 0$.
  Tonelli's theorem (which is applicable since all involved functions are non-negative),
  combined with Hölder's inequality for the mixed-norm Lebesgue spaces
  (see \cite[Equation~(1) in Section~2]{MixedLpSpaces}) shows that
  \begin{equation}
    \begin{split}
      \int_X
        [\Phi_K \, f] (x) \cdot g(x)
      \, d \mu(x)
      & = \int_Y
            \int_X
              K^T(y,x) \, f(y) \, g(x)
            \, d \mu(x)
          \, d \nu(y) \\
      & = \int_{Y}
            f(y) \cdot [\Phi_{K^T} \, g] (y)
          \, d \nu(y)
        \leq \|f\|_{\lebesgue^{p,q}(\nu)}
             \cdot \|\Phi_{\! K^T} \, g\|_{\lebesgue^{p',q'}(\nu)} \\
      & \leq \vertiii{\Phi_{K^T}}_{\lebesgue^{p',q'} \to \lebesgue^{p',q'}}
             \cdot \|f\|_{\lebesgue^{p,q}(\nu)}
             \cdot \|g\|_{\lebesgue^{p',q'}(\mu)} < \infty .
    \end{split}
    \label{eq:TransposedKernelBoundednessProof}
  \end{equation}
  On the one hand, this implies%
  \footnote{Otherwise, since $\mu_1,\mu_2$ are $\sigma$-finite, there would be sets $M \in \CalF$,
  as well as $A \in \CalF_1$, $B \in \CalF_2$ with $\mu_1(A) < \infty$ and $\mu_2(B) < \infty$
  and such that $\mu(M \cap (A \times B)) > 0$ and $\Phi_K f \equiv \infty$ on $M \cap (A \times B)$.
  But then the left-hand side of the inequality \eqref{eq:TransposedKernelBoundednessProof}
  would be infinite for the choice $g := \Indicator_{M \cap (A \times B)}$,
  although $g \in \lebesgue^{p',q'}(\mu)$.}
  that $[\Phi_K f](x) < \infty$ for $\mu$-almost all $x \in X$.
  Then, using the dual characterization of the $\lebesgue^{p,q}$-norm
  from Theorem~\ref{thm:MixedNormDuality}, we see that $\Phi_K f \in \lebesgue^{p,q} (\mu)$,
  and furthermore
  \({
    \|\Phi_K f\|_{\lebesgue^{p,q}(\mu)}
    \leq \vertiii{\Phi_{K^T}}_{\lebesgue^{p',q'} \to \lebesgue^{p',q'}}
         \cdot \|f\|_{\lebesgue^{p,q}(\nu)}
  }\).
\end{proof}

Finally, we will also use the following relation between the constants $C_i (K)$
from Equation~\eqref{eq:MixedNormSchurConstants} for the kernel $K$ and the constants
$C_i (K^T)$ for the transposed kernel.

\begin{lemma}\label{lem:SchurConstantsForAdjointKernel}
  Let $(X,\CalF,\mu) = (X_1 \times X_2, \CalF_1 \otimes \CalF_2, \mu_1 \otimes \mu_2)$
  and $(Y,\CalG,\nu) = (Y_1 \times Y_2, \CalG_1 \otimes \CalG_2, \nu_1 \otimes \nu_2)$
  and assume that $\mu_1,\mu_2,\nu_1,\nu_2$ are $\sigma$-finite.

  Let $K : X \times Y \to [0,\infty]$ be measurable, and let the transposed kernel
  $K^T : Y \times X \to [0,\infty]$ be as in Equation~\eqref{eq:TransposedKernelDefinition}.
  Then the constants $C_i$ introduced in Equation~\eqref{eq:MixedNormSchurConstants} satisfy
  \[
    C_1(K^T) = C_2 (K),
    \quad
    C_2 (K^T) = C_1 (K),
    \quad
    C_3 (K^T) = C_4(K),
    \quad \text{and} \quad
    C_4 (K^T) = C_3 (K).
  \]
\end{lemma}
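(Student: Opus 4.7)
The proof plan is essentially a direct verification by unpacking the definitions, since each constant $C_i$ is defined by a specific ordering of (essential) suprema and integrals, and transposition simply swaps the roles of $X$ and $Y$ (and correspondingly $\mu$ and $\nu$) in the formulas.

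First I would handle the easy identities $C_1(K^T) = C_2(K)$ and $C_2(K^T) = C_1(K)$. Since $K^T : Y \times X \to [0,\infty]$ takes its first argument in $Y$ and its second in $X$, the definition of $C_1$ applied to $K^T$ gives $C_1(K^T) = \esssup_{y \in Y} \int_X |K^T(y,x)| \, d\mu(x) = \esssup_{y \in Y} \int_X |K(x,y)| \, d\mu(x) = C_2(K)$. The identity $C_2(K^T) = C_1(K)$ follows by the same token, with the roles of the variables reversed.

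Next I would treat $C_3(K^T) = C_4(K)$ and $C_4(K^T) = C_3(K)$. Here the key point is that when the definition of $C_3$ is applied to the kernel $K^T$ living on $Y \times X = (Y_1 \times Y_2) \times (X_1 \times X_2)$, the role formerly played by $X_i$ is now played by $Y_i$ and vice versa. Concretely, one obtains
\[
  C_3(K^T) = \esssup_{y_2 \in Y_2}
             \int_{X_2}
               \Big(
                 \esssup_{x_1 \in X_1}
                   \int_{Y_1}
                     \big| K^T\big((y_1,y_2),(x_1,x_2)\big) \big|
                   \, d\nu_1(y_1)
               \Big)
             \, d\mu_2(x_2),
\]
and substituting $K^T((y_1,y_2),(x_1,x_2)) = K((x_1,x_2),(y_1,y_2))$ gives precisely the definition of $C_4(K)$. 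The identity $C_4(K^T) = C_3(K)$ is entirely analogous.

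There is no real obstacle here; the only thing to be mindful of is keeping track of which space and which measure is associated with each coordinate after transposing. All quantities involved are non-negative and the measurability of the inner essential suprema was already addressed after Equation~\eqref{eq:MixedNormSchurConstants}, so no technical issues arise.
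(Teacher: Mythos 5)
Your proof is correct and takes the same route as the paper, which simply remarks that the identities follow from the definitions; you have spelled out that bookkeeping explicitly, with the formulas for $C_1,C_2$ and $C_3,C_4$ correctly transposed.
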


\begin{proof}
  The assertion follows easily from the definitions.
\end{proof}

With this, we can now handle the case $p > q$.

\begin{proposition}\label{prop:SchurTestMixedCase2}
  Let $\XIndexTuple{i}$ and $\YIndexTuple{i}$ be $\sigma$-finite measure spaces for $i \in \{ 1,2 \}$,
  and let $\XTuple = \ProductTupleX$ and $\YTuple = \ProductTupleY$.

  Assume that $K : X \times Y \to \CC$ is measurable, and that the constants
  $C_1(K)$, $C_2(K)$, and $C_4(K)$ introduced in Equation~\eqref{eq:MixedNormSchurConstants}
  are finite.
  Furthermore, let $p,q \in [1, \infty]$ with $p > q$.
  Then
  \[
    \Phi_K : \lebesgue^{p,q}(\nu) \to \lebesgue^{p,q}(\mu)
  \]
  is well-defined and bounded
  (with absolute convergence of the defining integral for $\mu$-almost all $x \in X$), with
  \(
    \vertiii{\Phi_K}_{\lebesgue^{p,q} \to \lebesgue^{p,q}}
    \leq \max \big\{ C_1(K), C_2(K), C_4(K) \big\}
    .
  \)
\end{proposition}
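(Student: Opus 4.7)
The plan is to reduce this to the case already handled in Proposition~\ref{prop:SchurTestMixedCase1} by means of duality. Since $p > q$, the conjugate exponents satisfy $p' < q'$, which puts the pair $(p',q')$ in the regime where Proposition~\ref{prop:SchurTestMixedCase1} applies. The natural route is therefore to work with the transposed kernel $K^T : Y \times X \to [0,\infty]$ from Equation~\eqref{eq:TransposedKernelDefinition} and move back and forth between $\Phi_K$ and $\Phi_{K^T}$ using Lemma~\ref{lem:AdjointBoundedness}.

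As in the opening reduction of Section~\ref{sub:MixedNormSchurSufficient}, I would first observe that it suffices to treat non-negative kernels: for arbitrary complex-valued $K$ one has $|\Phi_K f| \leq \Phi_{|K|} |f|$, the constants $C_i(\cdot)$ are invariant under taking absolute values, and boundedness of $\Phi_{|K|}$ on $\lebesgue^{p,q}$ with the claimed norm transfers to $\Phi_K$; moreover $\Phi_{|K|}|f|$ being in $\lebesgue^{p,q}(\mu)$ forces a.e.\ finiteness, which yields the asserted absolute convergence. Hence from here on I may assume $K \geq 0$ and apply Lemmas~\ref{lem:AdjointBoundedness} and~\ref{lem:SchurConstantsForAdjointKernel} directly.

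Next I would invoke Lemma~\ref{lem:SchurConstantsForAdjointKernel} to translate the hypothesis: the finiteness of $C_1(K)$, $C_2(K)$, $C_4(K)$ is precisely the finiteness of $C_2(K^T)$, $C_1(K^T)$, $C_3(K^T)$, respectively. Since $p > q$ implies $p' < q'$ (the map $r \mapsto r/(r-1)$ is decreasing on $[1,\infty]$), in particular $p' \leq q'$, so Proposition~\ref{prop:SchurTestMixedCase1} applied to $K^T$ with the exponents $p',q'$ gives that $\Phi_{K^T} : \lebesgue^{p',q'}(\mu) \to \lebesgue^{p',q'}(\nu)$ is well-defined and bounded, with
\[
  \vertiii{\Phi_{K^T}}_{\lebesgue^{p',q'} \to \lebesgue^{p',q'}}
  \leq \max_{1 \leq i \leq 3} C_i(K^T)
  = \max \{ C_1(K), C_2(K), C_4(K) \}.
\]

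Finally, I would conclude by Lemma~\ref{lem:AdjointBoundedness}, which (by taking $(K^T)^T = K$) transfers boundedness of $\Phi_{K^T}$ on $\lebesgue^{p',q'}$ to boundedness of $\Phi_K$ on $\lebesgue^{p,q}$ with identical operator norm. The absolute convergence of the defining integral $\mu$-almost everywhere is inherited from the non-negative reduction, as explained above. I do not anticipate a serious obstacle here: the entire argument is essentially bookkeeping on the permutation of the indices $\{1,2,3,4\}$ under transposition, the elementary fact $p>q \Rightarrow p'<q'$, and the careful invocation of the two lemmas; the only mild subtlety is making sure the non-negativity reduction and Lemma~\ref{lem:AdjointBoundedness} (stated for non-negative kernels) are applied in the correct order, which is handled by first replacing $K$ with $|K|$ before dualizing.
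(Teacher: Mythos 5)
Your proposal is correct and follows essentially the same path as the paper's own proof: reduce to $K \geq 0$, pass to the transposed kernel, use Lemma~\ref{lem:SchurConstantsForAdjointKernel} to identify the constants, apply Proposition~\ref{prop:SchurTestMixedCase1} with the conjugate exponents $p' \leq q'$, and transfer back via Lemma~\ref{lem:AdjointBoundedness}. The only cosmetic difference is that the paper names the auxiliary kernel $L := |K|^T$ explicitly and verifies the constants for $L$, whereas you first replace $K$ by $|K|$ and then transpose; these are the same object and the same computation.
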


\begin{proof}
  Let $L := |K|^T : Y \times X \to [0,\infty)$ denote the transposed kernel
  of the (pointwise) absolute value of $K$.
  By Lemma~\ref{lem:SchurConstantsForAdjointKernel} and the explanation at the beginning
  of Subsection~\ref{sub:MixedNormSchurSufficient}, we see that
  ${C_1 (L) = C_2(|K|) = C_2(K) < \infty}$,
  $C_2 (L) = C_1(|K|) = C_1(K) < \infty$,
  and finally also ${C_3(L) = C_4(|K|) = C_4 (K) < \infty}$.
  Furthermore, since $p^{-1} < q^{-1}$, we see that the conjugate exponents $p', q'$
  satisfy $\tfrac{1}{p'} = 1 - p^{-1} > 1 - q^{-1} = \tfrac{1}{q'}$, and hence $p' < q'$.

  Therefore, Proposition~\ref{prop:SchurTestMixedCase1} shows that
  $\Phi_L : \lebesgue^{p',q'}(\mu) \to \lebesgue^{p',q'}(\nu)$ is well-defined and bounded, with
  \(
    \vertiii{\Phi_L}_{\lebesgue^{p',q'} \to \lebesgue^{p',q'}}
    \leq \max_{1 \leq i \leq 3} C_i (L)
    = \max\{ C_1(K), C_2(K), C_4(K) \} =: C
    .
  \)
  Since $L^T = |K|$, Lemma~\ref{lem:AdjointBoundedness} shows that
  $\Phi_{|K|} : \lebesgue^{p,q}(\nu) \to \lebesgue^{p,q}(\mu)$ is bounded, with
  $\vertiii{\Phi_{|K|}}_{\lebesgue^{p,q} \to \lebesgue^{p,q}} \leq C$.
  Finally, the reasoning from the beginning of Subsection~\ref{sub:MixedNormSchurSufficient}
  shows that $\Phi_K : \lebesgue^{p,q}(\nu) \to \lebesgue^{p,q}(\mu)$ is bounded, with
  \(
    \vertiii{\Phi_K}_{\lebesgue^{p,q} \to \lebesgue^{p,q}}
    \leq \vertiii{\Phi_{|K|}}_{\lebesgue^{p,q} \to \lebesgue^{p,q}}
    \leq C
    ,
  \)
  as claimed.
\end{proof}

By combining Propositions~\ref{prop:SchurTestMixedCase1} and \ref{prop:SchurTestMixedCase2},
we obtain Theorem~\ref{thm:SchurTestSufficientUnweighted}.

\subsection{Necessity of the generalized Schur condition}
\label{sub:MixedNormSchurNecessary}

In this subsection, we prove Theorem~\ref{thm:SchurNecessity}.
For this, we will need the following technical result,
the proof of which we defer to Appendix~\ref{sec:CountableDualityCharacterization}.

\begin{lemma}\label{lem:CountableDualityCharacterization}
  Let $(\Omega, \CalC)$ be a measurable space, and let
  $(Y, \CalG, \nu) = (Y_1 \times Y_2, \CalG_1 \otimes \CalG_2, \nu_1 \otimes \nu_2)$,
  where $\nu_1, \nu_2$ are $\sigma$-finite measures.

  Finally, let $H : \Omega \times Y \to [0,\infty]$ be measurable.
  Then there is a countable family $(h_n)_{n \in \N}$ of measurable functions
  $h_n : Y \to [0,\infty)$ such that $\| h_n \|_{\lebesgue^{1,\infty}(\nu)} \leq 1$,
  $h_n \in \lebesgue^1(\nu)$, and
  \[
    \| H(\omega, \bullet) \|_{\lebesgue^{\infty,1}(\nu)}
    = \sup_{n \in \N}
        \int_{Y}
          H(\omega, y) \cdot h_n (y)
        \, d \nu (y)
    \qquad \forall \, \omega \in \Omega.
  \]
\end{lemma}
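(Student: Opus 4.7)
The plan is to reduce the quantity $\| H(\omega, \bullet) \|_{\lebesgue^{\infty,1}(\nu)}$ to a countable supremum of linear functionals in $H(\omega, \bullet)$, by combining Lemma~\ref{lem:CountableLInfinityCharacterization} with the $\sigma$-finiteness of $\nu_2$. First, I would apply Lemma~\ref{lem:CountableLInfinityCharacterization} to the measurable function $(\omega, y_2, y_1) \mapsto H(\omega, y_1, y_2)$, treating $\Omega \times Y_2$ as the parameter space and $(Y_1, \CalG_1, \nu_1)$ as the test space, to obtain a countable family $(\phi_k)_{k \in \N}$ of non-negative measurable functions $\phi_k : Y_1 \to [0, \infty)$ with $\phi_k \in \lebesgue^1(\nu_1)$ and $\|\phi_k\|_{\lebesgue^1(\nu_1)} \leq 1$ such that
\[
 \esssup_{y_1 \in Y_1} H(\omega, y_1, y_2) = \sup_{k \in \N} \Psi^{(k)}(\omega, y_2), \qquad \Psi^{(k)}(\omega, y_2) := \int_{Y_1} H(\omega, y_1, y_2) \, \phi_k(y_1) \, d\nu_1(y_1).
\]
Next, using the $\sigma$-finiteness of $\nu_2$, I would write $Y_2 = \bigsqcup_{m \in \N} Y_2^{(m)}$ disjointly with $\nu_2(Y_2^{(m)}) < \infty$, and set $Y_2^{\leq M} := \bigsqcup_{m \leq M} Y_2^{(m)}$. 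Two successive applications of the monotone convergence theorem then give
\[
 \| H(\omega, \bullet) \|_{\lebesgue^{\infty,1}(\nu)} = \sup_{M, N \in \N} \int_{Y_2^{\leq M}} \max_{k \leq N} \Psi^{(k)}(\omega, y_2) \, d\nu_2(y_2).
\]

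For each measurable selector $\pi : Y_2^{\leq M} \to \{1, \ldots, N\}$, define $h_{M,N,\pi}(y_1, y_2) := \phi_{\pi(y_2)}(y_1) \cdot \Indicator_{Y_2^{\leq M}}(y_2)$. Since $\|h_{M,N,\pi}(\cdot, y_2)\|_{\lebesgue^1(\nu_1)} = \|\phi_{\pi(y_2)}\|_{\lebesgue^1(\nu_1)} \cdot \Indicator_{Y_2^{\leq M}}(y_2) \leq 1$, the function $h_{M,N,\pi}$ has $\lebesgue^{1,\infty}$-norm at most $1$, and membership in $\lebesgue^1(\nu)$ is guaranteed by $\nu_2(Y_2^{\leq M}) < \infty$. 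Moreover, Tonelli's theorem yields
\[
 \int_{Y} H(\omega, y) \, h_{M,N,\pi}(y) \, d\nu(y) = \int_{Y_2^{\leq M}} \Psi^{(\pi(y_2))}(\omega, y_2) \, d\nu_2(y_2),
\]
and taking the supremum over all measurable $\pi$ recovers $\int_{Y_2^{\leq M}} \max_{k \leq N} \Psi^{(k)}(\omega, \cdot) \, d\nu_2$, with equality attained by the argmax selector $\pi^{\ast}_\omega$ (under any fixed lex-tiebreak rule).

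The main obstacle is that the collection of measurable selectors $\pi : Y_2^{\leq M} \to \{1, \ldots, N\}$ is uncountable in general, whereas the family $(h_n)_{n \in \N}$ must be countable and independent of $\omega$. To resolve this, I would apply Lemma~\ref{lem:CountableLInfinityCharacterization} a second time, now to a suitable auxiliary function on $(\Omega \times Y_2^{\leq M}, \CalC \otimes \CalG_2)$ that encodes the argmax via the finitely many measurable maps $\Psi^{(1)}(\omega, \cdot), \ldots, \Psi^{(N)}(\omega, \cdot)$, together with a monotone exhaustion of $Y_2^{\leq M}$; this extracts a countable subfamily $(\pi_\ell)_\ell$ of measurable selectors such that $\sup_\ell \int_{Y_2^{\leq M}} \Psi^{(\pi_\ell(y_2))}(\omega, \cdot) \, d\nu_2 = \int_{Y_2^{\leq M}} \max_{k \leq N} \Psi^{(k)}(\omega, \cdot) \, d\nu_2$ for every $\omega \in \Omega$. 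Enumerating the resulting $h_{M, N, \pi_\ell}$ over all triples $(M, N, \ell) \in \N^3$ yields the required family $(h_n)_n$; the matching upper bound $\int_{Y} H(\omega, y) \, h_n(y) \, d\nu \leq \| H(\omega, \bullet) \|_{\lebesgue^{\infty,1}(\nu)}$ is automatic from Hölder's inequality for mixed-norm Lebesgue spaces combined with the norm constraint $\|h_n\|_{\lebesgue^{1,\infty}(\nu)} \leq 1$.
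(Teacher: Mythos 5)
Your Steps~1 and~2 track the paper's proof closely: the first application of Lemma~\ref{lem:CountableLInfinityCharacterization} is exactly the paper's Step~1, and your selector functions $h_{M,N,\pi}$ are a reformulation of the paper's explicit argmax construction of $g_k(\omega,\bullet)$ in its Step~2. The genuine gap is in the last step. Lemma~\ref{lem:CountableLInfinityCharacterization}, as stated, rewrites an essential supremum $\esssup_\lambda F(\theta,\lambda)$ as a countable supremum of integrals against normalized indicators $\Indicator_{M_n}/\mu(M_n)$; it is not a device for extracting a countable family of $\{1,\ldots,N\}$-valued measurable selectors $\pi_\ell$ that realize $\int_{Y_2^{\le M}}\max_{k\le N}\Psi^{(k)}(\omega,\cdot)\,d\nu_2$ simultaneously for all $\omega$. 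You correctly identify the obstacle --- the argmax selectors $\pi^\ast_\omega$ form an $\omega$-indexed, hence potentially uncountable, family --- but the proposed fix (``apply Lemma~\ref{lem:CountableLInfinityCharacterization} a second time to a suitable auxiliary function encoding the argmax'') cannot be carried out as described: there is no evident $(\Theta\times\Lambda)$-structure in which the supremum over selectors becomes an $\esssup$ to which that lemma applies.

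What closes the gap in the paper is the machinery \emph{behind} Lemma~\ref{lem:CountableLInfinityCharacterization} rather than the lemma itself. Because the argmax-selected functions $g_k:\Omega\times Y\to[0,\infty)$ are \emph{jointly} measurable, Lemma~\ref{lem:MeasurableFunctionCountablyGeneratedSigmaAlgebras} produces a countably generated sub-$\sigma$-algebra $\CalG_0\subset\CalG$ making every $g_k$ measurable with $\nu|_{\CalG_0}$ still $\sigma$-finite; Lemma~\ref{lem:LebesgueSpaceSeparability} then gives separability of $\lebesgue^1(\nu|_{\CalG_0})$, and a countable $\lebesgue^1$-dense subfamily $(h_n)_{n}$ of $\{h\in\lebesgue^1(\nu|_{\CalG_0}): h\ge0,\ \|h\|_{\lebesgue^{1,\infty}}\le1\}$, together with a Fatou argument along an a.e.-convergent subsequence, yields the claim for every $\omega$. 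If you wish to keep the selector language, you would likewise pass to a countably generated $\sigma$-algebra making $\Psi^{(1)},\ldots,\Psi^{(N)}$ jointly measurable, observe that the argmax partition sets lie in it, and invoke separability of the associated $\lebesgue^1$-space --- but this is precisely the separability argument, not another invocation of Lemma~\ref{lem:CountableLInfinityCharacterization}.
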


\begin{rem*}
  The claim is non-trivial, since usually neither of the spaces $\lebesgue^{\infty,1} (\nu)$
  or $\lebesgue^{1,\infty} (\nu)$ is separable.
  Furthermore, it should be noted that \emph{the countability of the family $(h_n)_{n \in \N}$
  is crucial for our purposes}.
  Indeed, for each $\omega \in \Omega$ and $k \in \N$, it follows by the dual characterization
  of the mixed Lebesgue norm (see Theorem~\ref{thm:MixedNormDuality}) that there is a function
  $h_{\omega,k} \geq 0$ satisfying $\| h_{\omega,k} \|_{\lebesgue^{1,\infty} (\nu)} \leq 1$ and
  \(
    \int_Y H(\omega,y) h_{\omega,k}(y) \, d \nu(y)
    \geq (1-k^{-1}) \| H(\omega,\bullet) \|_{\lebesgue^{\infty,1}(\nu)} .
  \)
  Yet, in the proof of Part~(3) of Theorem~\ref{thm:SchurNecessity}, we will have to introduce
  an exceptional null-set $N_n \subset \Omega$ for each function $h_n$,
  where in the setting of the proof, $\Omega = X_2$ is equipped with a measure.
  Since the family $(h_n)_{n \in \N}$ is countable,
  we know that $\bigcup_{n \in \N} N_n$ is still a null-set.
  If instead of the $h_n$ we would use the \emph{uncountable} family
  $(h_{\omega,k})_{\omega \in \Omega, k \in \N}$, it could happen that the union of the
  exceptional null-sets $N_{\omega,k}$ is no longer a null-set---in fact,
  these sets could cover all of $\Omega$.
\end{rem*}

\begin{proof}[Proof of Theorem~\ref{thm:SchurNecessity}]
  \textbf{Ad (1):}
  Define $H : Y \to [0,\infty], y \mapsto \int_X K(x,y) \, d \mu(x)$
  and furthermore ${C := \vertiii{\Phi_K}_{\lebesgue^1 \to \lebesgue^1}}$.
  For arbitrary $f \in \lebesgue^1 (\nu)$ with $f \geq 0$, we see by Tonelli's theorem that
  \[
    \int_Y H(y) \cdot f(y) \, d \nu(y)
    = \int_{X}
        \int_Y
          K(x,y) \, f(y)
        \, d \nu(y)
      \, d \mu(x)
    = \|\Phi_K \, f\|_{\lebesgue^1 (\mu)}
    \leq C \cdot \|f\|_{\lebesgue^1 (\nu)} .
  \]
  In view of the dual characterization of the $\lebesgue^\infty$-norm
  (see \cite[Theorem~6.14]{FollandRA}), the preceding estimate implies that
  $C_2 (K) = \|H\|_{\lebesgue^\infty (\nu)} \leq C < \infty$, as desired.
  Note that it is enough to consider only non-negative functions for the dual
  characterization of the $\lebesgue^\infty$-norm of $H$, since $H \geq 0$.
  Also recall that $\nu$ is $\sigma$-finite, so that the dual characterization is indeed applicable.

  \medskip{}

  \noindent
  \textbf{Ad (2):}
  Let $f : Y \to [0,\infty), y \mapsto 1$, and note $f \in \lebesgue^\infty(\nu)$.
  We then get
  \[
    \vertiii{\Phi_K}_{\lebesgue^\infty \to \lebesgue^\infty}
    \geq \vertiii{\Phi_K}_{\lebesgue^\infty \to \lebesgue^\infty} \cdot \|f\|_{\lebesgue^\infty}
    \geq \|\Phi_K \, f\|_{\lebesgue^\infty (\mu)}
    =    \esssup_{x \in X}
           \int_Y
             K(x,y)
           \, d \nu(y)
    =    C_1 (K).
  \]

  \medskip{}

  \noindent
  \textbf{Ad (3):}
  Let $C := \vertiii{\Phi_K}_{\lebesgue^{1,\infty} \to \lebesgue^{1,\infty}}$ and
  \(
    H :
    X_2 \times Y \! \to [0,\infty],
    (x_2, y) \mapsto \!
                     \int_{X_1} \!
                       K \big( (x_1,x_2) ,y \big)
                     \, d \mu_1(x_1) .
  \)
  By Tonelli's theorem, $H$ is $\CalF_2 \otimes \CalG$-measurable.
  Thus, Lemma~\ref{lem:CountableDualityCharacterization} yields a sequence $(h_n)_{n \in \N}$
  of measurable functions $h_n : Y \to [0,\infty)$ with $\| h_n \|_{\lebesgue^{1,\infty}(\nu)} \leq 1$
  and such that
  \begin{equation}
    \| H(x_2, \bullet) \|_{\lebesgue^{\infty,1} (\nu)}
    = \sup_{n \in \N}
        \int_Y
          H(x_2, y) \cdot h_n (y)
        \, d \nu (y)
    \qquad \forall \, x_2 \in X_2 .
    \label{eq:SchurNecessityCountableCharacterization}
  \end{equation}
  For each $n \in \N$, there is a $\mu_2$-null-set $N_n \subset X_2$ with
  \({
    \| (\Phi_K \, h_n) (\bullet,x_2) \|_{\lebesgue^1(\mu_1)}
    \leq \! \| \Phi_K \, h_n \|_{\lebesgue^{1,\infty} (\mu)}
    \leq C
  }\)
  for all $x_2 \in X_2 \setminus N_n$.

  Define $N := \bigcup_{n \in \N} N_n$.
  For $x_2 \in X_2 \setminus N$, we then see by definition of $H$ and by Tonelli's theorem that
  \[
    \int_Y \!
      H(x_2,y) \cdot h_n (y)
    \, d \nu (y)
    \!=\! \int_{X_1} \!
            \int_Y \!
              K \big( (x_1,x_2) , y \big)
              h_n (y)
            \, d \nu(y)
          \, d \mu_1 (x_1)
    = \| (\Phi_K \, h_n) (\bullet, x_2) \|_{\lebesgue^1 (\mu_1)}
    \!\leq\! C .
  \]
  In view of Equation~\eqref{eq:SchurNecessityCountableCharacterization}, this implies
  $\| H(x_2, \bullet) \|_{\lebesgue^{\infty,1}(\nu)} \leq C$ for all $x_2 \in X_2 \setminus N$.
  Directly from the definitions of $H$ and of $C_3(K)$, we see that this implies
  $C_3(K) \leq C = \vertiii{\Phi_K}_{\lebesgue^{1,\infty} \to \lebesgue^{1,\infty}} < \infty$,
  as claimed.

  \medskip{}

  \noindent
  \textbf{Ad (4):}
  Let $K^T$ denote the transposed kernel of $K$.
  By Lemma~\ref{lem:AdjointBoundedness},
  $\Phi_{K^T} : \lebesgue^{1,\infty} (\mu) \to \lebesgue^{1,\infty}(\nu)$
  is bounded, with
  \(
    \vertiii{\Phi_{K^T}}_{\lebesgue^{1,\infty} \to \lebesgue^{1,\infty}}
    \leq \vertiii{\Phi_{K}}_{\lebesgue^{\infty,1} \to \lebesgue^{\infty,1}}
    =:   C
    < \infty.
  \)
  By Part~(3) (applied to $K^T$, with interchanged roles of $\mu$ and $\nu$),
  this implies $C_3(K^T) \leq C$.
  Finally, Lemma~\ref{lem:SchurConstantsForAdjointKernel} shows that
  $C_4(K) = C_3(K^T) \leq C < \infty$, as claimed.
\end{proof}

\section{Proofs for the properties of the kernel modules \texorpdfstring{$\BBm(X,Y)$}{𝓑ₘ(X,Y)}}%
\label{sec:KernelModulePropertiesProof}

In this section, we prove the properties of the kernel module $\BBm (X,Y)$ that are stated in
Propositions~\ref{prop:NewKernelModuleBasicProperties1}--\ref{prop:NewKernelModuleBasicProperties3}.

\begin{proof}[Proof of Proposition~\ref{prop:NewKernelModuleBasicProperties1}]
\textbf{Ad (\ref{enu:NewKernelModuleSolid}):}
This is an immediate consequence of the definitions, once one notes that if $|L| \leq |K|$
holds $\mu \otimes \nu$-almost everywhere, then Tonelli's theorem shows
that for $\mu_2 \otimes \nu_2$-almost every $(x_2, y_2) \in X_2 \times Y_2$,
we have $|L^{(x_2,y_2)}| \leq |K^{(x_2,y_2)}|$ $\mu_1 \otimes \nu_1$-almost everywhere.

\medskip{}

\noindent
\textbf{Ad (\ref{enu:NewKernelModuleFatouProperty}):}
We first prove that $\|\bullet\|_{\AAi(X,Y)}$ satisfies the Fatou property,
for \emph{arbitrary} $\sigma$-finite measure spaces $\XTuple, \YTuple$
(not necessarily of product structure).
Indeed, if a sequence of kernels ${K_n : X \times Y \to [0,\infty]}$
satisfies $K_n \nearrow K$ pointwise,
then the monotone convergence theorem shows that
\(
  \| K_n (x,\bullet) \|_{\lebesgue^1(\nu)} \nearrow \| K (x,\bullet) \|_{\lebesgue^1(\nu)}
\)
and
\(
  \| K_n (\bullet,y) \|_{\lebesgue^1(\mu)} \nearrow \| K (\bullet,y) \|_{\lebesgue^1(\mu)} .
\)
Next, it is easy to see that if $0 \leq G_n \nearrow G$,
then $\| G_n \|_{\lebesgue^\infty} \nearrow \| G \|_{\lebesgue^\infty}$.
If we combine this with the preceding observations, we see
\(
  \esssup_{x \in X}
    \| K_n (x,\bullet) \|_{\lebesgue^1(\nu)}
  \nearrow
  \esssup_{x \in X}
    \| K (x,\bullet) \|_{\lebesgue^1(\nu)}
\)
and
\(
  \esssup_{y \in Y}
    \| K_n (\bullet,y) \|_{\lebesgue^1(\mu)}
  \nearrow
  \esssup_{y \in Y}
    \| K (\bullet,y) \|_{\lebesgue^1(\mu)} .
\)
Recalling the definition of $\| \bullet \|_{\AAi}$,
this implies $\| K_n \|_{\AAi} \nearrow \| K \|_{\AAi}$.

Now we prove the actual claim, first for the unweighted case.
Clearly, $K_n^{(x_2,y_2)} \nearrow K^{(x_2,y_2)}$.
Therefore, the preceding considerations show that
\[
  \Gamma_n (x_2,y_2)
  := \| K_n^{(x_2,y_2)} \|_{\AAi (X_1,Y_1)}
  \nearrow \| K^{(x_2,y_2)} \|_{\AAi (X_1,Y_1)}
  =: \Gamma(x_2,y_2).
\]
Applying the Fatou property for $\|\bullet\|_{\AAi}$, we see
\(
  \| K_n \|_{\BBi}
  = \| \Gamma_n \|_{\AAi (X_2,Y_2)}
  \nearrow \| \Gamma \|_{\AAi (X_2,Y_2)}
  = \| K \|_{\BBi} .
\)
Finally, for the weighted case, note that
$\| K_n \|_{\BBm} = \| m \cdot K_n \|_{\BBi} \nearrow \| m \cdot K \|_{\BBi} = \| K \|_{\BBm}$,
since $m \cdot K_n \nearrow m \cdot K$.

\medskip{}

\noindent
\textbf{Ad (\ref{enu:NewKernelModuleComplete}):}
It is not hard to see that $\| \bullet \|_{\BBm}$ is a function-norm in the sense of Zaanen
(see \cite[Section 63]{ZaanenIntegration}); that is, for $K,L : X \times Y \to [0,\infty]$
measurable and $\alpha \in [0,\infty)$, the following hold:
\begin{align*}
  & \| K \|_{\BBm} = 0
    \quad \Longleftrightarrow \quad
    K = 0 \text{ almost everywhere}, \\
  & \| K + L \|_{\BBm} \leq \| K \|_{\BBm} + \| L \|_{\BBm}
    \qquad \text{and} \qquad
    \| \alpha K \|_{\BBm} = \alpha \, \| K \|_{\BBm}, \\
  \text{as well as} \quad
  & \| K \|_{\BBm} \leq \| L \|_{\BBm} \text{ if } K \leq L \text{ almost everywhere}.
\end{align*}
Since we just showed that $\| \bullet \|_{\BBm}$ also satisfies the Fatou property,
and since $\| K \|_{\BBm} = \| \,|K|\, \|_{\BBm}$ for $K : X \times Y \to \CC$ measurable,
it follows from the theory of normed Köthe spaces that $\big( \BBm(X,Y), \| \bullet \|_{\BBm} \big)$
is indeed a Banach space; see \cite[Section~65, Theorem~1]{ZaanenIntegration}.

\medskip{}

\noindent
\textbf{Ad (\ref{enu:NewKernelModuleEmbedsInOld}):}
We start with the unweighted case.
On the one hand, we have for each $x_2 \in X_2$ that
\begin{align*}
  \esssup_{x_1 \in X_1}
    \int_Y
      |K \big( (x_1,x_2),y \big)|
    \, d \nu(y)
  & = \esssup_{x_1 \in X_1}
        \int_{Y_2}
          \| K^{(x_2,y_2)} (x_1, \bullet) \|_{\lebesgue^1(\nu_1)}
        \, d \nu_2 (y_2) \\
  ({\scriptstyle{\text{Eq. } \eqref{eq:EsssupOfIntegral}}})
  & \leq \int_{Y_2}
           \esssup_{x_1 \in X_1}
             \| K^{(x_2,y_2)} (x_1, \bullet) \|_{\lebesgue^1(\nu_1)}
         \, d \nu_2(y_2)  \\
  & \leq \int_{Y_2}
           \| K^{(x_2,y_2)} \|_{\AAi}
         \, d \nu_2 (y_2) .
\end{align*}
By definition of $\| \bullet \|_{\BBi}$, we have
$\int_{Y_2} \| K^{(x_2,y_2)} \|_{\AAi} \, d \nu_2 (y_2) \leq \| K \|_{\BBi}$
for almost all $x_2 \in X_2$.
Overall, we thus see
\[
  C_1(K)
  = \esssup_{x \in X}
      \int_Y
        |K(x,y)|
      \, d \nu(y)
  \leq \| K \|_{\BBi}.
\]
Now, combine the identity $C_2(K) = C_1(K^T)$ from Lemma~\ref{lem:SchurConstantsForAdjointKernel}
with Part~(\ref{enu:NewKernelModuleAdjoint}) of Proposition~\ref{prop:NewKernelModuleBasicProperties2}
(which will be proven independently) to get
\({
  C_2(K)
  = C_1(K^T)
  \leq \| K^T \|_{\BBi(Y,X)}
  =    \| K \|_{\BBi(X,Y)} .
}\)
But directly from the definition of $\| \bullet \|_{\AAi}$, we see
$\| K \|_{\AAi(X,Y)} = \max \{ C_1(K), C_2(K) \} \leq \| K \|_{\BBi(X,Y)}$.

The weighted case is now a direct consequence of the definitions.
\end{proof}

\begin{proof}[Proof of Proposition~\ref{prop:NewKernelModuleBasicProperties2}]
\textbf{Ad (\ref{enu:NewKernelModuleAdjoint}):}
We start with the unweighted case.
By definition, we see $\| L^T \|_{\AAi(W,V)} = \| L \|_{\AAi(V,W)}$
for arbitrary ($\sigma$-finite) measure spaces $(V,\mathcal{V},\gamma)$
and $(W,\mathcal{W},\theta)$, and any measurable function $L : V \times W \to \CC$.
Also, it is easy to see that ${(K^T)^{(y_2,x_2)} = (K^{(x_2,y_2)})^T}$.
By combining these observations, we see
\[
  \Psi(y_2,x_2)
  := \big\| (K^T)^{(y_2,x_2)} \big\|_{\AAi(Y_1,X_1)}
  =  \big\| (K^{(x_2,y_2)})^T \big\|_{\AAi(Y_1,X_1)}
  =  \big\| K^{(x_2,y_2)} \big\|_{\AAi(X_1,Y_1)}
  =: \Gamma(x_2,y_2).
\]
In other words, $\Psi = \Gamma^T$.
This finally implies $K^T \in \BBi (Y,X)$, since
\[
  \| K^T \|_{\BBi(Y,X)}
  = \| \Psi \|_{\AAi(Y_2,X_2)}
  = \| \Gamma^T \|_{\AAi(Y_2,X_2)}
  = \| \Gamma \|_{\AAi(X_2,Y_2)}
  = \| K \|_{\BBi(X,Y)}
  < \infty.
\]

For the weighted case, note that $K^T \in \BBi_{m^T}(Y,X)$
if $m^T K^T = (m \, K)^T \in \BBi(Y,X)$,
which holds by the unweighted case, since $m \, K \in \BBi(X,Y)$.
Finally, we also see
\[
  \| K^T \|_{\BBi_{m^T}(Y,X)}
  = \| m^T \, K^T \|_{\BBi(Y,X)}
  = \| (m \, K)^T \|_{\BBi(Y,X)}
  = \| m \, K \|_{\BBi(X,Y)}
  = \| K \|_{\BBm(X,Y)}.
\]

\medskip{}

\noindent
\textbf{Ad (\ref{enu:NewKernelModuleMultiplicationProperty}):}
We start with the unweighted case and with non-negative kernels
${K : X \times Y \to [0,\infty]}$ and $L : Y \times Z \to [0,\infty]$.
For brevity, let us define $K_0 (x_2,y_2) := \| K^{(x_2,y_2)} \|_{\AAi (X_1,Y_1)}$
and $L_0(y_2,z_2) := \| L^{(y_2,z_2)} \|_{\AAi (Y_1 , Z_1)}$.
By definition of the product $K \KernelProduct L$, by Tonelli's theorem, and since
$\| L^{(y_2,z_2)} (y_1, \bullet) \|_{\lebesgue^1(\varrho_1)} \leq L_0(y_2,z_2)$
for almost every $y_1 \in Y_1$, we see for all $(x_2, z_2) \in X_2 \times Z_2$ that
\begin{align*}
  & \esssup_{x_1 \in X_1}
      \big\| (K \KernelProduct L)^{(x_2,z_2)} (x_1,\bullet) \big\|_{\lebesgue^1(\varrho_1)} \\
  ({\scriptstyle{\text{Definitions, Tonelli}}})
  & = \esssup_{x_1 \in X_1}
      \int_{Y_2}
        \int_{Y_1}
          K^{(x_2,y_2)} (x_1,y_1)
          \cdot \| L^{(y_2,z_2)} (y_1, \bullet) \|_{\lebesgue^1(\varrho_1)}
        \, d \nu_1 (y_1)
      \, d \nu_2(y_2) \\
  ({\scriptstyle{\text{Eq. } \eqref{eq:EsssupOfIntegral}, \text{ Def.~of } L_0}})
  & \leq \int_{Y_2}
           L_0(y_2,z_2)
           \cdot \esssup_{x_1 \in X_1}
                   \| K^{(x_2,y_2)}(x_1, \bullet) \|_{\lebesgue^1(\nu_1)}
         \, d \nu_2(y_2) \\
  ({\scriptstyle{\text{Def. of } K_0}})
  & \leq \int_{Y_2}
           L_0(y_2, z_2)
           \cdot K_0(x_2,y_2)
         \, d \nu_2 (y_2) .
\end{align*}
Using an almost identical calculation, we see for every $(x_2, z_2) \in X_2 \times Z_2$ that
\[
  \esssup_{z_1 \in Z_1}
    \big\| (K \KernelProduct L)^{(x_2,z_2)} (\bullet,z_1) \big\|_{\lebesgue^1 (\mu_1)}
  \leq \int_{Y_2}
         K_0(x_2, y_2) \cdot L_0(y_2, z_2)
       \, d \nu_2(y_2) .
\]
%NOTE: Please retain the following comment.
%      It is an expanded version of the ``similar calculation'' argument.
%\begin{align*}
%  & \esssup_{z_1 \in Z_1}
%      \| (K \KernelProduct L)^{(x_2,z_2)} (\bullet,z_1) \|_{\lebesgue^1 (\mu_1)} \\
%  ({\scriptstyle{\text{Definitions, Tonelli}}})
%  & = \esssup_{z_1 \in Z_1}
%        \int_{Y_2}
%          \int_{Y_1}
%            L^{(y_2,z_2)} (y_1, z_1)
%            \cdot \| K^{(x_2,y_2)}(\bullet,y_1) \|_{\lebesgue^1(\mu_1)}
%          \, d \nu_1(y_1)
%        \, d \nu_2 (y_2) \\
%  ({\scriptstyle{\text{Eq. } \eqref{eq:EsssupOfIntegral}, \text{ Def. of } K_0}})
%  & \leq \int_{Y_2}
%         \Big[
%           K_0(x_2, y_2)
%           \cdot \esssup_{z_1 \in Z_1}
%                   \int_{Y_1}
%                     L^{(y_2,z_2)} (y_1,z_1)
%                   \, d \nu_1(y_1)
%         \Big]
%         \, d \nu_2 (y_2) \\
%  ({\scriptstyle{\text{Def. of } L_0}})
%  & \leq \int_{Y_2}
%           K_0(x_2,y_2)
%           \cdot L_0(y_2,z_2)
%         \, d \nu_2(y_2) .
%\end{align*}
By definition of $\| \cdot \|_{\AAi(X_1,Z_1)}$, we have thus shown
\[
  \Gamma(x_2,z_2)
  := \| (K \KernelProduct L)^{(x_2,z_2)} \|_{\AAi(X_1,Z_1)}
  \leq \int_{Y_2}
         K_0(x_2, y_2) \cdot L_0(y_2, z_2)
       \, d \nu_2(y_2) .
\]
By another application of Tonelli's theorem, and since
$\| L_0(y_2, \bullet) \|_{\lebesgue^1(\varrho_2)} \leq \| L_0 \|_{\AAi} = \| L \|_{\BBi}$
for almost all $y_2 \in Y_2$, this implies
\begin{align*}
  \esssup_{x_2 \in X_2}
    \| \Gamma (x_2, \bullet) \|_{\lebesgue^1(\varrho_2)}
  & \leq \esssup_{x_2 \in X_2}
           \int_{Y_2}
             K_0 (x_2,y_2) \cdot \| L_0(y_2, \bullet) \|_{\lebesgue^1(\varrho_2)}
           \, d \nu_2 (y_2) \\
  & \leq \| L \|_{\BBi} \cdot \esssup_{x_2 \in X_2} \| K_0 (x_2, \bullet) \|_{\lebesgue^1(\nu_2)}
    \leq \| L \|_{\BBi} \cdot \| K_0 \|_{\AAi}
    =    \| L \|_{\BBi} \cdot \| K \|_{\BBi} .
\end{align*}
Using similar arguments, we also see
\(
  \esssup_{z_2 \in Z_2}
    \| \Gamma (\bullet, z_2) \|_{\lebesgue^1(\mu_2)}
  \leq \| K \|_{\BBi} \cdot \| L \|_{\BBi} .
\)
%NOTE: Please retain the following comment, which justifies the ``similar arguments''.
%\begin{align*}
%  \esssup_{z_2 \in Z_2}
%    \| \Gamma(\bullet, z_2) \|_{\lebesgue^1 (\mu_2)}
%  & \leq \esssup_{z_2 \in Z_2}
%           \int_{Y_2}
%             \| K_0(\bullet,y_2) \|_{\lebesgue^1(\mu_2)}
%             \cdot L_0(y_2,z_2)
%           \, d \nu_2 (y_2) \\
%  & \leq \| K_0 \|_{\AAi}
%         \cdot \esssup_{z_2 \in Z_2}
%                 \| L_0(\bullet, z_2) \|_{\lebesgue^1(\nu_2)}
%    \leq \| K \|_{\BBi} \cdot \| L_0 \|_{\AAi}
%    \leq \| K \|_{\BBi} \cdot \| L \|_{\BBi} .
%\end{align*}
By definition of $\Gamma$ and of $\| \Gamma \|_{\AAi}$, we have thus shown
\(
  \| K \KernelProduct L \|_{\BBi}
  = \| \Gamma \|_{\AAi}
  \leq \| K \|_{\BBi} \cdot \| L \|_{\BBi}
  < \infty ,
\)
which completes the proof for the unweighted case and non-negative kernels.
Note in particular that we get $(K \KernelProduct L) (x,z) < \infty$
for almost all $(x,z) \in X \times Z$, since $\| K \KernelProduct L \|_{\BBi}$ is finite.

Finally, we consider complex-valued kernels including weights.
Since ${K_0 := \omega \cdot |K| \in \BBi (X,Y)}$ and $L_0 := \sigma \cdot |L| \in \BBi (Y,Z)$
are non-negative, the above considerations imply $K_0 \KernelProduct L_0 \in \BBi (X,Z)$ and
\(
  \| K_0 \KernelProduct L_0 \|_{\BBi}
  \leq \| K_0 \|_{\BBi} \cdot \| L_0 \|_{\BBi}
  =    \| K \|_{\BBi_\omega} \cdot \| L \|_{\BBi_\sigma}.
\)
Since $\tau(x,z) \leq C \cdot \omega(x,y) \cdot \sigma(y,z)$, we have
$\tau(x,z) \cdot |K(x,y)| \cdot |L(y,z)| \leq C \cdot K_0(x,y) \cdot L_0(y,z)$, and thus
\begin{align*}
  \tau(x,z) \cdot |K \KernelProduct L (x,z)|
  & \leq \tau(x,z)
         \cdot \int_Y
                 |K(x,y)| \cdot |L(y,z)|
               \, d \nu(y) \\
  & \leq C \cdot \int_Y
                   K_0(x,y) \cdot L_0(y,z)
                 \, d \nu(y)
  =      C \cdot (K_0 \KernelProduct L_0) (x,z)
  <      \infty
\end{align*}
for almost all $(x,z) \in X \times Z$.
Finally, we see by solidity of $\BBi(X,Z)$ that $K \KernelProduct L \in \BBi_\tau (X,Z)$
with
\(
  \| K \KernelProduct L \|_{\BBi_\tau}
  = \| \tau \cdot (K \KernelProduct L) \|_{\BBi}
  \leq C \cdot \| K_0 \KernelProduct L_0 \|_{\BBi}
  \leq C \cdot \| K \|_{\BBi_\omega} \cdot \| L \|_{\BBi_\sigma}.
\)
\end{proof}

\begin{proof}[Proof of Proposition~\ref{prop:NewKernelModuleBasicProperties3}]
We again start with the unweighted case $v,w,m \equiv 1$ and $C = 1$.
By Theorem~\ref{thm:SchurTestSufficientUnweighted}, it suffices to prove
$C_i (K) \leq \| K \|_{\BBi(X,Y)}$ for $i \in \{1,2,3,4\}$,
with $C_i(K)$ as defined in Equation~\eqref{eq:MixedNormSchurConstants}.
The cases $i \in \{1,2\}$ were already handled in the proof
of Part~(\ref{enu:NewKernelModuleEmbedsInOld}) of Proposition~\ref{prop:NewKernelModuleBasicProperties1}.
Furthermore, once we show $C_3(K) \leq \| K \|_{\BBi(X,Y)}$, a combination of
Lemma~\ref{lem:SchurConstantsForAdjointKernel} and Part~(\ref{enu:NewKernelModuleAdjoint})
of Proposition~\ref{prop:NewKernelModuleBasicProperties2} will
show that $C_4(K) = C_3(K^T) \leq \| K^T \|_{\BBi(Y,X)} = \| K \|_{\BBi(X,Y)}$,
so that it suffices to consider the case $i = 3$.

Let $\Gamma(x_2,y_2) := \| K^{(x_2,y_2)} \|_{\AAi(X_1,Y_1)}$, and note
$\Gamma(x_2,y_2) \geq \esssup_{y_1 \in Y_1} \| K^{(x_2,y_2)}(\bullet,y_1) \|_{\lebesgue^1(\mu_1)}$,
as well as $\| K \|_{\BBi} = \| \Gamma \|_{\AAi}$.
By definition of $C_3(K)$, this implies as desired that
\begin{align*}
  C_3(K)
  & = \esssup_{x_2 \in X_2}
        \int_{Y_2}
          \esssup_{y_1 \in Y_1}
            \| K^{(x_2,y_2)}(\bullet, y_1) \|_{\lebesgue^{1}(\mu_1)}
        \, d \nu_2 (y_2) \\
  & \leq \esssup_{x_2 \in X_2}
           \| \Gamma(x_2, \bullet) \|_{\lebesgue^1(\nu_2)}
    \leq \| \Gamma \|_{\AAi}
    =    \| K \|_{\BBi} .
\end{align*}

Finally, we handle the weighted case.
As noted in the discussion around Equation~\eqref{eq:WeightedKernelDefinition},
if we define $K_{v,w} : X \times Y \to \CC, (x,y) \mapsto \frac{v(x)}{w(y)} \cdot K(x,y)$, then
$\Phi_K : \lebesgue^{p,q}_w (\nu) \to \lebesgue^{p,q}_v (\mu)$ is well-defined and bounded
if and only if $\Phi_{K_{v,w}} : \lebesgue^{p,q}(\nu) \to \lebesgue^{p,q}(\mu)$ is,
and in this case we have
\(
  \| \Phi_K \|_{\lebesgue^{p,q}_w (\nu) \to \lebesgue^{p,q}_v (\mu)}
  = \| \Phi_{K_{v,w}} \|_{\lebesgue^{p,q}(\nu) \to \lebesgue^{p,q}(\mu)} .
\)
Next, by our assumptions on $v,w,m$, and by the unweighted case, we see
that $\Phi_{K_{v,w}} : \lebesgue^{p,q}(\nu) \to \lebesgue^{p,q}(\mu)$ is indeed well-defined
and bounded, with
\[
  \| \Phi_{K_{v,w}} \|_{\lebesgue^{p,q}(\nu) \to \lebesgue^{p,q}(\mu)}
  \leq \| K_{v,w} \|_{\BBi}
  \vphantom{\overset{a}{\leq}}
  \,\smash{\overset{(\dagger)}{\leq}}\, C \cdot \| m \cdot K \|_{\BBi}
  =    C \cdot \| K \|_{\BBm} .
\]
Here, the step marked with $(\dagger)$ used that
$|K_{v,w}(x,y)| = \big| \frac{v(x)}{w(y)} \cdot K(x,y) \big| \leq C \cdot |(m \cdot K)(x,y)|$.
\end{proof}

\section{\texorpdfstring{Spaces compatible with the kernel modules $\BBm$}
                        {Spaces compatible with the new kernel modules}}
\label{sec:StructureOfBBmCompatibleSpaces}

In this section, we prove the necessary conditions for spaces compatible with $\BBm$
that we stated in Section~\ref{sub:RestrictionsOnSolidSpaces}.
The proof itself is presented in Section~\ref{sub:StructureOfBBmCompatibleSpaces},
preceded by a \emph{richness result} for the space $\BBm (X,Y)$
which is given in Section~\ref{sub:KernelModuleRichnessResults} below.
This result, which provides sufficient conditions on $f : X \to \CC$ and $g : Y \to \CC$
which guarantee that the \emph{tensor product}
${f \otimes g : X \times Y \to \CC, (x,y) \mapsto f(x) \cdot g(y)}$ belongs to $\BBm (X,Y)$,
will be useful for proving Theorem~\ref{thm:NecessaryConditionsForCompatibleSpaces} in
Section~\ref{sub:StructureOfBBmCompatibleSpaces}.

\subsection{A richness result for the kernel modules \texorpdfstring{$\BBm (X,Y)$}{𝓑ₘ(X,Y)}}
\label{sub:KernelModuleRichnessResults}

Before we state our richness result, we fix the following notation for the spaces
introduced in Definition~\ref{def:SumAndIntersectionSpaces}:
\begin{equation}\label{eq:IntersectionSpaceSymbol}
  \IntersectionSpaceSymbol
  := \IntersectionSpaceSymbol (\mu)
  := \lebesgue^1 (\mu)
     \cap \lebesgue^\infty(\mu)
     \cap \lebesgue^{1,\infty}(\mu)
     \cap \lebesgue^{\infty,1}(\mu),
  \quad \text{with}\quad
  \| \bullet \|_{\IntersectionSpaceSymbol}
  := \| \bullet \|_{\lebesgue^1
                    \cap \lebesgue^\infty
                    \cap \lebesgue^{1,\infty}
                    \cap \lebesgue^{\infty,1}}.
\end{equation}
Likewise, $\SumSpaceSymbol := \SumSpaceSymbol(\mu)$, where
\begin{equation}\label{eq:SumSpaceSymbol}
  \SumSpaceSymbol (\mu)
  := \lebesgue^1 (\mu)
     + \lebesgue^\infty (\mu)
     + \lebesgue^{1,\infty} (\mu)
     + \lebesgue^{\infty,1} (\mu),
  \quad \text{with}\quad
  \| \bullet \|_{\SumSpaceSymbol}
  := \| \bullet \|_{\lebesgue^1
     + \lebesgue^\infty
     + \lebesgue^{1,\infty}
     + \lebesgue^{\infty,1}}.
\end{equation}
Given a measurable function $w : X \to (0,\infty)$,
the weighted spaces $\IntersectionSpaceSymbol_w$ and $\SumSpaceSymbol_w$
are defined as in Equation~\eqref{eq:WeightedSpaceDefinition}.
This notation will allow for a succinct formulation of the following results.

\begin{rem*}
  We will see in Theorem~\ref{thm:DualOfIntersection} that
  $\| \bullet \|_{\SumSpaceSymbol}$ is indeed a norm, and that with this norm
  $\SumSpaceSymbol$ becomes a Banach space.
\end{rem*}

Having introduced the proper notation, we can now state and prove the announced richness result
for the kernel module $\BBm (X,Y)$.

\begin{lemma}\label{lem:KernelModuleRichnessResult}
  With notation and assumptions as in Definition~\ref{def:NewKernelModule},
  let $m : X \times Y \to (0,\infty)$, $v : X \to (0,\infty)$, and $w : Y \to (0,\infty)$
  be measurable and such that
  \(
    m(x,y)
    \leq C \cdot {v}(x) \cdot {w}(y)
  \)
  for all $x \in X$ and $y \in Y$ and some $C > 0$.

  If $f \in \GoodFunctions_{v}(\mu)$ and $g \in \GoodFunctions_{w}(\nu)$, then
  $f \otimes g \in \BBm (X \times Y)$ with
  \(
    \| f \otimes g \|_{\BBm} \leq 2C \, \| f \|_{\GoodFunctions_{v}} \cdot \| g \|_{\GoodFunctions_{w}}.
  \)
  Here,
  \(
    f \otimes g : X \times Y \to \CC, (x,y) \mapsto f(x) \cdot g(y) .
  \)
\end{lemma}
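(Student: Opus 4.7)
My plan is to reduce the weighted assertion to an unweighted statement about pure tensor products and then compute the doubly-iterated $\AAi$-norm explicitly, slice by slice.

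First, since $|m(x,y)(f\otimes g)(x,y)| \leq C\cdot F(x)\cdot G(y)$ with $F := v\cdot|f| \in \GoodFunctions(\mu)$, $G := w\cdot|g| \in \GoodFunctions(\nu)$ (and $\|F\|_{\GoodFunctions} = \|f\|_{\GoodFunctions_v}$, similarly for $G$), the solidity of $\BBi$ established in Proposition~\ref{prop:NewKernelModuleBasicProperties1}\,\eqref{enu:NewKernelModuleSolid} reduces matters to proving the unweighted bound $\|F\otimes G\|_{\BBi(X,Y)} \leq 2\,\|F\|_\GoodFunctions\,\|G\|_\GoodFunctions$ for non-negative $F,G$. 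So from now on I write $F,G \geq 0$ and compute $\|F\otimes G\|_{\BBi}$ directly from Definition~\ref{def:NewKernelModule}.

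For fixed $(x_2,y_2) \in X_2\times Y_2$, set $F_{x_2}(x_1) := F(x_1,x_2)$ and $G_{y_2}(y_1) := G(y_1,y_2)$. Then $(F\otimes G)^{(x_2,y_2)} = F_{x_2}\otimes G_{y_2}$ on $X_1\times Y_1$, and a direct evaluation of the two essential suprema in \eqref{eq:normA1} gives
\[
  \|F_{x_2}\otimes G_{y_2}\|_{\AAi(X_1,Y_1)}
  = \max\bigl\{\|F_{x_2}\|_{\lebesgue^\infty(\mu_1)}\|G_{y_2}\|_{\lebesgue^1(\nu_1)},\; \|F_{x_2}\|_{\lebesgue^1(\mu_1)}\|G_{y_2}\|_{\lebesgue^\infty(\nu_1)}\bigr\}.
\]
Using $\max\{a,b\}\leq a+b$, this is bounded by $\alpha(x_2)\gamma(y_2) + \beta(x_2)\delta(y_2)$, where
\[
  \alpha(x_2) := \|F_{x_2}\|_{\lebesgue^\infty(\mu_1)},\quad
  \beta(x_2) := \|F_{x_2}\|_{\lebesgue^1(\mu_1)},\quad
  \gamma(y_2) := \|G_{y_2}\|_{\lebesgue^1(\nu_1)},\quad
  \delta(y_2) := \|G_{y_2}\|_{\lebesgue^\infty(\nu_1)}.
\]

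Now I apply the $\AAi(X_2,Y_2)$-norm, using its monotonicity together with the triangle inequality:
\[
  \|F\otimes G\|_{\BBi}
  \leq \|\alpha\otimes \gamma\|_{\AAi(X_2,Y_2)} + \|\beta\otimes\delta\|_{\AAi(X_2,Y_2)}.
\]
Repeating the same elementary identity for a tensor product on $X_2\times Y_2$ (this time without the plus sign) and recognising the resulting one-variable norms as mixed-norm Lebesgue norms of $F$ and $G$ (for instance, $\|\alpha\|_{\lebesgue^1(\mu_2)} = \|F\|_{\lebesgue^{\infty,1}(\mu)}$, $\|\alpha\|_{\lebesgue^\infty(\mu_2)} = \|F\|_{\lebesgue^\infty(\mu)}$, and analogously for $\beta,\gamma,\delta$ using Definition~\eqref{eq:MixedLebesgueNormDefinition}), each of the four resulting products is at most $\|F\|_\GoodFunctions\,\|G\|_\GoodFunctions$. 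Summing gives the factor $2$, and undoing the reduction in the first paragraph yields the claimed bound $\|f\otimes g\|_{\BBm} \leq 2C\,\|f\|_{\GoodFunctions_v}\,\|g\|_{\GoodFunctions_w}$. The only step requiring care is bookkeeping of the four ``corner'' norms; there is no analytical obstacle, since everything follows from Fubini/Tonelli and the definitions.
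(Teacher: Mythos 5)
Your proposal is correct and follows essentially the same route as the paper's proof: reduce to the unweighted, non-negative case by solidity, compute the inner $\AAi(X_1,Y_1)$-norm of the tensor slice as a maximum of two products, bound the maximum by a sum, and then evaluate the outer $\AAi(X_2,Y_2)$-norm to identify the four mixed-norm quantities. The only cosmetic difference is that you triangulate at the outer $\AAi$ level (bounding $\|\alpha\otimes\gamma\|_{\AAi}+\|\beta\otimes\delta\|_{\AAi}$) whereas the paper estimates the two defining essential suprema of $\|\Gamma\|_{\AAi}$ directly; the resulting bound and the factor $2$ coincide.
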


\begin{proof}
  We first consider the case where $m,{v},{w} \equiv 1$ and $C = 1$.
  Directly from the definitions, we see for arbitrary $(x_2,y_2) \in X_2 \times Y_2$ that
  \begin{align*}
    & \esssup_{x_1 \in X_1}
        \big\| (f \otimes g)^{(x_2,y_2)} (x_1, \bullet) \big\|_{\lebesgue^1 (\nu_1)}
      = \| f(\bullet,x_2) \|_{\lebesgue^\infty} \cdot \| g(\bullet,y_2) \|_{\lebesgue^1} \\
    \quad \text{and} \quad
    & \esssup_{y_1 \in Y_1}
        \big\| (f \otimes g)^{(x_2,y_2)} (\bullet, y_1) \big\|_{\lebesgue^1 (\mu_1)}
      = \| f(\bullet, x_2) \|_{\lebesgue^1} \cdot \| g(\bullet, y_2) \|_{\lebesgue^\infty} .
  \end{align*}
  Therefore,
  \[
    \Gamma(x_2,y_2)
    := \| (f \otimes g)^{(x_2,y_2)} \|_{\AAi (X_1 , Y_1)}
    \leq \| f(\bullet,x_2) \|_{\lebesgue^\infty} \cdot \| g(\bullet,y_2) \|_{\lebesgue^1}
         + \| f(\bullet, x_2) \|_{\lebesgue^1} \cdot \| g(\bullet, y_2) \|_{\lebesgue^\infty} ,
  \]
  which easily implies
  \begin{align*}
    & \esssup_{x_2 \in X_2}
        \big\| \Gamma(x_2,\bullet) \big\|_{\lebesgue^1(\nu_2)}
      \leq \| f \|_{\lebesgue^\infty} \, \| g \|_{\lebesgue^1}
           + \| f \|_{\lebesgue^{1,\infty}} \, \| g \|_{\lebesgue^{\infty,1}} \\
    \quad \text{and} \quad
    & \esssup_{y_2 \in Y_2}
        \big\| \Gamma(\bullet,y_2) \big\|_{\lebesgue^1(\mu_2)}
      \leq \| f \|_{\lebesgue^{\infty,1}} \, \| g \|_{\lebesgue^{1,\infty}}
           + \| f \|_{\lebesgue^{1}} \, \| g \|_{\lebesgue^\infty} .
  \end{align*}
  Overall, this implies
  \(
    \| f \otimes g \|_{\BBi}
    = \| \Gamma \|_{\AAi}
    \leq 2 \, \| f \|_{\GoodFunctions(\mu)} \, \| g \|_{\GoodFunctions (\nu)} .
  \)

  It remains to consider the case including the weights.
  But by assumption on the weights, we have
  $|m \cdot (f \otimes g)| \leq C \cdot ({v} \cdot |f|) \otimes ({w} \cdot |g|)$,
  where ${v} \cdot |f| \in \GoodFunctions(\mu)$ and ${w} \cdot |g| \in \GoodFunctions(\nu)$.
  By the unweighted case, this implies $m \cdot (f \otimes g) \in \BBi$.
  In other words, $f \otimes g \in \BBm$, and
  \[
    \| f \otimes g \|_{\BBm}
    = \big\| m \cdot (f \otimes g) \big\|_{\BBi}
    \leq C \cdot \big\| ({v} \cdot |f|) \otimes ({w} \cdot |g|) \big\|_{\BBi}
    \leq 2C \cdot \big\| {v} \cdot |f| \big\|_{\GoodFunctions}
            \cdot \big\| {w} \cdot |g| \big\|_{\GoodFunctions}
    =    2C \cdot \| f \|_{\GoodFunctions_{v}} \cdot \| g \|_{\GoodFunctions_{w}} ,
  \]
  as claimed.
\end{proof}

In view of the preceding lemma, it is natural to ask for which spaces $\BanachOne$ we can guarantee
that $\BanachOne \cap \GoodFunctions_{v}$ is nontrivial.
The next lemma shows that this is always the case if $\BanachOne$
is a non-trivial \emph{solid} function space on $\XTuple$;
the definition of these spaces was given in Section~\ref{sub:RestrictionsOnSolidSpaces}.

\begin{lemma}\label{lem:SolidSpacesContainGoodFunctions}
  Let ${\XTuple := \ProductTupleX}$,
  where $\XIndexTuple{1}$ and $\XIndexTuple{2}$ are $\sigma$-finite measure spaces.
  Let $\BanachOne$ be a solid function space on $X$ with $\BanachOne \neq \{0\}$,
  and let ${{v} : X \to (0,\infty)}$ be measurable.
  Then there exists a measurable set $E \subset X$ such that
  $\mu(E) > 0$ and ${\Indicator_E \in \BanachOne \cap \GoodFunctions_{v} \cap \GoodFunctions}$.
\end{lemma}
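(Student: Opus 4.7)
The plan is to construct $E$ explicitly as the intersection of three auxiliary sets: one ensuring membership in $\BanachOne$ via solidity, one ensuring the product structure needed for $\GoodFunctions$, and one where the weight $v$ is bounded so that $\GoodFunctions$-membership passes to $\GoodFunctions_v$.

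First, I would extract a nontrivial non-negative element of $\BanachOne$. Since $\BanachOne \neq \{0\}$ is solid, there exists $f \in \BanachOne$ with $\|f\|_{\BanachOne} > 0$, and replacing $f$ by $|f|$ (which is allowed by solidity), we may assume $f \geq 0$ and $f$ is not $\mu$-a.e.\ zero. Setting $A_n := \{x \in X : f(x) > 1/n\}$, we have $\{f > 0\} = \bigcup_{n \in \N} A_n$, so $\mu(A_{n_0}) > 0$ for some $n_0 \in \N$. On $A_{n_0}$ we have $\Indicator_{A_{n_0}} \leq n_0 \cdot f$, so solidity of $\BanachOne$ yields $\Indicator_{A_{n_0}} \in \BanachOne$.

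Next, I exploit $\sigma$-finiteness and the finiteness of $v$. By $\sigma$-finiteness of $\mu_1, \mu_2$, choose increasing sequences $(X_i^{(k)})_{k \in \N}$ of measurable sets with $\mu_i(X_i^{(k)}) < \infty$ and $X_i = \bigcup_k X_i^{(k)}$, and set $B_k := X_1^{(k)} \times X_2^{(k)}$. Since $v$ is $(0,\infty)$-valued, $V_\ell := \{x \in X : v(x) \leq \ell\}$ exhausts $X$ as $\ell \to \infty$. Consequently $A_{n_0} = \bigcup_{k,\ell} (A_{n_0} \cap B_k \cap V_\ell)$ (up to a $\mu$-null set), and since $\mu(A_{n_0}) > 0$ we can fix $k, \ell$ with $\mu(E) > 0$ for
\[
  E := A_{n_0} \cap B_k \cap V_\ell .
\]

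It remains to verify $\Indicator_E \in \BanachOne \cap \GoodFunctions \cap \GoodFunctions_v$. Membership in $\BanachOne$ is immediate from $\Indicator_E \leq \Indicator_{A_{n_0}} \in \BanachOne$ and solidity. For $\GoodFunctions$, observe $\Indicator_E \leq \Indicator_{B_k} = \Indicator_{X_1^{(k)}} \otimes \Indicator_{X_2^{(k)}}$, and a direct computation of the four norms shows $\|\Indicator_{B_k}\|_{\lebesgue^1} = \mu_1(X_1^{(k)}) \mu_2(X_2^{(k)})$, $\|\Indicator_{B_k}\|_{\lebesgue^\infty} = 1$, $\|\Indicator_{B_k}\|_{\lebesgue^{1,\infty}} = \mu_1(X_1^{(k)})$, and $\|\Indicator_{B_k}\|_{\lebesgue^{\infty,1}} = \mu_2(X_2^{(k)})$, all finite; solidity of $\GoodFunctions$ then gives $\Indicator_E \in \GoodFunctions$. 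Finally, on $E \subset V_\ell$ we have $v \cdot \Indicator_E \leq \ell \cdot \Indicator_E \in \GoodFunctions$, so solidity delivers $v \cdot \Indicator_E \in \GoodFunctions$, i.e.\ $\Indicator_E \in \GoodFunctions_v$. There is no real obstacle here; the only subtle point is to remember that bounding $v$ on $E$ is needed precisely because $v$ itself need not be $\GoodFunctions$-integrable, and this is handled cleanly by the truncation $V_\ell$.
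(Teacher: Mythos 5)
Your proof is correct and follows essentially the same approach as the paper's: extract a nontrivial non-negative $\BanachOne$-element, pass to an indicator via a sub-level set and solidity, and intersect with a $v$-bounded set and a rectangle of finite $\mu_1 \otimes \mu_2$-measure so that all four mixed Lebesgue norms are finite. The only cosmetic difference is that you truncate in one combined step and verify $\GoodFunctions$ and $\GoodFunctions_v$ separately, whereas the paper first intersects with the $v$-level set, then with the rectangle, and bounds $(1+v)\cdot\Indicator_E$ to handle both spaces at once; the substance is identical.
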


\begin{proof}
  $\BanachOne$ is non-trivial and solid; hence, there exists a non-negative function
  $g \in \BanachOne$ such that ${\{ x \in X \colon g(x) > 0 \}}$ has positive measure.
  Thus, ${E_{1} := \{ x \in X \colon g(x) \geq n_0^{-1} \}}$ has positive measure for a suitable
  $n_0 \in \N$.
  Next, since $X = \bigcup_{k \in \N} \{ x \in X \colon {v}(x) \leq k \}$,
  we see by $\sigma$-additivity that there is $k \in \N$ such that
  $E_2 := E_1 \cap \{ x \in X \colon {v}(x) \leq k \}$ has positive measure.

  Furthermore, since $X_1, X_2$ are $\sigma$-finite, we have $X_i = \bigcup_{n \in \N} X_i^{(n)}$
  for certain $X_i^{(n)} \in \CalF_i$ satisfying $X_i^{(n)} \subset X_i^{(n+1)}$ and
  $\mu_i \big( X_i^{(n)} \big) < \infty$ for all $n \in \N$ and $i \in \{ 1,2 \}$.
  Because of $X = \bigcup_{n \in \N} \big( X_1^{(n)} \times X_2^{(n)} \big)$,
  there is thus some $n \in \N$ such that $E := E_2 \cap \big( X_1^{(n)} \times X_2^{(n)} \big)$
  has positive measure.

  Define $f := \Indicator_E$, and note that $g \geq n_0^{-1}$ on $E_1 \supset E_2 \supset E$,
  so that $0 \leq f \leq n_0 \cdot g$.
  By solidity of $\BanachOne$ and since $g \in \BanachOne$, this implies $f \in \BanachOne$.
  Furthermore, $0 \leq (1 + {v}) \cdot f \leq (1+k) \cdot \Indicator_{X_1^{(n)} \times X_2^{(n)}}$,
  so that
  \begin{align*}
      \| (1+{v}) \cdot f\|_{\lebesgue^1(\mu)}
    & \leq (1+k) \cdot \mu_1 \big( X_1^{(n)} \big) \cdot \mu_2 \big( X_2^{(n)} \big)
      < \infty , \\
      \| (1+{v}) \cdot f\|_{\lebesgue^\infty(\mu)}
    & \leq 1+k < \infty, \\
      \| (1+{v}) \cdot f\|_{\lebesgue^{1,\infty}(\mu)}
    & \leq (1+k) \cdot \mu_1 \big( X_1^{(n)} \big)
      < \infty, \\
      \| (1+{v}) \cdot f\|_{\lebesgue^{\infty,1}(\mu)}
    & \leq (1+k) \cdot \mu_2 \big( X_2^{(n)} \big) < \infty ,
  \end{align*}
  and thus $f \in \GoodFunctions_{v} \cap \GoodFunctions$.
  Finally, $\mu(E) > 0$ holds by our choice of $n_0, k$, and $n$.
\end{proof}

\subsection{\texorpdfstring{Necessary conditions for spaces compatible with the kernel modules $\BBm$}
                           {Necessary conditions for spaces compatible with the new kernel modules}}
\label{sub:StructureOfBBmCompatibleSpaces}

In this section, we prove Theorem~\ref{thm:NecessaryConditionsForCompatibleSpaces},
considering both parts of the theorem individually.

\medskip{}

\noindent
\textbf{Proof of Part~(\ref{enu:KernelCoDomainEmbedding})
of Theorem~\ref{thm:NecessaryConditionsForCompatibleSpaces}:}
Since $\BanachTwo$ is a solid function space, convergence in $\BanachTwo$
implies local convergence in measure (that is, convergence in measure
on sets of finite measure); see
\mbox{\hspace{1sp}\cite[Lemma~2.2.8]{VoigtlaenderPhDThesis}}.
The same holds for the space $\GoodFunctions_v(\mu)$
introduced in Definition~\ref{def:SumAndIntersectionSpaces}
and Equation~\eqref{eq:IntersectionSpaceSymbol}.
Since $X$ is $\sigma$-finite, local convergence in measure determines the limit uniquely,
up to changes on a set of measure zero.
Thus, the closed graph theorem shows that the continuous embedding
$\GoodFunctions_v(\mu) \hookrightarrow \BanachTwo$ holds if
and only if we have $\GoodFunctions_v(\mu) \subset \BanachTwo$ as sets, which we now prove.

Since $\BanachOne \neq \{0\}$ is a solid function space on $Y$,
Lemma~\ref{lem:SolidSpacesContainGoodFunctions} produces a measurable set $E \subset Y$
satisfying $\nu(E) \in (0,\infty)$ and such that $\Indicator_E \in \BanachOne \cap \GoodFunctions_w(\nu)$.
Define $g := \Indicator_E / \nu(E)$.
Now, let $f \in \GoodFunctions_v(\mu)$ be arbitrary and note $|f| \in \GoodFunctions_v(\mu)$ as well.
According to Lemma~\ref{lem:KernelModuleRichnessResult}, we have $K := |f| \otimes g \in \BBm(X,Y)$.
By assumption, this implies that $\Phi_K : \BanachOne \to \BanachTwo$ is well-defined,
and thus $h := \Phi_K [\Indicator_E] \in \BanachTwo$.
But we have
\[
  (\Phi_K \Indicator_E)(x)
  = |f(x)| \cdot \int_Y g(y) \cdot \Indicator_E (y) \, d \nu(y)
  = |f(x)|
  \qquad \forall \, x \in X,
\]
and thus $|f| = \Phi_K [\Indicator_E] \in \BanachTwo$.
Since $\BanachTwo$ is solid, we see $f \in \BanachTwo$,
and thus $\GoodFunctions_v(\mu)\subset \BanachTwo$, since $f \in \GoodFunctions_v(\mu)$ was arbitrary.
As seen above, this proves $\GoodFunctions_v(\mu) \hookrightarrow \BanachTwo$.
\hfill$\square$

\medskip{}

\noindent
\textbf{Proof of Part~(\ref{enu:KernelDomainEmbedding})
of Theorem~\ref{thm:NecessaryConditionsForCompatibleSpaces}:}
Our proof of Part~(\ref{enu:KernelDomainEmbedding})
of Theorem~\ref{thm:NecessaryConditionsForCompatibleSpaces}
is crucially based on the following description of the space
$\SumSpaceSymbol(\mu)$ as the associate space of $\IntersectionSpaceSymbol(\mu)$.
The proof of this result is surprisingly involved, and thus postponed to
Appendix~\ref{sec:DualCharacterizationOfSumSpace}.

\begin{theorem}\label{thm:DualOfIntersection}
Let $(X,\CalF,\mu) = (X_1\otimes X_2,\CalF_2 \otimes \CalF_2, \mu_1 \otimes \mu_2)$,
where $\mu_1,\mu_2$ are $\sigma$-finite.
Let furthermore $\IntersectionSpaceSymbol$ and $\SumSpaceSymbol$ be
as in \eqref{eq:IntersectionSpaceSymbol} and \eqref{eq:SumSpaceSymbol}, respectively.
Then the space $\big( \SumSpaceSymbol, \|\bullet\|_{\SumSpaceSymbol} \big)$ is a Banach space.

Furthermore, if $f : X \to \CC$ is measurable, then $f \in \SumSpaceSymbol$ if and only if
$f \cdot g \in \lebesgue^1 (\mu)$ for all $g \in \IntersectionSpaceSymbol$.

Finally, for $f \in \SumSpaceSymbol$, we have
\begin{equation*}
  \frac{1}{16} \, \|f\|_{\SumSpaceSymbol}
  \leq \sup \Big\{
              \int_{X} |f \cdot g| \, d\mu
              \, \colon \,
              g \in \IntersectionSpaceSymbol \text{ with } \|g\|_{\IntersectionSpaceSymbol} \leq 1
            \Big\}  \leq 16 \, \|f\|_{\SumSpaceSymbol}.
\end{equation*}
\end{theorem}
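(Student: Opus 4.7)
The theorem has three components: (A) that $(\SumSpaceSymbol,\|\cdot\|_{\SumSpaceSymbol})$ is a Banach space, (B) the set-level characterization $f \in \SumSpaceSymbol$ iff $f g \in \lebesgue^1(\mu)$ for all $g \in \IntersectionSpaceSymbol$, and (C) the two-sided norm equivalence with constant $16$. I would prove them in the order: easy upper inequality of (C), then (A), then the hard lower inequality of (C), and finally (B) as a corollary.

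\emph{Easy direction of (C).} For $f \in \SumSpaceSymbol$ and $g \in \IntersectionSpaceSymbol$, fix any decomposition $f = f_1 + f_2 + f_3 + f_4$ with $f_i$ in the four respective spaces $(X_1,X_2,X_3,X_4) = (\lebesgue^1, \lebesgue^\infty, \lebesgue^{1,\infty}, \lebesgue^{\infty,1})$. Hölder's inequality for mixed-norm Lebesgue spaces (from \cite{MixedLpSpaces}) pairs each $X_i$ with its K\"othe dual in $\{\lebesgue^\infty, \lebesgue^1, \lebesgue^{\infty,1}, \lebesgue^{1,\infty}\}$ respectively, giving $\int_X |f_i g|\,d\mu \leq \|f_i\|_{X_i}\,\|g\|_{\IntersectionSpaceSymbol}$. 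Summing over $i$ and taking the infimum over decompositions yields $\int_X |fg|\,d\mu \leq \|f\|_{\SumSpaceSymbol}\,\|g\|_{\IntersectionSpaceSymbol}$, which already proves $\sup_{\|g\|_{\IntersectionSpaceSymbol} \leq 1} \int_X |fg|\,d\mu \leq \|f\|_{\SumSpaceSymbol}$ (in fact with constant $1$, cleaner than the stated $16$), and in particular $fg \in \lebesgue^1(\mu)$ whenever $f \in \SumSpaceSymbol$ and $g \in \IntersectionSpaceSymbol$.

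\emph{(A): Banach space property.} Positive homogeneity and subadditivity of $\|\cdot\|_{\SumSpaceSymbol}$ are immediate from the infimum definition. Definiteness is obtained from the easy inequality: if $\|f\|_{\SumSpaceSymbol} = 0$, then $\int_X |fg|\,d\mu = 0$ for every $g \in \IntersectionSpaceSymbol$; testing against $\mathbf{1}_E$ for measurable $E$ of finite measure contained in a fixed finite-measure rectangle $X_1^{(n)} \times X_2^{(n)}$ (such $\mathbf{1}_E$ indeed lies in $\IntersectionSpaceSymbol$) and exhausting $X$ by $\sigma$-finiteness forces $|f| = 0$ almost everywhere. Completeness follows via the standard absolutely-convergent-series criterion: given $\sum_n \|f_n\|_{\SumSpaceSymbol} < \infty$, pick near-optimal decompositions $f_n = \sum_{i=1}^4 f_n^{(i)}$ with $\sum_i \|f_n^{(i)}\|_{X_i} \leq 2\|f_n\|_{\SumSpaceSymbol}$; since each $X_i$ is itself a Banach space, each series $s^{(i)} := \sum_n f_n^{(i)}$ converges in $X_i$, and $s := \sum_{i=1}^4 s^{(i)} \in \SumSpaceSymbol$ is the required $\SumSpaceSymbol$-limit of the partial sums.

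\emph{Hard direction of (C), and then (B).} The crux is to show: if $M := \sup_{\|g\|_{\IntersectionSpaceSymbol} \leq 1} \int_X |fg|\,d\mu < \infty$, then $f$ admits a decomposition with $\sum_i \|f_i\|_{X_i} \leq 16 M$. Reduce first to $f \geq 0$ by working with $|f|$. The plan is to construct the decomposition explicitly by a nested level-set partition of $X$ into four regions, each capturing one of the four qualitative ways mass can concentrate: pointwise bounded ($\lebesgue^\infty$), totally integrable ($\lebesgue^1$), horizontally integrable uniformly in $x_2$ ($\lebesgue^{1,\infty}$), or its transpose ($\lebesgue^{\infty,1}$). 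Concretely: set $f_2 := f \cdot \mathbf{1}_{\{f \leq M\}}$, which gives $\|f_2\|_{\lebesgue^\infty} \leq M$; on the excess set $E := \{f > M\}$, introduce the slice sizes $\rho_1(x_2) := \mu_1(E^{(x_2)})$ and $\rho_2(x_1) := \mu_2(E^{(x_1)})$, and partition $E$ into the four regions indexed by $\{\rho_1 \lessgtr 1\} \times \{\rho_2 \lessgtr 1\}$, each assigned to the appropriate $X_i$. The norm of $f$ on each such region is then bounded by testing the hypothesis against the $\IntersectionSpaceSymbol$-normalized indicator of the region, exploiting that in each sub-region the relevant coordinate of $\|\mathbf{1}_R\|_{\IntersectionSpaceSymbol}$ is the dominating one. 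Up to four binary splits (pointwise versus slicewise, horizontal versus vertical), the constant $16 = 2^4$ accumulates. Statement (B) follows: the ``only if'' is the easy direction, while the ``if'' uses the closed graph theorem on the multiplication map $g \mapsto fg$ from $\IntersectionSpaceSymbol$ to $\lebesgue^1(\mu)$ (closed because convergence in either space entails local convergence in measure, which determines a.e.~limits), yielding $M < \infty$; the lower bound of (C) then places $f$ in $\SumSpaceSymbol$.

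\emph{Main obstacle.} The chief difficulty is the explicit four-way level-set decomposition and verifying, with sharp constants, that each piece lies in the intended $X_i$. A Hahn-Banach approach is tempting but has to circumvent the fact that $\lebesgue^\infty(\mu)^{\ast} \supsetneq \lebesgue^1(\mu)$, forcing either a restriction to a dense subspace or invocation of the Fatou property of the constituent spaces. The constructive route avoids these functional-analytic subtleties but requires careful simultaneous tracking of the level sets of $f$, $\rho_1$, and $\rho_2$, together with a four-way analysis of how $\|\mathbf{1}_R\|_{\IntersectionSpaceSymbol}$ degenerates on each sub-region into exactly one of the four constituent norms.
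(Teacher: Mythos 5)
Your Parts (A) and the upper bound in (C) are fine, and in fact your direct proof of completeness for $\SumSpaceSymbol$ (via absolutely convergent series in each constituent $X_i$, with uniqueness of limits enforced by local convergence in measure) is somewhat more elementary than the paper's route through abstract normed Köthe-space theory. Your observation that the upper bound in (C) actually holds with constant $1$ rather than $16$ is correct.

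The hard lower bound in (C) — showing that $M := \sup_{\|g\|_{\IntersectionSpaceSymbol} \leq 1} \int_X |fg|\,d\mu < \infty$ forces a decomposition of $f$ with $\SumSpaceSymbol$-norm $\lesssim M$ — is where your proposal has a genuine gap, and the gap is not merely in the details. Three concrete problems with the proposed level-set construction:
(i) Testing against $g = \mathbf{1}_R / \|\mathbf{1}_R\|_{\IntersectionSpaceSymbol}$ for a region $R$ fails whenever $\|\mathbf{1}_R\|_{\IntersectionSpaceSymbol} = \infty$, which is the generic situation. For example, if $R$ is of the type $\{\rho_1 < 1,\ \rho_2 \geq 1\}$ with $\mu_2(X_2) = \infty$, then $\|\mathbf{1}_R\|_{\lebesgue^1} = \|\mathbf{1}_R\|_{\lebesgue^{\infty,1}} = \infty$, so the proposed test function does not exist and gives no information. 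You would have to test against \emph{localized} pieces of $R$, and for the $\lebesgue^{1,\infty}$ and $\lebesgue^{\infty,1}$ bounds (which are essential suprema) you would need a countable family of tests together with control of the resulting exceptional null sets — precisely the measurability obstruction that the paper takes pains to handle (see Lemma~\ref{lem:CountableLInfinityCharacterization}).
(ii) The flat cutoff at level $M$ is not the right first cut. Already in the classical $\lebesgue^1 + \lebesgue^\infty$ duality, the $\lebesgue^\infty$-threshold is not $M$ but rather $\lambda_* := \inf\{\lambda \geq 0 : \mu(\{|f| > \lambda\}) \leq 1\}$, determined by the size of the level set; one then has to \emph{prove} $\lambda_* \lesssim M$ by testing against subsets of $\{|f| > \lambda\}$ of measure exactly $1$. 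In the iterated case the threshold must additionally depend on the outer variable, which is why the paper's decomposition in the proof of Proposition~\ref{prop:IteratedFunctionNormEquivalentToSumNorm} cuts at $2G(y)$ with $G(y) := \varrho_X(|F(\cdot,y)|)$, not at any flat level.
(iii) Your secondary split of $E$ by the slice \emph{measures} $\rho_1, \rho_2 \lessgtr 1$ (which are functions of a single variable, hence produce a rectangular partition) does not encode the quantity that actually governs the decomposition — namely the slice \emph{norm} $G(y)$ and the pointwise size of $|f|$ relative to $G(y)$, as in the paper's choice of sets $A = \{G > 2\alpha\}$ and $B = \{|F| > 2G\}$. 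There is no reason the piece on $\{\rho_1 \geq 1,\ \rho_2 \geq 1\}$, where $f > M$ pointwise and both indicator-norms are large, should lie in any of the four constituent spaces with controlled norm.

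You correctly sense that the Hahn–Banach route requires the Fatou property; but your constructive replacement is essentially an attempt to prove the Lorentz–Luxemburg theorem $\varrho_\otimes'' = \varrho_\otimes$ by hand for this particular function norm, and the sketch does not yet contain the ingredients (slice-dependent thresholds, approximation by countable families of test functions, Fatou-type passage to the limit) that such a proof would need. The paper sidesteps all of this by proving once that $\varrho_\otimes$ is a function norm with the Fatou property, computing the two norm equivalences $\|\cdot\|_{\SumSpaceSymbol} \asymp \varrho_\otimes(|\cdot|)$ and $\|\cdot\|_{\IntersectionSpaceSymbol} \asymp \varrho_\otimes'(|\cdot|)$ separately, and then invoking Lorentz–Luxemburg to close the loop; the explicit level-set decomposition does appear, but only in the controlled situation of bounding $\|F\|_{\SumSpaceSymbol}$ by $\varrho_\otimes(|F|)$, where the optimal threshold from Lemma~\ref{lem:LebesgueSumFunctionNorm}(3) is available by construction.
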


With this dual characterization of $\SumSpaceSymbol = \SumSpace$ at hand, we can now complete
the proof of Theorem~\ref{thm:NecessaryConditionsForCompatibleSpaces}.
\begin{proof}[Proof of Part~(\ref{enu:KernelDomainEmbedding})
of Theorem~\ref{thm:NecessaryConditionsForCompatibleSpaces}]

By the same reasoning as in the proof of Part~(\ref{enu:KernelCoDomainEmbedding}),
the \emph{continuous} embedding
$\BanachOne \hookrightarrow \SumSpaceSymbol_{1/w}(\nu)$
holds if and only if we have $\BanachOne \subset \SumSpaceSymbol_{1/w}(\nu)$ as sets.

Thus, let $f \in \BanachOne$.
Proving $f \in \SumSpaceSymbol_{1/w}(\nu)$ means proving that
$g := \frac{1}{w} \cdot f \in \SumSpaceSymbol(\nu)$.
For this, it suffices by Theorem~\ref{thm:DualOfIntersection} to prove that
$g \cdot h \in \lebesgue^{1}(\nu)$ for all
\(
  h \in \IntersectionSpaceSymbol(\nu).
\)
Thus, let $h \in \IntersectionSpaceSymbol(\nu)$ be arbitrary,
and define $\psi := \frac{1}{w} \cdot |h|$, noting that $\psi \in \IntersectionSpaceSymbol_w(\nu)$.
The rest of the proof again proceeds similar to the proof of Part~(\ref{enu:KernelCoDomainEmbedding}):
By applying Lemma~\ref{lem:SolidSpacesContainGoodFunctions}
(to the measure space $(X,\mu)$ and with $\BanachOne = \lebesgue^1(\mu)$,
noting that $\lebesgue^1(\mu)$ is non-trivial since $\mu(X) \neq 0$ and $\mu$ is $\sigma$-finite),
we obtain a measurable set $E \subset X$ such that $\mu(E) > 0$ and such that
$\varphi := \Indicator_E \in \IntersectionSpaceSymbol(\mu) \cap \IntersectionSpaceSymbol_v(\mu)$.
Finally, Lemma~\ref{lem:KernelModuleRichnessResult} shows that
$K := \varphi \otimes \psi \in \BBm(X,Y)$; by our assumption, this means that
$\big( \Phi_K |f| \big) (x) < \infty$ for $\mu$-almost all $x \in X$, since $|f| \in \BanachOne$.
In particular since $\mu(E) > 0$, there is some $x \in E$ such that
\[
  \infty
  > \big( \Phi_K |f| \big) (x)
  = \varphi(x)
    \cdot \int_Y
            |f(y)| \cdot \psi(y)
          \, d \nu(y)
  = \int_Y
      |g(y)| \cdot |h(y)|
    \, d \nu(y) ,
\]
which means that indeed $g \cdot h \in \lebesgue^1(\nu)$, as was to be shown.
\end{proof}

\section{Boundedness of \texorpdfstring{$\Phi_K : \BanachOne \to \lebesgue^\infty_{1/v}$}
                                       {ΦK from 𝐀 into a weighted 𝑳∞ space}}
\label{sec:EmbeddingIntoWeightedLInfty}

In this section, we prove Theorem~\ref{thm:BoundednessIntoWeightedLInfty}.
For doing so, we will need the following auxiliary result, the proof of which can be found
in Appendix~\ref{sub:CartesianProductSumNormProof}.

\begin{lemma}\label{lem:CartesianProductSumNorm}
  Let $\XTuple = \ProductTupleX$, where $\XIndexTuple{i}$ is a $\sigma$-finite measure space
  for $i \in \{ 1, 2 \}$.
  For $V \in \CalF_1$ and $W \in \CalF_2$, we have
  \[
    \| \Indicator_{V \times W} \|_{\SumSpace}
    \geq \min \{ 1, \mu_1(V), \mu_2(W), \mu(V \times W) \}.
  \]
\end{lemma}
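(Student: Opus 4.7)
The plan is a duality argument: I will construct a test function $g \geq 0$ with $\|g\|_{\IntersectionSpace} \leq 1$ whose pairing against $\Indicator_{V \times W}$ exceeds the claimed lower bound, and then dualize against any decomposition via Hölder's inequality for mixed-norm Lebesgue spaces (see \cite[Eq.~(1), Sec.~2]{MixedLpSpaces}). Concretely, for any decomposition $\Indicator_{V \times W} = f_1 + f_2 + f_3 + f_4$ and any nonnegative measurable $g$, Hölder gives
\[
  \int_X \Indicator_{V \times W} \, g \, d\mu
  \leq \|f_1\|_{\lebesgue^1} \|g\|_{\lebesgue^\infty}
     + \|f_2\|_{\lebesgue^\infty} \|g\|_{\lebesgue^1}
     + \|f_3\|_{\lebesgue^{1,\infty}} \|g\|_{\lebesgue^{\infty,1}}
     + \|f_4\|_{\lebesgue^{\infty,1}} \|g\|_{\lebesgue^{1,\infty}} .
\]
If the four norms of $g$ appearing on the right are each bounded by $1$, the right-hand side is dominated by $\sum_i \|f_i\|$, so taking the infimum over decompositions yields $\int \Indicator_{V \times W} g \, d\mu \leq \|\Indicator_{V \times W}\|_{\SumSpace}$. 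The task therefore reduces to producing such a test function $g$ whose pairing with $\Indicator_{V \times W}$ meets the desired lower bound.

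I propose $g := c \cdot \Indicator_{V_0 \times W_0}$ for suitable subsets $V_0 \subset V$, $W_0 \subset W$ of finite measure. A direct calculation yields $\|g\|_{\lebesgue^\infty} = c$, $\|g\|_{\lebesgue^{1,\infty}} = c\,\mu_1(V_0)$, $\|g\|_{\lebesgue^{\infty,1}} = c\,\mu_2(W_0)$, and $\|g\|_{\lebesgue^1} = c\,\mu_1(V_0)\mu_2(W_0)$; all four are bounded by $1$ precisely when $c \leq 1/M$ with $M := \max\{1, \mu_1(V_0), \mu_2(W_0), \mu_1(V_0)\mu_2(W_0)\}$. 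Choosing $c := 1/M$ I obtain $\int \Indicator_{V \times W} g \, d\mu = \mu_1(V_0)\mu_2(W_0)/M$, and a short case distinction (whether each of $\mu_1(V_0), \mu_2(W_0)$ is $\leq 1$ or $\geq 1$) establishes the elementary identity
\[
  \frac{ab}{\max\{1, a, b, ab\}} = \min\{1, a\} \cdot \min\{1, b\} = \min\{1, a, b, ab\}
  \qquad (a, b \in [0,\infty)) .
\]

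It remains to pick $V_0, W_0$ so that $\min\{1, \mu_1(V_0)\} \geq \min\{1, \mu_1(V)\}$ and likewise for $W_0$; this is the single step where $\sigma$-finiteness is genuinely needed, and I regard it as the only real subtlety in the argument. If $\mu_1(V) \leq 1$ I simply take $V_0 := V$; otherwise I write $V = \bigcup_n (V \cap X_1^{(n)})$ with $\mu_1(X_1^{(n)}) < \infty$ and pick $n$ large enough that $\mu_1(V \cap X_1^{(n)}) \geq 1$, which is possible since these measures increase to $\mu_1(V) > 1$. An analogous choice yields $W_0$. Combining all ingredients gives $\int \Indicator_{V \times W} g \, d\mu = \min\{1, \mu_1(V_0)\} \min\{1, \mu_2(W_0)\} \geq \min\{1, \mu_1(V), \mu_2(W), \mu(V \times W)\}$, as required. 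The degenerate case $\mu_1(V) = 0$ or $\mu_2(W) = 0$ is trivial since both sides of the lemma then vanish.
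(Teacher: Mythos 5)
Your proof is correct, but it takes a genuinely different route from the paper's. The paper exploits the function-norm machinery it has just built: it invokes Proposition~\ref{prop:IteratedFunctionNormEquivalentToSumNorm} to get $\|\Indicator_{V\times W}\|_{\SumSpace} \geq \varrho_{\otimes}(\Indicator_{V\times W})$, observes that $\varrho_{\otimes}$ tensorizes on indicator functions of rectangles, i.e.\ $\varrho_{\otimes}(\Indicator_{V\times W}) = \varrho_{X_1}(\Indicator_V)\,\varrho_{X_2}(\Indicator_W)$, and then bounds $\varrho_{X_i}(\Indicator_A) \geq \min\{1,\mu_i(A)\}$ by a direct minimization over $\lambda$ using the explicit formula from Lemma~\ref{lem:LebesgueSumFunctionNorm}(1). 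You instead bypass $\varrho_{\otimes}$ entirely and argue by duality: you pair $\Indicator_{V\times W}$ against an explicit normalized test function $g = c\,\Indicator_{V_0\times W_0}$ in $\IntersectionSpaceSymbol$, estimate the pairing of each summand of an arbitrary decomposition of $\Indicator_{V\times W}$ by mixed-norm Hölder, and optimize $c$. Your elementary identity $ab/\max\{1,a,b,ab\} = \min\{1,a\}\min\{1,b\} = \min\{1,a,b,ab\}$ then lands on exactly the same intermediate quantity $\min\{1,\mu_1(V_0)\}\min\{1,\mu_2(W_0)\}$ that the paper reaches via the product of $\varrho$'s. The $\sigma$-finiteness issue you flag (needing $\mu_1(V_0)<\infty$ to define $g$) is real and your handling of it is correct; the paper's proof avoids it because $\varrho_{X_1}(\Indicator_V)$ is directly defined even when $\mu_1(V)=\infty$. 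Your approach is more self-contained and does not require the Köthe-space apparatus, at the modest cost of re-deriving by hand the Hölder-type pairing that Proposition~\ref{prop:IteratedFunctionNormEquivalentToSumNorm} packages; the paper's approach is shorter in context because it reuses infrastructure already established for Theorem~\ref{thm:DualOfIntersection}.
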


Using this estimate, we can now prove Theorem~\ref{thm:BoundednessIntoWeightedLInfty}.

\begin{proof}[Proof of Theorem~\ref{thm:BoundednessIntoWeightedLInfty}]
  \textbf{Ad (1):}
  Since $J$ is countable, there exists a (not necessarily injective)
  surjection $\N \to J, n \mapsto j_n$.
  Define $\Omega_n := U_{j_n} \setminus \bigcup_{\ell = 1}^{n-1} U_{j_\ell}$, noting that
  the $\Omega_n$ are pairwise disjoint and satisfy
  $\biguplus_{n \in \N} \Omega_n = \bigcup_{n \in \N} U_{j_n} = \bigcup_{j \in J} U_j = X$,
  as well as $\Omega_n \subset U_{j_n}$.
  Using this partition, define
  \[
    w_{\CalU}^c : X \to (0,\infty),
                  x \mapsto \sum_{n \in \N}
                              \Big[
                                (w_{\CalU})_{j_n} \cdot \Indicator_{\Omega_n} (x)
                              \Big] .
  \]
  Clearly, $w_{\CalU}^c$ is well-defined (and in particular $(0,\infty)$-valued) and measurable.

  To prove Equation~\eqref{eq:ContinuousCoveringWeightCondition},
  first note that since $\CalU$ is a product-admissible covering,
  there is $C_0 > 0$ satisfying $(w_{\CalU})_i \leq C_0 \cdot (w_{\CalU})_j$ for all $i,j \in J$
  for which $U_i \cap U_j \neq \emptyset$.
  Now, let $j \in J$ and $x \in U_j$ be arbitrary.
  We have $x \in \Omega_n$ for a unique $n \in \N$, and then $w_{\CalU}^c (x) = (w_{\CalU})_{j_n}$.
  Furthermore, since $x \in U_j \cap \Omega_n \subset U_j \cap U_{j_n}$, we have
  $(w_{\CalU})_{j_n} \leq C_0 \cdot (w_{\CalU})_j$ and
  $(w_{\CalU})_{j} \leq C_0 \cdot (w_{\CalU})_{j_n}$, meaning
  \(
    \frac{w_{\CalU}^c (x)}{(w_{\CalU})_j}
    + \frac{(w_{\CalU})_j}{w_{\CalU}^c (x)}
    \leq 2 \, C_0 .
  \)
  Overall, we see
  \(
    \sup_{j \in J}
      \sup_{x \in U_j}
      \Big[
        \frac{w_{\CalU}^c (x)}{(w_{\CalU})_j}
        + \frac{(w_{\CalU})_j}{w_{\CalU}^c (x)}
      \Big]
    \leq 2 \, C_0
    < \infty ,
  \)
  as desired.

  Finally, let $v_{\CalU}^c : X \to (0, \infty)$ satisfy
  \eqref{eq:ContinuousCoveringWeightCondition};
  that is,
  \({
    C_v := \sup_{j \in J}
             \sup_{x \in U_j}
               \Big[
                 \frac{(w_{\CalU})_j}{v_{\CalU}^c (x)}
                 + \frac{v_{\CalU}^c (x)}{(w_{\CalU})_j}
               \Big]
        < \infty .
  }\)
  Let $x \in X$ and note that $x \in U_j$ for some $j \in J$.
  Hence,
  \(
    \vphantom{\sum^i}
    v_{\CalU}^c (x)
    \leq C_v \cdot (w_{\CalU})_j
    \leq 2 C_0 C_v \cdot w_{\CalU}^c (x) .
  \)
  Conversely,
  \(
    w_{\CalU}^c (x)
    \leq 2 C_0 \cdot (w_{\CalU})_j
    \leq 2 C_0 C_v \cdot v_{\CalU}^c (x) .
  \)
  Overall, this shows that $w_{\CalU}^c \asymp v_{\CalU}^c$, as claimed.

  \medskip{}

  \noindent
  \textbf{Ad (2):}
  First, note that for arbitrary $x,y \in X$, we have $x \in U_j \subset \CalU(x)$ for some $j \in J$,
  and hence $|K(x,y)| \leq \MaxKernel{\CalU} K (x,y) \leq L(x,y)$.
  Since $K$ is measurable and because of $\| L \|_{\BBm} < \infty$,
  this implies $K, |K| \in \BBm(X,X) \hookrightarrow \Bounded(\BanachOne)$.
  In case of $\BanachOne = \{ 0 \}$, the operator $\Phi_K : \BanachOne \to \lebesgue_{1/v}^\infty$
  is trivially bounded, so that we can assume in the following that $\BanachOne \neq \{ 0 \}$.
  Note that this implies $\mu(X) > 0$.
  As in Definition~\ref{def:ProductAdmissibleCovering}, let us write $U_j = V_j \times W_j$.

  Now, let $f \in \BanachOne$.
  For arbitrary $j \in J$ and $x,y \in U_j$, we then have $x \in U_j \subset \CalU(y)$,
  and hence $|K(x,z)| \leq \MaxKernel{\CalU} K (y,z) \leq L(y,z)$ for arbitrary $z \in X$.
  Therefore,
  \[
    |\Phi_K f (x)|
    \leq \int_X
           |K(x,z)| \cdot |f(z)|
         \, d \mu(z)
    \leq \int_X
           L(y,z) \cdot |f(z)|
         \, d \mu (z)
    =    (\Phi_L \, |f|) (y) .
  \]
  Since this holds for all $x,y \in U_j$, we see that if we define
  \[
    \Theta_j := \sup_{x \in U_j} |\Phi_K f (x)| \qquad \text{for } j \in J,
  \]
  then $\Theta_j \leq (\Phi_L \, |f|)(y)$ for all $y \in U_j$, meaning that
  \(
    \Theta_j \cdot \Indicator_{U_j} (y) \leq (\Phi_L \, |f|) (y)
  \)
  for all $j \in J$ and $y \in X$.
  Since $\Phi_L \, |f| \in \BanachOne$ and by solidity of $\BanachOne$, this implies
  \begin{equation}
    \Theta_j \cdot \big\| \Indicator_{U_j} \big\|_{\BanachOne}
    \leq \big\|
           \Phi_L \, |f|
         \big\|_{\BanachOne}
    \leq C^{(1)} \cdot \| \Phi_L \|_{\BBm} \cdot \| f \|_{\BanachOne}
    =:   C^{(2)} \cdot \| f \|_{\BanachOne}
    \qquad \forall \, j \in J .
    \label{eq:WeightedLInfityEmbeddingStep1}
  \end{equation}
  Here, the constant $C^{(1)} = C^{(1)}(\BanachOne, m)$ is provided by our assumption
  $\BBm(X,X) \hookrightarrow \Bounded(\BanachOne)$.

  Next, Part~(\ref{enu:KernelDomainEmbedding})
  of Theorem~\ref{thm:NecessaryConditionsForCompatibleSpaces} shows
  because of $\BBm(X,X) \hookrightarrow \Bounded(\BanachOne)$ and $\mu(X) \neq 0$,
  and thanks to our assumption $m(x,y) \leq C \cdot u(x) \cdot u(y)$ that
  $\BanachOne \hookrightarrow \SumSpaceSymbol_{1/u}$.
  As before, we use the notation $\SumSpaceSymbol := \SumSpace$ for brevity.
  Hence, there is $C^{(3)} = C^{(3)} (\BanachOne, u) > 0$ such that
  \(
    \| \Indicator_{U_j} \|_{\SumSpaceSymbol_{1/u}}
    \leq C^{(3)} \cdot \| \Indicator_{U_j} \|_{\BanachOne}
  \)
  for all $j \in J$.

  Now, since $u$ is $\CalU$-moderate, there is $C^{(4)} > 0$ satisfying
  $u (y) \leq C^{(4)} \cdot u(x)$ and hence also $\frac{1}{u(x)} \leq C^{(4)} \cdot \frac{1}{u(y)}$
  for all $j \in J$ and all $x, y \in U_j$.
  This entails
  $\frac{1}{u(x)} \Indicator_{U_j} (y) \leq C^{(4)} \cdot \frac{1}{u(y)} \Indicator_{U_j}(y)$
  for all $x \in U_j$ and all $y \in X$.
  In combination with Lemma~\ref{lem:CartesianProductSumNorm}, this implies
  \begin{align*}
    \frac{(w_{\CalU})_j}{u(x)}
    & = \frac{\min \{ 1, \mu_1 (V_j), \mu_2 (W_j), \mu(V_j \times W_j) \}}{u(x)}
      \leq \frac{1}{u(x)} \cdot \| \Indicator_{V_j \times W_j} \|_{\SumSpaceSymbol}
      =    \frac{1}{u(x)} \cdot \| \Indicator_{U_j} \|_{\SumSpaceSymbol} \\
    & \leq C^{(4)} \cdot \Big\| \frac{1}{u} \, \Indicator_{U_j} \Big\|_{\SumSpaceSymbol}
      =    C^{(4)} \cdot \| \Indicator_{U_j} \|_{\SumSpaceSymbol_{1/u}}
      \leq C^{(3)} C^{(4)} \cdot \| \Indicator_{U_j} \|_{\BanachOne}
    \qquad \forall \, j \in J \text{ and } x \in U_j .
  \end{align*}

  By construction of the weight $w_{\CalU}^c$, the constant
  $C^{(5)} := \sup_{j \in J} \sup_{x \in U_j} \frac{w_{\CalU}^c (x)}{(w_{\CalU})_j}$
  is finite; furthermore, by definition of the weight $v$
  (see Equation~\eqref{eq:SpecialLInftyWeight}), we see
  $\frac{1}{v(x)} = \frac{w_{\CalU}^c (x)}{u(x)} \leq C^{(5)} \cdot \frac{(w_{\CalU})_j}{u(x)}$
  for all $x \in U_j$.
  By combining these observations with Equation~\eqref{eq:WeightedLInfityEmbeddingStep1}
  and recalling the definition of $\Theta_j$, we conclude that
  \[
    \frac{1}{v(x)} \, |\Phi_K f (x)|
    \leq \Theta_j \cdot \frac{1}{v(x)}
    \leq C^{(3)} C^{(4)} C^{(5)} \cdot \Theta_j \cdot \| \Indicator_{U_j} \|_{\BanachOne}
    \leq C^{(2)} C^{(3)} C^{(4)} C^{(5)} \cdot \| f \|_{\BanachOne}
  \]
  for all $j \in J$ and $x \in U_j$.
  Since $X = \bigcup_{j \in J} U_j$, this implies
  $\| \Phi_K f \|_{\lebesgue^{\infty}_{1/v}} \leq C^{(6)} \cdot \| f \|_{\BanachOne}$.
  Note that the constant $C^{(6)} := C^{(2)} C^{(3)} C^{(4)} C^{(5)}$
  satisfies $C^{(6)} = C^{(6)} (\BanachOne, \CalU, u, m, w_{\CalU}^c, L)$; that is,
  it is independent of the choice of $f \in \BanachOne$.
\end{proof}

\section{Application: General coorbit theory with mixed-norm Lebesgue spaces}
\label{sec:CoorbitTheory}

In this section, we first give a more precise exposition of coorbit theory,
based on the article \cite{kempka2015general} by Kempka et al.
We then give rigorous formulations and proofs of
Theorems~\ref{thm:IntroductionCoorbitWellDefinedConditions}
and \ref{thm:IntroductionCoorbitDiscretizationConditions}.

\subsection{A formal review of general coorbit theory}%
\label{sub:CoorbitReview}

As in Section~\ref{sec:IntroCoorbitTheory}, we assume throughout that
$\CalH$ is a separable Hilbert space, and that $\XTuple = \ProductTupleX$,
where $\XIndexTuple{j}$ ($j \in \{1,2\}$) is a $\sigma$-compact,
locally compact Hausdorff space with Borel $\sigma$-algebra $\CalF_j$,
and a Radon measure $\mu_j$ with $\supp \mu_j = X_j$.
Furthermore, we assume that $\CalF_1 \otimes \CalF_2$ is the Borel $\sigma$-algebra on $X$
and that $\mu$ is a Radon measure; this holds for instance if $X_1, X_2$ are second-countable;
see \cite[Theorem~7.20]{FollandRA}.
Finally, we assume that $\Psi = (\psi_x)_{x \in X} \subset \CalH$ is a continuous Parseval frame
(see Equation~\eqref{eq:ParsevalFrameCondition})
with reproducing kernel $K_\Psi$ as in Equation~\eqref{eq:ReproducingKernelDefinition}.

As mentioned in Section~\ref{sec:IntroCoorbitTheory},
the reproducing kernel $K_\Psi$ and the Banach function space $\BanachOne$
have to satisfy certain technical conditions to ensure that the coorbit space $\Co_\Psi (\BanachOne)$
is well-defined.
In the following two lemmas, we first collect the conditions from \cite{kempka2015general}
which ensure that the reservoir $\Reservoir$ is well-defined, and then the conditions which guarantee
that also the coorbit spaces $\Co_\Psi (\BanachOne)$ are well-defined.
Simplifications of these conditions will be presented in the next subsection.

As previously in Equation~\eqref{eq:SeparableMatrixWeight},
for a measurable function $v : X \to (0,\infty)$, we define
${ m_v : X \times X \rightarrow (0,\infty), (x,y) \mapsto \max\{v(x)/v(y),v(y)/v(x)\} }$.

\begin{lemma}[{\cite[Section 2.3]{kempka2015general}}]\label{lem:ReservoirConditions}
  Let $v : X \to [1,\infty)$ be measurable and
  assume that there is a constant $C_B > 0$ such that
  \begin{equation}
    \|\psi_x\|_{\Hil} \leq C_B \, v(x) \quad \text{for all} \quad x \in X,
    \qquad \text{and} \qquad
    K_\Psi \in \AAi_{m_v}.
    \label{eq:CoorbitReservoirAssumptions}
  \end{equation}
  Then all results in \cite[Section~2.3]{kempka2015general} apply.
  In particular, the following hold:
  \begin{itemize}
   \item[(1)] The space
              \(
                \Hil^1_v
                := \{
                     f \in \Hil
                     \colon
                     \| V_\Psi f \|_{\lebesgue^1_v (\mu)} < \infty
                   \}
                ,
              \)
              equipped with the norm
              \(
               \|f\|_{\Hil^1_v} := \|V_\Psi f\|_{\lebesgue^1_v(\mu)}
              \)
              is a Banach space which satisfies $\Hil^1_v \hookrightarrow \Hil$.
              Furthermore, there is a null-set $N \subset X$ such that $\psi_x \in \Hil^1_v$
              for all $x \in X \setminus N$.
              Finally, the map $X \setminus N \to \Hil^1_v, x \mapsto \psi_x$ is Bochner measurable.
              \vspace{0.1cm}

   \item[(2)] The extension of $V_\Psi$ to the antidual\footnote{That is,
              $\Reservoir = \{ \varphi : \Hil_v^1 \to \CC \colon \varphi \text{ continuous and antilinear} \}$,
              where $\varphi$ is called antilinear if $\varphi(x + y) = \varphi(x) + \varphi(y)$
              and $\varphi(\alpha x) = \overline{\alpha} \, \varphi(x)$ for all $x,y \in \Hil_v^1$
              and $\alpha \in \CC$.}
              $\Reservoir := (\Hil^1_v)^\urcorner$ of $\Hil^1_v$, given by
              \begin{equation}
                V_{\Psi} : \Reservoir \to \lebesgue^\infty_{1/v}(\mu), f \mapsto V_{\Psi} f
                \quad \text{with} \quad
                V_{\Psi} f \colon X \setminus N \to \CC,
                                  x \mapsto \langle f,\psi_x\rangle_{(\Hil^1_v)^\urcorner,\Hil^1_v}
              \end{equation}
              is a well-defined, continuous and injective map from $(\Hil^1_v)^\urcorner$
              into $\lebesgue^\infty_{1/v}(\mu)$, and $f \mapsto \| V_\Psi f \|_{\lebesgue_{1/v}^\infty}$
              is an equivalent norm on $\Reservoir$.
              \vspace{0.1cm}

   \item[(2)] For ${F \in \lebesgue^\infty_{1/v}(\mu)}$, we have $F = \Phi_{K_\Psi}(F)$ if and only if
              $F = V_\Psi f$ for some $f \in (\Hil^1_v)^\urcorner$.
  \end{itemize}
\end{lemma}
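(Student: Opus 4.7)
The statement is essentially a compatibility result: it asserts that the two hypotheses in \eqref{eq:CoorbitReservoirAssumptions} imply the full list of technical assumptions imposed in Section~2.3 of \cite{kempka2015general}, and then the three enumerated conclusions follow by direct invocation of the results established there. So the plan is to systematically translate each assumption of \cite[Section~2.3]{kempka2015general} into our language and verify it from \eqref{eq:CoorbitReservoirAssumptions}, and only then read off the conclusions (1)--(3).

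First, the pointwise bound $\|\psi_x\|_{\Hil} \leq C_B \, v(x)$ ensures that the weighted Bochner integrability condition required in \cite{kempka2015general} to make sense of the space $\Hil^1_v$ is satisfied: given $f \in \Hil^1_v$, Cauchy--Schwarz applied to $V_\Psi f (x) = \langle f, \psi_x \rangle$ combined with this bound gives measurability and the appropriate estimates. Second, the crucial Schur-type hypothesis on the reproducing kernel appears in \cite{kempka2015general} as a boundedness requirement for $\Phi_{K_\Psi}$ on $\lebesgue_v^1(\mu)$ and $\lebesgue_{1/v}^\infty(\mu)$, and as a compatibility requirement between $K_\Psi$ and the weight $m_v$. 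Here, the hypothesis $K_\Psi \in \AAi_{m_v}$ together with Part~(\ref{enu:NewKernelModuleEmbedsInOld}) of Proposition~\ref{prop:NewKernelModuleBasicProperties1} (or directly the classical Schur test as recalled in Section~\ref{sec:Intro}) yields that $\Phi_{K_\Psi}$ acts boundedly on $\lebesgue_v^p(\mu)$ for $p \in \{1,\infty\}$, using the weighted Schur condition via the substitution $K \mapsto K_{v,1/v}$ described around Equation~\eqref{eq:WeightedKernelDefinition}. This simultaneously gives the self-adjointness-type reproducing identity $\Phi_{K_\Psi} \circ \Phi_{K_\Psi} = \Phi_{K_\Psi}$ on the relevant spaces (a consequence of $\Psi$ being a Parseval frame, combined with the absolute convergence provided by Proposition~\ref{prop:NewKernelModuleBasicProperties2}~(\ref{enu:NewKernelModuleMultiplicationProperty})).

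Once these two ingredients are in place, conclusion (1) follows by citing the corresponding completeness result in \cite[Section~2.3]{kempka2015general}: the continuous embedding $\Hil^1_v \hookrightarrow \Hil$ is immediate from $v \geq 1$, and completeness of $\Hil^1_v$ is standard once one knows that $\|\bullet\|_{\Hil^1_v}$ dominates $\|\bullet\|_{\Hil}$. The existence of the null-set $N$ with $\psi_x \in \Hil^1_v$ on $X \setminus N$ reduces to the estimate $\|V_\Psi \psi_x\|_{\lebesgue_v^1(\mu)} = \int_X |K_\Psi(y,x)| \, v(y) \, d\mu(y)$, which is finite for $\mu$-almost every $x$ precisely because $K_\Psi \in \AAi_{m_v}$ (rewrite this integral with the weighted kernel $(K_\Psi)_{v,1/v}$ and use that $m_v(y,x) \geq v(y)/v(x)$). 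Bochner measurability of $x \mapsto \psi_x$ follows from weak measurability (itself a consequence of the measurability of $V_\Psi f$ for each $f \in \Hil$) together with separability of $\Hil^1_v$; the latter is argued via the density machinery in \cite{kempka2015general}.

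For conclusion (2), the extension of $V_\Psi$ to the antidual is obtained by setting $V_\Psi f(x) := \langle f, \psi_x \rangle_{(\Hil^1_v)^\urcorner, \Hil^1_v}$, which is well-defined off the null-set $N$ by (1); boundedness $V_\Psi : \Reservoir \to \lebesgue_{1/v}^\infty(\mu)$ follows from $\|\psi_x\|_{\Hil_v^1} \lesssim \int_X |K_\Psi(y,x)| v(y) \, d\mu(y) \leq \|K_\Psi\|_{\AAi_{m_v}} \cdot v(x)$, and injectivity together with the equivalent-norm claim is again a direct citation. Finally, conclusion (3), the characterization of the image of $V_\Psi$ as the fixed points of $\Phi_{K_\Psi}$ in $\lebesgue_{1/v}^\infty(\mu)$, is standard coorbit reproducing-formula material: one direction uses that for $f \in \Reservoir$, the Parseval frame identity (extended by duality) gives $V_\Psi f = \Phi_{K_\Psi}(V_\Psi f)$; the converse direction constructs $f$ by defining $\langle f, g \rangle := \int_X F(x) \, \overline{V_\Psi g(x)} \, d\mu(x)$ for $g \in \Hil_v^1$, checking boundedness via the $\lebesgue_{1/v}^\infty$--$\lebesgue_v^1$ duality and again the kernel condition. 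The main obstacle, and the reason this is presented as a ``citation'' result, is the careful bookkeeping needed to match our normalization of weighted kernels and the Schur class $\AAi_{m_v}$ with the (slightly differently phrased) axioms in \cite[Section~2.3]{kempka2015general}; once that translation is done once and for all, the remaining content is purely formal.
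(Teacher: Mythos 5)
The paper does not actually prove this lemma; it is presented purely as a citation to \cite[Section~2.3]{kempka2015general}, with the assertion that the two conditions in \eqref{eq:CoorbitReservoirAssumptions} are exactly the hypotheses under which that reference operates. Your proposal goes further than the paper and sketches \emph{why} those hypotheses match, which is a reasonable thing to do and is broadly correct: the bound $\|\psi_x\|_{\Hil}\leq C_B\,v(x)$ and the Schur-type condition $K_\Psi\in\AAi_{m_v}$ are precisely what \cite{kempka2015general} assumes, and your computations (e.g.\ $\|V_\Psi\psi_x\|_{\lebesgue^1_v(\mu)}\leq v(x)\,\|K_\Psi\|_{\AAi_{m_v}}$ via $v(y)\leq m_v(y,x)\,v(x)$, and the resulting $\lebesgue^\infty_{1/v}$-bound on the extended $V_\Psi$) correctly identify the key estimates.

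Two small inaccuracies in how you hook this back into the present paper. First, you invoke Proposition~\ref{prop:NewKernelModuleBasicProperties1}(\ref{enu:NewKernelModuleEmbedsInOld}) and Proposition~\ref{prop:NewKernelModuleBasicProperties2}(\ref{enu:NewKernelModuleMultiplicationProperty}), but those are statements about $\BBm$, whereas the hypothesis here is $K_\Psi\in\AAi_{m_v}$ — a strictly weaker condition. The $\AAm$-versions of these facts are what is actually needed, and they hold directly (indeed, $\AAi_{m_v}(X,X)$ being a Banach algebra and acting on weighted $\lebesgue^p$ is cited from \cite{GeneralizedCoorbit1} in the Introduction, and can also be recovered as the degenerate $X_2=\{0\}$ case of the $\BBm$-results as in Section~\ref{sub:OldAlgebraIsSpecialCase}); as stated, your citations point to propositions whose hypothesis $K\in\BBm$ you have not established. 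Second, the reproducing identity $\Phi_{K_\Psi}\circ\Phi_{K_\Psi}=\Phi_{K_\Psi}$ is not an ``algebraic'' consequence of membership in a kernel module; it comes from $P:=V_\Psi V_\Psi^\ast$ being a projection on $\lebesgue^2(\mu)$, extended by the kind of dominated-convergence argument carried out in Lemma~\ref{lem:BoundednessEmbeddingEquivalence} — your phrasing blurs these two sources. Neither of these affects the validity of the result, since the paper defers the whole burden to \cite{kempka2015general}, but a careful write-up should make the $\AAm$ versus $\BBm$ distinction explicit.
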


\begin{rem*}
  The possible issue that $\psi_x \notin \Hil_v^1$ on a null-set can be circumvented by redefining
  $\psi_x = 0$ on this null-set.
  Thus, as in \cite{kempka2015general} (see \cite[Remark~2.16]{kempka2015general}),
  we assume in the following that $\psi_x \in \Hil_v^1$ for \emph{all} $x \in X$.
\end{rem*}

\begin{lemma}[{\cite[Section 2.4]{kempka2015general}}]\label{lem:CoorbitConditions}
  Suppose that the assumptions of Lemma~\ref{lem:ReservoirConditions} are satisfied.
  Let $\BanachOne$ be a rich solid Banach function space on $X$, and assume that
  \begin{enumerate}
    \item The integral operator $\Phi_{K_\Psi}$ acts continuously on $\BanachOne$;
          \vspace{0.1cm}

    \item We have $\Phi_{K_\Psi}(\BanachOne) \hookrightarrow \lebesgue^\infty_{1/v} (\mu)$,
          meaning that there is a constant $C > 0$ such that
          \begin{equation}
            \| \Phi_{K_\Psi} (f) \|_{\lebesgue^\infty_{1/v}}
            \leq C \cdot \| \Phi_{K_\Psi} (f) \|_{\BanachOne}
            \qquad \forall \, f \in \BanachOne .
            \label{eq:CoorbitLInftyEmbeddingAssumption}
          \end{equation}
  \end{enumerate}
  Then all results in \cite[Section~2.4]{kempka2015general} apply; in particular the space
  \[
    \Co (\BanachOne)
    = \Co_{\Psi,v} (\BanachOne)
    := \big\{ f \in (\Hil^1_v)^{\urcorner} \colon V_\Psi f \in \BanachOne \big\}
    \quad \text{with norm} \quad
    \| f \|_{\Co (\BanachOne)} := \| V_\Psi f \|_{\BanachOne}
  \]
  is a Banach space; also, $F \in \BanachOne$ is of the form $F = V_\Psi f$
  for some $f \in \Co(\BanachOne)$ if and only if ${F = \Phi_{K_\Psi}(F)}$.
\end{lemma}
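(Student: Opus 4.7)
The proof is essentially a verification that our hypotheses match the technical framework of \cite[Section~2.4]{kempka2015general}, followed by an appeal to the cited results; I would proceed in three short stages. First, I would observe that Lemma~\ref{lem:ReservoirConditions} has already established the underlying reservoir structure required in \cite[Section~2.3]{kempka2015general}: the Banach space $\Hil^1_v$, its antidual $\Reservoir = (\Hil_v^1)^\urcorner$, and the extended voice transform $V_\Psi : \Reservoir \hookrightarrow \lebesgue^\infty_{1/v}(\mu)$ are all available. This is the prerequisite on which the Section~2.4 development of \cite{kempka2015general} builds.

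Second, I would match our two additional hypotheses on $\BanachOne$---namely the boundedness of $\Phi_{K_\Psi}$ on $\BanachOne$ and the embedding $\Phi_{K_\Psi}(\BanachOne) \hookrightarrow \lebesgue^\infty_{1/v}(\mu)$---with the structural assumptions used in the coorbit construction of \cite[Section~2.4]{kempka2015general}. Combined with the richness of $\BanachOne$, these are precisely the compatibility conditions between the reproducing kernel and the target norm that are imposed there. With the hypotheses verified, the statement that ``all results of \cite[Section~2.4]{kempka2015general} apply'' becomes a direct consequence.

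Third, I would extract the two highlighted claims as special cases. Completeness of $\Co_{\Psi,v}(\BanachOne)$ follows in the standard way: a Cauchy sequence $(f_n)$ in $\Co(\BanachOne)$ has $(V_\Psi f_n)$ Cauchy in $\BanachOne$ with some limit $F$; the embedding \eqref{eq:CoorbitLInftyEmbeddingAssumption} propagates this Cauchy property to $\lebesgue^\infty_{1/v}$ (after applying $\Phi_{K_\Psi}$, using that $V_\Psi f_n = \Phi_{K_\Psi} V_\Psi f_n$ as elements of $V_\Psi(\Reservoir)$), so the limit already lies in $V_\Psi(\Reservoir)$ by Lemma~\ref{lem:ReservoirConditions}(3), yielding some $f \in \Reservoir$ with $V_\Psi f = F \in \BanachOne$, i.e.\ $f \in \Co(\BanachOne)$. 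For the characterization, the forward direction is the reproducing identity $V_\Psi f = \Phi_{K_\Psi}(V_\Psi f)$, which is a direct consequence of the Parseval property \eqref{eq:ParsevalFrameCondition} applied to test elements $\psi_x \in \Hil^1_v$; the converse direction follows by combining \eqref{eq:CoorbitLInftyEmbeddingAssumption} (which ensures $F \in \lebesgue^\infty_{1/v}$) with Lemma~\ref{lem:ReservoirConditions}(3) to conclude $F = V_\Psi f$ for some $f \in \Reservoir$, which must then belong to $\Co(\BanachOne)$ since $V_\Psi f = F \in \BanachOne$.

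The only real obstacle is bookkeeping: one must check carefully that the notions of ``solid'', ``rich'', and the precise form of the compatibility conditions between $\BanachOne$ and $K_\Psi$ used in \cite{kempka2015general} coincide with ours. Once this dictionary is established, no new argument is needed beyond invoking the cited machinery.
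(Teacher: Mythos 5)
The paper gives no proof of this lemma at all---it is stated as a direct quotation of \cite[Section~2.4]{kempka2015general}, with the burden of verification resting entirely on the cited work. Your proposal, which reduces the matter to checking that the hypotheses line up and then invoking the cited machinery, is therefore exactly the paper's (implicit) approach, and your additional sketches of completeness and of the range characterization are broadly correct. One small imprecision: the reproducing identity $V_\Psi f = \Phi_{K_\Psi}(V_\Psi f)$ for general $f \in (\Hil_v^1)^\urcorner$ is not a ``direct consequence of the Parseval property'' alone---Parseval gives it only for $f \in \Hil$, and extending it to the full reservoir is precisely the content of the third bullet of Lemma~\ref{lem:ReservoirConditions}, so both directions of the characterization should be routed through that lemma rather than one of them through Parseval.
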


\begin{remark}
  By definition, $\Co_{\Psi,v} (\BanachOne)$ is dependent on the weight $v$
  and the analyzing frame $\Psi$.
  However, in \cite[Lemma~2.26]{kempka2015general}, it is shown that for any other weight $\tilde{v}$
  for which the assumptions of Lemmas~\ref{lem:ReservoirConditions}--\ref{lem:CoorbitConditions}
  are satisfied, $\Co_{\Psi,v} (\BanachOne)= \Co_{\Psi,\tilde{v}} (\BanachOne)$.
  Furthermore, in \cite[Lemma~2.29]{kempka2015general}, it is shown that
  $\Co_{\Psi,v} (\BanachOne) = \Co_{\widetilde{\Psi},v} (\BanachOne)$ for any continuous
  Parseval frame $\widetilde{\Psi} = \{\widetilde{\psi}_x\}_{x\in X}$ for which the mixed kernel
  \(
    K_{\Psi,\widetilde{\Psi}} :
    X \times X \rightarrow \CC,
    (x,y) \mapsto \langle \psi_y, \widetilde{\psi}_x \rangle
  \)
  and its transpose $K_{\Psi,\widetilde{\Psi}}^T$ are both contained in $\AAi_{m_v}$
  and induce bounded integral operators on $\BanachOne$.
  All in all, this justifies the compact notation $\Co (\BanachOne)$ for $\Co_{\Psi,v} (\BanachOne)$.
\end{remark}

To formally state the discretization theory for the coorbit spaces,
we first need to properly define the sequence spaces $\BanachOne^\flat$
and $\BanachOne^\sharp$ that occur in the definition
of the analysis and synthesis operators; see Equation~\eqref{eq:CoefficientSynthesisOperator}.

\begin{definition}[{see \cite[Section~2.2]{kempka2015general}}]\label{def:SequenceSpaces}
  Let $\CalU = (U_i)_{i \in I}$ be an admissible covering of $X$
  and let $\BanachOne$ be a rich solid Banach function space on $X$.
  The spaces $\BanachOne^\flat = \BanachOne^\flat(\CalU)$
  and $\BanachOne^\sharp = \BanachOne^\sharp(\CalU)$ are comprised by
  the sequences $(\lambda_i)_{i\in I} \in \CC^I$ for which the norms
  \begin{equation}
    \| (\lambda_i)_{i \in I} \|_{\BanachOne^\flat}
      := \bigg\|
           \sum_{i \in I}
             |\lambda_i| \, \Indicator_{U_i}
         \bigg\|_{\BanachOne}
    \qquad \text{and} \qquad
    \| (\lambda_i)_{i \in I} \|_{\BanachOne^\sharp}
      := \bigg\|
           \sum_{i \in I}
             \frac{|\lambda_i|}{\mu(U_i)} \cdot \Indicator_{U_i}
         \bigg\|_{\BanachOne}
    \label{eq:SequenceSpaceNorms}
  \end{equation}
  are finite, respectively.
\end{definition}

As a convenient shorthand, we will use the notation
$\|K\|_\AAvA := \max \{\|m_v \, K\|_{\AAi}, \| \Phi_K \|_{\BanachOne \to \BanachOne}\}$
for a given kernel $K$, where $m_v$ is as defined in Equation~\eqref{eq:SeparableMatrixWeight}.

\begin{theorem}[{\cite[Theorem 2.48]{kempka2015general}}]\label{thm:coorbitsdisc}
  Suppose that the continuous frame $\Psi = (\psi_x)_{x \in X}$,
  the weight $v : X \to [1,\infty)$ and the solid Banach function space $\BanachOne$
  satisfy the assumptions of Lemma~\ref{lem:CoorbitConditions}
  (and thus also those of Lemma~\ref{lem:ReservoirConditions}).

  Assume $\| \, |K_\Psi| \, \|_{\AAvA} < \infty$.
  Let $\CalU = (U_i)_{i\in I}$ an admissible covering of $X$,
  let ${\Gamma : X \times X \to S^1}$, let $\oscUG$ be as defined
  in Equation~\eqref{eq:OscillationDefinition}, and let
  $L : X \times X \to [0,\infty]$ be measurable with $\oscUG (K_\Psi) \leq L$.
  Define
  \[
    \delta
    = \delta(v, \BanachOne, L)
    := \max \big\{
               \| L \|_\AAvA,
               \quad
               \big\| L^T \big\|_\AAvA
            \big\}
    \in [0,\infty]
    .
  \]
  If
  \begin{equation}\label{eq:DiscretizationInvertibilityEstimate}
    \delta \cdot \big( 2 \, \| \, | K_\Psi | \, \|_{\AAvA} + \delta \big) < 1,
  \end{equation}
  and if for each $i \in I$ some $x_i \in U_i$ is chosen arbitrarily, then
  there exists a ``dual'' family ${\Phi_d = (\varphi_i)_{i\in I} \subset \Hil^1_v \cap \Co(\BanachOne)}$,
  such that the following are true:
  \begin{itemize}
   \item[(1)] A given $f \in (\Hil^1_v)^\urcorner$ is an element of $\Co(\BanachOne)$ if and only if
              \(
                \big( \langle f, \psi_{x_i} \rangle_{(\Hil^1_v)^{\urcorner}, \Hil_v^1} \big)_{i\in I}
                \in \BanachOne^\flat(\UU)
                ,
              \)
              if and only if
              \(
                \big( \langle f, \varphi_{i} \rangle_{(\Hil_v^1)^{\urcorner}, \Hil_v^1} \big)_{i\in I}
                \in \BanachOne^\sharp(\UU)
                .
              \)
              In this case, we have
              \begin{equation}
                \|f\|_{\Co(\BanachOne)}
                \asymp \big\|
                         \big(
                           \langle f, \psi_{x_i}\rangle_{(\Hil^1_v)^{\urcorner}, \Hil_v^1}
                         \big)_{i \in I}
                       \big\|_{\BanachOne^\flat(\UU)}
                \asymp \big\|
                         \big(
                           \langle f, \varphi_{i} \rangle_{(\Hil^1_v)^{\urcorner}, \Hil_v^1}
                         \big)_{i \in I}
                       \big\|_{\BanachOne^\sharp (\UU)} .
              \end{equation}

   \item[(2)] If $(\lambda_i)_{i\in I} \in \BanachOne^\sharp(\UU)$,
              then $\sum_{i\in I} \lambda_i \, \psi_{x_i} \in \Co(\BanachOne)$ with
              \(
                \big\| \sum_{i \in I} \lambda_i \, \psi_{x_i} \big\|_{\Co(\BanachOne)}
                \lesssim \|(\lambda_i)_{i\in I}\|_{\BanachOne^\sharp(\UU)}
                ,
              \)
              with unconditional convergence of the series in the weak-$\ast$ topology
              induced by $(\Hil^1_v)^\urcorner$.\\
              If  $(\lambda_i)_{i \in I} \in \BanachOne^\flat(\UU)$,
              then $\sum_{i\in I} \lambda_i \, \varphi_{i} \in \Co(\BanachOne)$ with
              \(
                \big\| \sum_{i\in I} \lambda_i \, \varphi_{i} \big\|_{\Co(\BanachOne)}
                \lesssim \|(\lambda_i)_{i \in I}\|_{\BanachOne^\flat(\UU)}
                .
              \)
              \vspace{0.1cm}

   \item[(3)] For all $f\in\Co(\BanachOne)$, we have
              \(
                \sum_{i \in I} \,
                  \langle f, \varphi_{i} \rangle_{(\Hil_v^1)^{\urcorner}, \Hil_v^1}
                  \,\, \psi_{x_i}
                = f
                = \sum_{i \in I} \,
                    \langle f, \psi_{x_i}\rangle_{(\Hil_v^1)^{\urcorner}, \Hil_v^1}
                    \,\, \varphi_{i}
                .
              \)
  \end{itemize}
\end{theorem}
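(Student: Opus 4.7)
The statement is quoted as \cite[Theorem~2.48]{kempka2015general}, so the plan is to verify that the hypotheses given here (phrased in terms of the norm $\|\bullet\|_\AAvA$) coincide with those used in the cited reference, and then invoke that result directly. To make the role of the oscillation bound and of the contraction estimate \eqref{eq:DiscretizationInvertibilityEstimate} transparent, I sketch below the structure of the underlying discretization argument.

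The strategy is to compare the continuous reproducing identity $F = \Phi_{K_\Psi} F$, valid for $F \in V_\Psi(\Co(\BanachOne))$, with a discretized version obtained by sampling $K_\Psi$ at the points $x_i$. To this end one introduces an approximation operator $U_\CalU$ whose integral kernel agrees with $K_\Psi(x, x_i)\,\overline{\Gamma(x_i,y)}$ for $y \in U_i$. The oscillation assumption $\oscUG(K_\Psi) \leq L$ then yields the pointwise kernel estimate $|K_\Psi(x,y) - K_U(x,y)| \leq L(x,y)$, so that $\Phi_{K_\Psi} - U_\CalU$ is kernel-dominated by $L$. By the definition of $\delta$ and of $\|\bullet\|_\AAvA$, this gives $\|\Phi_{K_\Psi} - U_\CalU\|_{\BanachOne \to \BanachOne} \leq \delta$ and the analogous bound for the transpose on the sequence side.

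Since $\Phi_{K_\Psi}$ acts as the identity on its image $V_\Psi(\Co(\BanachOne))$, we have $\Phi_{K_\Psi}^2 = \Phi_{K_\Psi}$ on that range, and the algebraic identity
\[
  \Phi_{K_\Psi} - U_\CalU^2 = (\Phi_{K_\Psi} - U_\CalU)\,\Phi_{K_\Psi} + U_\CalU\,(\Phi_{K_\Psi} - U_\CalU)
\]
forces $\|\Phi_{K_\Psi} - U_\CalU^2\|_{\BanachOne \to \BanachOne} \leq \delta \cdot (2\|K_\Psi\|_\AAvA + \delta) < 1$. A Neumann series argument therefore produces a bounded inverse of $U_\CalU^2$ on the range of $\Phi_{K_\Psi}$. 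The dual family $(\varphi_i)_{i \in I}$ is then extracted by applying this inverse to the building blocks supported on the $U_i$, and parts (1)--(3) are read off from the matrix representation of the reproducing identity in the pair $(\psi_{x_i}, \varphi_i)$.

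The main obstacle in the bookkeeping is to control every operator simultaneously on three levels: on $\BanachOne$, on $\lebesgue^\infty_{1/v}$ (to stay inside the reservoir $\Reservoir = (\Hil_v^1)^\urcorner$), and on the sequence spaces $\BanachOne^\flat$, $\BanachOne^\sharp$. The norm $\|K\|_\AAvA = \max\{\|m_v \, K\|_{\AAi},\, \|\Phi_K\|_{\BanachOne \to \BanachOne}\}$ is tailored exactly for this: its first component guarantees reservoir-compatibility and that $\varphi_i \in \Hil_v^1 \cap \Co(\BanachOne)$, while its second component yields the boundedness on $\BanachOne$ needed for the Neumann series to converge in operator norm. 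Once both of these are in place, the norm equivalences in (1) and the unconditional convergence in (2) follow from solidity of $\BanachOne$ and admissibility of $\CalU$ by standard coorbit-theoretic arguments.
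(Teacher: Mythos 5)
Your proposal correctly identifies this statement as a quotation of \cite[Theorem~2.48]{kempka2015general}, which is precisely what the paper does: no independent proof is offered, only the citation together with a remark explaining that the auxiliary kernel $L$ replaces $\oscUG(K_\Psi)$ to sidestep measurability issues (since $\Gamma$ need not be measurable and the supremum in $\oscUG$ is over a potentially uncountable set). Your explanatory sketch of the Neumann-series argument behind the cited result is sound and consistent with the invertibility condition \eqref{eq:DiscretizationInvertibilityEstimate}, so the approaches coincide; the only thing you omit is the measurability rationale for $L$, which is the single point the paper itself adds beyond the citation.
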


\begin{rem*}
  In \cite{kempka2015general}, the above conditions are formulated directly in terms of
  $\oscUG (K_\Psi)$ and not using the auxiliary kernel $L$.
  This, however, has the minor issue that in general $\oscUG (K_\Psi)$ might not be measurable,
  since $\Gamma$ might not be measurable and also since the definition of $\oscUG(K_\Psi)$
  involves taking a supremum over a (potentially) uncountable set.
  The above formulation circumvents these problems;
  it can be obtained via straightforward minor modifications of the arguments
  given in \cite{kempka2015general}.
\end{rem*}

\subsection{Simplification of the well-definedness conditions}%
\label{sub:CoorbitWelldefinedSimplification}

While Lemma~\ref{lem:ReservoirConditions} relies only on properties of the reproducing kernel $K_\Psi$,
Lemma~\ref{lem:CoorbitConditions} must be verified individually for every target space $\BanachOne$,
except when we generally have $\AAi_{m_v} \hookrightarrow \Bounded(\BanachOne,\BanachOne)$ and
$\AAi_{m_v}(\BanachOne) \hookrightarrow \lebesgue^\infty_{1/v}(\mu)$.
As shown in \cite[Corollary~4]{GeneralizedCoorbit1}, both conditions are satisfied if
$\BanachOne = \lebesgue^p(\mu)$ and if there is an admissible covering $\CalU = (U_i)_{i \in I}$
such that the maximal kernel $\MaxKernel{\CalU}(K_\Psi)$ is in $\AAi_{m_v}$
and ${\sup_{i \in I} \sup_{x,y \in U_i} v(x) / v(y) < \infty}$.

For more general choices of $\BanachOne$, either condition may be violated.
In particular, the embedding $\AAi_{m_v} \hookrightarrow \Bounded(\BanachOne,\BanachOne)$
is not generally true when $\BanachOne = \lebesgue^{p,q}_w(\mu)$.
This is a significant obstruction to the development of a coherent coorbit theory for such spaces.

Using the kernel algebras $\BBm$, we will now state a set of
conditions which ensures that Lemmas~\ref{lem:ReservoirConditions} and \ref{lem:CoorbitConditions}
can be applied for a larger family of functions spaces, including weighted, mixed-norm Lebesgue spaces.

\begin{proposition}\label{pro:coorbitsWithBm}
   Let $\XTuple$ and $\Psi = (\psi_x)_{x \in X} \subset \Hil$ as at the beginning
   of Section~\ref{sub:CoorbitReview}.
   Furthermore, assume that Conditions~\eqref{eq:CoorbitWeightConditions}
   and \eqref{eq:CoorbitWellDefinedConditions} are satisfied.

   Then the conditions of Lemmas~\ref{lem:ReservoirConditions} and \ref{lem:CoorbitConditions}
   are satisfied.
   Therefore, $\Co(\BanachOne) = \Co_{\Psi,v}(\BanachOne)$ is a well-defined Banach space.
\end{proposition}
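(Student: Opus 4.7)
The plan is to verify, step by step, the hypotheses of Lemmas~\ref{lem:ReservoirConditions} and \ref{lem:CoorbitConditions}; the conclusion then follows immediately from those lemmas. The requirements of Lemma~\ref{lem:ReservoirConditions} are essentially built into the assumptions: the pointwise bound $\|\psi_x\|_\Hil \leq C_B\, v(x)$ is a direct consequence of the demand $v(x) \gtrsim \|\psi_x\|_\Hil$ from \eqref{eq:CoorbitWeightConditions}, while the membership $K_\Psi \in \AAi_{m_v}$ is explicitly required by \eqref{eq:CoorbitWellDefinedConditions}.

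Next I turn to Lemma~\ref{lem:CoorbitConditions}. For the boundedness of $\Phi_{K_\Psi}$ on $\BanachOne$: since $\CalU$ covers $X$, every $x$ lies in some $U_j \subset \CalU(x)$, whence $|K_\Psi(x,y)| \leq \MaxKernel{\CalU} K_\Psi(x,y) \leq L(x,y)$ for all $y$. By solidity of $\BBi_{m_0}$ (Proposition~\ref{prop:NewKernelModuleBasicProperties1}(\ref{enu:NewKernelModuleSolid})), this yields $K_\Psi \in \BBi_{m_0}$, and the assumption $\|\Phi_K\|_{\BanachOne \to \BanachOne} \leq \|K\|_{\BBi_{m_0}}$ from \eqref{eq:CoorbitWeightConditions} then delivers the desired bound. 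For the embedding $\Phi_{K_\Psi}(\BanachOne) \hookrightarrow \lebesgue^\infty_{1/v}$, I will invoke Theorem~\ref{thm:BoundednessIntoWeightedLInfty} with $m = m_0$, the weight $u$, and the covering $\CalU$; all of its hypotheses ($m_0(x,y) \leq C\, u(x) u(y)$, $u$ is $\CalU$-moderate, $\BBi_{m_0} \hookrightarrow \Bounded(\BanachOne)$, and the existence of $L \in \BBi_{m_0}$ with $\MaxKernel{\CalU} K_\Psi \leq L$) are guaranteed by \eqref{eq:CoorbitWeightConditions}--\eqref{eq:CoorbitWellDefinedConditions}. This yields boundedness of $\Phi_{K_\Psi} : \BanachOne \to \lebesgue^\infty_{1/\tilde v}$ with $\tilde v(x) = u(x)/w_{\CalU}^c(x)$; since \eqref{eq:CoorbitWeightConditions} forces $v \gtrsim \tilde v$, I conclude $\lebesgue^\infty_{1/\tilde v} \hookrightarrow \lebesgue^\infty_{1/v}$, and hence $\Phi_{K_\Psi} : \BanachOne \to \lebesgue^\infty_{1/v}$ is bounded. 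To extract from this the precise form $\|\Phi_{K_\Psi}(f)\|_{\lebesgue^\infty_{1/v}} \lesssim \|\Phi_{K_\Psi}(f)\|_\BanachOne$ demanded in \eqref{eq:CoorbitLInftyEmbeddingAssumption}, I use the Parseval identity of $\Psi$, which implies $K_\Psi \KernelProduct K_\Psi = K_\Psi$ and hence idempotence of $\Phi_{K_\Psi}$ on $\BanachOne$. Setting $g := \Phi_{K_\Psi}(f)$, the previous bound applied to $g = \Phi_{K_\Psi}(g)$ then gives $\|g\|_{\lebesgue^\infty_{1/v}} \lesssim \|g\|_\BanachOne$, as required.

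The main remaining subtlety I expect is the technical richness of $\BanachOne$ (that is, $\Indicator_K \in \BanachOne$ for compact $K \subset X$), which Lemma~\ref{lem:CoorbitConditions} requires but which is not made explicit in \eqref{eq:CoorbitWeightConditions}. I plan to deduce this from Theorem~\ref{thm:NecessaryConditionsForCompatibleSpaces}(\ref{enu:KernelCoDomainEmbedding}) with the choice $v = w = u$ (permitted because $m_0 \leq C\, u \otimes u$), which yields $(\IntersectionSpace)_u \hookrightarrow \BanachOne$. Since compact subsets of $X$ have finite $\mu$-measure and finite-measure compact projections onto $X_1$ and $X_2$ (as $\mu$ is Radon), and since $u$ is locally bounded, $\Indicator_K$ automatically lies in $(\IntersectionSpace)_u$ and thus in $\BanachOne$, giving richness at no additional cost.
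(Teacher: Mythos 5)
Your outline follows the paper's proof quite closely — same verification of the reservoir conditions, same use of Theorem~\ref{thm:NecessaryConditionsForCompatibleSpaces}(\ref{enu:KernelCoDomainEmbedding}) with a locally bounded $u$ to establish richness, same use of solidity of $\BBi_{m_0}$ to get $K_\Psi \in \BBi_{m_0} \hookrightarrow \Bounded(\BanachOne)$, and same invocation of Theorem~\ref{thm:BoundednessIntoWeightedLInfty} plus $v \gtrsim u/w_\CalU^c$ to get boundedness of $\Phi_{K_\Psi}: \BanachOne \to \lebesgue^\infty_{1/v}$.

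The one place where you differ from the paper is the final step, and it hides a real (though repairable) gap. You assert that Parseval gives the kernel identity $K_\Psi \KernelProduct K_\Psi = K_\Psi$ and ``hence idempotence of $\Phi_{K_\Psi}$ on $\BanachOne$.'' The kernel identity is fine, but passing from it to the operator identity $\Phi_{K_\Psi}\Phi_{K_\Psi} F = \Phi_{K_\Psi} F$ for an arbitrary $F \in \BanachOne$ requires a Fubini argument, and the absolute integrability needed for Fubini is not automatic for $F \in \BanachOne$ (the idempotence is only directly available on $\lebesgue^2(\mu)$, where $\Phi_{K_\Psi} = V_\Psi V_\Psi^\ast$ is a projection). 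This extension is precisely the content of Lemma~\ref{lem:BoundednessEmbeddingEquivalence}, whose proof proceeds by truncating $F$ to $F_n \in \BanachOne \cap \lebesgue^2(\mu)$, using the idempotence on $\lebesgue^2$, and passing to the limit via dominated convergence; the domination is supplied by $\Phi_{|K_\Psi|}|F| \in \BanachOne$ (so that the needed iterated integrals are finite a.e.). The paper simply cites Lemma~\ref{lem:BoundednessEmbeddingEquivalence} at this point. You have all the ingredients — in particular $\Phi_{|K_\Psi|}: \BanachOne \to \BanachOne$ bounded follows from $|K_\Psi| \in \BBi_{m_0}$ by solidity, as you note — but the ``hence'' is doing work that needs to be spelled out or delegated to that lemma.
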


\begin{proof}
  Condition~\eqref{eq:CoorbitReservoirAssumptions} is an immediate consequence
  of Conditions~\eqref{eq:CoorbitWeightConditions} and \eqref{eq:CoorbitWellDefinedConditions}.
  Thus, all assumptions of Lemma~\ref{lem:ReservoirConditions} are satisfied.

  Next, since $\BBi_{m_0}(X) \hookrightarrow \Bounded(\BanachOne)$ and $\BanachOne \neq \{ 0 \}$
  as well as $m_0(x,y) \leq C \cdot u(x) \, u(y)$ by Condition~\eqref{eq:CoorbitWeightConditions},
  Theorem~\ref{thm:NecessaryConditionsForCompatibleSpaces}(\ref{enu:KernelCoDomainEmbedding})
  shows that
  \(
    (\lebesgue^1 \cap \lebesgue^\infty \cap \lebesgue^{1,\infty} \cap \lebesgue^{\infty,1})_u
    \hookrightarrow \BanachOne.
  \)
  Since $u$ is locally bounded and since each compact set $K \subset X$ satisfies
  $K \subset K_1 \times K_2$ for suitable compact sets $K_i \subset X_i$,
  we see that the space on the left-hand side contains $\Indicator_K$
  for each compact $K \subset X$, which shows that $\BanachOne$ is rich.

  Further, note that for each $x \in X$, we have $x \in U_j \subset \CalU(x)$ for some $j \in J$,
  and hence ${L(x,y) \geq (M_{\CalU} K_\Psi ) (x,y) \geq |K_\Psi (x,y)|}$.
  Since $L \in \BBi_{m_0}$ and since $K_\Psi$ is measurable,
  this implies by solidity of $\BBi_{m_0}$ that
  $K_\Psi, |K_\Psi| \in \BBi_{m_0} (X) \hookrightarrow \Bounded(\BanachOne)$, so that
  $\Phi_{K_\Psi} : \BanachOne \to \BanachOne$ and $\Phi_{|K_\Psi|} : \BanachOne \to \BanachOne$
  are bounded.

  Finally, since $u$ is $\CalU$-moderate with $m_0(x,y) \leq C \cdot u(x) \, u(y)$,
  and since ${M_{\CalU} K_\Psi \leq L \in \BBi_{m_0}(X)}$ and
  $\BBi_{m_0}(X) \hookrightarrow \Bounded(\BanachOne)$,
  Part~(2) of Theorem~\ref{thm:BoundednessIntoWeightedLInfty} shows
  for $v_0 : X \to (0,\infty), x \mapsto [w_{\CalU}^c (x)]^{-1} \, u(x)$
  that $\Phi_{K_\Psi} : \BanachOne \to \lebesgue_{1/v_0}^\infty (\mu)$ is well-defined and bounded.
  Note that Condition~\eqref{eq:CoorbitWeightConditions} implies that $v \gtrsim v_0$,
  so that also $\Phi_{K_\Psi} : \BanachOne \to \lebesgue_{1/v}^\infty (\mu)$ is well-defined and bounded.
  By Lemma~\ref{lem:BoundednessEmbeddingEquivalence}, this implies that
  $\Phi_{K_\Psi} (\BanachOne) \hookrightarrow \lebesgue_{1/v}^\infty (\mu)$.
  We have thus verified all assumptions of Lemma~\ref{lem:CoorbitConditions}.
\end{proof}

\subsection{Simplification of the discretization conditions}%
\label{sub:CoorbitDiscretizationSimplification}

Here we present our unified conditions for discretization in coorbit spaces $\Co(\BanachOne)$
for the setting where $\BBi_{m_{0}}(X) \hookrightarrow \Bounded(\BanachOne)$.

\begin{proposition}\label{pro:coorbitsWithBm2}
   Suppose that the assumptions of Proposition~\ref{pro:coorbitsWithBm} hold,
   and set ${m := m_v + m_0}$.
   If there exist an admissible covering $\widetilde{\CalU} = ( \widetilde{U}_i )_{i \in I}$ of $X$,
   a phase function ${\Gamma : X \times X \to S^1}$, and some $L \in \BBm$ such that
   \begin{equation}\label{eq:KernelsInSmallAlgebra}
     K_\Psi, L \in \BBm \quad \text{and} \quad \oscUtG(K_\Psi) \leq L
   \end{equation}
   and
   \begin{equation}\label{eq:IdMinusDiscNormEstimate}
    \| L \|_{\BBi_{m}} \cdot (2 \, \| K_\Psi \|_{\BBi_{m}} + \| L \|_{\BBi_{m}}) < 1,
   \end{equation}
   then all assumptions of Theorem~\ref{thm:coorbitsdisc} (with $\widetilde{\CalU}$ instead of $\CalU$)
   are satisfied.

   In particular, if for each $i \in I$ a point $x_i \in \widetilde{U}_i$ is chosen arbitrarily,
   then there exists a ``dual'' family $\Phi_d = (\varphi_i)_{i\in I} \subset \Hil^1_{v} \cap \Co(\BanachOne)$,
   such that the following are true:
   \begin{itemize}
     \item[(1)] $f\in (\Hil^1_{v})^\urcorner$ is an element of $\Co(\BanachOne)$ if and only if
                \(
                  \big(
                    \langle f, \psi_{x_i} \rangle_{(\Hil_v^1)^{\urcorner}, \Hil_v^1}
                  \big)_{i \in I} \in \BanachOne^\flat(\widetilde{\UU})
                \)
                if and only if
                \(
                  \big(
                    \langle f, \varphi_{i} \rangle_{(\Hil_v^1)^{\urcorner}, \Hil_v^1}
                  \big)_{i\in I} \in \BanachOne^\sharp(\widetilde{\UU})
                  ,
                \)
                and in this case
                \begin{equation}
                  \| f \|_{\Co(\BanachOne)}
                  \asymp \big\|
                           \big(
                             \langle f, \psi_{x_i} \rangle_{(\Hil_v^1)^{\urcorner}, \Hil_v^1}
                           \big)_{i \in I}
                         \big\|_{\BanachOne^\flat(\widetilde{\UU})}
                  \asymp \big\|
                           \big(
                             \langle f, \varphi_{i} \rangle_{(\Hil_v^1)^{\urcorner}, \Hil_v^1}
                           \big)_{i\in I}
                         \big\|_{\BanachOne^\sharp(\widetilde{\UU})}
                  .
                \end{equation}

     \item[(2)] If $(\lambda_i)_{i\in I} \in \BanachOne^\sharp(\widetilde{\UU})$,
                then $\sum_{i\in I} \lambda_i \, \psi_{x_i} \in \Co(\BanachOne)$ with
                \(
                  \|
                    \sum_{i \in I}
                      \lambda_i \, \psi_{x_i}
                  \|_\BanachOne
                  \lesssim \|(\lambda_i)_{i \in I}\|_{\BanachOne^\sharp(\widetilde{\UU})}
                  ,
                \)
                with unconditional convergence of the series in the weak-$\ast$ topology
                induced by $(\Hil^1_{v})^\urcorner$.
                Further, if  $(\lambda_i)_{i\in I} \! \in \! \BanachOne^\flat(\widetilde{\UU})$,
                then $\sum_{i\in I} \lambda_i \, \varphi_{i} \! \in \! \Co(\BanachOne)$ and
                \(
                  \|\sum_{i \in I} \lambda_i \, \varphi_{i}\|_\BanachOne
                  \lesssim \|(\lambda_i)_{i \in I}\|_{\BanachOne^\flat(\widetilde{\UU})}
                  .
                \)

     \item[(3)] For all $f\in\Co(\BanachOne)$, we have
                \(
                  \sum_{i \in I} \,
                    \langle f, \varphi_{i}\rangle_{(\Hil_v^1)^{\urcorner}, \Hil_v^1}
                    \,\, \psi_{x_i}
                  = f
                  = \sum_{i \in I} \,
                      \langle f, \psi_{x_i}\rangle_{(\Hil_v^1)^{\urcorner}, \Hil_v^1}
                      \,\, \varphi_{i}
                  .
                \)
  \end{itemize}
\end{proposition}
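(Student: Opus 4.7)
The plan is to verify that Equations~\eqref{eq:KernelsInSmallAlgebra} and \eqref{eq:IdMinusDiscNormEstimate} together imply every hypothesis of Theorem~\ref{thm:coorbitsdisc} (with $\widetilde{\CalU}$ in place of $\CalU$); once this is done, assertions~(1)--(3) are immediate restatements of the corresponding conclusions of that theorem, applied within the coorbit framework already established by Proposition~\ref{pro:coorbitsWithBm}. The setup of Lemmas~\ref{lem:ReservoirConditions} and \ref{lem:CoorbitConditions} is already guaranteed by that proposition, so the remaining task is purely quantitative: prove the two norm inequalities $\||K_\Psi|\|_{\AAvA} < \infty$ and $\delta\cdot(2\||K_\Psi|\|_{\AAvA}+\delta) < 1$ from Theorem~\ref{thm:coorbitsdisc}.

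The central observation I would exploit is that the $\BBm$-norm dominates the hybrid $\AAvA$-norm. Concretely, for any $K \in \BBm$, Proposition~\ref{prop:NewKernelModuleBasicProperties1}(\ref{enu:NewKernelModuleEmbedsInOld}) gives $\|m\,K\|_{\AAi} = \|K\|_{\AAm} \leq \|K\|_{\BBm}$; together with the pointwise inequality $m_v \leq m = m_v + m_0$ this yields $\|m_v K\|_{\AAi} \leq \|K\|_{\BBm}$. Likewise, $m_0 \leq m$ combined with solidity (Proposition~\ref{prop:NewKernelModuleBasicProperties1}(\ref{enu:NewKernelModuleSolid})) places $K$ in $\BBi_{m_0}$ with $\|K\|_{\BBi_{m_0}} \leq \|K\|_{\BBm}$, and the last line of Condition~\eqref{eq:CoorbitWeightConditions} then gives $\|\Phi_K\|_{\BanachOne \to \BanachOne} \leq \|K\|_{\BBi_{m_0}} \leq \|K\|_{\BBm}$. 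Combining both estimates yields $\|K\|_{\AAvA} \leq \|K\|_{\BBm}$ for every $K \in \BBm$, and by solidity the same bound holds for $|K|$; applied to $K = K_\Psi$, this proves the finiteness of $\||K_\Psi|\|_{\AAvA}$.

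To handle the transposed kernel $L^T$, I would use that the weight $m$ is symmetric: $m_v(x,y) = \max\{v(x)/v(y),v(y)/v(x)\}$ is symmetric by construction, and $m_0$ is symmetric by Condition~\eqref{eq:CoorbitWeightConditions}, so $m^T = m$. Proposition~\ref{prop:NewKernelModuleBasicProperties2}(\ref{enu:NewKernelModuleAdjoint}) then yields $L^T \in \BBi_{m^T} = \BBm$ with $\|L^T\|_{\BBm} = \|L\|_{\BBm}$, and applying the previous paragraph to $L^T$ gives $\|L^T\|_{\AAvA} \leq \|L\|_{\BBm}$. Setting $\delta := \max\{\|L\|_{\AAvA}, \|L^T\|_{\AAvA}\}$, I obtain $\delta \leq \|L\|_{\BBm}$, and therefore
\[
  \delta \cdot \bigl( 2\,\||K_\Psi|\|_{\AAvA} + \delta \bigr)
  \leq \|L\|_{\BBm} \cdot \bigl( 2\,\|K_\Psi\|_{\BBm} + \|L\|_{\BBm} \bigr)
  < 1
\]
by hypothesis~\eqref{eq:IdMinusDiscNormEstimate}. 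Combined with the measurable majorant $\oscUtG(K_\Psi) \leq L$ supplied by~\eqref{eq:KernelsInSmallAlgebra} (note that $L \in \BBm$ is automatically measurable), this furnishes all hypotheses of Theorem~\ref{thm:coorbitsdisc} and the conclusions~(1)--(3) follow.

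The main point requiring attention is therefore not analytic but rather the careful bookkeeping between the three different norms $\|\cdot\|_{\BBm}$, $\|\cdot\|_{\AAi_{m_0}}$, and $\|\cdot\|_{\AAvA}$, together with the symmetry of the weight $m$ needed to treat $L^T$ on the same footing as $L$; no step beyond invoking solidity, Proposition~\ref{prop:NewKernelModuleBasicProperties1}(\ref{enu:NewKernelModuleEmbedsInOld}), Proposition~\ref{prop:NewKernelModuleBasicProperties2}(\ref{enu:NewKernelModuleAdjoint}), and the assumed operator bound from Condition~\eqref{eq:CoorbitWeightConditions} is actually needed.
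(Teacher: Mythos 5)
Your proposal is correct and follows essentially the same route as the paper: reduce to checking the hypotheses of Theorem~\ref{thm:coorbitsdisc}, use $\|K\|_{\AAvA} \leq \|K\|_{\BBm}$ (via Proposition~\ref{prop:NewKernelModuleBasicProperties1}(\ref{enu:NewKernelModuleEmbedsInOld}), solidity, and the operator bound from Condition~\eqref{eq:CoorbitWeightConditions}), and use the symmetry $m^T = m$ together with Proposition~\ref{prop:NewKernelModuleBasicProperties2}(\ref{enu:NewKernelModuleAdjoint}) to get $\|L^T\|_{\BBm} = \|L\|_{\BBm}$, whence $\delta \leq \|L\|_{\BBm}$ and the smallness condition~\eqref{eq:DiscretizationInvertibilityEstimate} follows from~\eqref{eq:IdMinusDiscNormEstimate}. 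The only cosmetic difference is that you pass from weight $m$ to $m_v$ via solidity of $\AAi$, whereas the paper passes from $\BBi_{m_v}$ to $\BBi_{m}$ via solidity of $\BBi$; both yield the same key bound $\|\bullet\|_{\AAi_{m_v}} \leq \|\bullet\|_{\BBi_m}$.
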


\begin{proof}
  Since by assumption Proposition~\ref{pro:coorbitsWithBm} is applicable, that proposition
  shows that the assumptions of Lemma~\ref{lem:CoorbitConditions} are satisfied.

  Next, recall from Part~(4) of Proposition~\ref{prop:NewKernelModuleBasicProperties1} that
  $\| \bullet \|_{\AAi_{m_v}} \leq \| \bullet \|_{\BBi_{m_v}} \leq \| \bullet \|_{\BBi_m}$.
  Furthermore, Condition~\eqref{eq:CoorbitWeightConditions} shows that
  \(
    \| \Phi_\bullet \|_{\BanachOne \to \BanachOne}
    \leq \| \bullet \|_{\BBi_{m_0}}
    \leq \| \bullet \|_{\BBi_m}
    .
  \)
  Overall, we thus see with $\| \bullet \|_{\AAvA}$ as defined
  before Theorem~\ref{thm:coorbitsdisc} that $\| K \|_{\AAvA} \leq \| K \|_{\BBm}$
  for each measurable kernel $K$.

  Since $K_\Psi, L \in \BBm$, we thus see that $|K_\Psi|, L \in \AAvA$, where we note that
  $\oscUtG(K_\Psi) \leq L$, as required in Theorem~\ref{thm:coorbitsdisc}.
  Furthermore, since $m_v^T = m_v$ and $m_0^T = m_0$ we also have $m^T = m$,
  so that Proposition~\ref{prop:NewKernelModuleBasicProperties2} shows
  $\| K^T \|_{\BBm} = \| K \|_{\BBm}$ for each measurable kernel $K$.
  Overall, we thus see that the constant $\delta$ in Theorem~\ref{thm:coorbitsdisc} satisfies
  \[
    \delta
    = \max \big\{ \| L \|_{\AAvA}, \| L^T \|_{\AAvA} \big\}
    \leq \max \big\{ \| L \|_{\BBm}, \| L^T \|_{\BBm} \big\}
    =    \| L \|_{\BBm},
  \]
  and hence
  \[
    \delta \cdot \big( 2 \, \| \,|K_\Psi|\, \|_{\AAvA} + \delta \big)
    \leq \| L \|_{\BBm} \cdot \big( 2 \, \| \, |K_\Psi| \, \|_{\BBm} + \| L \|_{\BBm} \big)
    =    \| L \|_{\BBm} \cdot \big( 2 \, \| K_\Psi \|_{\BBm} + \| L \|_{\BBm} \big)
    < 1;
  \]
  see Equation~\eqref{eq:IdMinusDiscNormEstimate}.
  This completes the proof.
\end{proof}

\appendix

\section{A dual characterization of the space
\texorpdfstring{$\lebesgue^1 + \lebesgue^\infty + \lebesgue^{1,\infty} + \lebesgue^{\infty,1}$}
               {𝑳¹ + 𝑳∞ + 𝑳¹∞ + 𝑳∞¹}}
\label{sec:DualCharacterizationOfSumSpace}

The main objective of this appendix is to prove Theorem~\ref{thm:DualOfIntersection}.
That is, we show that if $F : X_1 \times X_2 \to \CC$ satisfies
$F \cdot G \in \lebesgue^1 (\mu_1 \otimes \mu_2)$ for all
$G \in \lebesgue^1 \cap \lebesgue^\infty \cap \lebesgue^{1,\infty} \cap \lebesgue^{\infty,1}$,
then ${F \in \lebesgue^1 + \lebesgue^\infty + \lebesgue^{1,\infty} + \lebesgue^{\infty,1}}$,
with a corresponding norm estimate.
Along the way, we will also obtain everything necessary to prove
Lemma~\ref{lem:CartesianProductSumNorm}; see Appendix~\ref{sub:CartesianProductSumNormProof}.

The general structure of the proof of Theorem~\ref{thm:DualOfIntersection} is as follows:
First, we show that there is an equivalent norm $\| \bullet \|_\ast$ for the space
${\lebesgue^1 + \lebesgue^\infty + \lebesgue^{1,\infty} + \lebesgue^{\infty,1}}$,
such that with this norm,
${\lebesgue^1 + \lebesgue^\infty + \lebesgue^{1,\infty} + \lebesgue^{\infty,1}}$
is a so-called \emph{normed Köthe space}, whose defining \emph{function norm}
satisfies the \emph{Fatou property}.
Once this is established, the claim of Theorem~\ref{thm:DualOfIntersection}
is proven in Appendix~\ref{sub:DualOfIntersectionProof} as a consequence of
the \emph{Luxemburg representation theorem}.
All of these notions (Köthe spaces, function norms, etc) will be recalled below.

In the following, we will always use the convention $\infty - \lambda = \infty$
for $\lambda \in \R$.
Note that this implies $(\theta - \lambda) + \lambda = \theta = (\theta + \lambda) - \lambda$
for all $\theta \in [0,\infty]$ and $\lambda \in [0,\infty)$.

We begin by studying the norm that defines the space $\lebesgue^1 + \lebesgue^\infty$.

\begin{lemma}\label{lem:LebesgueSumFunctionNorm}
  Let $(X,\CalF,\mu)$ be a measure space.
  For any measurable function $f : X \to [0,\infty]$, define
  \[
    \varrho(f)
    := \inf \big\{
              \|g\|_{\lebesgue^\infty} + \|h\|_{\lebesgue^1}
              \colon
              g, h : X \to [0,\infty] \text{ measurable and } f = g + h
            \big\} \in [0,\infty]
  \]
  and $X_{f,\lambda} := \{ x \in X \colon f(x) > \lambda \}$, for $\lambda \in [0,\infty]$.
  Then the following properties hold:
  \begin{enumerate}
    \item We have
          \(
            \varrho(f)
            = \min_{\lambda \in [0,\infty)}
                \big[ \lambda + \|\Indicator_{X_{f,\lambda}} \cdot (f - \lambda)\|_{\lebesgue^1} \big]
          \).
          In particular, the minimum is attained.

    \item We have
          \(
            \varrho(f)
            = \inf_{\lambda \in [0,\infty) \cap \QQ}
                \big[ \lambda + \|\Indicator_{X_{f,\lambda}} \cdot (f - \lambda)\|_{\lebesgue^1} \big]
          \).
          \vspace{0.1cm}

    \item If $F : X \to \CC$ is measurable and $\alpha := \varrho(|F|) \in [0,\infty]$, then
          \[
            \big\| F \cdot (1-\Indicator_{X_{|F|,2\alpha}}) \big\|_{\lebesgue^\infty}
            \leq 2 \, \varrho(|F|)
            \quad \text{and} \quad
            \big\| F \cdot \Indicator_{X_{|F|,2\alpha}} \big\|_{\lebesgue^1}
            \leq 2 \, \varrho(|F|) .
          \]
  \end{enumerate}
\end{lemma}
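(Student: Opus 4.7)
The backbone of the argument is to rewrite the three assertions in terms of the auxiliary function $\varphi : [0,\infty) \to [0,\infty]$ defined by $\varphi(\lambda) := \lambda + \int_X (f-\lambda)_+ \, d\mu$, where $(t)_+ := \max(t,0)$. Since $\Indicator_{X_{f,\lambda}} \cdot (f-\lambda) = (f-\lambda)_+$, this agrees with the expression appearing in (1) and (2).

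For part~(1) the strategy is to prove the two opposing inequalities and then upgrade the infimum to a minimum. For the upper bound $\varrho(f) \leq \varphi(\lambda)$, I would use the canonical truncation $f = \min(f,\lambda) + (f-\lambda)_+$ for each $\lambda \in [0,\infty)$; both summands are non-negative and measurable, $\|\min(f,\lambda)\|_{\lebesgue^\infty} \leq \lambda$, and $\|(f-\lambda)_+\|_{\lebesgue^1} = \varphi(\lambda) - \lambda$. For the reverse inequality, take an arbitrary decomposition $f = g + h$; setting $\lambda := \|g\|_{\lebesgue^\infty}$, the case $\lambda = \infty$ is trivial, while if $\lambda < \infty$ then $g \leq \lambda$ a.e.\ forces $h \geq (f-\lambda)_+$ a.e., so $\|g\|_{\lebesgue^\infty} + \|h\|_{\lebesgue^1} \geq \varphi(\lambda) \geq \inf \varphi$. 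To pass from infimum to minimum I would verify that $\varphi$ is lower semicontinuous (by Fatou's lemma, using that $\lambda \mapsto (f(x)-\lambda)_+$ is pointwise continuous) and coercive (since $\varphi(\lambda) \geq \lambda$), which together force attainment on $[0,\infty)$; the case $\varrho(f) = \infty$ has to be handled separately but is immediate. Part~(2) then follows by noting that $(f-\lambda)_+ \nearrow (f-\lambda_0)_+$ as $\lambda \searrow \lambda_0$, so monotone convergence yields right-continuity of $\varphi$, and density of $\QQ \cap [0,\infty)$ in $[0,\infty)$ gives the claim.

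For part~(3) I would apply part~(1) to $f := |F|$, assuming $\alpha < \infty$ (the case $\alpha = \infty$ being vacuous), to produce $\lambda_* \in [0,\infty)$ with $\lambda_* + \int (|F|-\lambda_*)_+ \, d\mu = \alpha$; in particular $\lambda_* \leq \alpha$. On $X \setminus X_{|F|,2\alpha}$ we have $|F| \leq 2\alpha$, giving the $\lebesgue^\infty$-bound immediately. On $X_{|F|,2\alpha}$ the inequality $|F| > 2\alpha \geq 2\lambda_*$ yields $|F| < 2(|F|-\lambda_*)$, whence
\[
  \int_{X_{|F|,2\alpha}} |F| \, d\mu
  \leq 2 \int_X (|F|-\lambda_*)_+ \, d\mu
  = 2(\alpha - \lambda_*)
  \leq 2\alpha.
\]
The only genuinely delicate point is the attainment in part~(1): one has to ensure the argument handles values of $\lambda$ where $\varphi(\lambda) = \infty$ and rules out minimising sequences escaping to $+\infty$; the lower semicontinuity plus coercivity combination is the right tool, but requires a little care to set up cleanly.
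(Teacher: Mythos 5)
Your proposal follows the paper's proof step for step: the same truncation decomposition $f = \min(f,\lambda) + (f-\lambda)_+$ for the upper bound, the same choice $\lambda := \|g\|_{\lebesgue^\infty}$ for the lower bound, lower semicontinuity via Fatou plus the bound $\varphi(\lambda) \geq \lambda$ to extract a convergent minimising subsequence, right-continuity via monotone convergence for part~(2), and the same chain $|F| > 2\alpha \geq 2\lambda_*$ $\Rightarrow$ $|F| \leq 2(|F|-\lambda_*)$ for part~(3). Correct and essentially identical in approach.
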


\begin{proof}
  We start by showing that the auxiliary function
  \[
    \Psi_f :
    [0,\infty) \to [0,\infty],
    \lambda \mapsto \lambda + \|\Indicator_{X_{f,\lambda}} \cdot (f - \lambda) \|_{\lebesgue^1}
  \]
  is lower semicontinuous; this means that if
  $(\lambda_n)_{n \in \N} \subset [0,\infty)$ satisfies $\lambda_n \to \lambda \in [0,\infty)$,
  then $\Psi_f (\lambda) \leq \liminf_{n \to \infty} \Psi_f (\lambda_n)$.
  Since the identity map $\lambda \mapsto \lambda$ is continuous, it suffices to
  consider only the second summand in the definition of $\Psi_f (\lambda)$.
  If $x \in X_{f,\lambda}$, then $f(x) > \lambda_n$ for all $n \geq n_x$,
  for a suitable $n_x \in \N$.
  From this, we derive
  \[
    \Indicator_{X_{f,\lambda}} (x) \cdot (f(x) - \lambda)
    \leq \liminf_{n \to \infty}
           \big[ \Indicator_{X_{f,\lambda_n}} (x) \cdot (f(x) - \lambda_n) \big]
    \qquad \forall \, x \in X,
  \]
  where all involved functions are non-negative.
  Thus, an application of Fatou's lemma shows as claimed that
  \({
    \|\Indicator_{X_{f,\lambda}} \cdot (f - \lambda)\|_{\lebesgue^1}
    \leq \liminf_{n \to \infty}
           \|\Indicator_{X_{f,\lambda_n}} \cdot (f - \lambda_n)\|_{\lebesgue^1}
  }\).

  \medskip{}

  We now prove each claim individually.

  \medskip{}

  \noindent
  \textbf{Ad (1):}
  Define
  \(
    \varrho^\ast (f)
    := \inf_{\lambda \in [0,\infty)} \,
         [\lambda + \|\Indicator_{X_{f,\lambda}} \cdot (f - \lambda)\|_{\lebesgue^1}]
    =  \inf_{\lambda \geq 0}
         \Psi_f (\lambda)
  \).
  We first show ${\varrho (f) \leq \varrho^\ast (f)}$.
  Indeed, if $\lambda \in [0,\infty)$ is arbitrary, define
  \(
    g_\lambda
    := \min\{f, \lambda\}
    = f \cdot \Indicator_{f \leq \lambda} + \lambda \cdot \Indicator_{X_{f,\lambda}}
  \),
  and $h_\lambda := \Indicator_{X_{f,\lambda}} \cdot (f - \lambda)$.
  Then $g_\lambda, h_\lambda : X \to [0,\infty]$ are measurable and satisfy $f = g_\lambda + h_\lambda$.
  Therefore,
  \[
    \varrho(f)
    \leq \|g_\lambda\|_{\lebesgue^\infty} + \|h_\lambda\|_{\lebesgue^1}
    \leq \lambda + \|\Indicator_{X_{f,\lambda}} \cdot (f - \lambda)\|_{\lebesgue^1}.
  \]
  Since this holds for all $\lambda \in [0,\infty)$, we see $\varrho(f) \leq \varrho^\ast (f)$.

  In case of $\varrho(f) = \infty$, the preceding derivations imply
  \(
    \infty
    =    \varrho(f)
    \leq \varrho^\ast(f)
    \leq \lambda
         + \|
             \Indicator_{X_{f,\lambda}}
             \cdot (f - \lambda)
           \|_{\lebesgue^1}
  \)
  for \emph{every} $\lambda\in [0,\infty)$, which shows that the desired equality holds
  and that the infimum is attained.
  Therefore, let us assume $\varrho(f) < \infty$, and let $g,h : X \to [0,\infty]$ be measurable
  with $f = g + h$ and $\|g\|_{\lebesgue^\infty} + \|h\|_{\lebesgue^1} < \infty$.
  Define $\lambda := \|g\|_{\lebesgue^\infty}$.
  For $\mu$-almost every $x \in X$, we then have $|g(x)| \leq \lambda < \infty$, and hence
  $h (x) = f(x) - g(x) \geq f(x) - \lambda$.
  This implies $0 \leq \Indicator_{X_{f,\lambda}} \cdot (f - \lambda) \leq h$ $\mu$-almost everywhere,
  and hence
  \[
    \varrho^\ast (f)
    \leq \lambda + \big\| \Indicator_{X_{f,\lambda}} \cdot (f - \lambda) \big\|_{\lebesgue^1}
    \leq \|g\|_{\lebesgue^\infty} + \|h\|_{\lebesgue^1}.
  \]
  Since this holds for all admissible choices of $g,h$, we get $\varrho^\ast (f) \leq \varrho(f)$.
  It remains to show that the infimum in the definition of $\varrho^\ast (f)$ is attained.
  Choose a sequence
  $(\lambda_n)_{n \in \N} \subset [0,\infty)$ satisfying $\Psi_f (\lambda_n) \to \varrho^\ast (f)$.
  Note that $0 \leq \lambda_n \leq \Psi_f (\lambda_n) \to \varrho^\ast(f) \leq \varrho(f) < \infty$,
  so that the sequence $(\lambda_n)_{n \in \N}$ is bounded.
  Therefore, there is a subsequence $(\lambda_{n_\ell})_{\ell \in \N}$
  and some $\lambda \in [0,\infty)$ satisfying ${\lambda_{n_\ell} \to \lambda}$.
  By lower semicontinuity of $\Psi_f$, this implies
  \({
    \varrho^\ast (f)
    \leq \Psi_f (\lambda)
    \leq {\displaystyle{\liminf_{\ell \to \infty}}} \,
           \Psi_f(\lambda_{n_\ell})
    = \varrho^\ast(f)
    ,
  }\)
  proving that the infimum is attained.

  \medskip{}

  \noindent
  \textbf{Ad (2):}
  Let $\varrho^{\natural} (f) := \inf_{\lambda \in [0,\infty) \cap \QQ} \Psi_f (\lambda)$.
  By Part (1), we see $\varrho(f) = \varrho^\ast (f) \leq \varrho^{\natural}(f)$.

  To prove the converse estimate, we first show that $\Psi_f$ is right continuous.
  To see this, let $(\lambda_n)_{n \in \N} \subset [0,\infty)$ be a non-increasing sequence.
  Then $(f(x) - \lambda_n)_{n \in \N}$ is a non-decreasing sequence which converges pointwise to
  $f(x) - \lambda$, where $\lambda = \inf_{n \in \N} \lambda_n = \lim_{n \to \infty} \lambda_n$.
  Furthermore, we have
  $\Indicator_{X_{f,\lambda_n}} \leq \Indicator_{X_{f,\lambda_{n+1}}} \leq \Indicator_{X_{f,\lambda}}$.
  Finally, if $x \in X$ is such that $f(x) > \lambda$, then $f(x) > \lambda_n$ for all $n \geq n_x$
  (for a suitable $n_x \in \N$), proving that
  $\Indicator_{X_{f,\lambda_n}} \nearrow \Indicator_{X_{f,\lambda}}$ pointwise.
  Overall, we see
  \(
    0 \leq     \Indicator_{X_{f,\lambda_n}} \cdot (f - \lambda_n)
      \nearrow \Indicator_{X_{f,\lambda}} \cdot (f - \lambda)
  \),
  so that the monotone convergence theorem shows
  \(
    \|\Indicator_{X_{f,\lambda_n}} \cdot (f - \lambda_n)\|_{\lebesgue^1}
    \to \|\Indicator_{X_{f,\lambda}} \cdot (f - \lambda)\|_{\lebesgue^1}
  \).
  In view of this, we easily see that $\Psi_f$ is right continuous.

  Now, let $\lambda \in [0,\infty)$.
  There is a non-increasing sequence $(\lambda_n)_{n \in \N} \subset [0,\infty) \cap \QQ$
  such that $\lambda_n \to \lambda$.
  Note that $\varrho^\natural (f) \leq \Psi_f (\lambda_n)$ for all $n \in \N$,
  and hence $\varrho^{\natural} (f) \leq \lim_{n \to \infty} \Psi_f(\lambda_n) = \Psi_f (\lambda)$.
  Since $\lambda \in [0,\infty)$ was arbitrary, this implies
  \(
    \varrho^{\natural} (f)
    \leq \inf_{\lambda \in [0,\infty)}
           \Psi_f(\lambda)
    = \varrho^\ast (f)
    = \varrho (f)
    .
  \)

  \medskip{}

  \noindent
  \textbf{Ad (3)}
  The claim is trivial in case of $\alpha = \infty$, so that we can assume $\alpha < \infty$.
  Part~(1) shows that there is some $\lambda \in [0,\infty)$ satisfying
  $\alpha = \lambda + \|\Indicator_{X_{|F|,\lambda}} \cdot (|F| - \lambda)\|_{\lebesgue^1}$.
  In particular, this implies that $2 \alpha \geq \alpha \geq \lambda$.
  Next, note that if $|F(x)| > 2 \alpha$,
  then $|F(x)| \leq 2 \cdot (|F(x)| - \alpha) \leq 2 \cdot (|F(x)| - \lambda)$.
  Therefore,
  \[
    \big\| F \cdot \Indicator_{X_{|F|,2\alpha}} \big\|_{\lebesgue^1}
    \leq 2 \cdot \big\| \Indicator_{X_{|F|,2\alpha}} \cdot (|F| - \lambda) \big\|_{\lebesgue^1}
    \leq 2 \cdot \big\| \Indicator_{X_{|F|,\lambda}} \cdot (|F| - \lambda) \big\|_{\lebesgue^1}
    \leq 2 \alpha .
  \]
  The estimate $\|F \cdot (1-\Indicator_{X_{|F|,2\alpha}})\|_{\lebesgue^\infty} \leq 2 \alpha$
  is trivial.
\end{proof}

As a first corollary of the preceding lemma, we can now show that $\varrho$ is a
so-called \emph{function-norm} which satisfies the \emph{Fatou property}.
Before we prove this, we recall the pertinent definitions for the convenience of the reader.

\begin{definition}\label{def:FunctionNorm}(see \cite[§§ 63 and 65]{ZaanenIntegration})

  Let $(X,\CalF,\mu)$ be a $\sigma$-finite measure space.
  We denote by $\CalM^+$ the set of all equivalence classes of measurable functions $f : X \to [0,\infty]$,
  where two functions are equivalent if they agree $\mu$-almost everywhere.
  By the usual abuse of notation, we will often not distinguish between a function and its equivalence class.

  A map $\varrho : \CalM^+ \to [0,\infty]$ is called a \emph{function seminorm} if it satisfies
  the following properties:
  \begin{enumerate}
    \item $\varrho(f) = 0$ if $f \in \CalM^+$ with $f = 0$ almost everywhere;

    \item $\varrho(a \, f) = a \, \varrho(f)$ for all $f \in \CalM^+$ and $a \in [0,\infty)$;

    \item $\varrho (f + g) \leq \varrho(f) + \varrho(g)$ for all $f,g \in \CalM^+$;

    \item if $f,g \in \CalM^+$ satisfy $f \leq g$ almost everywhere, then $\varrho(f) \leq \varrho(g)$.
  \end{enumerate}
  A function seminorm is called a \emph{function norm} if it has the additional property that
  $f = 0$ almost everywhere for every $f \in \CalM^+$ with $\varrho(f) = 0$.

  A function seminorm $\varrho$ is said to have the \emph{Fatou property} if
  for every sequence $(f_n)_{n \in \N} \subset \CalM^+$ with $\liminf_{n \to \infty} \varrho(f_n) < \infty$,
  we have $\varrho(\liminf_{n \to \infty} f_n) \leq \liminf_{n \to \infty} \varrho(f_n)$.
\end{definition}

\begin{rem*}
  The definition of the Fatou property given above is not the one given in \cite{ZaanenIntegration},
  but it is equivalent, as shown in \cite[§65, Theorem 3]{ZaanenIntegration}.
\end{rem*}

\begin{proposition}\label{prop:LebesgueSumFunctionNormFatouProperty}
  Let $(X,\CalF,\mu)$ be a $\sigma$-finite measure space.
  The map $\varrho : \CalM^+ \to [0,\infty]$ introduced in Lemma~\ref{lem:LebesgueSumFunctionNorm}
  is a function norm which satisfies the Fatou property.
\end{proposition}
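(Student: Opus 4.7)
The plan is to verify the function seminorm axioms, the definiteness condition, and the Fatou property in that order. The axioms for a function seminorm follow quickly from the definition of $\varrho$: positivity on the zero class by the decomposition $f = 0 + 0$, positive homogeneity by scaling an optimal decomposition, and subadditivity by adding optimal decompositions of $f$ and $g$ componentwise. Before proving monotonicity I would note the useful alternative formulation coming from Lemma~\ref{lem:LebesgueSumFunctionNorm}(1), namely
\[
  \varrho(f) = \min_{\lambda \in [0,\infty)} \phi_\lambda(f),
  \qquad \phi_\lambda(f) := \lambda + \int_X (f-\lambda)_+ \, d\mu,
\]
where $(f-\lambda)_+ = \Indicator_{X_{f,\lambda}} \cdot (f - \lambda)$.

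For monotonicity (axiom (4)), suppose $f \leq g$ almost everywhere and let $g = g' + h'$ be an admissible decomposition. I would set $g'' := \min(f, g')$ and $h'' := f - g''$. Then $0 \leq g'' \leq g'$ gives $\|g''\|_{\lebesgue^\infty} \leq \|g'\|_{\lebesgue^\infty}$, and since $h'' = (f - g')_+ \leq (g - g')_+ = h'$ almost everywhere, also $\|h''\|_{\lebesgue^1} \leq \|h'\|_{\lebesgue^1}$. Taking the infimum over admissible decompositions of $g$ then yields $\varrho(f) \leq \varrho(g)$. Definiteness is immediate from Lemma~\ref{lem:LebesgueSumFunctionNorm}(1): if $\varrho(f) = 0$, some $\lambda \in [0,\infty)$ realizes the minimum, forcing both $\lambda = 0$ and $\|(f)_+\|_{\lebesgue^1} = 0$, hence $f = 0$ almost everywhere.

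The main obstacle is the Fatou property, and I would argue as follows. Let $(f_n)_{n \in \N} \subset \CalM^+$ with $\alpha := \liminf_{n \to \infty} \varrho(f_n) < \infty$, and set $f := \liminf_n f_n$. Pass to a subsequence along which $\varrho(f_{n_k}) \to \alpha$. By Lemma~\ref{lem:LebesgueSumFunctionNorm}(1), we may choose $\lambda_{n_k} \in [0,\infty)$ with $\varrho(f_{n_k}) = \phi_{\lambda_{n_k}}(f_{n_k})$; since $\lambda_{n_k} \leq \varrho(f_{n_k})$ and the latter sequence is bounded, a further subsequence (which I still label $(n_k)$) satisfies $\lambda_{n_k} \to \lambda$ for some $\lambda \in [0,\alpha]$.

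The key pointwise observation is that for every $x \in X$,
\[
  \liminf_{k \to \infty} \big( f_{n_k}(x) - \lambda_{n_k} \big)_+
  \geq \big( f(x) - \lambda \big)_+ ,
\]
which follows by taking an arbitrary subsequential limit $b \geq \liminf_k f_{n_k}(x) \geq f(x)$ of $f_{n_k}(x)$ and using continuity of $(s,t) \mapsto (s-t)_+$ together with its monotonicity in $s$. Fatou's lemma then gives $\int_X (f - \lambda)_+ \, d\mu \leq \liminf_k \int_X (f_{n_k} - \lambda_{n_k})_+ \, d\mu$, and hence
\[
  \varrho(f) \leq \phi_\lambda(f)
  = \lambda + \int_X (f-\lambda)_+ \, d\mu
  \leq \liminf_{k \to \infty} \phi_{\lambda_{n_k}}(f_{n_k})
  = \alpha = \liminf_{n \to \infty} \varrho(f_n) ,
\]
which is the Fatou property. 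The only subtle point throughout is that $\liminf_k f_{n_k} \geq f$ (rather than equality), but this inequality goes in the correct direction because $(s-\lambda)_+$ is non-decreasing in $s$.
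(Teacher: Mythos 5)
Your proof is correct and follows essentially the same route as the paper: the axioms are verified directly (with definiteness and the Fatou property resting on Lemma~\ref{lem:LebesgueSumFunctionNorm}(1)), and the Fatou argument proceeds identically by passing to a subsequence, extracting convergent minimizers $\lambda_{n_k} \to \lambda$, establishing the pointwise liminf inequality, and applying Fatou's lemma. The one small deviation is monotonicity: you argue directly from the infimum-over-decompositions definition by trimming $(g',h')$ to $(g'',h'') = (\min(f,g'), (f-g')_+)$, whereas the paper instead observes $\Indicator_{X_{f,\lambda}}(f-\lambda) \leq \Indicator_{X_{g,\lambda}}(g-\lambda)$ and invokes the $\min_\lambda$ characterization — both are sound, and neither is shorter or more general than the other.
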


\begin{proof}
  The first two properties in Definition~\ref{def:FunctionNorm} are trivially satisfied.

  Next, if $f, g \in \CalM^+$ with $f \leq g$, then
  \(
    0
    \leq \Indicator_{X_{f,\lambda}} \cdot (f - \lambda)
    \leq \Indicator_{X_{g,\lambda}} \cdot (g - \lambda)
    ,
  \)
  and hence
  \(
    \lambda + \|\Indicator_{X_{f,\lambda}} \cdot (f - \lambda)\|_{\lebesgue^1}
    \leq \lambda + \|\Indicator_{X_{g,\lambda}} \cdot (g - \lambda) \|_{\lebesgue^{1}}
  \)
  for all $\lambda \in [0,\infty)$.
  In view of the first part of Lemma~\ref{lem:LebesgueSumFunctionNorm}, this implies
  $\varrho(f) \leq \varrho(g)$.

  Furthermore, if $f,g \in \CalM^+$ and $f = f_1 + f_2$ as well as $g = g_1 + g_2$
  for measurable functions $f_1, f_2, g_1, g_2 : X \to [0,\infty]$,
  then $f + g = (f_1 + g_1) + (f_2 + g_2)$.
  By definition of $\varrho$, this implies
  \[
    \varrho(f+g)
    \leq \|f_1 + g_1\|_{\lebesgue^\infty} + \|f_2 + g_2\|_{\lebesgue^1}
    \leq \big( \|f_1\|_{\lebesgue^\infty} + \|f_2\|_{\lebesgue^1} \big)
      +  \big( \|g_1\|_{\lebesgue^\infty} + \|g_2\|_{\lebesgue^1} \big).
  \]
  Since this holds for all admissible $f_1,f_2,g_1,g_2$, we get by definition of $\varrho$
  that $\varrho(f+g) \leq \varrho(f) + \varrho(g)$.

  To verify that $\varrho$ is a function norm, let $f \in \CalM^+$ satisfy $\varrho(f) = 0$.
  By the first part of Lemma~\ref{lem:LebesgueSumFunctionNorm}, there is $\lambda \in [0,\infty)$
  such that $0 = \varrho(f) = \lambda + \|\Indicator_{X_{f,\lambda}} \cdot (f - \lambda)\|_{\lebesgue^1}$.
  This implies $\lambda = 0$, and then $0 = \|f\|_{\lebesgue^1}$,
  so that we see $f = 0$ almost everywhere.

  \medskip{}

  Finally, we verify the Fatou property.
  Let $(f_n)_{n \in \N} \subset \CalM^+$ with $\theta := \liminf_{n \to \infty} \varrho(f_n) < \infty$.
  Set $f := \liminf_{n \to \infty} f_n$.
  Choose a subsequence $(f_{n_k})_{k \in \N}$ such that $\varrho(f_{n_k}) \to \theta$.
  The first part of Lemma~\ref{lem:LebesgueSumFunctionNorm} yields a sequence
  $(\lambda_k)_{k \in \N} \subset [0,\infty)$ satisfying
  \(
    \varrho(f_{n_k})
    = \lambda_k + \|\Indicator_{f_{n_k} > \lambda_k} \cdot (f_{n_k} - \lambda_k)\|_{\lebesgue^1}
  \)
  for all $k \in \N$.
  In particular, $0 \leq \lambda_k \leq \varrho(f_{n_k}) \to \theta$, so that $(\lambda_k)_{k \in \N}$
  is a bounded sequence.
  Thus, there is a subsequence $(\lambda_{k_\ell})_{\ell \in \N}$
  and some $\lambda \in [0,\infty)$ satisfying $\lambda_{k_\ell} \to \lambda$.
  \vspace{0.1cm}

  Note $f \leq \liminf_{\ell \to \infty} f_{n_{k_\ell}}$, and hence
  \(
    f - \lambda
    \leq \liminf_{\ell \to \infty} ( f_{n_{k_\ell}} - \lambda_{k_\ell} )
    = {\raisebox{0.1cm}{$\displaystyle{\sup_{L \in \N}}$} \,\,
       \raisebox{0.05cm}{$\displaystyle{\inf_{\ell \geq L}}$}}
        \, (f_{n_{k_{\ell}}} - \lambda_{k_\ell})
  \).
  Thus, for each $x \in X$ satisfying $f(x) - \lambda > 0$, there is $L_x \in \N$
  such that $f_{n_{k_\ell}} (x) - \lambda_{k_\ell} \geq \frac{f(x) - \lambda}{2} > 0$
  for all $\ell \geq L_x$.
  Overall, we see
  \({
    \Indicator_{X_{f,\lambda}} (x) \cdot (f(x) - \lambda)
    \leq \liminf_{\ell \to \infty}
           \Big[
             \Indicator_{f_{n_{k_\ell}} > \lambda_{k_\ell}} (x)
             \cdot \big( f_{n_{k_\ell}} (x) -\! \lambda_{k_\ell} \big) \!
           \Big] ,
  }\)
  where all involved functions are non-negative.
  Therefore, Fatou's lemma and Part~(1) of Lemma~\ref{lem:LebesgueSumFunctionNorm} imply that
  \begin{align*}
    \varrho(f)
    & \leq \lambda + \| (f - \lambda) \cdot \Indicator_{X_{f,\lambda}}\|_{\lebesgue^1}
      \leq \lim_{\ell \to \infty}
             \lambda_{k_\ell}
           + \Big\|
               \liminf_{\ell \to \infty}
                 \big[
                   (f_{n_{k_\ell}} - \lambda_{k_\ell})
                   \cdot \Indicator_{f_{n_{k_\ell}} > \lambda_{k_\ell}}
                 \big]
             \Big\|_{\lebesgue^1} \\
    & \leq \liminf_{\ell \to \infty}
             \Big(
               \lambda_{k_\ell}
               + \big\|
                   (f_{n_{k_\ell}} - \lambda_{k_\ell})
                   \cdot \Indicator_{f_{n_{k_\ell}} > \lambda_{k_\ell}}
                 \big\|_{\lebesgue^1}
             \Big)
      =     \lim_{\ell \to \infty}
              \varrho(f_{n_{k_\ell}})
      =     \theta
      =     \liminf_{n \to \infty} \varrho(f_n).
    \qedhere
  \end{align*}
\end{proof}

Up to now, we have shown that the norm defining the space $\lebesgue^1 + \lebesgue^\infty$
is a function norm satisfying the Fatou property.
Our next goal is to construct a function-norm $\varrho_{\otimes}$ satisfying the Fatou property
and such that $F \mapsto \varrho_{\otimes}(|F|)$ is equivalent to the norm defining the space ${\SumSpace}$.
For this, the following property will be crucial.

\begin{lemma}\label{lem:LebesgueSumNormMeasurable}
  Let $(X, \CalF, \mu)$ be a $\sigma$-finite measure space, and let $(Y,\CalG)$ be a measurable space.
  Let $\varrho$ be as defined in Lemma~\ref{lem:LebesgueSumFunctionNorm}.

  If $F : X \times Y \to [0,\infty]$ is measurable with respect to the product $\sigma$-algebra
  $\CalF \otimes \CalG$, then the map
  \[
    Y \to [0,\infty], y \mapsto \varrho \big(F (\bullet, y) \big)
  \]
  is measurable.
\end{lemma}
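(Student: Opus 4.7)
The plan is to leverage Part~(2) of Lemma~\ref{lem:LebesgueSumFunctionNorm}, which is precisely designed to allow arguments of this kind: it reexpresses $\varrho(f)$ as an infimum over the \emph{countable} set $[0,\infty) \cap \QQ$, rather than over $[0,\infty)$. This reduction is crucial, since measurability is preserved under countable (but not arbitrary) infima. Thus I would write, for every $y \in Y$,
\[
  \varrho \bigl( F(\bullet, y) \bigr)
  = \inf_{\lambda \in [0,\infty) \cap \QQ}
      \Bigl[
        \lambda
        + \big\|
            \Indicator_{\{F(\bullet,y) > \lambda\}}
            \cdot \bigl( F(\bullet,y) - \lambda \bigr)
          \big\|_{\lebesgue^1(\mu)}
      \Bigr] ,
\]
and reduce the problem to showing that each function
\[
  \Phi_\lambda :
  Y \to [0,\infty],
  y \mapsto \lambda
            + \int_X
                \Indicator_{\{F(x,y) > \lambda\}}
                \cdot \bigl( F(x,y) - \lambda \bigr)
              \, d\mu(x)
\]
is $\CalG$-measurable for each fixed $\lambda \in [0,\infty) \cap \QQ$.

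For this, I would first observe that since $F : X \times Y \to [0,\infty]$ is $\CalF \otimes \CalG$-measurable, so is the truncated, shifted kernel
\[
  G_\lambda (x,y)
  := \Indicator_{\{F(x,y) > \lambda\}} (x,y)
     \cdot \bigl( F(x,y) - \lambda \bigr)
  = \max \bigl\{ F(x,y) - \lambda, \, 0 \bigr\} ,
\]
where we use the convention $\infty - \lambda = \infty$; both representations show joint measurability (the second as the composition of the jointly measurable $F$ with a Borel function on $[0,\infty]$). Since $\mu$ is $\sigma$-finite and $G_\lambda \geq 0$ is jointly measurable, Tonelli's theorem implies that $y \mapsto \int_X G_\lambda(x,y) \, d\mu(x)$ is $\CalG$-measurable, and therefore so is $\Phi_\lambda = \lambda + \int_X G_\lambda(\bullet, y) \, d\mu$.

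Finally, $y \mapsto \varrho \bigl( F(\bullet, y) \bigr) = \inf_{\lambda \in [0,\infty) \cap \QQ} \Phi_\lambda(y)$ is a countable infimum of $\CalG$-measurable $[0,\infty]$-valued functions, hence $\CalG$-measurable. There is no real obstacle here; the only nontrivial ingredient is the countable reformulation of $\varrho$, which has already been established in Lemma~\ref{lem:LebesgueSumFunctionNorm}(2), so the argument is essentially a direct application of Tonelli together with that reformulation.
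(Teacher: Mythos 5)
Your argument is essentially identical to the paper's: both reduce to the countable-infimum reformulation of $\varrho$ from Part~(2) of Lemma~\ref{lem:LebesgueSumFunctionNorm} and then invoke the Fubini--Tonelli theorem to establish $\CalG$-measurability of each $y \mapsto \int_X \Indicator_{\{F(x,y) > \lambda\}} (F(x,y)-\lambda)\, d\mu(x)$. The only addition is your remark that the integrand equals $\max\{F(x,y)-\lambda, 0\}$, a minor (correct) alternative justification of joint measurability.
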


\begin{proof}
  For each $\lambda \in [0,\infty)$, the map
  \[
    Y \to [0,\infty],
    y \mapsto \Big\|
                \Indicator_{F (\bullet,y) > \lambda} \cdot \big( F(\bullet,y) - \lambda \big)
              \Big\|_{\lebesgue^1}
              = \int_X
                  \Indicator_{F(x,y) > \lambda} \cdot \bigl( F(x,y) - \lambda \bigr)
                \, d \mu (x)
  \]
  is measurable as a consequence of the Fubini-Tonelli theorem.
  Now, the formula
  \[
    \varrho \big( F(\bullet,y) \big)
    = \inf_{\lambda \in [0,\infty) \cap \QQ}
        \Big[
          \lambda
          + \big\|
              \Indicator_{F(\bullet,y) > \lambda}
              \cdot \big( F(\bullet,y) - \lambda \big)
            \big\|_{\lebesgue^1}
        \Big]
  \]
  given in the second part of Lemma~\ref{lem:LebesgueSumFunctionNorm} shows that
  $y \mapsto \varrho \big( F(\bullet,y) \big)$ is measurable as the infimum of a countable family of
  non-negative measurable functions.
\end{proof}

In view of Lemma~\ref{lem:LebesgueSumNormMeasurable}, we see that
if $(X, \CalF, \mu)$ and $(Y, \CalG, \nu)$ are $\sigma$-finite measure spaces,
and if $F : X \times Y \to [0,\infty]$ is measurable, then the map
$y \mapsto \varrho \big( F(\bullet,y) \big)$ is a measurable non-negative function
to which we can again apply $\varrho$, which is as defined in Lemma~\ref{lem:LebesgueSumFunctionNorm},
but now on $Y$ instead of on $X$.
Thus, the following definition makes sense.

\begin{definition}\label{def:IteratedLebesgueSumNorm}
  Let $(X, \CalF, \mu)$ and $(Y, \CalG, \nu)$ be $\sigma$-finite measure spaces.
  Let $\varrho_X$ and $\varrho_Y$ be as defined in Lemma~\ref{lem:LebesgueSumFunctionNorm},
  applied to the measure spaces $(X,\CalF,\mu)$ or $(Y,\CalG,\nu)$, respectively.

  For every measurable function $F : X \times Y \to [0,\infty]$, define
  \[
    \varrho_{\otimes} (F)
    := \varrho_Y \Big( y \mapsto \varrho_X \big( F(\bullet,y) \big) \Big)
    \in [0,\infty].
  \]
\end{definition}

\begin{lemma}\label{lem:IteratedLebesgueSumNormIsFunctionNorm}
  Let $(X, \CalF, \mu)$ and $(Y, \CalG, \nu)$ be $\sigma$-finite measure spaces.
  The map $\varrho_{\otimes}$ introduced in Definition~\ref{def:IteratedLebesgueSumNorm}
  is a function norm on $(X \times Y, \CalF \otimes \CalG, \mu \otimes \nu)$
  which satisfies the Fatou property.
\end{lemma}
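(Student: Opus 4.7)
The plan is to simply lift each of the five defining properties of a function norm, as well as the Fatou property, from the ``inner'' and ``outer'' copies $\varrho_X$ and $\varrho_Y$ (already shown to be function norms with the Fatou property in Proposition~\ref{prop:LebesgueSumFunctionNormFatouProperty}) to the iterated functional $\varrho_\otimes$. The well-definedness of $\varrho_\otimes(F)$ as an element of $[0,\infty]$ (i.e.\ measurability in $y$ of the inner quantity) is already guaranteed by Lemma~\ref{lem:LebesgueSumNormMeasurable}.

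First I would dispose of the four algebraic/order properties in Definition~\ref{def:FunctionNorm} in sequence. For nullity and monotonicity, Fubini/Tonelli reduces an $(\mu\otimes\nu)$-almost-everywhere statement to the statement that for $\nu$-almost every $y$, the corresponding property holds for $F(\bullet,y)$ in $\mu$-measure; applying $\varrho_X$ then yields an $\nu$-almost-everywhere pointwise identity/inequality for the function $y\mapsto \varrho_X(F(\bullet,y))$, to which one then applies the corresponding property of $\varrho_Y$. Positive homogeneity and the triangle inequality follow from the pointwise (in $y$) identities $\varrho_X((aF)(\bullet,y)) = a\,\varrho_X(F(\bullet,y))$ and $\varrho_X((F+G)(\bullet,y)) \leq \varrho_X(F(\bullet,y)) + \varrho_X(G(\bullet,y))$, combined with the corresponding properties of $\varrho_Y$.

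For definiteness, suppose $\varrho_\otimes(F) = 0$. Then $\varrho_Y(\varphi) = 0$ where $\varphi(y) := \varrho_X(F(\bullet,y))$; by the definiteness of $\varrho_Y$, we get $\varphi = 0$ $\nu$-almost everywhere, and by the definiteness of $\varrho_X$, this gives $F(\bullet,y) = 0$ $\mu$-almost everywhere for $\nu$-almost every $y$. Fubini then yields $F = 0$ $(\mu \otimes \nu)$-almost everywhere.

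Finally, for the Fatou property, let $(F_n)_{n \in \N} \subset \CalM^+$ with $\theta := \liminf_{n \to \infty} \varrho_\otimes(F_n) < \infty$, and set $F := \liminf_{n \to \infty} F_n$ and $\varphi_n(y) := \varrho_X(F_n(\bullet,y))$. For each fixed $y \in Y$, the Fatou property of $\varrho_X$ (applied when $\liminf_n \varphi_n(y) < \infty$, and trivially otherwise) yields the pointwise estimate $\varrho_X(F(\bullet,y)) \leq \liminf_{n \to \infty} \varphi_n(y)$. Using the monotonicity of $\varrho_Y$ and then its Fatou property (applicable since $\liminf_n \varrho_Y(\varphi_n) = \theta < \infty$), we conclude
\[
  \varrho_\otimes(F)
  = \varrho_Y\bigl(y \mapsto \varrho_X(F(\bullet,y))\bigr)
  \leq \varrho_Y\bigl(\liminf_{n \to \infty} \varphi_n\bigr)
  \leq \liminf_{n \to \infty} \varrho_Y(\varphi_n)
  = \liminf_{n \to \infty} \varrho_\otimes(F_n).
\]
No serious obstacle is anticipated; the only delicate point is to be careful that each ``pointwise'' step in the Fatou argument holds for \emph{every} $y \in Y$ (not merely almost every), so that the outer application of $\varrho_Y$ requires no null-set cleanup, but this is automatic from the case distinction above.
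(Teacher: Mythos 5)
Your proof proposal is correct and follows essentially the same route as the paper's own proof: lift each property of a function norm from $\varrho_X$ and $\varrho_Y$ to the iterated functional, use Fubini/Tonelli to handle definiteness, and handle the Fatou property by the pointwise application of $\varrho_X$'s Fatou property followed by monotonicity and the Fatou property of $\varrho_Y$. You also correctly flag (and resolve) the one subtle point, namely that the pointwise Fatou estimate for $\varrho_X$ must hold for every $y$, which is arranged by treating the $\liminf_n\varphi_n(y)=\infty$ case as trivial — the paper makes the same observation.
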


\begin{proof}
  The first two properties in Definition~\ref{def:FunctionNorm} are clear.
  Next, if ${F,G : X \times Y \to [0,\infty]}$ are measurable, then
  \(
    \varrho_X \big( [F+G](\bullet,y) \big)
    \leq \varrho_X \big( F(\bullet,y) \big) + \varrho_X \big( G(\bullet,y) \big)
  \)
  for all $y \in Y$, since $\varrho_X$ is a function norm.
  By the monotonicity and subadditivity of $\varrho_Y$, this entails
  \begin{align*}
    \varrho_{\otimes} (F + G)
    & = \varrho_Y \Big(  y  \mapsto  \varrho_X \big( [F+G](\bullet,y) \big) \Big) \\
    & \leq \varrho_Y \Big( y \mapsto \varrho_X \big( F(\bullet,y) \big) \Big)
           + \, \varrho_Y \Big( y \mapsto \varrho_X \big( G(\bullet,y) \big) \Big)
      = \varrho_{\otimes}(F) + \varrho_{\otimes} (G),
  \end{align*}
  as desired.
  The monotonicity of $\varrho_{\otimes}$ follows easily from that of $\varrho_X$ and $\varrho_Y$.

  Next, if $\varrho_{\otimes} (F) = 0$, then since $\varrho_Y$ is a function-norm,
  there is a $\nu$-null-set $N \subset Y$ such that $\varrho_X (F(\bullet,y)) = 0$
  for all $y \in Y \setminus N$.
  Since $\varrho_X$ is a function-norm, this implies $F(\bullet,y) = 0$ $\mu$-almost everywhere
  for $y \in Y \setminus N$.
  Hence, Tonelli's theorem shows
  $\|F\|_{\lebesgue^1} = \int_Y \int_X F(x,y) d \mu(x) d \nu (y) = 0$,
  and hence $F = 0$ almost everywhere with respect to $\mu \otimes \nu$.

  It remains to verify the Fatou property.
  If $(F_n)_{n \in \N}$ is a sequence of measurable functions $F_n : X \times Y \to [0,\infty]$
  and $F = \liminf_{n \to \infty} F_n$, then $F(\bullet,y) = \liminf_{n \to \infty} F_n (\bullet,y)$,
  so that the Fatou property for $\varrho_X$ implies
  $\varrho_X \big( F(\bullet,y) \big) \leq \liminf_{n \to \infty} \varrho_X \big( F_n (\bullet, y) \big)$
  for all $y \in Y$.
  Strictly speaking, this only follows from the Fatou property if the right-hand side is finite;
  but otherwise the estimate is trivially satisfied.
  By the monotonicity and the Fatou property of $\varrho_Y$, we thus see
  \begin{align*}
    \varrho_\otimes (F)
    & = \varrho_Y \Big( y \mapsto \varrho_X \big( F(\bullet,y) \big) \Big)
      \leq \varrho_Y
           \Big(
             y \mapsto \liminf_{n \to \infty} \varrho_X \big( F_n (\bullet,y) \big)
           \Big) \\
    & \leq \liminf_{n \to \infty}
             \varrho_Y \Big( y \mapsto \varrho_X \big( F_n (\bullet,y) \big) \Big)
      =    \liminf_{n \to \infty}
             \varrho_\otimes (F_n).
    \qedhere
  \end{align*}
\end{proof}

The following proposition shows that the norm $F \mapsto \varrho_{\otimes}(|F|)$ is equivalent
to the defining norm of the space $\SumSpace$.

\begin{proposition}\label{prop:IteratedFunctionNormEquivalentToSumNorm}
  Let $(X, \CalF, \mu)$ and $(Y, \CalG, \nu)$ be $\sigma$-finite measure spaces.
  With $\varrho_{\otimes}$ as in Definition~\ref{def:IteratedLebesgueSumNorm} and
  $\| \bullet \|_{\SumSpace}$ as introduced in Definition~\ref{def:SumAndIntersectionSpaces},
  we then have
  \[
    \frac{1}{16} \cdot \|F\|_{\SumSpace}
    \leq \varrho_{\otimes} (|F|)
    \leq \|F\|_{\SumSpace}
  \]
  for each measurable $F : X \times Y \to \CC$.
\end{proposition}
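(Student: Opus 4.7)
The upper bound $\varrho_\otimes(|F|) \leq \|F\|_{\SumSpace}$ is elementary. Given any measurable decomposition $F = F_1 + F_2 + F_3 + F_4$, the monotonicity and subadditivity of the function norm $\varrho_\otimes$ (Lemma~\ref{lem:IteratedLebesgueSumNormIsFunctionNorm}) reduce matters to showing $\varrho_\otimes(|F_1|) \leq \|F_1\|_{\lebesgue^1}$, $\varrho_\otimes(|F_2|) \leq \|F_2\|_{\lebesgue^\infty}$, $\varrho_\otimes(|F_3|) \leq \|F_3\|_{\lebesgue^{1,\infty}}$, $\varrho_\otimes(|F_4|) \leq \|F_4\|_{\lebesgue^{\infty,1}}$. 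For each, I would choose in the infimum defining $\varrho_X(|F_i|(\bullet,y))$ either the trivial split $g=|F_i|(\bullet,y), h=0$ or $g=0, h=|F_i|(\bullet,y)$, and then do the same for $\varrho_Y$; the outer-inner $L^p$/$L^q$ combinations exactly reproduce the four spaces. Passing to the infimum over admissible decompositions of $F$ yields the claim.

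For the lower bound $\tfrac{1}{16}\|F\|_{\SumSpace} \leq \varrho_\otimes(|F|)$: if $\varrho_\otimes(|F|)=\infty$ there is nothing to prove, and if $\varrho_\otimes(|F|)=0$ then $F=0$ a.e.\ (since $\varrho_\otimes$ is a function norm) and so $\|F\|_{\SumSpace}=0$. Assume henceforth $\alpha := \varrho_\otimes(|F|) \in (0,\infty)$. Set $\beta(y) := \varrho_X\bigl(|F|(\bullet,y)\bigr)$, which is measurable by Lemma~\ref{lem:LebesgueSumNormMeasurable} and satisfies $\varrho_Y(\beta)=\alpha$, and introduce the measurable ``large-value'' sets
\[
  E := \{(x,y) \in X \times Y \colon |F(x,y)| > 2\beta(y)\},
  \qquad
  Y^{\ast} := \{y \in Y \colon \beta(y) > 2\alpha\}.
\]
Applying Part~(3) of Lemma~\ref{lem:LebesgueSumFunctionNorm} pointwise in $y$ gives, for $\nu$-a.e.\ $y$,
\[
  \bigl\|F(\bullet,y)(1 - \Indicator_E(\bullet,y))\bigr\|_{\lebesgue^\infty(X)} \leq 2\beta(y),
  \qquad
  \bigl\|F(\bullet,y)\Indicator_E(\bullet,y)\bigr\|_{\lebesgue^1(X)} \leq 2\beta(y),
\]
and applying Part~(3) once more, now to $\beta$ on $Y$, gives $\beta \leq 2\alpha$ $\nu$-a.e.\ on $Y \setminus Y^\ast$ and $\int_{Y^\ast}\beta\,d\nu \leq 2\alpha$.

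The plan is then to decompose $F$ using the four measurable cells generated by $E$ and $Y^\ast$:
\begin{align*}
  F_1 &:= F \cdot \Indicator_E \cdot \Indicator_{Y^\ast}, &
  F_2 &:= F \cdot (1-\Indicator_E) \cdot (1-\Indicator_{Y^\ast}), \\
  F_3 &:= F \cdot (1-\Indicator_E) \cdot \Indicator_{Y^\ast}, &
  F_4 &:= F \cdot \Indicator_E \cdot (1-\Indicator_{Y^\ast}),
\end{align*}
which clearly sum to $F$. I would show that $F_1 \in \lebesgue^1$, $F_2 \in \lebesgue^\infty$, $F_3 \in \lebesgue^{\infty,1}$, $F_4 \in \lebesgue^{1,\infty}$, each of norm at most $4\alpha$. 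Indeed, $\|F_1\|_{\lebesgue^1} \leq \int_{Y^\ast}\!2\beta\,d\nu \leq 4\alpha$; for $\nu$-a.e.\ $y \in Y\!\setminus\! Y^\ast$ the pointwise bound $|F_2(\bullet,y)| \leq 2\beta(y)\leq 4\alpha$ yields $\|F_2\|_{\lebesgue^\infty} \leq 4\alpha$; the $\lebesgue^\infty$-in-$x$ bound on $F_3(\bullet,y)$ integrated over $Y^\ast$ gives $\|F_3\|_{\lebesgue^{\infty,1}} \leq \int_{Y^\ast}\!2\beta\,d\nu \leq 4\alpha$; and the $\lebesgue^1$-in-$x$ bound on $F_4(\bullet,y)$ together with $\beta(y)\leq 2\alpha$ on $Y\setminus Y^\ast$ gives $\|F_4\|_{\lebesgue^{1,\infty}} \leq 4\alpha$. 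Summing produces $\|F\|_{\SumSpace} \leq 16\alpha$, as required.

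The main obstacle is conceptual rather than technical: finding the right four-cell decomposition. The key insight is that the cut-off in $x$ at the level $2\beta(y)$ separates the $\lebesgue^\infty$-in-$x$ regime from the $\lebesgue^1$-in-$x$ regime (\emph{per slice}), while the cut-off in $y$ at the level $2\alpha$ separates the $\lebesgue^\infty$-in-$y$ regime from the $\lebesgue^1$-in-$y$ regime of the resulting slice-norms; the measurability of $\beta$ (via Lemma~\ref{lem:LebesgueSumNormMeasurable}) is what makes the sets $E$ and $Y^\ast$ jointly measurable, so that the four constructed $F_i$ are legitimate measurable functions rather than mere pointwise recipes. Once this scheme is in place, the four norm bounds follow directly from the two applications of Part~(3) of Lemma~\ref{lem:LebesgueSumFunctionNorm}.
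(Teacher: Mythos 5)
Your proof is correct and takes essentially the same approach as the paper. For the upper bound you split off the four pieces and use subadditivity of $\varrho_\otimes$ term-by-term rather than the paper's explicit pairing of $F_2+F_4$ and $F_1+F_3$ inside $\varrho_X$, but this is just a cleaner packaging of the same estimate; for the lower bound your sets $E, Y^\ast$ and the resulting four-cell decomposition are identical to the paper's $B, A$ and $F_1,\dots,F_4$ up to relabeling, and both proofs hinge on the same two applications of Part~(3) of Lemma~\ref{lem:LebesgueSumFunctionNorm}.
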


\begin{rem*}
  In the terminology of solid function spaces, the proposition shows
  (in combination with Lemma~\ref{lem:IteratedLebesgueSumNormIsFunctionNorm}) that the space
  ${\SumSpace}$ with its canonical norm satisfies the \emph{weak Fatou property};
  see \mbox{\cite[§ 65]{ZaanenIntegration}} for the definition.
  Here, we would like to remark that there is a characterization of (possibly infinite) families
  $(X_i)_{i \in I}$ of solid Banach spaces for which the sum $\sum_{i \in I} X_i$
  with its natural norm satisfies the \emph{Fatou property};
  see \cite{BanachLatticeSumsFatouProperty}.
  This characterization, however, is quite technical, and we were unable to verify it in our setting.
  Thus, although we know that $\SumSpace$ with its natural norm satisfies the \emph{weak} Fatou
  property, we could not confirm whether it actually satisfies the Fatou property as well.
\end{rem*}

\begin{proof}
  We first show $\varrho_{\otimes} (|F|) \leq \|F\|_{\SumSpace}$,
  which is trivial if the right-hand side is infinite.
  Thus, we can assume $\|F\|_{\SumSpace} < \infty$.
  Let $F_1,\dots,F_4 : X \times Y \to \CC$ be measurable with $F = F_1 + \cdots + F_4$ and such that
  \({
    \|F_1\|_{\lebesgue^1}
    + \|F_2\|_{\lebesgue^\infty}
    + \|F_3\|_{\lebesgue^{1,\infty}}
    + \|F_4\|_{\lebesgue^{\infty,1}}
    < \infty.
  }\)
  With $F_y^{(1)} := F_2 (\bullet,y) + F_4 (\bullet,y)$ and
  $F_y^{(2)}:= F_1 (\bullet,y) + F_3(\bullet,y)$, we have
  \({
    F(\bullet,y)
    = F_y^{(1)} + F_y^{(2)}
    ,
  }\)
  and therefore $\big| F(\bullet, y) \big| \leq \big| F_y^{(1)} \big| + \big| F_y^{(2)} \big|$.
  Using the monotonicity and the definition of $\varrho_X$ (see Lemma~\ref{lem:LebesgueSumFunctionNorm}),
  this implies for each $y \in Y$ that
  \begin{align*}
    \varrho_X \big( |F(\bullet,y)| \big)
    & \leq \varrho_X \big( \, \big| F_y^{(1)} \big| +  \big| F_y^{(2)} \big| \, \big)
      \leq \big\| F_y^{(1)} \big\|_{\lebesgue^\infty}
           + \big\| F_y^{(2)} \big\|_{\lebesgue^1} \\
    & \leq \big(\underbrace{
             \| F_2(\bullet, y) \|_{\lebesgue^\infty}
             + \| F_3 (\bullet, y) \|_{\lebesgue^1}
           }_{\textstyle =:G_1 (y)}\big)
           + \big(\underbrace{
               \| F_1(\bullet, y) \|_{\lebesgue^1}
               + \| F_4 (\bullet, y) \|_{\lebesgue^{\infty}}
             }_{\textstyle =:G_2 (y)}\big).
  \end{align*}
  By using the monotonicity of $\varrho_Y$ and the definition of $\varrho_Y$, we finally arrive at
  \begin{align*}
    \varrho_{\otimes} (|F|)
    & = \varrho_Y \Big( y \mapsto \varrho_X \big( |F(\bullet,y)| \big) \Big)
      \leq \varrho_Y \big( G_1 + G_2 \big) \\
    & \leq \|G_1\|_{\lebesgue^\infty} + \|G_2\|_{\lebesgue^1}
      \leq \|F_2\|_{\lebesgue^\infty} + \|F_3\|_{\lebesgue^{1,\infty}}
           + \|F_1\|_{\lebesgue^1} + \|F_4\|_{\lebesgue^{\infty,1}} .
  \end{align*}
  Since this holds for all admissible $F_1,\dots,F_4$, we see
  $\varrho_{\otimes} (|F|) \leq \|F\|_{\SumSpace}$.

  \medskip{}

  We now prove $\|F\|_{\SumSpace} \leq 16 \, \varrho_{\otimes} (|F|)$.
  In case of $\varrho_{\otimes}(|F|) = \infty$ this is trivial, so that we can assume
  $\alpha := \varrho_{\otimes}(|F|) \in [0,\infty)$.
  Define $G : Y \to [0,\infty], y \mapsto \varrho_X \big( |F(\bullet,y)| \big)$
  and note $\alpha = \varrho_Y (G)$.
  Note that $G$ is measurable by Lemma~\ref{lem:LebesgueSumNormMeasurable}.
  Let
  \[
    A := \big\{ y \in Y \colon G(y) > 2 \alpha \big\}
    \quad \text{and} \quad
    B := \big\{ (x,y) \in X \times Y \colon |F(x,y)| > 2 G(y) \big\}.
  \]
  Finally, define $F_1,\dots,F_4 : X \times Y \to \CC$ by
  \[
    F_1 (x,y) := F(x,y) \cdot \Indicator_{A}(y) \cdot \Indicator_{B}(x,y),
    \qquad \quad
    F_2 (x,y) := F(x,y) \cdot \Indicator_{A^c}(y) \cdot \Indicator_{B^c}(x,y)
  \]
  and
  \[
    F_3 (x,y) := F(x,y) \cdot \Indicator_{A^c}(y) \cdot \Indicator_{B}(x,y),
    \qquad \quad
    F_4 (x,y) := F(x,y) \cdot \Indicator_{A}(y) \cdot \Indicator_{B^c}(x,y).
  \]
  We clearly have $F = F_1 + \cdots + F_4$.
  Furthermore, by the last part of Lemma~\ref{lem:LebesgueSumFunctionNorm},
  and since $G(y) = \varrho_X \big( |F(\bullet,y)| \big)$ and $\alpha = \varrho_Y (G)$,
  we see the following:
  \begin{enumerate}
    \item \(
            \|F_1 (\bullet,y)\|_{\lebesgue^1}
            = \Indicator_A (y)
              \cdot \big\|
                      F(\bullet,y) \cdot \Indicator_{|F(\bullet,y)| > 2 G(y)}
                    \big\|_{\lebesgue^1}
            \leq \Indicator_A (y) \cdot 2 \varrho_X \big( |F(\bullet,y)| \big)
            =    \Indicator_A (y) \cdot 2 G(y)
          \),
          and hence
          \(
            \|F_1\|_{\lebesgue^1(\mu \otimes \nu)}
            \leq 2 \| G \cdot \Indicator_A \|_{\lebesgue^1 (\nu)}
            =    2 \| G \cdot \Indicator_{G > 2 \alpha} \|_{\lebesgue^1(\nu)}
            \leq 4 \varrho_Y (G)
            =    4 \, \varrho_{\otimes} (|F|)
          \).

    \item \(
            |F_2 (x,y)|
            \leq 2 G(y) \, \Indicator_{A^c} (y)
            \leq 4 \, \alpha
            = 4 \, \varrho_{\otimes} (|F|)
          \)
          for all $(x,y) \in X \times Y$,
          which shows that $\|F_2\|_{\lebesgue^\infty} \leq 4 \, \varrho_{\otimes} (|F|)$.

    \item Similar to the estimate for $\|F_1\|_{\lebesgue^1}$, we see that
          \[
            \quad \qquad
            \|F_3 (\bullet,y)\|_{\lebesgue^1}
            \leq \Indicator_{A^c}(y)
                 \cdot \| F(\bullet,y) \cdot \Indicator_{|F(\bullet,y)| > 2 G(y)} \|_{\lebesgue^1}
            \leq \Indicator_{A^c}(y) \cdot 2 G(y)
            \leq 4 \, \alpha
            =    4 \, \varrho_{\otimes} (|F|)
          \]
          for all $y \in Y$, and hence
          $\|F_3\|_{\lebesgue^{1,\infty}} \leq 4 \, \varrho_{\otimes}(|F|)$.

    \item $|F_4(x,y)| \leq 2 G(y) \cdot \Indicator_A (y)$ for all $x \in X$ and $y \in Y$.
          From this estimate, it follows that
          \(
            \|F_4\|_{\lebesgue^{\infty,1}}
            \leq 2 \, \| G \cdot \Indicator_{A}\|_{\lebesgue^1}
            \leq 4 \, \varrho_{\otimes} (|F|)
          \).
          Here, the last step was justified in the estimate of $\|F_1\|_{\lebesgue^1}$ above.
  \end{enumerate}
  Overall, we see
  \(
    \|F\|_{\SumSpace}
    \leq \|F_1\|_{\lebesgue^1}
         + \|F_2\|_{\lebesgue^\infty}
         + \|F_3\|_{\lebesgue^{1,\infty}}
         + \|F_4\|_{\lebesgue^{\infty,1}}
    \leq 16 \, \varrho_{\otimes} (|F|)
  \),
  which completes the proof.
\end{proof}

\subsection{Proof of Lemma \ref{lem:CartesianProductSumNorm}}
\label{sub:CartesianProductSumNormProof}

We will now use Lemma~\ref{lem:LebesgueSumFunctionNorm} and
Proposition~\ref{prop:IteratedFunctionNormEquivalentToSumNorm} to prove
Lemma~\ref{lem:CartesianProductSumNorm}.

\begin{proof}[Proof of Lemma~\ref{lem:CartesianProductSumNorm}]
  With the function norm $\varrho_{\otimes}$ introduced in Definition~\ref{def:IteratedLebesgueSumNorm},
  Proposition~\ref{prop:IteratedFunctionNormEquivalentToSumNorm} shows that
  $\| \Indicator_{V \times W} \|_{\SumSpace} \geq \varrho_{\otimes} (\Indicator_{V \times W})$.
  Next, with $\varrho$ as in Lemma~\ref{lem:LebesgueSumFunctionNorm},
  and writing $\varrho_{X_1}$ and $\varrho_{X_2}$ to indicate the space on which $\varrho$ acts,
  a direct computation shows that
  \[
    \varrho_{\otimes} (\Indicator_{V \times W})
    = \varrho_{X_2} \Big(
                      y \mapsto \varrho_{X_1} \big( \Indicator_{V \times W} (\bullet, y) \big)
                    \Big)
    = \varrho_{X_2} \Big(
                      y \mapsto \Indicator_W (y) \cdot \varrho_{X_1} (\Indicator_V)
                    \Big)
    = \varrho_{X_1} (\Indicator_V) \cdot \varrho_{X_2} (\Indicator_W) .
  \]
  To complete the proof, it therefore suffices to show that
  $\varrho_{X_1} (\Indicator_V) \geq \min\{ 1, \mu_1(V) \}$
  and likewise $\varrho_{X_2} (\Indicator_W) \geq \min\{ 1, \mu_2(W) \}$,
  since this implies the claimed estimate
  \begin{align*}
    \| \Indicator_{V \times W} \|_{\SumSpace}
    & \geq \varrho_{\otimes} (\Indicator_{V \times W})
      =    \varrho_{X_1} (\Indicator_V) \cdot \varrho_{X_2} (\Indicator_W) \\
    & \geq \min \big\{ 1, \mu_1 (V) \big\} \cdot \min \big\{ 1, \mu_2 (W) \big\} \\
    & \geq \min \big\{ 1, \mu_1 (V), \mu_2 (W), \mu_1 (V) \cdot \mu_2 (W) \big\} \\
    & =    \min \big\{ 1, \mu_1 (V), \mu_2 (W), \mu(V \times W) \big\} .
  \end{align*}

  We only prove that $\varrho_{X_1} (\Indicator_V) \geq \min\{ 1, \mu_1(V) \}$,
  since $\varrho_{X_2} (\Indicator_W) \geq \min\{ 1, \mu_2(W) \}$
  can be shown with the same arguments.
  Let $V_{\lambda} := \{x_1\in X_1 \colon \Indicator_V(x_1) > \lambda \}$, for $\lambda\geq 0$.
  Recall from Lemma~\ref{lem:LebesgueSumFunctionNorm} that
  \[
    \varrho_{X_1} (\Indicator_V)
    = \inf_{\lambda \in [0,\infty)}
        \Big[
          \lambda
          + \|
              \Indicator_{V_{\lambda}} \cdot (\Indicator_V - \lambda)
            \|_{\lebesgue^1}
        \Big] .
  \]
  Now, in case of $\lambda \geq 1$, we trivially have
  \(
    \lambda
    + \|
        \Indicator_{V_{\lambda}} \cdot (\Indicator_V - \lambda)
      \|_{\lebesgue^1}
    \geq \lambda
    \geq 1
    \geq \min \{ 1, \mu_1 (V) \}.
  \)
  Finally, if $0 \leq \lambda < 1$
  then $\Indicator_{V_{\lambda}} = \Indicator_V$,
  and hence
  \(
    \Indicator_{V_{\lambda}} \cdot (\Indicator_V - \lambda)
    = \Indicator_V \cdot (\Indicator_V - \lambda)
    = (1 - \lambda) \cdot \Indicator_V ,
  \)
  which implies that
  \[
    \lambda
    + \big\|
        \Indicator_{V_{\lambda}} \cdot (\Indicator_V - \lambda)
      \big\|_{\lebesgue^1}
    = \lambda + (1 - \lambda) \cdot \big\| \Indicator_V \big\|_{\lebesgue^1}
    = \lambda + (1 - \lambda) \cdot \mu_1 (V)
    \geq \min \{ 1, \mu_1 (V) \}.
  \]
  In combination, the two cases show that indeed
  $\varrho_{X_1}(\Indicator_V) \geq \min\{ 1, \mu_1 (V) \}$.
\end{proof}

\subsection{Proving Theorem~\ref{thm:DualOfIntersection} using the Lorentz-Luxemburg representation theorem}
\label{sub:DualOfIntersectionProof}

We now proceed with the proof of Theorem~\ref{thm:DualOfIntersection},
based on the Lorentz-Luxemburg representation theorem.
To that end, we first recall the concept of associate function seminorms.

\begin{definition}\label{def:AssociateNorm}(see \cite[§ 68]{ZaanenIntegration})
  Let $(X,\CalF,\mu)$ be a $\sigma$-finite measure space, and let $\varrho$ be a function
  seminorm on $X$.
  The \emph{associate function seminorm} $\varrho'$ of $\varrho$ is defined as
  \[
    \varrho ' (f)
    := \sup \Big\{
              \int_X f \cdot g \, d \mu
              \quad \colon \quad
              g : X \to [0,\infty] \text{ measurable and } \varrho(g) \leq 1
            \Big\}
    \in [0, \infty]
  \]
  for $f : X \to [0,\infty]$ measurable.
\end{definition}

As shown in \cite[Theorem 1 in § 68]{ZaanenIntegration}, $\varrho'$ is always
a function seminorm which satisfies the Fatou property.
We now compute---up to a constant factor---the associated seminorm $\varrho_{\otimes}'$
of the function norm $\varrho_{\otimes}$.

\begin{lemma}\label{lem:LebesgueSumNormAssociateNorm}
  Let $(X,\CalF,\mu)$ and $(Y,\CalG,\nu)$ be $\sigma$-finite measure spaces,
  and let $\varrho_{\otimes}$ as in Definition~\ref{def:IteratedLebesgueSumNorm}.
  For $F : X \times Y \to [0,\infty]$ measurable, let us write
  \[
    \|F\|_{\IntersectionSpace}
    := \IntersectionNorm{F}.
  \]
  Then, the associate seminorm $\varrho_{\otimes}'$ satisfies
  \[
    \|F\|_{\IntersectionSpace}
    \leq \varrho_{\otimes}' (F)
    \leq 16 \cdot \|F\|_{\IntersectionSpace}
  \]
  for any measurable function $F : X \times Y \to [0,\infty]$.
  In particular, $\varrho_{\otimes}'$ is a function norm, not just a function seminorm.
\end{lemma}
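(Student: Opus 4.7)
The plan is to prove the two inequalities separately, using Proposition~\ref{prop:IteratedFunctionNormEquivalentToSumNorm} as the bridge between $\varrho_\otimes$ and the concrete sum-space norm $\| \bullet \|_{\SumSpaceSymbol}$, together with Hölder's inequality for mixed-norm Lebesgue spaces and the dual characterizations of $\lebesgue^\infty$, $\lebesgue^{1,\infty}$, $\lebesgue^{\infty,1}$ in terms of $\lebesgue^1$, $\lebesgue^{\infty,1}$, $\lebesgue^{1,\infty}$.

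For the \emph{upper bound} $\varrho_\otimes'(F) \leq 16 \cdot \| F \|_{\IntersectionSpaceSymbol}$, I would fix a non-negative measurable $G$ with $\varrho_\otimes(G) \leq 1$. By Proposition~\ref{prop:IteratedFunctionNormEquivalentToSumNorm}, $\| G \|_{\SumSpaceSymbol} \leq 16 \, \varrho_\otimes(G) \leq 16$. Given $\varepsilon > 0$, pick a decomposition $G = G_1 + G_2 + G_3 + G_4$ with total sum-norm at most $16 + \varepsilon$; replacing each $G_i$ by its real part (which only decreases each individual norm) we may assume the $G_i$ are real-valued, and then since $G = \sum_i G_i \geq 0$ we have $\int F G \, d(\mu \otimes \nu) = \sum_i \int F G_i \leq \sum_i \int F \, |G_i|$. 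Applying the classical Hölder estimate $\int F \, |G_1| \leq \|F\|_{\lebesgue^\infty} \|G_1\|_{\lebesgue^1}$, its symmetric version $\int F \, |G_2| \leq \|F\|_{\lebesgue^1} \|G_2\|_{\lebesgue^\infty}$, and the corresponding mixed-norm Hölder inequalities (from \cite[Equation~(1) in Section~2]{MixedLpSpaces}) for the $\lebesgue^{1,\infty}$/$\lebesgue^{\infty,1}$ pairs, each term is bounded by $\| F \|_{\IntersectionSpaceSymbol}$ times the corresponding norm of $G_i$, yielding $\int F G \leq (16 + \varepsilon) \, \| F \|_{\IntersectionSpaceSymbol}$.

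For the \emph{lower bound} $\| F \|_{\IntersectionSpaceSymbol} \leq \varrho_\otimes'(F)$, the idea is to exhibit, for each of the four norms in $\| F \|_{\IntersectionSpaceSymbol}$, a non-negative competitor $G$ with $\varrho_\otimes(G) \leq 1$ whose integral against $F$ recovers that norm. The key observation is the trivial ``embedding'' direction of Proposition~\ref{prop:IteratedFunctionNormEquivalentToSumNorm}: for any non-negative $g$, $\varrho_\otimes(g) \leq \| g \|_{\SumSpaceSymbol}$, and in particular $\varrho_\otimes(g)$ is bounded by any single one of $\| g \|_{\lebesgue^1}, \| g \|_{\lebesgue^\infty}, \| g \|_{\lebesgue^{1,\infty}}, \| g \|_{\lebesgue^{\infty,1}}$ (choosing the trivial decomposition). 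Hence the constant function $G \equiv 1$ satisfies $\varrho_\otimes(G) \leq \| 1 \|_{\lebesgue^\infty} = 1$, giving $\| F \|_{\lebesgue^1} = \int F \, d(\mu \otimes \nu) \leq \varrho_\otimes'(F)$. For $\| F \|_{\lebesgue^\infty}$ I use the $\sigma$-finite dual characterization $\|F\|_{\lebesgue^\infty} = \sup_{g \geq 0, \, \|g\|_{\lebesgue^1} \leq 1} \int F g$, noting that such $g$ automatically satisfy $\varrho_\otimes(g) \leq 1$. For $\| F \|_{\lebesgue^{1,\infty}}$ and $\| F \|_{\lebesgue^{\infty,1}}$ I use Theorem~\ref{thm:MixedNormDuality} to characterize each via testing against non-negative functions of unit norm in the conjugate mixed-norm space, which again lie in the unit ball of $\varrho_\otimes$.

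Finally, $\varrho_\otimes'$ being a function seminorm is general (see \cite[Theorem~1 in §68]{ZaanenIntegration}); definiteness upgrades it to a function norm because $\varrho_\otimes'(F) = 0$ forces $\| F \|_{\IntersectionSpaceSymbol} = 0$, hence $\| F \|_{\lebesgue^1} = 0$ and $F = 0$ $(\mu \otimes \nu)$-almost everywhere. I do not foresee any serious obstacle; the only mildly subtle point is reducing to real-valued $G_i$ in the upper-bound argument so that the pointwise identity $\sum_i G_i = G$ survives and the Hölder estimates apply directly.
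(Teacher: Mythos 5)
Your proof is correct and follows essentially the same route as the paper: both directions reduce to the norm equivalence of Proposition~\ref{prop:IteratedFunctionNormEquivalentToSumNorm}, with Hölder's inequality for mixed-norm Lebesgue spaces handling the upper bound and the dual characterizations of $\lebesgue^\infty$, $\lebesgue^{1,\infty}$, $\lebesgue^{\infty,1}$ (plus finiteness of $G$ from $\varrho_\otimes(G)\leq 1$) handling the lower bound. The only deviations are cosmetic: the aside about reducing to real-valued $G_i$ is unnecessary, since $\int F\,G \leq \sum_j \int F\,|G_j|$ follows directly from $F \geq 0$ and $G \leq \sum_j |G_j|$ pointwise, and using the explicit competitor $G\equiv 1$ for the $\lebesgue^1$-piece slightly shortens the paper's corresponding duality argument.
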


\begin{proof}
  We first prove the right-hand estimate.
  If the right-hand side is infinite, this estimate is trivial;
  hence, we can assume that ${\theta := \|F\|_{\IntersectionSpace} < \infty}$.
  Let $G : X \times Y \to [0,\infty]$ be measurable with $\varrho_{\otimes} (G) \leq 1$.
  Since $\varrho_{\otimes}$ is a function norm
  (see Lemma~\ref{lem:IteratedLebesgueSumNormIsFunctionNorm}), this implies that
  ${G < \infty}$ almost everywhere; see \cite[Theorem~1 in § 63]{ZaanenIntegration}.
  Hence, we can assume ${G < \infty}$ everywhere.
  Let $\eps > 0$.
  By Proposition~\ref{prop:IteratedFunctionNormEquivalentToSumNorm},
  there exist measurable functions ${G_1,\dots,G_4 : X \times Y \to \CC}$ such that
  $G = G_1 + \dots + G_4$ with
  \(
    \|G_1\|_{\lebesgue^1}
    + \|G_2\|_{\lebesgue^\infty}
    + \|G_3\|_{\lebesgue^{1,\infty}}
    + \|G_4\|_{\lebesgue^{\infty,1}}
    \leq 16 + \eps
  \).
  Therefore, Hölder's inequality for the mixed-norm Lebesgue spaces
  (see \cite[Equation~(1) in Section~2]{MixedLpSpaces}) shows that
  \begin{align*}
    \int_{X \times Y} \!\!\!
      F \!\cdot\! G
    \, d (\mu \otimes \nu)
    & \leq \sum_{j=1}^4 \int_{X \times Y} F \cdot |G_j| \, d (\mu \otimes \nu) \\
    & \leq \|F\|_{\lebesgue^\infty}       \, \|G_1\|_{\lebesgue^1}
           + \|F\|_{\lebesgue^1}          \, \|G_2\|_{\lebesgue^\infty}
           + \|F\|_{\lebesgue^{\infty,1}} \, \|G_3\|_{\lebesgue^{1,\infty}}
           + \|F\|_{\lebesgue^{1,\infty}} \, \|G_4\|_{\lebesgue^{\infty,1}} \\
    & \leq \theta \cdot (\|G_1\|_{\lebesgue^1}
                         + \|G_2\|_{\lebesgue^\infty}
                         + \|G_3\|_{\lebesgue^{1,\infty}}
                         + \|G_4\|_{\lebesgue^{\infty,1}}) \\
    & \leq (16 + \eps) \cdot \| F \|_{\IntersectionSpace} .
  \end{align*}
  Since $\eps > 0$ was arbitrary, and by definition of the associate norm,
  this proves the right-hand estimate.

  \medskip{}

  For proving the left-hand estimate, we can assume that $\theta := \varrho_{\otimes}' (F) < \infty$.
  First, note that if $G \in \lebesgue^1 (\mu \otimes \nu)$ with $\|G\|_{\lebesgue^1} \leq 1$,
  then $\varrho_{\otimes}(|G|) \leq 1$, by Proposition~\ref{prop:IteratedFunctionNormEquivalentToSumNorm}.
  Therefore, the definition of $\varrho_{\otimes}'$ yields
  \(
    \big| \int_{X \times Y} F \cdot G \, d (\mu \otimes \nu) \big|
    \leq \int_{X \times Y} F \cdot |G| \, d (\mu \otimes \nu)
    \leq \varrho_{\otimes}' (F)
  \).
  By the dual characterization of the $\lebesgue^\infty$-norm (see \cite[Theorem~6.14]{FollandRA}),
  this implies $\|F\|_{\lebesgue^\infty} \leq \varrho_{\otimes}' (F) < \infty$.
  In the same way (taking $G$ such that $\|G\|_{\lebesgue^\infty} \leq 1$),
  we also see $\|F\|_{\lebesgue^1} \leq \varrho_{\otimes}' (F) < \infty$.
  In particular, this implies that $F$ is finite almost everywhere.

  Finally, note that if $G \in \lebesgue^{1,\infty} (\mu \otimes \nu)$
  with $\|G\|_{\lebesgue^{1,\infty}} \leq 1$, then $\varrho_{\otimes}(|G|) \leq 1$,
  by Proposition~\ref{prop:IteratedFunctionNormEquivalentToSumNorm}.
  Therefore,
  \(
    \big| \int_{X \times Y} F \cdot G \, d (\mu \otimes \nu) \big|
    \leq \int_{X \times Y} F \cdot |G| \, d (\mu \otimes \nu)
    \leq \varrho_{\otimes}' (F)
  \).
  By the dual characterization of the $\lebesgue^{\infty,1}$-norm
  (see Theorem~\ref{thm:MixedNormDuality}, or \cite[Theorem~2 in Section~2]{MixedLpSpaces}),
  this implies ${\|F\|_{\lebesgue^{\infty,1}} \leq \varrho_{\otimes} '(F) < \infty}$.
  Again, we get in the same way (by taking $G$ such that $\|G\|_{\lebesgue^{\infty,1}} \leq 1$)
  that $\|F\|_{\lebesgue^{1,\infty}} \leq \varrho_{\otimes}' (F) < \infty$.
  Overall, these considerations establish the left-hand estimate.

  \medskip{}

  The left-hand estimate also shows that $\varrho_{\otimes}'$ is a function norm, since if
  $\varrho_{\otimes}'(F) = 0$, then in particular $\|F\|_{\lebesgue^\infty} = 0$,
  and thus $F = 0$ almost everywhere.
\end{proof}

We will derive Theorem~\ref{thm:DualOfIntersection} as a consequence of the preceding lemma
and of three beautiful results from the theory of Köthe spaces that we now recall.

\begin{theorem}\label{thm:LorentzLuxemburg}
  (Lorentz-Luxemburg representation theorem; see \cite[Theorem~1 in § 71]{ZaanenIntegration})

  Let $(X,\CalF,\mu)$ be a $\sigma$-finite measure space, and let $\varrho$ be a function seminorm
  on $X$ that satisfies the Fatou property.
  Then $\varrho = \varrho''$; that is, $\varrho$ coincides with the associate seminorm
  of the associate seminorm $\varrho'$ of $\varrho$.
\end{theorem}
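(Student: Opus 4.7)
The plan is to establish $\varrho = \varrho''$ by proving the two inequalities $\varrho'' \leq \varrho$ and $\varrho \leq \varrho''$ separately; only the second requires the Fatou property.

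For the easy direction $\varrho'' \leq \varrho$, I would first prove a generalized Hölder inequality: for all measurable $f, g : X \to [0,\infty]$,
\[
  \int_X f \cdot g \, d\mu \leq \varrho(f) \cdot \varrho'(g).
\]
This follows almost directly from the definition of $\varrho'$ by homogeneity. If $\varrho(f) \in (0,\infty)$, then $\tilde{f} := f/\varrho(f)$ satisfies $\varrho(\tilde{f}) = 1$, so by definition of $\varrho'$ we get $\int \tilde{f} g \, d\mu \leq \varrho'(g)$, which rearranges to the claim. The cases $\varrho(f) = 0$ and $\varrho(f) = \infty$ are routine (in the former case one uses that $\varrho(n \cdot f) = 0$ for all $n$, and the latter is trivial with the convention $0 \cdot \infty = 0$). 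Taking the supremum over all $g$ with $\varrho'(g) \leq 1$ immediately yields $\varrho''(f) \leq \varrho(f)$.

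For the hard direction $\varrho \leq \varrho''$, I would argue by contradiction. Suppose there exists a measurable $f : X \to [0,\infty]$ with $\varrho''(f) < \varrho(f)$. After rescaling, I may assume $\varrho''(f) < 1 < \varrho(f)$. The goal is to produce a measurable $g : X \to [0,\infty]$ with $\varrho'(g) \leq 1$ and $\int f g \, d\mu > 1$, contradicting the definition of $\varrho''(f)$. The first reduction uses the Fatou property: picking an exhaustion $X = \bigcup_{n \in \N} E_n$ by sets of finite measure, I set $f_n := \min\{f, n\} \cdot \Indicator_{E_n}$, so that $f_n \nearrow f$ pointwise. Since $\varrho$ has the Fatou property and $(f_n)$ is monotone, $\varrho(f_n) \nearrow \varrho(f)$, so $\varrho(f_n) > 1$ for some $n$. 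Thus it suffices to separate such a bounded, finite-measure $f_n$ from the closed convex cone $C := \{h \in L^\infty(E_n) : h \geq 0, \, \varrho(h) \leq 1\}$.

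The key step is to apply a geometric Hahn–Banach separation theorem inside $L^\infty(E_n)$ (or a suitable predual) to obtain a linear functional $\Lambda$ with $\Lambda(h) \leq 1$ on $C$ and $\Lambda(f_n) > 1$, and then to identify $\Lambda$ with integration against a non-negative measurable function $g : X \to [0,\infty]$. Positivity of $g$ is forced by applying the separation to the positive cone and using that $C$ is solid (if $\Lambda$ had negative part, one could contradict $\Lambda \leq 1$ on $C$); representability as integration against a genuine function (rather than a finitely additive measure) uses that $E_n$ has finite measure, so the relevant dual pairing $L^1$--$L^\infty$ applies. Once $g$ is in hand with $\int f_n g \, d\mu > 1$ and $\int h g \, d\mu \leq 1$ for all $h \in C$, I extend the second inequality from $C$ to the whole unit ball $\{h \geq 0 : \varrho(h) \leq 1\}$ by a second application of the Fatou property (truncate and exhaust any such $h$ by $L^\infty$-functions of finite-measure support), yielding $\varrho'(g) \leq 1$, which produces the desired contradiction.

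The hard part will be the positivity and representability of the separating functional: a careless application of Hahn–Banach in $L^\infty$ produces only a finitely additive signed measure, which need not be representable by a function. Working on $E_n$ of finite measure circumvents this by keeping us inside a standard $L^1$-$L^\infty$ duality, and restricting the separation to the convex cone $C \subset L^\infty_+(E_n)$ forces $g \geq 0$. After that, the extension from the $L^\infty$-unit-ball of $\varrho$ to the full unit ball is a soft monotone-convergence argument powered by the Fatou hypothesis.
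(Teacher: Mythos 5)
The paper cites this theorem from Zaanen's book without proof, so there is no proof of the paper's own to compare against; I'll assess the proposal on its own terms.

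Your overall blueprint — the generalized Hölder inequality for $\varrho'' \leq \varrho$, the Fatou-driven reduction to a bounded $f_n$ supported on a finite-measure set, a Hahn--Banach separation, positivity via solidity, and a final monotone approximation to control $\varrho'(g)$ — is the standard architecture of the Lorentz--Luxemburg proof. However, there is a genuine gap at precisely the step you single out as delicate. You claim that working on a finite-measure set $E_n$ ``keeps us inside a standard $L^1$--$L^\infty$ duality'' and therefore guarantees the separating functional on $L^\infty(E_n)$ is realized by an integrable function. This is false: the norm dual of $L^\infty(E_n)$ is the space of bounded finitely additive measures absolutely continuous with respect to $\mu|_{E_n}$, and this strictly contains $L^1(E_n)$ whenever $\mu|_{E_n}$ is non-atomic or has infinitely many atoms. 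Finiteness of $\mu(E_n)$ does nothing to eliminate the purely finitely additive part, so a Hahn--Banach separation carried out in $L^\infty(E_n)$ does not hand you a function $g$. (One \emph{can} repair this by a Yosida--Hewitt decomposition and then invoking Fatou to discard the purely finitely additive component, but that is a different and substantially harder argument than what you have written.)

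The clean fix is to run the separation in $L^1(E_n)$ rather than $L^\infty(E_n)$. Since $\mu(E_n) < \infty$ and $f_n$ is bounded, $f_n \in L^1(E_n)$, and the convex set $C := \{ h \in L^1(E_n) \colon h \geq 0,\ \varrho(h) \leq 1 \}$ is norm-closed in $L^1(E_n)$: given $h_k \to h$ in $L^1$, pass to an a.e.-convergent subsequence and apply the Fatou property of $\varrho$ to conclude $\varrho(h) \leq 1$. This is the second (and crucial) place the Fatou hypothesis is actually spent, and your sketch never mentions it. Hahn--Banach in $L^1(E_n)$ then produces $g \in (L^1(E_n))^* = L^\infty(E_n)$ separating $\{f_n\}$ from $C$, automatically represented by a bounded function; your solidity observation (replace $g$ by $g\,\Indicator_{\{g > 0\}}$, using that $C$ is downward closed) does give $g \geq 0$. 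After that, extending $\int h g \, d\mu \leq \alpha$ from $h \in C$ to all measurable $h \geq 0$ with $\varrho(h) \leq 1$ is ordinary monotone convergence of the Lebesgue integral (truncate $h$ pointwise at level $m$; the truncates lie in $C$), not a Fatou argument for $\varrho$ as you suggest. You would also need to dispose of the degenerate case $\alpha = 0$, which your sketch omits. In short: the skeleton is right, but the load-bearing functional-analytic step is carried out in the wrong space, and your stated reason for why the difficulty is circumvented is mathematically incorrect.
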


\begin{proposition}\label{prop:SecondDualEasyCharacterization}
  Let $(X, \CalF, \mu)$ be a $\sigma$-finite measure space,
  and let $\varrho$ be a function seminorm on $X$.
  If the associate seminorm $\varrho'$ of $\varrho$ is in fact a function \emph{norm},
  then we have the following equivalence for every measurable function $f : X \to \CC$:
  \[
    \varrho '' (|f|) < \infty
    \quad \Longleftrightarrow \quad
    \forall \, g : X \to \CC \text{ measurable with } \varrho' (|g|) < \infty:
    \int_X |f \cdot g| \, d \mu < \infty.
  \]
\end{proposition}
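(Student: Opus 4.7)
The plan is to prove the easy direction directly from the definition of $\varrho''$ and to handle the non-trivial direction with a closed graph argument applied to a multiplication operator.

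For the forward implication, I will use the formula
\[
  \varrho''(|f|)
  = \sup \Big\{ \int_X |f| \cdot h \, d\mu
                \,\colon\, h : X \to [0,\infty] \text{ measurable with } \varrho'(h) \leq 1
         \Big\}.
\]
If $g : X \to \CC$ is measurable with $c := \varrho'(|g|) < \infty$, then either $c = 0$---in which case $|g| = 0$ a.e.\ because $\varrho'$ is a \emph{norm}, so $\int_X |f \cdot g| \, d\mu = 0$---or $c > 0$, in which case $h := |g|/c$ satisfies $\varrho'(h) \leq 1$, whence $\int_X |f \cdot g| \, d\mu = c \int_X |f| \cdot h \, d\mu \leq c \cdot \varrho''(|f|) < \infty$. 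This is routine.

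The main content is the reverse implication. I will introduce the Banach function space $\BanachOne := \{ g : X \to \CC \text{ measurable} \,\colon\, \varrho'(|g|) < \infty \}$, equipped with the norm $\| g \|_{\BanachOne} := \varrho'(|g|)$. Since $\varrho'$ is a function norm with the Fatou property (as the associate of a function seminorm, by \cite[Theorem 1 in § 68]{ZaanenIntegration}), the theory of normed Köthe spaces \cite[Section 65, Theorem 1]{ZaanenIntegration} ensures that $\BanachOne$ is indeed a Banach space, and that norm convergence in $\BanachOne$ implies local convergence in measure (and hence a.e.\ convergence along a subsequence, since $\mu$ is $\sigma$-finite). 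Now consider the multiplication map $T : \BanachOne \to \lebesgue^1(\mu), g \mapsto f \cdot g$; by hypothesis, $T$ is everywhere defined on $\BanachOne$ (since $\varrho'(|g|) < \infty$ means exactly $g \in \BanachOne$, and the assumption yields $f \cdot g \in \lebesgue^1$).

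To apply the closed graph theorem, I will verify that $T$ is closed: if $g_n \to g$ in $\BanachOne$ and $T g_n \to H$ in $\lebesgue^1(\mu)$, then along a common subsequence both $g_{n_k} \to g$ and $f \cdot g_{n_k} \to H$ almost everywhere, forcing $H = f \cdot g = T g$. The closed graph theorem then yields a constant $C > 0$ such that $\int_X |f \cdot g| \, d\mu = \| T g \|_{\lebesgue^1} \leq C \cdot \varrho'(|g|)$ for all $g \in \BanachOne$. Specialising to non-negative $h$ with $\varrho'(h) \leq 1$ gives $\int_X |f| \cdot h \, d\mu \leq C$, and taking the supremum in the formula for $\varrho''(|f|)$ recalled above yields $\varrho''(|f|) \leq C < \infty$, completing the proof. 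The main obstacle I anticipate is the verification that $\BanachOne$ is indeed complete and that norm convergence gives a.e.\ convergence along a subsequence, both of which I plan to dispatch by direct reference to the cited results of Zaanen and to the $\sigma$-finiteness of $\mu$.
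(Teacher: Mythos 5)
Your proof is correct, but it takes a genuinely different route from the paper's. The paper disposes of the claim in two sentences by quoting results from Zaanen's monograph: \cite[Theorem~4 in § 68]{ZaanenIntegration} shows that the hypothesis ``$\varrho'$ is a function norm'' forces $\varrho$ to be \emph{saturated}, and \cite[Corollary in § 71]{ZaanenIntegration} then identifies, for saturated $\varrho$, the space $L_{\varrho''}$ exactly as the associate space (Köthe dual) of $L_{\varrho'}$ --- which is precisely the asserted equivalence. You instead re-derive the result from scratch: the non-trivial implication becomes a closed graph argument for the multiplication operator $T : L_{\varrho'} \to \lebesgue^1(\mu)$, $g \mapsto f \cdot g$. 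Your argument is sound: you invoke the Fatou property of $\varrho'$ (\cite[Theorem~1 in § 68]{ZaanenIntegration}) to obtain completeness of $L_{\varrho'}$ (\cite[Section~65, Theorem~1]{ZaanenIntegration}) and the fact that norm convergence implies local convergence in measure, and both ingredients are needed for closedness of the graph. One small point worth making explicit when you pass from the bound $\int |f g|\, d\mu \leq C\, \varrho'(|g|)$ to $\varrho''(|f|) \leq C$: the supremum in the definition of $\varrho''$ runs over $[0,\infty]$-valued $h$ with $\varrho'(h) \leq 1$, and you should note that such an $h$ is finite a.e.\ (because $\varrho'$ is a function \emph{norm}; see \cite[Theorem~1 in § 63]{ZaanenIntegration}), so it may be identified with an element of $L_{\varrho'}$ to which the operator bound applies. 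Overall, your approach buys self-containment and avoids the saturation machinery, at the cost of length; the paper's approach is terse but leans on a somewhat technical corner of Zaanen's book.
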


\begin{proof}
  Since $\varrho'$ is a function norm, \cite[Theorem~4 in § 68]{ZaanenIntegration} shows that
  $\varrho$ is \emph{saturated}.
  Therefore, \cite[Corollary in § 71]{ZaanenIntegration} yields the claim.
\end{proof}

\begin{proposition}\label{prop:FunctionSpaceCompleteness}
  (consequence of \cite[Theorem~1 in § 65]{ZaanenIntegration})

  Let $(X,\CalF,\mu)$ be a $\sigma$-finite measure space, and let $\varrho$ be a function
  \emph{norm} on $X$ which satisfies the Fatou property.
  Then the space
  \[
    L_\varrho
    := \big\{
         f : X \to \CC \quad \colon \quad f \text{ measurable and } \varrho(|f|) < \infty
       \big\}
  \]
  is a Banach space when equipped with the norm $\|f\|_{L_\varrho} := \varrho(|f|)$.
  As usual, one identifies two elements of $L_\varrho$ if they agree almost everywhere.
\end{proposition}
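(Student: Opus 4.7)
The plan is to verify the norm axioms (which follow almost immediately from the function norm properties) and then prove completeness via the classical absolutely convergent subsequence trick, where the Fatou property is exactly what guarantees control of the limit. For the norm part, subadditivity $\varrho(|f+g|) \leq \varrho(|f|+|g|) \leq \varrho(|f|) + \varrho(|g|)$ follows from monotonicity plus subadditivity of $\varrho$; positive homogeneity is immediate; and definiteness uses precisely that $\varrho$ is a function \emph{norm} (not just seminorm). Simultaneously this shows $L_\varrho$ is a vector space.

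For completeness, let $(f_n)_{n\in\N}$ be Cauchy in $L_\varrho$. I would extract a subsequence $(f_{n_k})_{k\in\N}$ with $\varrho(|f_{n_{k+1}} - f_{n_k}|) \leq 2^{-k}$, and set
\[
  g_N := \sum_{k=1}^N |f_{n_{k+1}} - f_{n_k}|,
  \qquad
  g := \sum_{k=1}^\infty |f_{n_{k+1}} - f_{n_k}| = \lim_{N \to \infty} g_N.
\]
By subadditivity and monotonicity, $\varrho(g_N) \leq \sum_{k=1}^N 2^{-k} \leq 1$ for every $N$. Since $g_N \nearrow g$ pointwise, the Fatou property yields $\varrho(g) \leq \liminf_{N\to\infty} \varrho(g_N) \leq 1 < \infty$. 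By the standard fact that a function norm assigns finite value only to functions that are finite almost everywhere (see \cite[Theorem~1 in § 63]{ZaanenIntegration}), we conclude $g < \infty$ $\mu$-a.e., so the telescoping series $f_{n_1} + \sum_{k=1}^\infty (f_{n_{k+1}} - f_{n_k})$ converges absolutely a.e.\ to some measurable function $f$, and $f_{n_k} \to f$ pointwise $\mu$-a.e.

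It remains to show $f \in L_\varrho$ and $f_n \to f$ in $L_\varrho$. Given $\eps > 0$, pick $N_\eps$ so that $\varrho(|f_n - f_m|) \leq \eps$ for $n, m \geq N_\eps$. For fixed $n \geq N_\eps$, the functions $|f_n - f_{n_k}|$ converge pointwise a.e.\ to $|f_n - f|$ as $k \to \infty$, so the Fatou property gives
\[
  \varrho(|f_n - f|)
  \leq \liminf_{k \to \infty} \varrho(|f_n - f_{n_k}|)
  \leq \eps.
\]
In particular $f - f_n \in L_\varrho$ for some $n$, hence $f \in L_\varrho$, and the displayed estimate shows $f_n \to f$ in $L_\varrho$.

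The only subtlety worth flagging is the a.e.\ finiteness of any measurable function with finite $\varrho$-value: this is where ``function norm'' (as opposed to seminorm) is essential, since without it the candidate limit $f$ might not be well-defined pointwise a.e., and the Fatou step in the final estimate would collapse. Everything else is a routine adaptation of the proof that $L^p$ is complete, with the monotone convergence / Fatou lemma replaced throughout by the abstract Fatou property of $\varrho$.
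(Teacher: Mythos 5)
Your proof is correct. The paper gives no proof of this proposition at all---it defers entirely to \cite[Theorem~1 in \S\,65]{ZaanenIntegration}, the completeness theorem for normed K\"othe spaces---whereas you supply the standard self-contained argument that the reference itself contains: the norm axioms on $L_\varrho$ follow directly from the function-norm properties of $\varrho$, and completeness is obtained via the Riesz--Fischer ``absolutely summable subsequence'' device, with the abstract Fatou property replacing the monotone convergence theorem and Fatou's lemma that one would invoke in the $\lebesgue^p$ case. The one genuinely non-routine step, which you flag correctly, is the deduction that $\varrho(g)<\infty$ forces $g<\infty$ $\mu$-a.e.; this holds precisely because $\varrho$ is a function \emph{norm} rather than a mere seminorm (if $\mu(\{g=\infty\})>0$, monotonicity would give $n\,\varrho(\Indicator_{\{g=\infty\}})\le\varrho(g)$ for all $n$, hence $\varrho(\Indicator_{\{g=\infty\}})=0$, contradicting definiteness), and your citation of Zaanen's \S\,63 for it matches the reference the paper itself uses for the same fact in the proof of Lemma~\ref{lem:LebesgueSumNormAssociateNorm}. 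Both invocations of the Fatou property are legitimate as written, since in each case you verify the prerequisite $\liminf_n\varrho(f_n)<\infty$ before applying it.
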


We can finally prove Theorem~\ref{thm:DualOfIntersection}.

\begin{proof}[Proof of Theorem~\ref{thm:DualOfIntersection}]
  We first show that $(\SumSpace, \|\bullet\|_{\SumSpace})$ is a Banach space.
  It is not hard to verify that $\| \bullet \|_{\SumSpace}$ is a seminorm.
  Further, Proposition~\ref{prop:IteratedFunctionNormEquivalentToSumNorm}
  states (in the language of Proposition~\ref{prop:FunctionSpaceCompleteness})
  that ${\SumSpace = L_{\varrho_{\otimes}}}$, and that the (semi)-norms
  $\|\bullet\|_\ast := \varrho_{\otimes}(|\bullet|)$ and $\|\bullet\|_{\SumSpace}$ are equivalent.
  Furthermore, Lemma~\ref{lem:IteratedLebesgueSumNormIsFunctionNorm} shows that the function
  \emph{semi}-norm $\varrho_{\otimes}$ is in fact a function \emph{norm},
  and that $\varrho_{\otimes}$ satisfies the Fatou property.
  In particular, this implies that $\| \bullet \|_{\SumSpace}$ is definite, and hence a norm.
  Since the function norm $\varrho_{\otimes}$ satisfies the Fatou property,
  Proposition~\ref{prop:FunctionSpaceCompleteness} shows that
  $(L_{\varrho_{\otimes}}, \|\bullet\|_\ast)$ is a Banach space.
  Hence so is $(\SumSpace, \|\bullet\|_{\SumSpace})$.

  \medskip{}

  Next, since $\varrho_{\otimes}$ satisfies the Fatou property
  (Lemma~\ref{lem:IteratedLebesgueSumNormIsFunctionNorm}),
  the Lorentz-Luxemburg representation theorem (Theorem~\ref{thm:LorentzLuxemburg})
  shows that $\varrho_{\otimes}'' = \varrho_{\otimes}$.
  Furthermore, we saw above that $\SumSpace = L_{\varrho_{\otimes}}$.
  Finally, Lemma~\ref{lem:LebesgueSumNormAssociateNorm} shows that $\varrho_{\otimes}'$
  is a function \emph{norm}.
  Therefore, Proposition~\ref{prop:SecondDualEasyCharacterization} shows
  for every measurable function $F : X_1 \times X_2 \to \CC$ that
  \begin{align*}
      F \! \in \! \lebesgue^1 \!
               + \! \lebesgue^\infty \!
               + \! \lebesgue^{1,\infty} \!
               + \! \lebesgue^{\infty,1}
    & \Longleftrightarrow
      F \in L_{\varrho_{\otimes}}
      \quad \overset{\varrho_{\otimes} = \varrho_{\otimes}''}{\Longleftrightarrow} \quad
      \varrho_{\otimes} '' (|F|) < \infty \\
    & \Longleftrightarrow
      F \!\cdot\! G \in \lebesgue^1 (\mu_1 \!\otimes\! \mu_2)
      \quad \forall \, G \!:\! X_1 \!\times\! X_2 \to \CC \text{ meas.~with } \varrho_{\otimes}' (|G|) < \infty \\
    ({\scriptstyle{\text{Lemma~}\ref{lem:LebesgueSumNormAssociateNorm}}})
    & \Longleftrightarrow
      F \cdot G \in \lebesgue^1 (\mu_1 \otimes \mu_2)
      \quad \forall \, G \in \IntersectionSpace.
  \end{align*}
  Here, we used in the last step that Lemma~\ref{lem:LebesgueSumNormAssociateNorm} shows
  that $\varrho_{\otimes}' (|G|) < \infty$ if and only if ${|G| \in \IntersectionSpace}$
  if and only if ${G \in \IntersectionSpace}$ (if $G$ is measurable).

  \medskip{}

  It remains to prove the norm equivalence.
  To this end, note as a consequence of Proposition~\ref{prop:IteratedFunctionNormEquivalentToSumNorm},
  Lemma~\ref{lem:IteratedLebesgueSumNormIsFunctionNorm}, Theorem~\ref{thm:LorentzLuxemburg},
  and Lemma~\ref{lem:LebesgueSumNormAssociateNorm} that
  \begin{align*}
    & \|F\|_{\SumSpace}
      \leq 16 \, \varrho_{\otimes} (|F|)
      =    16 \, \varrho_{\otimes} '' (|F|) \\
    & =    16 \, \sup \Big\{
                      \int_{X_1 \times X_2} \!\!\!
                        |F| \cdot G
                      \, d(\mu_1 \otimes \mu_2)
                      \, \Big| \,
                      G : X_1 \!\times\! X_2 \to [0,\infty] \text{ meas.~and }
                      \varrho_{\otimes}' (G) \leq 1
                    \Big\} \\
    ({\scriptstyle{\text{Lemma}~\ref{lem:LebesgueSumNormAssociateNorm}}})
    & \leq 16 \, \sup \Big\{
                      \int_{X_1 \times X_2}
                        |F \cdot G|
                      \, d(\mu_1 \otimes \mu_2)
                      \, \Big| \,
                      \begin{array}{l}
                        G \in \IntersectionSpace \\
                        \text{and } \|G\|_{\IntersectionSpace} \leq 1
                      \end{array}
                    \Big\},
  \end{align*}
  which is precisely the first estimate claimed in Theorem~\ref{thm:DualOfIntersection}.

  In a similar way, we get
  \begin{align*}
    & \qquad
      \sup \Big\{
             \int_{X_1 \times X_2}
               |F \cdot G|
             \, d(\mu_1 \otimes \mu_2)
             \, \Big| \,
             \begin{array}{l}
               G \in \IntersectionSpace \\
               \text{and } \|G\|_{\IntersectionSpace} \leq 1
             \end{array}
           \Big\} \\
    ({\scriptstyle{\text{Lemma } \ref{lem:LebesgueSumNormAssociateNorm}}})
    & \leq 16 \, \sup \Big\{
                        \int_{X_1 \times X_2} \!\!\!
                          |F| \cdot G
                        \, d(\mu_1 \!\otimes\! \mu_2)
                        \, \Big| \,
                        G : X_1 \!\times\! X_2 \to [0,\infty] \text{ meas.~and }
                        \varrho_{\otimes}' (G) \leq 1
                      \Big\} \\
    & = 16 \cdot \varrho_{\otimes} '' (|F|)
      = 16 \cdot \varrho_{\otimes} (|F|)
      \leq 16 \cdot \|F\|_{\SumSpace} \,\,\,.
    \qedhere
  \end{align*}
\end{proof}

\section{Proof of Lemma~\ref{lem:CountableDualityCharacterization}}
\label{sec:CountableDualityCharacterization}

For the proof of Lemma~\ref{lem:CountableDualityCharacterization}, we need two
auxiliary results from measure theory that we first collect.

\begin{lemma}\label{lem:LebesgueSpaceSeparability}(see \cite[Proposition~3.4.5]{CohnMeasureTheory})

  Let $(X,\CalF,\mu)$ be a $\sigma$-finite measure space,
  and assume that $\CalF$ is \emph{countably generated}
  (meaning that there is a countable set $\CalF_0 \subset \CalF$
  such that $\CalF$ is generated by $\CalF_0$; that is, $\sigma_X (\CalF_0) = \CalF$).
  Then the space $\lebesgue^1 (\mu)$ is separable.
\end{lemma}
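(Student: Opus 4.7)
The plan is to construct an explicit countable dense subset of $\lebesgue^1(\mu)$ out of a countable generating family and rational coefficients. First, I would fix a countable $\CalF_0 \subset \CalF$ with $\sigma_X(\CalF_0) = \CalF$ and use $\sigma$-finiteness to produce an increasing sequence $(X_n)_{n \in \N}$ in $\CalF$ with $X = \bigcup_n X_n$ and $\mu(X_n) < \infty$. Enlarging $\CalF_0$ by $\{X_n \colon n \in \N\}$ keeps it countable without altering the generated $\sigma$-algebra. The Boolean algebra $\CalA$ generated by this enlarged $\CalF_0$ is then countable, because every element of $\CalA$ arises from finitely many generators via finitely many Boolean operations, and there are only countably many such finite formal expressions.

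Next comes the technical core: for every $E \in \CalF$ with $\mu(E) < \infty$ and every $\eps > 0$, I need to produce $A \in \CalA$ with $\mu(A) < \infty$ and $\mu(E \triangle A) < \eps$. I would prove this via a monotone class argument carried out inside each finite-measure window $X_n$. Precisely, fix $n \in \N$ and let $\CalD_n$ denote the collection of $E \in \CalF$ with $E \subset X_n$ that admit such an approximation from $\CalA$. One checks that $\CalD_n$ contains the trace algebra $\{A \cap X_n \colon A \in \CalA\}$ and is closed under countable monotone unions and intersections, using continuity of the \emph{finite} measure $\mu|_{X_n}$ together with the triangle inequality $\mu(E \triangle A) \leq \mu(E \triangle E') + \mu(E' \triangle A)$ for symmetric differences. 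The monotone class theorem then yields $\CalD_n = \CalF \cap X_n$. Given a general finite-measure $E \in \CalF$, the sequence $E \cap X_n$ increases to $E$, so $\mu(E \setminus (E \cap X_n)) \to 0$, and the approximation of $E$ reduces to that of some $E \cap X_n$.

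With this approximation lemma in hand, I would show that the countable set
\[
  \CalS := \Big\{
             \sum_{j=1}^N q_j \, \Indicator_{A_j}
             \,\colon\,
             N \in \N, \ q_j \in \QQ + i\QQ, \ A_j \in \CalA, \ \mu(A_j) < \infty
           \Big\}
\]
is dense in $\lebesgue^1(\mu)$. The standard fact that complex simple functions $\sum_j c_j \Indicator_{E_j}$ with $E_j \in \CalF$ and $\mu(E_j) < \infty$ form a dense subspace of $\lebesgue^1(\mu)$ reduces the problem to approximating each coefficient $c_j$ by a complex rational and each level set $E_j$ by an element of $\CalA$ of finite measure; here I would use the identity $\|\Indicator_E - \Indicator_A\|_{\lebesgue^1} = \mu(E \triangle A)$ together with the bound from the second paragraph.

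The main obstacle will be the approximation step. The subtlety is that $\CalA$ is not a $\sigma$-algebra, and the symmetric-difference pseudo-metric is only well-behaved on finite-measure sets; so the monotone class argument must be executed separately inside each $X_n$ and then patched together via the exhaustion $X = \bigcup_n X_n$. Careful bookkeeping of the $\sigma$-finiteness at this point---rather than attempting the approximation globally on $X$, where symmetric differences can have infinite measure and the whole scheme collapses---is what keeps the argument honest.
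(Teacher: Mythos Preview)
Your proof is correct and follows the standard argument for this classical fact. The paper itself does not prove this lemma; it simply cites it from Cohn's \emph{Measure Theory} (Proposition~3.4.5) and uses it as a black box. Your approach---forming the countable Boolean algebra generated by $\CalF_0 \cup \{X_n\}$, proving the symmetric-difference approximation via a monotone class argument localized to each finite-measure window $X_n$, and then patching together via the exhaustion---is exactly the route taken in standard references, including Cohn's. The care you take in localizing to $X_n$ before applying the monotone class theorem is the right way to handle the $\sigma$-finite (as opposed to finite) case.
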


In connection with this criterion, the following result will also turn out to be helpful:

\begin{lemma}\label{lem:EverySetOnlyNeedsCountableGenerator}
  Let $X$ be a set, and let $\CalF_0 \subset 2^X$ be an arbitrary subset of the power set of $X$.
  Let $\CalF := \sigma_X (\CalF_0)$ be the $\sigma$-algebra generated by $\CalF_0$.

  For each $A \in \CalF$, there is a countable family $\CalF_A \subset \CalF_0$ such that
  $A \in \sigma_X (\CalF_A)$.
\end{lemma}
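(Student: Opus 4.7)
The plan is to use the standard ``good set'' principle: define the collection $\mathcal{S}$ of all $A \in \CalF$ for which the conclusion of the lemma holds, and verify that $\mathcal{S}$ is itself a $\sigma$-algebra containing $\CalF_0$. Since $\CalF = \sigma_X(\CalF_0)$ is by definition the smallest $\sigma$-algebra containing $\CalF_0$, minimality will force $\CalF \subset \mathcal{S}$, and since $\mathcal{S} \subset \CalF$ trivially, equality holds.

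Concretely, I would set
\[
  \mathcal{S} := \big\{
                   A \in \CalF
                   \,\colon\,
                   \exists \, \text{countable } \CalF_A \subset \CalF_0
                   \text{ with } A \in \sigma_X(\CalF_A)
                 \big\}.
\]
The containment $\CalF_0 \subset \mathcal{S}$ is immediate by taking $\CalF_A := \{A\}$ for $A \in \CalF_0$. To check that $\mathcal{S}$ is a $\sigma$-algebra, first note $\emptyset \in \sigma_X(\emptyset) \subset \sigma_X(\CalF_0)$, so $\emptyset \in \mathcal{S}$. For closure under complements, observe that if $A \in \mathcal{S}$ is witnessed by $\CalF_A$, then $A^c \in \sigma_X(\CalF_A)$ as well, so the same $\CalF_A$ witnesses $A^c \in \mathcal{S}$. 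The key step is closure under countable unions: given $(A_n)_{n \in \N} \subset \mathcal{S}$ with witnesses $\CalF_{A_n}$, the set $\widetilde{\CalF} := \bigcup_{n \in \N} \CalF_{A_n} \subset \CalF_0$ is a countable union of countable sets and hence countable; since $A_n \in \sigma_X(\CalF_{A_n}) \subset \sigma_X(\widetilde{\CalF})$ for every $n$, the countable union $\bigcup_n A_n$ lies in $\sigma_X(\widetilde{\CalF})$, so $\bigcup_n A_n \in \mathcal{S}$ with witness $\widetilde{\CalF}$.

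With these three properties established, $\mathcal{S}$ is a $\sigma$-algebra containing $\CalF_0$, and by minimality of $\sigma_X(\CalF_0)$ we conclude $\CalF = \sigma_X(\CalF_0) \subset \mathcal{S} \subset \CalF$, which is precisely the claim of the lemma. There is really no substantive obstacle here; the only point requiring any care is the countable-union step, where one must observe that a countable union of countable sets is countable (which is standard, using the axiom of countable choice), so that the witness $\widetilde{\CalF}$ remains countable as required.
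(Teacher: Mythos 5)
Your proof is correct and follows exactly the same good-set argument the paper uses: define the collection of sets admitting a countable witness, check it is a $\sigma$-algebra containing $\CalF_0$, and invoke minimality. You simply spell out the $\sigma$-algebra verification (which the paper leaves as "straightforward"), and the countable-union step with the union of witnesses is handled correctly.
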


\begin{proof}(see \cite[Exercise~7 in Section~1.1]{CohnMeasureTheory})
  Define
  \[
    \CalG
    := \big\{
         A \in \CalF
         \quad\colon\quad
         \exists \, \CalF_A \subset \CalF_0 \text{ countable such that } A \in \sigma_X (\CalF_A)
       \big\}.
  \]
  It is straightforward to verify that $\CalG$ is a $\sigma$-algebra.
  Furthermore, $\CalF_0 \subset \CalG$, because one can choose $\CalF_A := \{A\}$
  for $A \in \CalF_0$.
  Therefore, $\CalF = \sigma_X (\CalF_0) \subset \CalG \subset \CalF$.
\end{proof}

We will heavily use the following consequence of Lemma~\ref{lem:EverySetOnlyNeedsCountableGenerator}.

\begin{lemma}\label{lem:MeasurableFunctionCountablyGeneratedSigmaAlgebras}
  Let $(\Theta,\CalA)$ and $(\Lambda,\CalB)$ be measurable spaces.
  If $F : \Theta \times \Lambda \to [0,\infty]$ is $\CalA \otimes \CalB$-measurable,
  then there are countably generated $\sigma$-algebras $\CalA_0 \subset \CalA$
  and $\CalB_0 \subset \CalB$ such that $F$ is $\CalA_0 \otimes \CalB_0$-measurable.

  Furthermore, if $\mu : \CalB \to [0,\infty]$ is a $\sigma$-finite measure,
  then $\CalB_0$ can be chosen in such a way that $\mu|_{\CalB_0}$ is still $\sigma$-finite.
\end{lemma}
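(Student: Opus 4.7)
The plan is to combine a countable generating family for the Borel $\sigma$-algebra on $[0,\infty]$ with Lemma~\ref{lem:EverySetOnlyNeedsCountableGenerator}, applied not to $\CalA$ or $\CalB$ directly but to the collection of measurable rectangles $\CalA \times \CalB := \{A \times B : A \in \CalA, B \in \CalB\}$, which by definition generates $\CalA \otimes \CalB$.

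First, I would observe that the Borel $\sigma$-algebra on $[0,\infty]$ is generated by the countable family $\mathcal{E}_0 := \{[r,\infty] : r \in \QQ_{\geq 0}\}$. Hence $F : \Theta \times \Lambda \to [0,\infty]$ is $\CalA \otimes \CalB$-measurable if and only if each set $E_r := F^{-1}([r,\infty])$ for $r \in \QQ_{\geq 0}$ lies in $\CalA \otimes \CalB = \sigma_{\Theta \times \Lambda}(\CalA \times \CalB)$. Applying Lemma~\ref{lem:EverySetOnlyNeedsCountableGenerator} to each of the countably many sets $E_r$ with generating family $\CalA \times \CalB$, I obtain for every $r$ a countable family $\CalR_r \subset \CalA \times \CalB$ with $E_r \in \sigma_{\Theta \times \Lambda}(\CalR_r)$. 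Then $\CalR := \bigcup_{r \in \QQ_{\geq 0}} \CalR_r$ is countable.

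Next, I would ``project'' $\CalR$ onto the two factors. Writing each $R \in \CalR$ as $R = A_R \times B_R$ with $A_R \in \CalA$ and $B_R \in \CalB$, I set
\[
  \CalA_0 := \sigma_\Theta(\{A_R : R \in \CalR\}),
  \qquad
  \CalB_0 := \sigma_\Lambda(\{B_R : R \in \CalR\}).
\]
These are countably generated, and every $R \in \CalR$ belongs to $\CalA_0 \otimes \CalB_0$; consequently $E_r \in \CalA_0 \otimes \CalB_0$ for all $r \in \QQ_{\geq 0}$, which shows that $F$ is $\CalA_0 \otimes \CalB_0$-measurable.

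For the supplement concerning $\sigma$-finiteness of $\mu|_{\CalB_0}$, I would simply enlarge the generating family: since $\mu$ is $\sigma$-finite, choose $\Lambda_n \in \CalB$ with $\mu(\Lambda_n) < \infty$ and $\Lambda = \bigcup_{n \in \N} \Lambda_n$, and replace $\CalB_0$ by $\sigma_\Lambda(\{B_R : R \in \CalR\} \cup \{\Lambda_n : n \in \N\})$, which remains countably generated and now contains each $\Lambda_n$, so $\mu|_{\CalB_0}$ is $\sigma$-finite. There is no serious obstacle here; the only subtle point is to avoid the naive mistake of applying Lemma~\ref{lem:EverySetOnlyNeedsCountableGenerator} to $\CalA$ and $\CalB$ separately (which does not work, since the $E_r$ do not live in either factor alone), and instead applying it to the family of measurable rectangles $\CalA \times \CalB$.
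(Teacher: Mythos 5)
Your proof is correct and essentially follows the paper's route: apply Lemma~\ref{lem:EverySetOnlyNeedsCountableGenerator} to the family of measurable rectangles $\{A\times B : A\in\CalA, B\in\CalB\}$, project the resulting countable collection of rectangles onto the two factors to obtain $\CalA_0$ and $\CalB_0$, and enlarge $\CalB_0$ by a $\sigma$-finite exhaustion for the final clause. The one small difference is how the reduction to countably many sets in $\CalA\otimes\CalB$ is achieved: the paper constructs a simple-function approximation of $F$ to see that $\sigma(F)$ is countably generated, whereas you directly observe that the Borel $\sigma$-algebra on $[0,\infty]$ is generated by $\{[r,\infty]:r\in\QQ_{\geq 0}\}$, which is a cleaner shortcut to the same conclusion.
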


\begin{proof}
  First, note that
  \[
    \sigma(F)
    := \big\{ F^{-1} (M) \colon M \subset [0,\infty] \text{ measurable} \big\}
    \subset \CalA \otimes \CalB
  \]
  is countably generated.
  One way to see this is that---since $F$ is $\sigma(F)$-measurable---there is a sequence
  $(F_n)_{n \in \N}$ of simple, non-negative, $\sigma(F)$-measurable functions such that
  $F_n \nearrow F$ pointwise; see \cite[Proposition~2.1.8]{CohnMeasureTheory}.
  Let us write ${F_n = \sum_{\ell=1}^{N_n} \alpha_\ell^{(n)} \Indicator_{L_\ell^{(n)}}}$
  with $\alpha_\ell^{(n)} \in [0,\infty)$ and $L_\ell^{(n)} \in \sigma(F)$.
  Then, each $F_n$ is $\Sigma$-measurable, where
  $\Sigma := \sigma(\{ L_\ell^{(n)} \colon n \in \N, 1 \leq \ell \leq N_n \}) \subset \sigma(F)$
  is a countably generated $\sigma$-algebra.
  As a pointwise limit of the $F_n$, also $F$ is $\Sigma$-measurable,
  and hence $\sigma(F) \subset \Sigma \subset \sigma(F)$.
  Therefore, $\sigma(F) = \Sigma$ is indeed countably generated.

  Next, note that
  \[
    L_\ell^{(n)}
    \in     \sigma(F)
    \subset \CalA \otimes \CalB
    =       \sigma ( \{ A \times B \colon A \in \CalA, B \in \CalB \})
    \quad \text{for each } n \in \N \text{ and } 1 \leq \ell \leq N_n .
  \]
  In combination with Lemma~\ref{lem:EverySetOnlyNeedsCountableGenerator},
  this implies that there are countable families $(A_m)_{m \in \N} \subset \CalA$
  and $(B_m)_{m \in \N} \subset \CalB$ such that
  $L_\ell^{(n)} \in \sigma (\{ A_m \times B_m \colon m \in \N \})$
  for all $n \in \N$ and $1 \leq \ell \leq N_n$.
  Therefore, if we define $\CalA_0 := \sigma(\{ A_m \colon m \in \N \})$
  and $\CalB_0 := \sigma(\{ B_m \colon m \in \N \})$,
  then both $\CalA_0 \subset \CalA$ and $\CalB_0 \subset \CalB$ are countably generated, and
  \(
    \sigma(F)
    = \sigma(\{ L_\ell^{(n)} \colon n \in \N, 1 \leq \ell \leq N_n \})
    \subset \CalA_0 \otimes \CalB_0 .
  \)
  Since $F$ is $\sigma(F)$-measurable, this implies that $F$ is $\CalA_0 \otimes \CalB_0$-measurable.

  Finally, if $\mu : \CalB \to [0,\infty]$ is $\sigma$-finite,
  then $\Lambda = \bigcup_{n \in \N} E_n$ for suitable $E_n \in \CalB$
  with $\mu(E_n) < \infty$.
  Instead of the definition of $\CalB_0$ from above, we then define
  $\CalB_0 := \sigma(\{ B_m \colon m \in \N \} \cup \{ E_n \colon n \in \N \})$,
  so that $\CalB_0 \subset \CalB$ is still countably generated, $\mu|_{\CalB_0}$
  is $\sigma$-finite, and one sees precisely as before that
  $F$ is $\CalA_0 \otimes \CalB_0$-measurable.
\end{proof}

Using Lemmas~\ref{lem:LebesgueSpaceSeparability} and
\ref{lem:MeasurableFunctionCountablyGeneratedSigmaAlgebras},
we prove the following final technical ingredient
that we need for the proof of Lemma~\ref{lem:CountableDualityCharacterization}.

\begin{lemma}\label{lem:CountableLInfinityCharacterization}
  Let $(\Theta, \CalA)$ be a measurable space
  and let $(\Lambda, \CalB, \mu)$ be a $\sigma$-finite measure space
  with $\mu(\Lambda) > 0$.

  If $F : \Theta \times \Lambda \to [0,\infty]$ is measurable with respect to the
  product $\sigma$-algebra $\CalA \otimes \CalB$, then there is a countable family
  $(M_n)_{n \in \N} \subset \CalB$ of sets of finite, positive measure such that
  if we set $f_n := \Indicator_{M_n} / \mu(M_n)$, then
  \[
    \esssup_{\lambda \in \Lambda}
      F(\theta, \lambda)
    = \sup_{n \in \N}
        \int_\Lambda
          F(\theta, \lambda) \cdot f_n (\lambda)
        \, d \mu(\lambda)
    \qquad \forall \, \theta \in \Theta .
  \]
  In particular, the map
  \(
    \theta \mapsto \esssup_{\lambda \in \Lambda}
                     F(\theta, \lambda)
  \)
  is measurable.
\end{lemma}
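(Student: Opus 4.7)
The plan is to reduce to a countably generated sub-$\sigma$-algebra of $\CalB$ so that a countable algebra of generators provides a $\theta$-uniform family $(M_n)$. First, I would invoke Lemma~\ref{lem:MeasurableFunctionCountablyGeneratedSigmaAlgebras} to obtain a countably generated sub-$\sigma$-algebra $\CalB_0 \subset \CalB$ for which $F$ is $\CalA \otimes \CalB_0$-measurable and $\mu|_{\CalB_0}$ remains $\sigma$-finite. Letting $\CalE$ denote the algebra generated by a countable generator of $\CalB_0$, the set $\CalE$ is itself countable and satisfies $\sigma(\CalE) = \CalB_0$. I would then enumerate
\[
  \CalD := \{ M \in \CalE \colon 0 < \mu(M) < \infty \} = \{M_n\}_{n \in \N},
\]
which is non-empty because $\mu(\Lambda) > 0$ combined with $\sigma$-finiteness of $\mu|_{\CalB_0}$ yields a set of finite positive measure in $\CalB_0$, which can be approximated in $L^1$ by an element of $\CalE$. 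Setting $f_n := \Indicator_{M_n}/\mu(M_n)$ gives a family independent of $\theta$.

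Fixing $\theta \in \Theta$ and writing $g := F(\theta, \cdot)$, I would note that $g$ is $\CalB_0$-measurable, so $\esssup_\lambda g(\lambda)$ computed with respect to $\mu$ coincides with that computed with respect to $\mu|_{\CalB_0}$, since all super-level sets $\{g > C\}$ lie in $\CalB_0$ and the two measures agree on $\CalB_0$. The inequality $\sup_n \int g f_n \, d\mu \leq \esssup_\lambda g(\lambda)$ is immediate. For the reverse, given any $c < \esssup_\lambda g(\lambda)$, I would pick $A' \in \CalB_0$ with $A' \subset \{g > c\}$ and $0 < \mu(A') < \infty$ by $\sigma$-finiteness, and then invoke the classical $L^1$-density of generating algebras (valid since $\sigma(\CalE) = \CalB_0$ and $\mu|_{\CalB_0}$ is $\sigma$-finite) to select $M \in \CalE$ with $\mu(A' \triangle M) < \delta$ for arbitrarily small $\delta > 0$. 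For $\delta < \mu(A')$, the set $M$ automatically lies in $\CalD$, and the elementary estimate
\[
  \frac{1}{\mu(M)} \int_M g \, d\mu
  \geq c \cdot \frac{\mu(M \cap A')}{\mu(M)}
  \geq c \cdot \frac{\mu(A') - \delta}{\mu(A') + \delta}
\]
tends to $c$ as $\delta \to 0$. Letting $c \to \esssup_\lambda g(\lambda)$ yields the claimed identity (both in the finite and infinite case, the latter by taking $c$ arbitrarily large).

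The measurability of $\theta \mapsto \esssup_\lambda F(\theta,\lambda)$ is then a direct corollary: for each $n$ the map $\theta \mapsto \int F(\theta,\lambda) f_n(\lambda) \, d\mu(\lambda)$ is $\CalA$-measurable by Tonelli's theorem (as $F \cdot f_n$ is $\CalA \otimes \CalB$-measurable and non-negative), and a countable supremum preserves measurability. The main obstacle, as I see it, is producing the family $(M_n)$ \emph{uniformly in $\theta$}: the standard dual characterization of the $L^\infty$-norm only yields a $\theta$-dependent family of test functions, which is useless here because an uncountable union of exceptional null sets can fail to be null. Passing to the countably generated $\CalB_0$ and exploiting the countable algebra $\CalE$ resolves this by producing a single countable family rich enough to simultaneously detect the essential supremum of \emph{every} $\CalB_0$-measurable non-negative function.
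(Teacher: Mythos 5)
Your overall strategy is sound and takes a genuinely different route from the paper's. The paper passes through separability of $\lebesgue^1(\mu|_{\CalB_0})$ (via Lemma~\ref{lem:LebesgueSpaceSeparability}), extracts an $\lebesgue^1$-dense sequence of normalized indicators from the set $\mathscr{F}$, and derives the identity $\Psi = \widetilde\Psi$ by contradiction using Fatou's lemma along an a.e.-convergent subsequence. You instead work directly with the countable algebra $\mathcal{E}$ generated by the countable generator of $\CalB_0$ and invoke the classical approximation of finite-measure sets by algebra elements, closing the argument with a direct and elementary ratio estimate that avoids Fatou and subsequence extraction altogether. Both proofs rest on the same underlying reduction (Lemma~\ref{lem:MeasurableFunctionCountablyGeneratedSigmaAlgebras}), but your route is cleaner once the approximation theorem is available.

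There is, however, a genuine gap in the step where you invoke ``$\lebesgue^1$-density of generating algebras.'' You justify it by the facts $\sigma(\mathcal{E}) = \CalB_0$ and ``$\mu|_{\CalB_0}$ is $\sigma$-finite,'' but this is not a sufficient hypothesis: the approximation theorem requires that $\mu$ be $\sigma$-finite \emph{with respect to the algebra} $\mathcal{E}$, i.e.\ that $\Lambda$ be coverable by countably many $\mathcal{E}$-sets of finite measure. Otherwise the result can fail: take $\Lambda = \QQ$ with $\mu$ the counting measure, $\CalB_0 = 2^{\QQ}$, and $\mathcal{E}$ the algebra generated by the rational half-open intervals $(a,b] \cap \QQ$. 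Then $\sigma(\mathcal{E}) = \CalB_0$ and $\mu|_{\CalB_0}$ is $\sigma$-finite, yet every nonempty $M \in \mathcal{E}$ has infinite measure, so no singleton $\{q\}$ can be approximated in measure from $\mathcal{E}$ (and your set $\mathcal{D}$ would in fact be empty). The fix is cheap: the proof of Lemma~\ref{lem:MeasurableFunctionCountablyGeneratedSigmaAlgebras} chooses the countable generator of $\CalB_0$ so that it contains a countable family $(E_n)_{n\in\N}$ of finite-measure sets covering $\Lambda$; if you adopt that specific generator (or simply enlarge yours by adjoining the $E_n$), then $\mathcal{E}$ contains the $E_n$, the pre-measure $\mu|_{\mathcal{E}}$ is $\sigma$-finite, and the approximation theorem you cite becomes valid. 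With that modification in place, the remainder of your argument is correct, including the non-emptiness of $\mathcal{D}$, the $\theta$-uniformity of the family, and the handling of the case $\esssup_\lambda g(\lambda) = \infty$.
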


\begin{proof}
  First, Lemma~\ref{lem:MeasurableFunctionCountablyGeneratedSigmaAlgebras}
  yields countably generated sub-$\sigma$-algebras $\CalA_0 \subset \CalA$
  and $\CalB_0 \subset \CalB$ such that $F$ is $\CalA_0 \otimes \CalB_0$-measurable
  and such that $\mu|_{\CalB_0}$ is still $\sigma$-finite.
  The remainder of the proof proceeds in two steps.

  \medskip{}

  \noindent
  \textbf{Step 1} \emph{(Constructing the sets $M_n$):}
  Since $\CalB_0$ is countably generated and $\mu|_{\CalB_0}$ is $\sigma$-finite,
  Lemma~\ref{lem:LebesgueSpaceSeparability} shows that $\lebesgue^1 (\mu|_{\CalB_0})$
  is separable.
  Since non-empty subsets of separable metric spaces are again separable
  (see \cite[Corollary~3.5]{Hitchhiker}), this implies that
  \[
    \mathscr{F}
    := \big\{
         \Indicator_M / \mu(M)
         \quad\colon\quad
         M \in \CalB_0 \text{ and } 0 < \mu(M) < \infty
       \big\}
    \subset \lebesgue^1 (\mu|_{\CalB_0})
  \]
  is separable, so that there is a sequence of sets $(M_n)_{n \in \N} \subset \CalB_0$
  such that $0 < \mu(M_n) < \infty$ for all $n \in \N$, and such that
  $(f_n)_{n \in \N} := \big( \Indicator_{M_n} / \mu(M_n) \big)_{n \in \N} \subset \mathscr{F}$
  is dense.
  Here, we implicitly used that $\mu(\Lambda) > 0$, which implies that $\mathscr{F} \neq \emptyset$,
  by $\sigma$-finiteness of $\mu|_{\CalB_0}$.

  Now, for each $n \in \N$, let us define
  \[
    \Psi_n : \Theta \to [0,\infty],
             \theta \mapsto \int_\Lambda
                              F(\theta, \lambda) \cdot f_n (\lambda)
                            \, d \mu(\lambda)
    \qquad \text{and} \qquad
    \Psi : \Theta \to [0,\infty], \theta \mapsto \sup_{n \in \N} \Psi_n (\theta),
  \]
  as well as
  \(
    \widetilde{\Psi} :
    \Theta \to [0,\infty],
    \theta \mapsto \esssup_{\lambda \in \Lambda} F(\theta, \lambda)
  \).
  \smallskip{}

  Since
  \(
    \Theta \times \Lambda \to [0,\infty],
    (\theta, \lambda) \mapsto F(\theta, \lambda) \cdot f_n(\lambda)
  \)
  is measurable, it follows from Tonelli's theorem (see \cite[Proposition~5.2.1]{CohnMeasureTheory})
  that each $\Psi_n$ is measurable, so that $\Psi$ is measurable as well;
  see \cite[Proposition~2.1.5]{CohnMeasureTheory}.
  Note that strictly speaking, we need to have a $\sigma$-finite measure $\nu$ on $(\Theta,\CalA)$
  to apply Tonelli's theorem, but we can simply take $\nu \equiv 0$.

  \medskip{}

  \noindent
  \textbf{Step 2} \emph{(Proving the claim of the lemma, that is, $\Psi = \widetilde{\Psi}$):}
  Since $\| f_n \|_{\lebesgue^1 (\mu)} = 1$ for all $n \in \N$,
  it is clear that $\Psi_n \leq \widetilde{\Psi}$ for all $n \in \N$,
  and hence $\Psi \leq \widetilde{\Psi}$.
  Now, assume towards a contradiction that $\Psi (\theta) < \widetilde{\Psi} (\theta)$ for some
  $\theta \in \Theta$.
  Then we can choose $\alpha \in \R$ with $0 \leq \Psi(\theta) < \alpha < \widetilde{\Psi} (\theta)$.
  By definition of $\widetilde{\Psi}$, this implies that
  \[
    M := [F(\theta,\bullet)]^{-1} ( [\alpha,\infty])
       = \{ \lambda \in \Lambda \colon F(\theta,\lambda) \geq \alpha \}
  \]
  has positive measure.
  Furthermore, since $F$ is $\CalA_0 \otimes \CalB_0$-measurable,
  \cite[Lemma 5.1.2]{CohnMeasureTheory} shows that $F(\theta,\bullet)$ is $\CalB_0$-measurable,
  and hence $M \in \CalB_0$.

  Since $\mu|_{\CalB_0}$ is $\sigma$-finite, we see that there is some set $M' \in \CalB_0$
  satisfying $M' \subset M$ and furthermore $0 < \mu(M') < \infty$.
  Hence, $f := \Indicator_{M'} / \mu(M') \in \mathscr{F}$,
  so that there is a sequence $(n_k)_{k \in \N}$ such that
  $f_{n_k} \to f$, with convergence in $\lebesgue^1(\mu|_{\CalB_0})$.
  By \cite[Propositions 3.1.3 and 3.1.5]{CohnMeasureTheory}, it follows that there is a further
  subsequence $(n_{k_\ell})_{\ell \in \N}$ such that $f_{n_{k_\ell}} \to f$ $\mu$-almost everywhere.
  Therefore, recalling that $F(\theta,\bullet) \geq \alpha$ on $M' \subset M$,
  while $f \equiv 0$ on $\Lambda \setminus M'$,
  and recalling that $\int_{\Lambda} f_{n_k} (\lambda) \, d \mu(\lambda) = 1$,
  we see as a consequence of Fatou's lemma that
  \begin{align*}
    \alpha
    & = \alpha \cdot \int_\Lambda \, f d \mu
      \leq \int_\Lambda
             f(\lambda) \cdot F(\theta,\lambda)
           \, d \mu (\lambda)
      =    \int_\Lambda
             \liminf_{\ell \to \infty}
             \Big(
               f_{n_{k_\ell}} (\lambda)
               \cdot F(\theta, \lambda)
             \Big)
           \, d \mu (\lambda) \\
    & \leq \liminf_{\ell \to \infty}
             \int_\Lambda
               f_{n_{k_\ell}} (\lambda)
               \cdot F(\theta, \lambda)
           \, d \mu (\lambda)
      =    \liminf_{\ell \to \infty} \Psi_{n_{k_\ell}} (\theta)
      \leq \Psi (\theta) ,
  \end{align*}
  which is the desired contradiction, since $\Psi(\theta) < \alpha$.
\end{proof}

Finally, we prove Lemma~\ref{lem:CountableDualityCharacterization}.

\begin{proof}[Proof of Lemma~\ref{lem:CountableDualityCharacterization}]
  In case of $\nu_1(Y_1) = 0$ or $\nu_2(Y_2) = 0$, we have
  $\| H(\omega,\bullet) \|_{\lebesgue^{\infty,1}(\nu)} = 0$ for all $\omega \in \Omega$,
  so that we can simply take $h_n \equiv 0$ for all $n \in \N$.
  In the following, we can thus assume that $\nu_1(Y_1) > 0$ and $\nu_2(Y_2) > 0$.
  The proof is divided into three steps.

  \medskip{}

  \noindent
  \textbf{Step 1}
  \emph{(Writing $\| H \big(\omega, (\bullet,y_2) \big) \|_{\lebesgue^{\infty}(\nu_1)}$
  as a countable supremum of integrals):}
  We aim to apply Lemma~\ref{lem:CountableLInfinityCharacterization}.
  To this end, define $(\Theta, \CalA) := (\Omega \times Y_2, \CalC \otimes \CalG_2)$
  and $(\Lambda,\CalB,\mu) := (Y_1, \CalG_1,\nu_1)$.
  Finally, set
  \(
    F :
    \Theta \times \Lambda \to [0,\infty],
    \big( (\omega, y_2), y_1 \big) \mapsto H \big(\omega, (y_1,y_2) \big)
  \).
  By applying Lemma~\ref{lem:CountableLInfinityCharacterization} with these choices,
  we obtain a sequence $(f_n)_{n \in \N} \subset \lebesgue^1(\nu_1)$ of non-negative
  functions which satisfy $\| f_n \|_{\lebesgue^1 (\nu_1)} = 1$ and
  \begin{equation}
    \begin{split}
      \big\| H \big( \omega, (\bullet,y_2) \big) \big\|_{\lebesgue^\infty (\nu_1)}
      & = \big\|
            F \big( (\omega,y_2), \bullet \big)
          \big\|_{\lebesgue^\infty (\mu)}
        = \sup_{n \in \N}
          \int_\Lambda
            F \big( (\omega,y_2), \lambda \big) \cdot f_n (\lambda)
          \, d \mu (\lambda) \\
      & = \sup_{n \in \N}
          \int_{Y_1}
            H \big( \omega, (y_1,y_2) \big) \cdot f_n (y_1)
          \, d \nu_1 (y_1)
        =: \sup_{n \in \N}
             \Psi_n (\omega, y_2)
    \end{split}
    \label{eq:EssentialSupremumAsCountableSupremum}
  \end{equation}
  for all $(\omega,y_2) \in \Omega \times Y_2$.
  Here, it follows from Tonelli's theorem that
  $\Psi_n : (\Omega \times Y_2, \CalC \otimes \CalG_2) \to [0,\infty]$ is measurable.

  \medskip{}

  \noindent
  \textbf{Step 2} \emph{(Finding $g_k : \Omega \times Y \to [0,\infty)$ such that
  \(
    \int_Y H(\omega,y) g_k(\omega,y) \, d \nu(y)
    \xrightarrow[k\to\infty]{} \| H(\omega,\bullet) \|_{\lebesgue^{\infty,1}}
  \)
  and $\| g_k (\omega, \bullet) \|_{\lebesgue^{1,\infty}} \leq 1$,
as well as $g_k (\omega, \bullet) \in \lebesgue^1(\nu)$):}

  Since $\nu_2$ is $\sigma$-finite, we have $Y_2 = \bigcup_{n \in \N} E_n$ for certain
  $E_n \in \CalG_2$ with $\nu_2 (E_n) < \infty$ and $E_n \subset E_{n+1}$ for all $n \in \N$.
  Now, given $k \in \N$, define $g_k : \Omega \times Y \to [0,\infty)$ by
  \[
    g_k \big( \omega, (y_1,y_2) \big)
    := \Indicator_{E_k} (y_2)
       \cdot \sum_{\ell=1}^k
             \bigg(
               f_\ell (y_1)
               \cdot \prod_{m=1}^{\ell - 1}
                       \Indicator_{\Psi_\ell > \Psi_m} (\omega, y_2)
               \cdot \prod_{m = \ell+1}^k
                       \Indicator_{\Psi_\ell \geq \Psi_m} (\omega, y_2)
             \bigg) .
  \]
  The significance of this convoluted-seeming definition will be explained shortly.
  Before that, however, it should be noted that $g_k$ is $\CalC \otimes \CalG$-measurable.
  Now, given fixed $(\omega, y_2) \in \Omega \times Y_2$,
  let $\ell_0 = \ell_0 (\omega, y_2, k) \in \{ 1,\dots,k \}$ be minimal with
  $\Psi_{\ell_0} (\omega,y_2) = \max_{1 \leq \ell \leq k} \Psi_\ell (\omega, y_2)$.
  We claim that then $g_k (\omega, (\bullet, y_2)) = \Indicator_{E_k} (y_2) \cdot f_{\ell_0}$.
  Indeed, for $\ell \in \{ 1,\dots,k \}$, there are three cases:
  \begin{itemize}
    \item If $\ell < \ell_0$, then $\Psi_\ell (\omega,y_2) < \Psi_{\ell_0} (\omega,y_2)$,
          while $\ell_0 \in \{ \ell+1,\dots,k \}$.
          Hence, the $\ell$-th summand in the definition
          of $g_k \big( \omega, (\bullet, y_2) \big)$ vanishes.

    \item If $\ell > \ell_0$, then $\Psi_\ell (\omega,y_2) \leq \Psi_{\ell_0} (\omega,y_2)$
          and $\ell_0 \in \{1,\dots,\ell-1\}$.
          Hence, the $\ell$-th summand in the definition of $g_k \big (\omega, (\bullet,y_2) \big)$
          again vanishes.

    \item If $\ell = \ell_0$, then $\Psi_{\ell_0} (\omega,y_2) > \Psi_m (\omega,y_2)$
          for $m \in \{ 1,\dots,\ell_0 - 1 \}$ and
          $\Psi_{\ell_0} (\omega,y_2) \geq \Psi_m (\omega, y_2)$
          for $m \in \{ \ell_0+1,\dots,k \}$.
          Therefore, the $\ell_0$-th summand in the definition of
          $g_k \big( \omega, (\bullet,y_2) \big)$ is simply $f_{\ell_0}$.
  \end{itemize}
  This representation of $g_k(\omega, (\bullet,y_2))$ has two crucial implications:
  \begin{enumerate}
    \item With $\ell_0 = \ell_0(\omega, y_2, k)$ as above, we have
          $g_k (\omega, (\bullet,y_2)) = \Indicator_{E_k} (y_2) \cdot f_{\ell_0}$, which implies that
          $\| g_k (\omega, (\bullet,y_2)) \|_{\lebesgue^1(\nu_1)} \leq \Indicator_{E_k}(y_2)$,
          since $\| f_{\ell_0} \|_{\lebesgue^1 (\nu_1)} = 1$.
          Therefore,
          \(
            \| g_k (\omega, \bullet) \|_{\lebesgue^{1,\infty}(\nu)} \leq 1
          \)
          and
          \[
            \| g_k (\omega, \bullet) \|_{\lebesgue^1 (\nu)}
            = \int_{Y_2}
                \big\|
                  g_k \big( \omega, (\bullet,y_2) \big)
                \big\|_{\lebesgue^1(\nu_1)} \, d \nu_2(y_2)
            \leq \nu_2 (E_k) < \infty.
          \]

    \item For fixed $(\omega,y_2) \in \Omega \times Y_2$ and with $\ell_0 = \ell_0(\omega, y_2, k)$
          as above, we have
          \begin{align*}
            \qquad
            \int_{Y_1} \!\!
              H \big( \omega, (y_1,y_2) \big)
              \cdot g_k \big( \omega, (y_1,y_2) \big)
            \, d \nu_1 (y_1)
            & = \Indicator_{E_k} (y_2)
                \,\, \int_{Y_1}
                       H \big( \omega, (y_1,y_2) \big)
                       \cdot f_{\ell_0} (y_1)
                     \, d \nu_1(y_1) \\
            & = \Indicator_{E_k} (y_2)
                \cdot \Psi_{\ell_0} (\omega, y_2)
              = \Indicator_{E_k} (y_2) \cdot \max_{1 \leq \ell \leq k} \Psi_\ell (\omega,y_2) \\
            & \nearrow \sup_{n \in \N} \Psi_n (\omega,y_2)
              = \big\| H \big( \omega, (\bullet,y_2) \big) \big\|_{\lebesgue^\infty(\nu_1)}
          \end{align*}
          as $k \to \infty$, thanks to Equation~\eqref{eq:EssentialSupremumAsCountableSupremum}.
          By the monotone convergence theorem and Tonelli's theorem, this shows
          for arbitrary $\omega \in \Omega$ that
          \begin{align*}
            \qquad \qquad
            \int_Y
              H ( \omega, y ) \cdot g_k ( \omega, y )
            \, d \nu(y)
            & = \int_{Y_2}
                  \int_{Y_1}
                    H \big( \omega, (y_1,y_2) \big) \cdot g_k \big( \omega, (y_1, y_2) \big)
                  \, d \nu_1(y_1)
                \, d \nu_2 (y_2) \\
            & \xrightarrow[k\to\infty]{}
                \int_{Y_2}
                  \big\| H \big( \omega, (\bullet,y_2) \big) \big\|_{\lebesgue^\infty (\nu_1)}
                \, d \nu_2(y_2)
              = \| H(\omega,\bullet) \|_{\lebesgue^{\infty,1}(\nu)}.
          \end{align*}
  \end{enumerate}

  \medskip{}

  \noindent
  \textbf{Step 3} \emph{(Completing the proof):}
  Since $g_k$ is $\CalC \otimes \CalG$-measurable,
  Lemma~\ref{lem:MeasurableFunctionCountablyGeneratedSigmaAlgebras} yields for each $k \in \N$
  countably generated $\sigma$-algebras $\CalC^{(k)} \subset \CalC$ and $\CalG^{(k)} \subset \CalG$%
  ---say $\CalG^{(k)} = \sigma \big( \big\{ M_n^{(k)} \colon n \in \N \big\} \big)$---%
  such that $g_k$ is $\CalC^{(k)} \otimes \CalG^{(k)}$-measurable and such that $\nu|_{\CalG^{(k)}}$
  is $\sigma$-finite.
  Define
  \[
    \CalG_0 := \sigma(\{ M_n^{(k)} \colon n,k \in \N \}) \subset \CalG,
  \]
  and note that $\nu|_{\CalG_0}$ is $\sigma$-finite.
  Lemma~\ref{lem:LebesgueSpaceSeparability} shows that $\lebesgue^1(\nu|_{\CalG_0})$ is separable.
  Since any non-empty subset of a separable metric space is again separable
  (see \cite[Corollary~3.5]{Hitchhiker}), this implies that
  \[
    \mathscr{F}
    := \big\{
         h \in \lebesgue^1(\nu|_{\CalG_0})
         \quad\colon\quad
         h \geq 0 \text{ and } \| h \|_{\lebesgue^{1,\infty}} \leq 1 < \infty
       \big\}
  \]
  is separable as well.
  Thus, let $\{ h_n \colon n \in \N \} \subset \mathscr{F}$ be dense
  with respect to $\| \bullet \|_{\lebesgue^1(\nu)}$.
  Note that since $g_k$ is measurable with respect to
  $\CalC^{(k)} \otimes \CalG^{(k)} \subset \CalC \otimes \CalG_0$,
  and by the properties of $g_k$ derived in Step~2, we have $g_k (\omega, \bullet) \in \mathscr{F}$
  for all $\omega \in \Omega$.

  For brevity, write $\Psi (\omega) := \| H(\omega, \bullet) \|_{\lebesgue^{\infty,1}}$
  and $\widetilde{\Psi} (\omega) := \sup_{n \in \N} \int_Y H(\omega,y) \, h_n(y) \, d \nu(y)$.
  We want to prove that $\Psi = \widetilde{\Psi}$.
  First, by the Hölder inequality for the mixed-norm Lebesgue spaces
  (see \cite[Equation~(1) in Section~2]{MixedLpSpaces}), we see that
  \[
    0
    \leq \int_Y
           H(\omega, y) \, h_n (y)
         \, d \nu(y)
    \leq \| H(\omega, \bullet) \|_{\lebesgue^{\infty,1}}
         \| h_n \|_{\lebesgue^{1,\infty}}
    \leq \Psi(\omega)
  \]
  for all $n \in \N$, and hence $\widetilde{\Psi} \leq \Psi$.

  To prove the converse, let $\omega \in \Omega$ and $k \in \N$ be arbitrary.
  Since $g_k (\omega, \bullet) \in \mathscr{F}$, there is a sequence $(n_m)_{m \in \N}$
  satisfying $h_{n_m} \to g_k (\omega, \bullet)$ as $m \to \infty$,
  with convergence in $\lebesgue^1 (\nu)$.
  It is well-known (see for instance \mbox{\cite[Propositions 3.1.3 and 3.1.5]{CohnMeasureTheory}})
  that this implies that there is a subsequence
  $(n_{m_\ell})_{\ell \in \N}$ such that $h_{n_{m_\ell}} \to g_k (\omega, \bullet)$
  as $\ell \to \infty$, with convergence $\nu$-almost everywhere.
  By Fatou's lemma, this implies that
  \begin{align*}
    \int_Y
      H(\omega, y) \cdot g_k (\omega,y)
    \, d \nu(y)
    & = \int_Y
          \liminf_{\ell \to \infty}
          \Big(
            H(\omega, y) \cdot h_{n_{m_\ell}} (y)
          \Big)
        \, d \nu(y) \\
    & \leq \liminf_{\ell \to \infty}
             \int_Y
               H(\omega,y) \cdot h_{n_{m_\ell}} (y)
             \, d \nu(y)
      \leq \widetilde{\Psi} (\omega) .
  \end{align*}
  Since we saw in Step~2 that
  \(
    \int_Y
      H(\omega, y) \cdot g_k (\omega,y)
    \, d \nu(y)
    \smash{\xrightarrow[k\to\infty]{}} \Psi(\omega),
  \)
  we arrive at $\Psi(\omega) \leq \widetilde{\Psi}(\omega)$, as desired.
\end{proof}

\section{A technical result concerning the embedding
\texorpdfstring{$\Phi_{K_\Psi} (\BanachOne) \hookrightarrow \lebesgue_{1/v}^{\infty}$}
{into a weighted 𝑳∞ space}}%
\label{sec:BoundednessEmbeddingEquivalence}

In \cite{kempka2015general}, it is assumed that
$\Phi_{K_\Psi}(\BanachOne) \hookrightarrow \lebesgue_{1/v}^\infty (\mu)$, meaning that
Equation~\eqref{eq:CoorbitLInftyEmbeddingAssumption} holds.
For general integral kernels $K$ instead of $K_\Psi$, this would be a much stronger condition
than boundedness of $\Phi_K : \BanachOne \to \lebesgue_{1/v}^\infty (\mu)$.
Since $K_\Psi$ is a reproducing kernel, however, the two conditions are actually equivalent,
as we now show.

\begin{lemma}\label{lem:BoundednessEmbeddingEquivalence}
  Let $\XTuple$ be a $\sigma$-finite measure space, let $\Hil$ be a separable Hilbert space,
  and let $\Psi = (\psi_x)_{x \in X} \subset \Hil$ be a continuous Parseval frame for $\Hil$.
  Finally, let $\BanachOne$ be a solid Banach function space on $X$ and let $v : X \to (0,\infty)$
  be measurable.

  With $K_\Psi$ as defined in Equation~\eqref{eq:ReproducingKernelDefinition}, assume that
  $\Phi_{|K_\Psi|} : \BanachOne \to \BanachOne$
  and $\Phi_{K_\Psi} : \BanachOne \to \lebesgue_{1/v}^\infty$ are well-defined and bounded.
  Then $\Phi_{K_\Psi}(\BanachOne) \hookrightarrow \lebesgue_{1/v}^\infty$,
  meaning that Equation~\eqref{eq:CoorbitLInftyEmbeddingAssumption} holds.
\end{lemma}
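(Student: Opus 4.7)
The plan is to exploit the reproducing property of $K_\Psi$ to show that the operator $\Phi_{K_\Psi}$ is idempotent on $\BanachOne$, i.e., $\Phi_{K_\Psi} \circ \Phi_{K_\Psi} = \Phi_{K_\Psi}$ as maps $\BanachOne \to \BanachOne$. Once this is established, the desired inequality follows directly: for any $f \in \BanachOne$, solidity together with the bound $|\Phi_{K_\Psi} f| \leq \Phi_{|K_\Psi|}(|f|)$ and the assumed boundedness of $\Phi_{|K_\Psi|}$ on $\BanachOne$ show that $\Phi_{K_\Psi} f \in \BanachOne$; applying the assumed boundedness of $\Phi_{K_\Psi} : \BanachOne \to \lebesgue^\infty_{1/v}$ to $\Phi_{K_\Psi} f \in \BanachOne$ then gives
\[
  \|\Phi_{K_\Psi} f\|_{\lebesgue^\infty_{1/v}}
  = \|\Phi_{K_\Psi}(\Phi_{K_\Psi} f)\|_{\lebesgue^\infty_{1/v}}
  \leq C \, \|\Phi_{K_\Psi} f\|_{\BanachOne},
\]
which is exactly the claimed embedding.

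The kernel-level reproducing identity $K_\Psi \odot K_\Psi = K_\Psi$ is the first ingredient. I would establish it pointwise for all $x,z \in X$ by observing that $K_\Psi(x,y) = \overline{V_\Psi \psi_x (y)}$ and $K_\Psi(y,z) = V_\Psi \psi_z (y)$, so that
\[
  \int_X K_\Psi(x,y) \, K_\Psi(y,z) \, d\mu(y)
  = \langle V_\Psi \psi_z, V_\Psi \psi_x \rangle_{\lebesgue^2(\mu)}
  = \langle \psi_z, \psi_x \rangle_{\Hil}
  = K_\Psi(x,z),
\]
where the middle equality is the polarized form of the Parseval identity \eqref{eq:ParsevalFrameCondition}, and absolute convergence follows because $V_\Psi \psi_x, V_\Psi \psi_z \in \lebesgue^2(\mu)$.

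The second ingredient is a Fubini argument to lift this identity to $\Phi_{K_\Psi} \circ \Phi_{K_\Psi} f = \Phi_{K_\Psi} f$ at the level of functions. For $f \in \BanachOne$, I first use boundedness of $\Phi_{|K_\Psi|}$ on $\BanachOne$ to conclude that $g := \Phi_{|K_\Psi|}(\Phi_{|K_\Psi|}(|f|)) \in \BanachOne$, hence $g(x) < \infty$ for almost every $x \in X$. For such $x$, the computation
\[
  \int_X \int_X |K_\Psi(x,y)| \, |K_\Psi(y,z)| \, |f(z)| \, d\mu(z) \, d\mu(y)
  = g(x) < \infty
\]
justifies Tonelli/Fubini, yielding
\[
  \Phi_{K_\Psi}(\Phi_{K_\Psi} f)(x)
  = \int_X f(z) \left[ \int_X K_\Psi(x,y) K_\Psi(y,z) \, d\mu(y) \right] d\mu(z)
  = \int_X K_\Psi(x,z) \, f(z) \, d\mu(z)
  = \Phi_{K_\Psi} f(x)
\]
for almost every $x$, where the inner equality uses the reproducing identity from the previous step.

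The calculation is essentially routine once the reproducing identity is in hand; the only real subtlety is bookkeeping the almost-everywhere statements, in particular making sure that the exceptional null-set on which $g(x) = \infty$ (where Fubini may fail) does not interfere with the final essential-supremum estimate. Since a single null-set is harmless when taking $\| \cdot \|_{\lebesgue^\infty_{1/v}}$, this is not a serious obstruction, and the proof concludes as indicated in the first paragraph.
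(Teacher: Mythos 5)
Your proof is correct, and it establishes the crucial idempotence identity $\Phi_{K_\Psi} \circ \Phi_{K_\Psi} = \Phi_{K_\Psi}$ on $\BanachOne$ by a genuinely different route than the paper. The paper first proves idempotence on $\lebesgue^2(\mu)$ via the operator picture — showing that $P := V_\Psi V_\Psi^\ast$ is a projection with $P = \Phi_{K_\Psi}$ — and then transfers this to $\BanachOne$ by approximating $F \in \BanachOne$ with truncations $F_n := \Indicator_{Y_n} F \in \BanachOne \cap \lebesgue^2(\mu)$ and passing to the limit twice via dominated convergence, using $\Phi_{|K_\Psi|}|F|$ and $\Phi_{|K_\Psi|}\Phi_{|K_\Psi|}|F|$ as majorants. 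You instead establish the kernel-level reproducing identity $K_\Psi \odot K_\Psi = K_\Psi$ pointwise, for \emph{every} $(x,z)$, directly from the polarized Parseval identity (the absolute convergence coming from $V_\Psi \psi_x, V_\Psi \psi_z \in \lebesgue^2(\mu)$ and Cauchy--Schwarz), and then apply Fubini directly on $\BanachOne$, justified by the a.e.\ finiteness of $g := \Phi_{|K_\Psi|}\bigl(\Phi_{|K_\Psi|}|f|\bigr) \in \BanachOne$. This bypasses the approximation sequence entirely and is arguably cleaner; the paper's route stays closer to the Hilbert-space operator picture and avoids having to prove the kernel identity pointwise. Both arguments lean on the boundedness of $\Phi_{|K_\Psi|}$ on $\BanachOne$ to justify the needed measure-theoretic interchange, and both conclude identically via the chain $\|\Phi_{K_\Psi} f\|_{\lebesgue^\infty_{1/v}} = \|\Phi_{K_\Psi}(\Phi_{K_\Psi} f)\|_{\lebesgue^\infty_{1/v}} \leq C\, \|\Phi_{K_\Psi} f\|_{\BanachOne}$.
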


\begin{proof}
  Note that if $(\varphi_n)_{n \in I}$ is a \emph{countable} orthonormal basis
  for $\Hil$ (which exists by separability), then
  \(
    K_\Psi (x,y)
    = \langle \psi_y, \psi_x \rangle_{\Hil}
    = \sum_{n \in I}
        \langle \psi_y, \varphi_n \rangle_{\Hil}
        \langle \varphi_n, \psi_x \rangle_{\Hil} ,
  \)
  where $x \mapsto \langle \varphi_n, \psi_x \rangle_{\Hil}$ and
  $y \mapsto \langle \psi_y, \varphi_n \rangle_{\Hil}$ are measurable
  by definition of a continuous frame.
  Hence, $K := K_\Psi : X \times X \to \CC$ is measurable.

  By definition of a continuous Parseval frame, the voice transform
  \(
    V_\Psi : \Hil \to \lebesgue^2(\mu), f \mapsto V_\Psi f
  \)
  with $V_\Psi f (x) = \langle f, \psi_x \rangle_{\Hil}$ is an isometry, so that
  $V_\Psi^\ast V_\Psi = \identity_{\Hil}$.
  Thus, ${P := V_\Psi V_\Psi^\ast : \lebesgue^2(\mu) \to \lebesgue^2(\mu)}$ satisfies $P P = P$.
  Now, note that $V_\Psi^\ast F = \int_{X} F(y) \, \psi_y \, d \mu (y)$
  (with the integral understood in the weak sense), so that
  \(
    (P F)(x)
    = \langle V_\Psi^\ast F, \psi_x \rangle
    = \int_X F(y) \langle \psi_y, \psi_x \rangle \, d \mu (y)
    = (\Phi_K F) (x),
  \)
  meaning that $P = \Phi_K$.
  Because of $P P = P$, this means that $\Phi_K \Phi_K F = \Phi_K F$ for all $F \in \lebesgue^2(\mu)$.

  We now claim that $\Phi_K \Phi_K F = \Phi_K F$ also holds for all $F \in \BanachOne$.
  Once we show this, we immediately get the claim of the lemma, since then
  \[
    \| \Phi_{K} F \|_{\lebesgue_{1/v}^\infty}
    = \| \Phi_K \Phi_{K} F \|_{\lebesgue_{1/v}^\infty}
    \leq C \cdot \| \Phi_K F \|_{\BanachOne}
    \qquad \forall \, F \in \BanachOne ,
  \]
  since $\Phi_K : \BanachOne \to \lebesgue_{1/v}^\infty(\mu)$ is bounded by assumption of the lemma.

  Let $F \in \BanachOne$ be arbitrary.
  Since $X$ is $\sigma$-finite, we have $X = \bigcup_{n=1}^\infty X_n$ where $X_n \subset X_{n+1}$
  and $\mu(X_n) < \infty$.
  Define $Y_n := \{ x \in X_n \colon |F(x)| \leq n \}$, and note that $Y_n \subset Y_{n+1}$,
  $\mu(Y_n) < \infty$, and $X = \bigcup_{n =1}^\infty Y_n$, so that
  $F_n := \Indicator_{Y_n} \cdot F \in \BanachOne \cap \lebesgue^2(\mu)$ satisfies $F_n \to F$ pointwise.
  Note that $\Phi_K \Phi_K F_n = \Phi_K F_n$ and $|F_n| \leq |F|$.
  Since $\Phi_{|K|} : \BanachOne \to \BanachOne$ is well-defined, we have
  ${G := \Phi_{|K|} |F| \in \BanachOne}$, and in particular $(\Phi_{|K|} |F|) (x) < \infty$
  for $\mu$-almost all $x \in X$.
  For each such $x$, we see by the dominated convergence theorem that
  \[
    G_n (x)
    := \Phi_K F_n (x)
    = \int_{X} K(x,y) \, F_n (y) \, d \mu(y)
    \xrightarrow[n\to\infty]{} \int_X K(x,y) \, F(y) \, d \mu(y)
    = \Phi_K F (x) .
  \]
  Next, we also have $\int_X |K(x,y)| \, G(y) \, d \mu(y) = (\Phi_{|K|} G)(x) < \infty$
  for $\mu$-almost all $x \in X$.
  Since $|G_n (y)| \leq G (y)$, the dominated convergence theorem thus shows for $x \in X$
  with $(\Phi_{|K|} G) (x) < \infty$ (and hence for $\mu$-almost all $x \in X$) that
  \[
    \Phi_K G_n(x)
    = \int_X K(x,y) \, G_n (y) \, d \mu(y)
    \xrightarrow[n\to\infty]{} \int_X K(x,y) \, \Phi_K F(y) \, d \mu(y)
    = \Phi_K [\Phi_K F] (x) .
  \]
  Since we also have $\Phi_K G_n = \Phi_K \Phi_K F_n = \Phi_K F_n \xrightarrow[n\to\infty]{} \Phi_K F$
  almost everywhere, we thus see ${\Phi_K [\Phi_K F] = \Phi_K F}$ almost everywhere, as desired.
\end{proof}

\section{Sharpness of Schur's test for complex-valued kernels}%
\label{sec:SharpnessComplexValued}

The classical form of Schur's test gives a \emph{complete characterization} of the boundedness of
$\Phi_K : \lebesgue^p \to \lebesgue^p$ for $p \in [1,\infty]$, even for \emph{complex-valued}
integral kernels $K : X \times Y \to \CC$.
This seems to be a folklore result, but we could not locate an appropriate reference.
Therefore, we provide a proof below.
Somewhat surprisingly, it turns out that our generalized form of Schur's test
for mixed Lebesgue-spaces does not provide such a complete characterization
of the boundedness of $\Phi_K$ for complex-valued kernels $K$, as we will show using an example.

\begin{proposition}\label{prop:ClassicalSchurSharpness}
  Let $\XTuple$ and $\YTuple$ be $\sigma$-finite measure spaces, and let $K : X \times Y \to \CC$
  be measurable.
  Let $(Y_n)_{n \in \N} \subset \CalG$ with $Y = \bigcup_{n=1}^\infty Y_n$ and $\nu(Y_n) < \infty$,
  as well as $Y_n \subset Y_{n+1}$, and such that $\Phi_K f : X \to \CC$ is well-defined
  ($\mu$-almost everywhere) for every $f \in \mathscr{G}$, for the vector space
  \[
    \mathscr{G}
    := \bigl\{
         f \in \lebesgue^\infty(\nu)
         \quad \colon \quad
         \exists \, n \in \N:
           f = 0 \text{ a.e.~on } Y \setminus Y_n
       \bigr\}
    \subset \lebesgue^1(\nu) \cap \lebesgue^\infty (\nu)
    .
  \]
  Finally, assume for each $p \in \{ 1, \infty \}$ that
  $\Phi_K : (\mathscr{G}, \| \bullet \|_{\lebesgue^p (\nu)}) \to \lebesgue^p(\mu)$
  is bounded, with operator norm $\theta_p$.
  Then we have $C_1 (K) \leq \theta_\infty < \infty$ and $C_2(K) \leq \theta_1 < \infty$.
\end{proposition}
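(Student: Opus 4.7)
The plan is to combine a ``phase trick'' with a refinement argument along finite measurable partitions of the sets $Y_n$. To set this up, I first apply Lemma~\ref{lem:MeasurableFunctionCountablyGeneratedSigmaAlgebras} to $K$ in order to reduce to the case that $\CalF$ and $\CalG$ are countably generated while keeping $\mu$ and $\nu$ $\sigma$-finite. For each $n$ I fix an increasing sequence $(\sigma_m^{(n)})_{m \in \N}$ of finite measurable partitions of $Y_n$ whose union generates $\CalG|_{Y_n}$. A standard $\lebesgue^1$-martingale convergence argument combined with Fatou's lemma then yields, for every $f \in \lebesgue^1(\nu|_{Y_n})$, the monotone identity
\[
  \sum_{E \in \sigma_m^{(n)}} \bigg| \int_E f \, d\nu \bigg|
  \,\nearrow\,
  \int_{Y_n} |f| \, d\nu
  \qquad \text{as } m \to \infty.
\]
Applied pointwise in $x$ to $f = K(x,\cdot)$, which lies in $\lebesgue^1(\nu|_{Y_n})$ for $\mu$-a.e.\ $x$ thanks to the assumed well-definedness (with absolutely convergent defining integral) of $\Phi_K \Indicator_{Y_n}$, this will be the key approximation identity.

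For the bound $C_1(K) \leq \theta_\infty$, I fix a countable dense subset $D \subset S^1$. For each $m \in \N$ and each assignment of phases $\phi_E \in D$ to the members $E \in \sigma_m^{(n)}$, the step function $\phi := \sum_E \phi_E \Indicator_E$ lies in $\mathscr{G}$ with $\|\phi\|_\infty \leq 1$, so $|\Phi_K \phi (x)| \leq \theta_\infty$ holds outside a $\mu$-null set $N_\phi$. Since only countably many such $\phi$ arise, $\bigcup_\phi N_\phi$ remains $\mu$-null; outside it, taking the supremum over the phase assignments recovers $\sum_{E \in \sigma_m^{(n)}} |\int_E K(x,y) \, d\nu| \leq \theta_\infty$ for every $m$. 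Sending $m \to \infty$ via the monotone limit above, and then $n \to \infty$ via the monotone convergence $\int_{Y_n} |K| \, d\nu \nearrow \int_Y |K| \, d\nu$, yields $\int_Y |K(x,y)| \, d\nu \leq \theta_\infty$ for $\mu$-a.e.\ $x$, i.e.\ $C_1(K) \leq \theta_\infty$.

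For the bound $C_2(K) \leq \theta_1$, I fix a measurable $E \subset Y_n$ with $\nu(E) < \infty$, together with refining finite measurable partitions $\{E_i^{(m)}\}_i$ of $E$ whose union generates $\CalG|_E$ (e.g.\ obtained by intersecting the members of $\sigma_m^{(n)}$ with $E$). Since each $\Indicator_{E_i^{(m)}} \in \mathscr{G}$ has $\lebesgue^1(\nu)$-norm equal to $\nu(E_i^{(m)})$, the hypothesis $\|\Phi_K \Indicator_{E_i^{(m)}}\|_{\lebesgue^1(\mu)} \leq \theta_1 \, \nu(E_i^{(m)})$, summed over $i$, gives
\[
  \int_X \sum_i \bigg| \int_{E_i^{(m)}} K(x,y) \, d\nu \bigg| \, d\mu(x)
  \leq \theta_1 \, \nu(E).
\]
As $m \to \infty$, the integrand on the left increases pointwise (for $\mu$-a.e.\ $x$) to $\int_E |K(x,y)| \, d\nu$, so monotone convergence combined with Tonelli yields
\[
  \int_E H(y) \, d\nu(y) \leq \theta_1 \, \nu(E),
  \qquad \text{where} \qquad
  H(y) := \int_X |K(x,y)| \, d\mu(x).
\]
Since this inequality holds for every finite-measure $E \subset Y_n$ and every $n$, and $\nu$ is $\sigma$-finite, we conclude $H(y) \leq \theta_1$ for $\nu$-a.e.\ $y$, which is precisely $C_2(K) \leq \theta_1$.

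The main technical obstacle will be the monotone partition-refinement approximation $\sum_E |\int_E f \, d\nu| \nearrow \int |f| \, d\nu$ for $f \in \lebesgue^1$ along refining partitions that generate the underlying $\sigma$-algebra (which rests on $\lebesgue^1$-martingale convergence plus Fatou), together with the null-set bookkeeping involved in turning the countable family of pointwise bounds $|\Phi_K \phi(x)| \leq \theta_p$ into a single pointwise bound on $\sum_E |\int_E K(x,y) \, d\nu|$.
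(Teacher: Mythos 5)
Your proof is correct (up to routine bookkeeping you already flag) but takes a genuinely different route from the paper, most markedly for $C_2$. For $C_1(K) \leq \theta_\infty$, both proofs begin with the same reduction to a countably generated sub-$\sigma$-algebra via Lemma~\ref{lem:MeasurableFunctionCountablyGeneratedSigmaAlgebras}, but diverge afterwards: the paper invokes separability of $\lebesgue^1(\nu|_{\CalG_0})$ (Lemma~\ref{lem:LebesgueSpaceSeparability}) to extract a countable dense family in the unit ball of $\lebesgue^\infty$, approximates the optimal phase $\overline{\mathrm{sign}(K(x,\cdot))}\,\Indicator_{Y_\ell}$ along an a.e.-convergent subsequence, and concludes with dominated convergence; you instead replace the $L^1$-density argument by a martingale-refinement identity $\sum_{E\in\sigma_m^{(n)}}\bigl|\int_E K(x,\cdot)\,d\nu\bigr| \nearrow \int_{Y_n}|K(x,\cdot)|\,d\nu$ (monotonicity by the tower property, convergence by $L^1$-martingale convergence), and make the supremum countable by ranging phases over a countable dense $D\subset S^1$. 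For $C_2(K)\leq\theta_1$, the paper's proof is a duality argument: it shows $\Phi_{K^T}$ is bounded on the relevant $L^\infty$-type class and then applies Step~1 to $K^T$, exploiting $C_2(K)=C_1(K^T)$. Your argument is direct: summing the $L^1$-estimates over the cells of a refining partition of a finite-measure $E\subset Y_n$ and passing to the limit yields $\int_E H\,d\nu \leq \theta_1\,\nu(E)$ with $H(y)=\int_X|K(x,y)|\,d\mu$, and localizing over $E$ gives $H\leq\theta_1$ a.e.; this reuses the same refinement machinery and avoids the transposition entirely. Two small cautions: Lemma~\ref{lem:MeasurableFunctionCountablyGeneratedSigmaAlgebras} as stated only preserves $\sigma$-finiteness of \emph{one} of the two measures, but (as in the paper) you only need the reduction on the $\CalG$-side, so this is harmless; and in Step~2 the test sets $E$ should be taken from the reduced $\sigma$-algebra $\CalG_0$ (and contained in some $Y_n$), which is enough because $H$ is $\CalG_0$-measurable by Tonelli.
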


\begin{rem*}
  The space $\mathscr{G}$ is introduced to avoid having to assume a priori that $\Phi_K f$
  is a well-defined function for $f \in \lebesgue^1 (\nu) + \lebesgue^\infty (\nu)$,
  which would be unnecessarily restrictive.
\end{rem*}

\begin{proof}
  \textbf{Step~1:} (\emph{Showing $C_1(K) \leq \theta_\infty$}):
  The claim is clear in case of $\nu(Y) = 0$; therefore, let us assume $\nu(Y) > 0$.
  Since $[\Phi_K \Indicator_{Y_\ell}] (x)$ is well-defined almost everywhere,
  for each $\ell \in \N$ there is a $\mu$-null-set $M_\ell \subset X$
  such that $\int_{Y_\ell} |K (x,y)| \, d \nu (y) < \infty$ for all $x \in X \setminus M_\ell$.

  Next, by splitting $K$ into the positive- and negative part of its real- and imaginary parts,
  Lemma~\ref{lem:MeasurableFunctionCountablyGeneratedSigmaAlgebras} yields a countably generated
  $\sigma$-algebra $\CalG_0 \subset \CalG$ such that $K$ is $\CalF \otimes \CalG_0$-measurable.
  By enlarging $\CalG_0$, we can assume that $Y_n \in \CalG_0$ for all $n \in \N$.
  Now, Lemma~\ref{lem:LebesgueSpaceSeparability} shows that $\lebesgue^1(\nu|_{\CalG_0})$
  is separable.

  Since nonempty subsets of separable metric spaces are again separable
  (see \cite[Corollary~3.5]{Hitchhiker}), setting
  \(
    \mathscr{G}_0
    := \{
         f \in \lebesgue^1(\nu|_{\CalB_0}) \cap \mathscr{G}
         \colon
         \| f \|_{\lebesgue^\infty} \leq 1
       \} ,
  \)
  we can find a countable family $(g_n)_{n \in \N} \subset \mathscr{G}_0$ which is dense
  (with respect to $\| \bullet \|_{\lebesgue^1(\nu)}$) in $\mathscr{G}_0$.

  Define $h_{\ell,n} := \Indicator_{Y_\ell} \cdot g_n \in \mathscr{G}_0 \subset \mathscr{G}$,
  and note by our assumptions that for each $\ell,n \in \N$, there is a $\mu$-null-set
  $N_{\ell,n} \subset X$ satisfying $|(\Phi_K h_{\ell,n}) (x)| \leq \theta_\infty < \infty$
  for all $x \in X \setminus N_{\ell,n}$.
  Define ${N := \bigcup_{\ell=1}^\infty M_\ell \cup \bigcup_{\ell,n=1}^\infty N_{\ell,n}}$
  and note $\mu(N) = 0$.
  Fix $x \in X \setminus N$ and $\ell \in \N$ for the moment, and define
  ${f_\ell := \Indicator_{Y_\ell} \cdot \overline{\mathrm{sign} \bigl( K(x,\bullet) \bigr)}}$,
  noting that $f_\ell \in \mathscr{G}_0$.
  Since convergence in $\lebesgue^1(\nu)$ implies existence of a subsequence converging almost everywhere,
  we thus obtain a sequence $(n_k)_{k \in \N}$ such that $g_{n_k} \to f_\ell$ $\nu$-almost everywhere,
  and hence also $h_{\ell, n_k} = \Indicator_{Y_\ell} \cdot g_{n_k} \to f_\ell$ $\nu$-almost everywhere
  as $k \to \infty$.
  Since also $|K(x,y) \, h_{\ell,n_k}(y)| \leq \Indicator_{Y_\ell}(y) \, |K(x,y)|$
  and $\int_{Y_\ell} |K(x,y)| \, d \nu(y) < \infty$, we can apply the dominated convergence theorem
  to conclude
  \begin{align*}
    \int_{Y_\ell}
      |K(x,y)|
    \, d \nu(y)
    & = \bigg|
          \int_Y
            f_\ell(y) \, K(x,y)
          \, d \nu(y)
        \bigg| \\
    & = \lim_{k \to \infty}
        \bigg|
          \int_Y
          K(x,y) \, h_{\ell,n_k}(y)
          \, d \nu(y)
        \bigg|
      = \lim_{k \to \infty}
          \bigl| [\Phi_K \, h_{\ell,n_k}](x) \bigr|
      \leq \theta_\infty .
  \end{align*}
  Since this holds for all $\ell \in \N$, while $Y = \bigcup_{\ell = 1}^\infty Y_\ell$
  and $Y_\ell \subset Y_{\ell+1}$, we see $\int_Y |K(x,y)| \, d \nu(y) \leq \theta_\infty$
  for all $x \in X \setminus N$, and hence $C_1 (K) \leq \theta_\infty < \infty$.

  It should be noted that this step did not use that $\theta_1 < \infty$.

  \medskip{}

  \noindent
  \textbf{Step~2:} (\emph{Showing $C_2(K) \leq \theta_1$}):
  This is essentially a duality argument.
  Since $\mu$ is $\sigma$-finite, we can find $(X_n)_{n \in \N} \subset \CalF$
  with $\mu(X_n) < \infty$, $X_n \subset X_{n+1}$ and $X = \bigcup_{n=1}^\infty X_n$.
  Let us define
  \({
    \widetilde{\mathscr{G}}
    := \bigl\{
         g \in \lebesgue^\infty (\mu)
         \quad \colon \quad
         \exists \, n \in \N : g = 0 \text{ a.e.~on } X \setminus X_n
       \bigr\} .
  }\)
  Since $C_1 (K) < \infty$ by the previous step, we have
  \[
    \int_Y \int_{X_\ell} |K^T (y,x)| \, d \mu(x) \, d \nu(y)
    = \int_{X_\ell} \int_Y |K(x,y)| \, d \nu(y) \, d \mu(x)
    \leq C_1(K) \cdot \mu(X_\ell)
    <    \infty ,
  \]
  which easily implies that $(\Phi_{K^T} g) (y)$ is well-defined for each $g \in \widetilde{\mathscr{G}}$
  and $\nu$-almost all $y \in Y$.

  Finally, if $\ell \in \N$ and if $f : Y \to \CC$ is a simple function
  (i.e., a finite linear combination of indicator functions of measurable sets)
  with $f = 0$ on $Y \setminus Y_\ell$ (so that in particular $f \in \mathscr{G}$),
  and if $g \in \widetilde{\mathscr{G}}$, then
  \[
    \int_X \int_Y |K(x,y)| \, |f(y)|  \, d \nu(y) \, |g(x)| \, d \mu(x)
    \leq C_1(K) \| f \|_{\lebesgue^\infty} \| g \|_{\lebesgue^1}
    <    \infty ,
  \]
  so that Fubini's theorem is applicable in the following calculation:
  \begin{align*}
    \bigg|
      \int_Y
        f(y) \cdot (\Phi_{K^T} g) (y)
      \, d \nu(y)
    \bigg|
    & = \bigg|
          \int_X
            g(x)
            \int_Y
              K(x,y) \, f (y)
            \, d \nu(y)
          \, d \mu(x)
        \bigg| \\
    & \leq \| g \|_{\lebesgue^\infty(\mu)}
           \cdot \| \Phi_K f \|_{\lebesgue^1(\mu)}
      \leq \| g \|_{\lebesgue^\infty(\mu)}
           \cdot \theta_1
           \cdot \| f \|_{\lebesgue^1(\nu)} .
  \end{align*}
  By the usual characterization of the $\lebesgue^\infty$-norm by duality
  (see \cite[Theorem~6.14]{FollandRA}) applied on $Y_\ell$, this implies
  $\| (\Phi_{K^T} g)|_{Y_\ell} \|_{\lebesgue^\infty(\nu)} \leq \theta_1 \cdot \| g \|_{\lebesgue^\infty(\mu)}$.
  Since this holds for every $\ell \in \N$, we get
  $\| \Phi_{K^T} g \|_{\lebesgue^\infty(\nu)} \leq \theta_1 \cdot \| g \|_{\lebesgue^\infty(\mu)}$
  for all $g \in \widetilde{\mathscr{G}}$.
  Therefore, applying Step~1 to $K^T$ instead of $K$, we see that $C_1(K^T) \leq \theta_1 < \infty$.
  Since $C_2(K) = C_1(K^T)$ by (an obvious variation of) Lemma~\ref{lem:SchurConstantsForAdjointKernel},
  we are done.
\end{proof}

Now, we provide an example showing that for mixed Lebesgue spaces and \emph{complex-valued}
kernels, our generalized form of Schur's test is in general only sufficient---but not necessary---%
for the boundedness of $\Phi_K : \lebesgue^{p,q} \to \lebesgue^{p,q}$ for all $p,q \in [1,\infty]$.

\begin{example}\label{exa:ComplexKernelNoCharacterization}
  Define $X_1 := [0,1]$ with the Borel $\sigma$-algebra, and $X_2 := Y_1 := Y_2 := \Z$,
  all equipped with the power set $2^\Z$ as the $\sigma$-algebra.
  Next, let $\mu_1 := \lambda$ be the Lebesgue measure on $[0,1]$ and $\nu_2$ the counting measure on $\Z$.
  Furthermore, define $\mu_2 (A) := \sum_{n \in A} e^{-|n|}$ for $A \subset \Z$, noting that this is a
  finite measure on $\Z$ satisfying $\mu_2(\{ k \}) > 0$ for all $k \in \Z$.
  Finally, choose a positive sequence $c = (c_m)_{m \in \Z} \in \ell^2(\Z) \setminus \ell^1(\Z)$,
  say $c_m = (1 + |m|)^{-2/3}$, and define $\nu_1 (A) := \sum_{n \in A} \beta_n$ for $A \subset \Z$,
  where $\beta_n := \bigl( (1 + n^2) \cdot \sum_{|m| \leq |n|} c_m \bigr)^{-1}$.
  Note that ${\nu_1(\Z) \leq c_0^{-1} \sum_{n \in \Z} (1 + n^2)^{-1} < \infty}$.

  Now, with $X = X_1 \times X_2$ and $Y = Y_1 \times Y_2$ and with $\mu = \mu_1 \otimes \mu_2$
  and $\nu = \nu_1 \otimes \nu_2$, define
  \[
    K : X \times Y \to \CC,
        \big( (x,k), (n,m) \big) \mapsto c_m
                                         \cdot e^{-2 \pi i m x}
                                         \cdot \Indicator_{|m| \leq |n|}
                                         \cdot \Indicator_{|m| \leq |k|} .
  \]
  We claim that $\Phi_K : \lebesgue^{p,q}(\nu) \to \lebesgue^{p,q}(\mu)$ is well-defined
  and bounded for all $p,q \in [1,\infty]$, but that $C_3 (K) = \infty$,
  while $C_i (K) < \infty$ for $i \in \{ 1,2,4 \}$.
  This will show that for complex-valued kernels, our generalized form of Schur's test does not
  yield a complete characterization, in contrast to the case of non-negative kernels.

  First of all, note for fixed $p,q \in [1,\infty]$, $k \in \Z$, and $f \in \lebesgue^{p,q}(\nu)$ that
  \begin{equation}
    \begin{split}
      \bigl| (\Phi_K f) (x,k) \bigr|
      & \leq (\Phi_{|K|} |f|) (x,k)
        \leq \sum_{|m| \leq |k|}
               c_m
               \sum_{n \in \Z}
                 \beta_n \, |f(n,m)| \\
      & \leq \sum_{|m| \leq |k|}
               c_m \, \| f(\bullet, m) \|_{\lebesgue^1(\nu_1)}
        \lesssim \sum_{|m| \leq |k|}
                   c_m \, \| f(\bullet,m) \|_{\lebesgue^p(\nu_1)}
        < \infty .
    \end{split}
    \label{eq:ComplexKernelOperatorWellDefined}
  \end{equation}
  Here, we used that $\| \bullet \|_{\lebesgue^1(\nu_1)} \lesssim \| \bullet \|_{\lebesgue^p(\nu_1)}$
  since $\nu_1$ is a finite measure.
  The estimate \eqref{eq:ComplexKernelOperatorWellDefined} shows that
  $\Phi_K f : [0,1] \times \Z \to \CC$ is well-defined and also that
  ${\Phi_{|K|} |f| (\bullet, k) \in \lebesgue^\infty([0,1])}$.
  Finally, it shows that $\lebesgue^{p,q}(\nu) \to \CC, f \mapsto (\Phi_K f) (x,k)$
  is a bounded linear functional for arbitrary $(x,k) \in X$.

  Next, note that
  \[
    C_1 (K)
    = \esssup_{(x,k) \in [0,1] \times \Z}
        \int_{\Z^2}
          \bigl| K \bigl( (x,k), (n,m) \bigr) \bigr|
        \, d \nu(n,m)
    \leq \sum_{n \in \Z}
           \beta_n
           \sum_{|m| \leq |n|} c_m
    = \sum_{n \in \Z}
        (1 + n^2)^{-1}
    < \infty ,
  \]
  as well as
  \begin{align*}
    C_2 (K)
    & = \esssup_{(n,m) \in \Z^2}
          \int_{[0,1] \times \Z}
            \big| K \big( (x,k), (n,m) \big) \big|
          \, d \mu(x,k) \\
    & \leq \sup_{m \in \Z}
             \int_{\Z}
               \int_{[0,1]}
                 c_m
               \, d x
             \, d \mu_2(k)
      \leq \| c \|_{\ell^\infty} \cdot \mu_2 (\Z)
      <    \infty ,
  \end{align*}
  and
  \[
    C_4 (K)
    = \esssup_{m \in \Z}
        \int_{\Z}
          \esssup_{x \in [0,1]}
            \int_{\Z}
              \big|
              K \big( (x,k), (n,m) \big)
              \big|
            \, d \nu_1 (n)
        \, d \mu_2(k)
    \leq \| c \|_{\ell^\infty} \cdot \nu_1(\Z) \cdot \mu_2 (\Z)
    <    \infty .
  \]
  In view of Theorem~\ref{thm:SchurTestSufficientUnweighted}, this implies that
  $\Phi_K : \lebesgue^{p,q}(\nu) \to \lebesgue^{p,q}(\mu)$ is well-defined and bounded
  for all $p,q \in [1,\infty]$ with $p \geq q$, and in particular for $p = q = 1$, $p = q = \infty$
  and for $(p,q) = (\infty, 1)$.

  Next, for $f \in \lebesgue^{1,\infty}(\nu)$, $k \in \Z$ and $g \in \lebesgue^2 ([0,1])$, we have
  \begin{equation}
    \begin{split}
      \bigg|
        \int_0^1
          \Phi_K f (x,k) \, g (x)
        \, d x
      \bigg|
      & = \bigg|
            \sum_{n \in \Z} \,\,
              \sum_{|m| \leq \min \{ |k|, |n| \}} \!\!\!
                c_m \,
                \beta_n \,
                f (n,m) \,
                \int_0^1
                  g(x) \, e^{- 2 \pi i m x}
                \, d x
          \bigg| \\[0.2cm]
      & \leq \sum_{m \in \Z}
               c_m \, |\widehat{g}(m)|
               \sum_{n \in \Z}
                 \beta_n \, |f(n,m)|
        \leq \| f \|_{\lebesgue^{1,\infty}(\nu)}
             \cdot \| c \|_{\ell^2 (\Z)}
             \cdot \| \widehat{g} \|_{\ell^2(\Z)} \\
      & =    \| f \|_{\lebesgue^{1,\infty}(\nu)}
             \cdot \| c \|_{\ell^2 (\Z)}
             \cdot \| g \|_{\lebesgue^2 ([0,1])} ,
    \end{split}
    \label{eq:ComplexKernelOperatorBoundedness}
  \end{equation}
  where we used Plancherel's theorem $\| \widehat{g} \|_{\ell^2(\Z)} = \| g \|_{\lebesgue^2}$
  in the last step.
  The interchange of the series with the integral above can be justified using the dominated
  convergence theorem and the estimate in \eqref{eq:ComplexKernelOperatorWellDefined}.
  Since we know from above that
  $\Phi_K f (\bullet,k) \in \lebesgue^{\infty}([0,1]) \subset \lebesgue^2([0,1])$,
  the estimate \eqref{eq:ComplexKernelOperatorBoundedness} easily implies
  \(
    \| \Phi_K f (\bullet,k) \|_{\lebesgue^1}
    \leq \| \Phi_K f (\bullet,k) \|_{\lebesgue^2}
    \leq \| f \|_{\lebesgue^{1,\infty}(\nu)} \cdot \| c \|_{\ell^2 (\Z)},
  \)
  which shows that ${\Phi_K : \lebesgue^{1,\infty} (\nu) \to \lebesgue^{1,\infty}(\mu)}$
  is bounded, as claimed.
  Now, a version of the Riesz-Thorin interpolation theorem for mixed Lebesgue-spaces
  (see \cite[Section~7, Theorem~2]{MixedLpSpaces}) shows that
  $\Phi_K : \lebesgue^{p,q}(\nu) \to \lebesgue^{p,q}(\mu)$
  is in fact well-defined and bounded for all $p,q \in [1,\infty]$.

  Finally, to see $C_3 (K) = \infty$, recall that $\nu_2$ is the counting measure on $\Z$,
  so that
  \[
    C_3 (K)
    = \esssup_{k \in \Z}
        \int_{\Z}
          \esssup_{n \in \Z}
            \int_0^1
              \big| K \big( (x,k), (n,m) \big) \big|
            \, d x
        \, d \nu_2 (m)
    = \esssup_{k \in \Z}
        \sum_{|m| \leq |k|}
          c_m
    = \| c \|_{\ell^1}
    = \infty.
  \]
\end{example}

\let\section\origsection
\markleft{References}
\markright{}

\bibliographystyle{abbrv}
\bibliography{addbib,felixbib}

\end{document}